\setlist[enumerate]{leftmargin=*}
\newtheorem{thm}{Theorem}[section]
\newtheorem{prp}[thm]{Proposition}
\newtheorem{cor}[thm]{Corollary}
\newtheorem{lem}[thm]{Lemma}
\theoremstyle{definition}
\newtheorem{dfn}[thm]{Definition}
\newtheorem{rem}[thm]{Remark}
\newtheorem{step}{Step}
\numberwithin{equation}{section}
\newcommand{\tc}{\,:\,}
\newcommand{\defeq}{\mathrel{:=}}
\newcommand{\eqdef}{\mathrel{=:}}
\newcommand{\RR}{\mathbb{R}}
\newcommand{\CC}{\mathbb{C}}
\newcommand{\NN}{\mathbb{N}}
\newcommand{\ZZ}{\mathbb{Z}}
\newcommand{\Npos}{\mathbb{N}_+}
\newcommand{\Rpos}{\mathbb{R}^+}
\newcommand{\Rnon}{\mathbb{R}^+_0}
\newcommand{\schur}{\odot}
\newcommand{\pot}{\mathcal{P}}
\newcommand{\halfpot}{\mathcal{HP}}
\newcommand{\Tpot}{\mathcal{P}^\mathrm{t}}
\newcommand{\Tpotone}{\mathcal{P}^\mathrm{t}_1}
\newcommand{\mm}{\mathrm{m}}
\newcommand{\matP}{\mathbf{P}}
\newcommand{\matA}{\mathbf{A}}
\newcommand{\matB}{\mathbf{B}}
\newcommand{\matM}{\mathbf{M}}
\newcommand{\matN}{\mathbf{N}}
\newcommand{\matF}{\mathbf{F}}
\newcommand{\auxC}{\mathfrak{C}}
\newcommand{\auxD}{\mathfrak{D}}
\newcommand{\banP}{\mathscr{P}}
\newcommand{\banR}{\mathscr{R}}
\newcommand{\banD}{\mathscr{D}}
\newcommand{\banK}{\mathscr{K}}
\newcommand{\banE}{\mathscr{E}}
\newcommand{\solU}{\mathcal{U}}
\newcommand{\formE}{\mathcal{E}}
\newcommand{\chr}{\mathbf{1}}
\DeclareMathOperator{\diag}{diag}
\DeclareMathOperator{\inc}{inc}
\DeclareMathOperator{\codim}{codim}
\DeclareMathOperator{\supp}{supp}
\DeclareMathOperator*{\esssup}{ess\,sup}
\newcommand{\loc}{\mathrm{loc}}
\newcommand{\sobolev}[2]{L^{#2}_{#1}}
\newcommand{\opL}{\mathcal{L}}
\newcommand{\opH}{\mathcal{H}}
\newcommand{\romI}{\mathrm{I}}
\newcommand{\romII}{\mathrm{II}}
\newcommand{\romIII}{\mathrm{III}}
\newcommand{\dist}{\mathrm{dist}}
\newcommand{\Vol}{\mathrm{Vol}}
\newcommand{\weight}{\varpi}
\newcommand{\Kern}{\mathcal{K}}
\newcommand{\doub}{\mathcal{D}}
\newcommand{\inv}{\leftarrow}
\renewcommand{\vartheta}{\gamma}
\begin{document}

\title[Grushin operators in the plane]{An optimal multiplier theorem for Grushin operators in the plane, I}

\author{Gian Maria Dall'Ara}
\address[G.\ M.\ Dall'Ara]{
Istituto Nazionale di Alta Matematica ``Francesco Severi'' \\ 
Research Unit Scuola Normale Superiore \\ Piazza dei Cavalieri 7 \\ 56126 Pisa \\ Italy}
\email{dallara@altamatematica.it}
\author{Alessio Martini}
\address[A.\ Martini]{School of Mathematics \\ University of Birmingham \\ Edgbaston \\ Birmingham \\ B15 2TT \\ United Kingdom}
\email{a.martini@bham.ac.uk}

\thanks{A substantial part of this project was developed while the first-named author was a Marie Sk\l odowska-Curie Research Fellow at the University of Birmingham. Both authors gratefully acknowledge the support of the European Commission via the Marie Sk\l odowska-Curie Individual Fellowship ``Harmonic Analysis on Real Hypersurfaces in Complex Space'' (ID 841094). The authors are members of the Gruppo Nazionale per l'Analisi Matematica, la Probabilit\`a e le loro Applicazioni (GNAMPA) of the Istituto Nazionale di Alta Matematica (INdAM)}

\keywords{Grushin operator, spectral multiplier, Schr\"odinger operator}
\subjclass[2020]{34L20, 35J70, 35H20, 42B15}

\begin{abstract}
Let $\opL = -\partial_x^2 - V(x) \partial_y^2$ be the Grushin operator on $\RR^2$ with coefficient $V : \RR \to [0,\infty)$. Under the sole assumptions that $V(-x) \simeq V(x) \simeq xV'(x)$ and $x^2 |V''(x)| \lesssim V(x)$, we prove a spectral multiplier theorem of Mihlin--H\"ormander type for $\opL$, whose smoothness requirement is optimal and independent of $V$. The assumption on the second derivative $V''$ can actually be weakened to a H\"older-type condition on $V'$. The proof hinges on the spectral analysis of one-dimensional Schr\"odinger operators, including universal estimates of eigenvalue gaps and matrix coefficients of the potential.
\end{abstract}

\maketitle

\section{Introduction}

\subsection{Statement of the results}

Let $X$ be a measure space and $\opL$ a nonnegative self-adjoint operator on $L^2(X)$. 
By the spectral theorem, $\opL$ admits a spectral resolution $E$, and a functional calculus for $\opL$ can be defined via spectral integration. In particular, for all bounded Borel functions $\mm : [0,\infty) \to \CC$, the operator
\[
\mm(\opL) = \int_{[0,\infty)} \mm(\lambda) \,dE(\lambda)
\]
is bounded on $L^2(X)$. Determining nontrivial sufficient conditions for the boundedness of operators of the form $\mm(\opL)$ on other function spaces, such as $L^p(X)$ for $p \neq 2$, in terms of properties of the ``spectral multiplier'' $\mm$, is in general a much more complicated problem.

In the case where $\opL=-\Delta$ is the Laplace operator on $\RR^n$, the classical Mihlin--H\"ormander multiplier theorem \cite{mihlin_multipliers_1956,hrmander_estimates_1960} implies that $\mm(\opL)$ is of weak type $(1,1)$ and bounded on $L^p$ for all $p \in (1,\infty)$ whenever $\mm$ is continuous on $(0,\infty)$ and satisfies a local scale-invariant Sobolev condition of the form
\begin{equation}\label{eq:mhcond_intro}
\sup_{t \geq 0} \| \mm(t \cdot) \, \eta \|_{\sobolev{s}{q}} < \infty
\end{equation}
for $q=2$ and some $s > n/2$; here $\sobolev{s}{q}(\RR)$ denotes the $L^q$ Sobolev space of (fractional) order $s$, and $\eta \in C^\infty_c((0,\infty))$ is any nontrivial cutoff. It is well known that the smoothness requirement $s>n/2$ in this result is optimal, in the sense that $n/2$ cannot be replaced by any smaller number.

Results analogous to the Mihlin--H\"ormander theorem have been obtained for more general ``Laplace-like'' operators in a variety of settings. For example, if $\opL$ is the Laplace--Beltrami operator on a compact Riemannian manifold (or, more generally, an elliptic pseudodifferential operator on a compact manifold), then the analogue of the aforementioned Mihlin--H\"ormander result was proved by Seeger and Sogge \cite{seeger_boundedness_1989}, with a smoothness condition of the form \eqref{eq:mhcond_intro} with $q=2$ and $s>n/2$, where $n$ is the dimension of the manifold. The proof of Seeger and Sogge is fundamentally based on a Fourier integral operator representation for the half-wave propagator associated to $\opL$, already exploited in \cite{hormander_spectral_1968}, and appears to break down when the ellipticity assumption on $\opL$ is weakened.

A more general and robust --- but not as sharp --- result of Mihlin--H\"ormander type, essentially due to Hebisch \cite{hebisch_functional_1995} (see also \cite{mauceri_vectorvalued_1990,christ_lpbounds_1991,alexopoulos} for some predecessors in particular cases, and \cite{cowling_spectral_2001,duong_plancherel-type_2002,martini_crsphere} for alternative approaches and variations), applies to arbitrary nonnegative self-adjoint operators $\opL$ on $L^2(X)$, where $X$ is a doubling metric measure space. Under the assumption that $\opL$ satisfies Gaussian-type heat kernel bounds, the result yields weak type $(1,1)$ and $L^p$ boundedness for $p \in (1,\infty)$ of $\mm(\opL)$ whenever $\mm$ satisfies a smoothness condition of the form \eqref{eq:mhcond_intro} with $q=\infty$ and $s>Q/2$, where $Q$ is the ``homogeneous dimension'' of the doubling space $X$. When applying this result to the Laplace operator on $\RR^n$ or the Laplace--Beltrami operator on a compact Riemannian $n$-manifold, one can take $Q=n$, thus recovering the aforementioned optimal results in those cases up to the type of Sobolev norm used (the results with $q=2$ are sharper than those with $q=\infty$). At the same time, the approach based on heat kernel bounds yields a wider applicability of the result, including cases where the operator $\opL$ fails to be elliptic. In several of these cases, however, the smoothness requirement $s>Q/2$ turns out not to be optimal.

Let us consider, for example, degenerate elliptic operators on $\RR^{n_1}_x \times \RR^{n_2}_y$ of the form
\begin{equation}\label{eq:grushin}
\opL = -\Delta_x - V(x) \Delta_y,
\end{equation}
where the coefficient $V : \RR^{n_1} \to [0,\infty)$ is measurable and bounded above and below by multiples of the function
\begin{equation}\label{eq:twopowerlaw}
x \mapsto \begin{cases}
|x|^d &\text{if $|x| \leq 1$,}\\
|x|^D &\text{if $|x| \geq 1$}
\end{cases}
\end{equation}
for some $d,D>0$. In the case where $V$ is a polynomial, these operators are among those studied in \cite{grushin_certain_1970}, whence the name ``Grushin operators'' commonly used to refer to operators of the form \eqref{eq:grushin}; sometimes the name ``Baouendi--Grushin operators'' is also used, due to the previous work \cite{baouendi} on degenerate elliptic operators. In \cite{robinson_analysis_2008} Robinson and Sikora develop a detailed analysis of degenerate elliptic operators of Grushin type, yielding among other things that Hebisch's multiplier theorem applies to operators of the form \eqref{eq:grushin}, with homogeneous dimension $Q = n_1 + (1+\max\{D,d\}/2) n_2$. No claim on the optimality of the condition $s>Q/2$ is made there, and indeed improvements turn out to be possible in some cases.

In the case where $V(x) = |x|^2$, in \cite{martini_grushin_2012,martini_sharp_2014} it is proved that the smoothness condition can be pushed down to $s>(n_1+n_2)/2$ and expressed in terms of an $L^2$ Sobolev norm. The same smoothness condition turns out to be enough also in the case where $V(x) = \sum_{j=1}^{n_1} |x_j|$ and $n_1 \geq n_2/2$ \cite{chen_sharp_2013}. The threshold $(n_1+n_2)/2$, that is, half the topological dimension of the underlying manifold, corresponds to the sharp threshold in the classical Mihlin--H\"ormander theorem for the Euclidean Laplacian on $\RR^{n_1+n_2}$, and therefore cannot be beaten in the case of Grushin operators, since they are locally elliptic where $x \neq 0$ \cite{mitjagin_divergenz_1974,kenig_divergence_1982}. In view of these examples, it is natural to ask whether the optimal smoothness requirement coincides with the Euclidean condition $s>(n_1+n_2)/2$ for any Grushin operator \eqref{eq:grushin}, and in particular whether the optimal threshold is independent of the ``degrees'' $d$ and $D$ of the coefficient $V$.

A partial positive answer in the case $d=D>1$ is presented in \cite{dallara_martini}. Under some structural and smoothness assumptions on $V$ (namely, $V(x) = \sum_{j=1}^{n_1} V_j(|x_j|)$, where each $V_j : (0,\infty) \to (0,\infty)$ is comparable to the power law $t \mapsto t^D$ up to the third derivative), a theorem of Mihlin--H\"ormander type is proved with $L^2$ smoothness condition $s>\max\{n_1+n_2, (1+ D/2) n_2\}/2$.
In addition to not requiring any analyticity or homogeneity of the coefficient $V$, this result yields the optimal degree-independent condition $s>(n_1+n_2)/2$ whenever $n_2 \leq 2n_1/D$; in particular, however large the degree $D$ is, we can reach the Euclidean ``half the topological dimension'' threshold, provided $n_2$ is sufficiently small compared to $n_1$. 

The constraint $n_2 \leq 2n_1/D$ from the previous discussion is somewhat unsatisfactory: for example it excludes the lowest dimensional case $n_1=n_2=1$ as soon as $D>2$. Indeed, strikingly enough, even in the apparently simplest case of a homogeneous Grushin operator on $\RR^2$,
\begin{equation}\label{eq:2dgrushin_power}
\opL = -\partial_x^2 - |x|^D \partial_y^2,
\end{equation}
the aforementioned results yield the Euclidean condition $s>2/2$ only when $1 \leq D \leq 2$. When $D>2$, the result from \cite{dallara_martini} gives the $L^2$ condition $s>(1+D/2)/2$, whose threshold becomes arbitrarily large as $D$ grows; if $0<D<1$, instead, among the existing results, only the general theorem from \cite{robinson_analysis_2008} with $L^\infty$ condition $s>Q/2 = (2+D/2)/2$ appears to be applicable. In light  of this, the analysis of two-dimensional cases appears to be a natural choice as a testing ground for the question whether the Euclidean condition is always the optimal smoothness requirement.

In this paper we focus on two-dimensional Grushin operators of the form
\begin{equation}\label{eq:2dgrushin}
\opL = -\partial_x^2 - V(x) \partial_y^2,
\end{equation}
where $V : \RR \to [0,\infty)$ is assumed to be continuous, not identically zero, $C^1$ off the origin, and satisfying, for some $\theta \in (0,1)$, the estimates
\begin{subequations}\label{eq:2dassumptions}
\begin{equation}\label{eq:2dassumptions_doubling}
V(-x) \simeq V(x) \simeq x V'(x),  
\end{equation}
\begin{equation}\label{eq:2dassumptions_holder}
|V'(xe^h)-V'(x)| \lesssim |V'(x)| \, |h|^\theta
\end{equation}
\end{subequations}
for all $x \in \RR \setminus\{0\}$ and $h \in [-1,1]$. Here $A \lesssim B$ means that there exists a constant $C>0$ such that $A \leq C B$, and $A \simeq B$ is the conjunction of $A \lesssim B$ and $B \lesssim A$; we also write $A \lesssim_s B$ or $A \simeq_s B$ to indicate that the implicit constants may depend on a parameter $s$.

The assumption \eqref{eq:2dassumptions_holder} is a scale-invariant H\"older-type condition on $V'$, which is verified, for example, whenever $V$ is twice differentiable on $\RR \setminus \{0\}$ and satisfies the estimate $|x V''(x)| \lesssim |V'(x)|$.
Clearly the assumptions \eqref{eq:2dassumptions} include the operators \eqref{eq:2dgrushin_power} among others. Under these sole assumptions, we confirm that the Euclidean condition $s>2/2$ is indeed the optimal smoothness requirement, at least when expressed in terms of an $L^\infty$ Sobolev norm.

\begin{thm}\label{thm:main}
Let $\opL$ be the Grushin operator on $\RR^2$ defined by \eqref{eq:2dgrushin}, where the coefficient $V$ satisfies the estimates \eqref{eq:2dassumptions}. Let $s>2/2$.
\begin{enumerate}[label=(\roman*)]
\item\label{en:main_l1} For all $\mm : \RR \to \CC$ such that $\supp \mm \subseteq [-1,1]$,
\[
\sup_{t > 0} \| \mm(t\opL) \|_{L^1 \to L^1} \lesssim_s \| \mm \|_{\sobolev{s}{\infty}}.
\]
\item\label{en:main_mh} Let $\eta \in C^\infty_c((0,\infty))$ be nonzero. For all $\mm : \RR \to \CC$ and $p \in (1,\infty)$,
\[
\| \mm(\opL) \|_{L^1 \to L^{1,\infty}} \lesssim_s \sup_{t>0} \| \mm(t \cdot) \eta \|_{\sobolev{s}{\infty}}, \qquad \| \mm(\opL) \|_{L^p \to L^p} \lesssim_{s,p} \sup_{t>0}\| \mm(t\cdot) \eta \|_{\sobolev{s}{\infty}}.
\]
\end{enumerate}
\end{thm}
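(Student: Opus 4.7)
My plan is to follow the classical scheme that reduces a sharp multiplier theorem of Mihlin--H\"ormander type to a weighted Plancherel-type estimate on spectrally localized pieces of the multiplier, and then to prove the latter via quantitative spectral analysis of the $y$-Fourier fibres of $\opL$.

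First, the reduction. The doubling condition \eqref{eq:2dassumptions_doubling} ensures that the Carnot--Carath\'eodory metric $\dist_\opL$ associated to the vector fields $\partial_x$ and $\sqrt{V(x)}\,\partial_y$ endows $(\RR^2,\dist_\opL,dx\,dy)$ with a doubling structure, and by \cite{robinson_analysis_2008} the heat semigroup $e^{-t\opL}$ enjoys Gaussian bounds with respect to $\dist_\opL$. Given this, the standard Calder\'on--Zygmund machinery on spaces of homogeneous type, combined with finite propagation speed for $\cos(t\sqrt{\opL})$, reduces part \ref{en:main_mh} of Theorem \ref{thm:main} to part \ref{en:main_l1}, and by a scaling argument further reduces everything to the following weighted Plancherel estimate: for every $\alpha>1$ and every $\mm$ supported in $[1/4,1]$,
\[
\sup_{z_0\in\RR^2}\; \Vol(B(z_0,1)) \int_{\RR^2} |K_{\mm(\opL)}(z,z_0)|^2\, (1+\dist_\opL(z,z_0))^{2\alpha}\, dz \;\lesssim_\alpha\; \|\mm\|_{\sobolev{\alpha}{\infty}}^2,
\]
where $K_{\mm(\opL)}$ denotes the integral kernel of $\mm(\opL)$.

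Second, to prove the Plancherel estimate I would diagonalize $\opL$ along the $y$-direction. Since $\opL$ commutes with $y$-translations, the Fourier transform in $y$ gives a direct integral decomposition $\opL \cong \int^\oplus H_\eta\,d\eta$, where $H_\eta = -\partial_x^2 + \eta^2 V(x)$ is a one-dimensional Schr\"odinger operator on $L^2(\RR)$. Under \eqref{eq:2dassumptions_doubling} the potential $\eta^2 V$ grows at infinity, so each $H_\eta$ (for $\eta\neq 0$) has purely discrete spectrum $\{\lambda_n(\eta)\}_{n\geq 0}$ with normalized eigenfunctions $\psi_n(\cdot,\eta)$, and the kernel takes the explicit form
\[
K_{\mm(\opL)}((x,y),(x',y')) = \frac{1}{2\pi}\int_\RR e^{i\eta(y-y')} \sum_{n\geq 0} \mm(\lambda_n(\eta))\,\psi_n(x,\eta)\,\overline{\psi_n(x',\eta)}\,d\eta.
\]
The $\dist_\opL$-weight is controlled by a sum of $|x-x'|$ and a $V$-adapted power of $|y-y'|$. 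The $x$-weight is absorbed by pointwise decay estimates for the eigenfunctions $\psi_n$ beyond the classical turning points, while the $y$-weight, via Plancherel in $\eta$, translates into bounds for $\partial_\eta^k$ applied to the spectral kernel. By Rellich--Kato perturbation theory these $\eta$-derivatives are expressible through the matrix coefficients $\langle V\psi_n,\psi_m\rangle$ and the eigenvalue gaps $\lambda_m(\eta)-\lambda_n(\eta)$; clustering the sum over $n$ according to dyadic windows of $\lambda_n(\eta)$ and using Weyl-type counts for the clusters would then yield the desired estimate.

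The main obstacle, and the heart of the matter flagged in the abstract, is to establish the required spectral-analytic ingredients for the family $\{H_\eta\}$ \emph{universally}, i.e.\ with constants depending only on the doubling and H\"older parameters implicit in \eqref{eq:2dassumptions}, and in particular independently of the ``degree'' of $V$. Specifically, one needs universal lower bounds of the form $\lambda_{n+1}(\eta)-\lambda_n(\eta)\gtrsim \lambda_n(\eta)/(n+1)$ (or similar), together with universal decay estimates for the off-diagonal matrix coefficients $\langle V\psi_n,\psi_m\rangle$ and spectral cluster bounds on $\sum_{\lambda_n(\eta)\in I}|\psi_n(x,\eta)|^2$. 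Standard WKB and semiclassical techniques are unavailable at this regularity, so a more elementary but quantitative ODE analysis of $-\psi''+(\eta^2 V-\lambda)\psi = 0$ must be developed, exploiting the scale invariance encoded in \eqref{eq:2dassumptions_doubling} and the H\"older control \eqref{eq:2dassumptions_holder}. Once these spectral estimates are in place, the remainder of the argument is a careful but fairly mechanical combination of perturbation-theoretic identities and dyadic summations.
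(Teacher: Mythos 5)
Your overall architecture matches the paper's: reduce to a weighted Plancherel-type estimate, Fourier-transform in $y$ so that $\opL$ decomposes into the fibre Schr\"odinger operators $-\partial_x^2+\eta^2V$, convert the $y$-weight into $\eta$-differentiation of the fibre spectral data, and control the result via universal eigenvalue-gap, matrix-coefficient and spectral-cluster bounds for those fibres. The ODE input you list (gaps of size $\lambda_n/n$, off-diagonal decay of $\langle V\psi_n,\psi_m\rangle$, cluster bounds for $\sum\psi_n^2$) is exactly what the paper establishes in Sections~\ref{s:halflineschroedinger}--\ref{s:matrixbounds}. However, the reduction step as you state it contains a genuine gap, and it is precisely the gap that separates the optimal threshold $s>1$ from the generic one $s>Q/2$.

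You propose to reduce the theorem to the estimate $\Vol(B(z_0,1))\int|K_{\mm(\opL)}(z,z_0)|^2(1+\dist(z,z_0))^{2\alpha}\,dz\lesssim\|\mm\|_{\sobolev{\alpha}{\infty}}^2$ for $\alpha>1$. This cannot close the argument: passing from a weighted $L^2$ bound on the kernel to the needed $L^1$ bound uses Cauchy--Schwarz, which requires that the dual weight satisfies $\int(1+\dist(z,z_0))^{-2\alpha}\,dz\lesssim\Vol(B(z_0,1))$, and by the volume growth \eqref{eq:vol_doubling} this holds only when $2\alpha>Q=2+D/2$, not when $2\alpha>2$. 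Carried out faithfully, your scheme therefore reproduces the $s>Q/2$ threshold of \cite{robinson_analysis_2008}, and the degree-independence you want is lost. The missing device is a weight in the $y$-variable only, namely $\weight_r(z,z')$ of \eqref{eq:weight}, together with the mixed-weight integrability estimate of Proposition~\ref{prp:weights}: the integral of $(1+\dist/r)^{-\alpha}(1+\weight_r)^{-\beta}$ is controlled by $\Vol(z',r)$ as soon as $\beta\in[0,1)$ and $\alpha>1+(1+D/2)(1-\beta)$, so that as $\beta\to1$ the sum of the two exponents drops to $2$, not $Q$. Correspondingly the Plancherel estimate one must prove is for $\int|(1+\weight_r)^\vartheta K_{\mm(r^2\opL)}|^2$ with $\vartheta<1/2$ \emph{fractional} derivatives; a single integer $\eta$-derivative $\partial_\eta^k$ of the spectral kernel (as you suggest) already costs one full order of smoothness, so an interpolation between the unweighted Plancherel identity and the once-differentiated one is also required. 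Once the weight is corrected and the fractional interpolation is added, your outline of the fibre-wise spectral analysis is sound and agrees with the paper's.
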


It is worth pointing out that the assumptions \eqref{eq:2dassumptions} are substantially less restrictive than the corresponding ones in \cite{dallara_martini}. Among other things, the choices
\begin{align*}
V(x) &= |x|^d + |x|^D &&\qquad\text{for any } D,d>0,\\
V(x) &= 1/(|x|^{-d} + |x|^{-D}) &&\qquad\text{for any } D,d>0,\\
V(x) &= |x|^D \log(2+|x|) &&\qquad\text{for any } D > 0,
\end{align*}
are allowed here, thus showing that we do not require $V$ to be comparable to a power law with a specific degree.
The last example goes even beyond the class of coefficients considered in \cite{robinson_analysis_2008}, since it is not comparable to a function of the form \eqref{eq:twopowerlaw}.

Plenty of other examples can be constructed, 
by observing that the assumptions \eqref{eq:2dassumptions} admit a particularly simple rephrasing in terms of the functions $\upsilon_\pm(t) = \log V(\pm e^t)$. Namely, with \eqref{eq:2dassumptions_doubling} we are requiring that the $\upsilon_\pm : \RR \to \RR$ are continuously differentiable and
\[
\upsilon_\pm' \simeq 1, \qquad |\upsilon_+ - \upsilon_-| \lesssim 1,
\]
while the condition \eqref{eq:2dassumptions_holder} requires in addition that the $\upsilon_\pm'$ are $\theta$-H\"older continuous:
\[
|\upsilon_\pm'(t+h) - \upsilon'_\pm(t)| \lesssim |h|^\theta.
\]
In the case $V$ is homogeneous, the functions $\upsilon_\pm'$ are constant and equal to the homogeneity degree; so in a sense we could think of the assumptions \eqref{eq:2dassumptions} as allowing for potentials $V$ with non-constant, H\"older-continuous degree.

We remark that the smoothness condition in Theorem \ref{thm:main} is expressed in terms of an $L^\infty$ Sobolev norm, in contrast to the $L^2$ Sobolev condition used in \cite{martini_grushin_2012,chen_sharp_2013,martini_sharp_2014,dallara_martini}. In addition, the latter results also apply to higher-dimensional cases. As a matter of fact, the methods presented here can be adapted to treat some higher-dimensional cases as well, and
even to prove the sharper result with an $L^2$ Sobolev condition. However, the proof of Theorem \ref{thm:main} turns out to be relatively complicated as it is, and we believe that it may be of interest to present it in the simplest and cleanest form, instead of obfuscating the core ideas with additional technicalities. 
A separate future work \cite{DMfuture}, building on the present one, will be devoted to the $L^2$ Sobolev norm improvement of Theorem \ref{thm:main}.

In any case, the fact that here we can obtain a smoothness condition that is independent of the coefficient $V$ (and matches the Euclidean one, corresponding to $V \equiv 1$) is already a substantial improvement over the results in \cite{robinson_analysis_2008,dallara_martini}, where the smoothness threshold (for fixed dimension) grows with the degree of $V$. This is true even if we restrict our attention to $L^2$ Sobolev conditions: indeed, by Sobolev's embedding, Theorem \ref{thm:main} implies a corresponding result with $L^2$ condition $s>3/2$, which is anyway a huge improvement over the $L^2$ condition $s>(1+D/2)/2$ from \cite{dallara_martini} as soon as $D$ is large. Moreover, in its form with an $L^\infty$ condition, our result implies an essentially sharp bound on the growth of the norms of the imaginary powers:
\[
1+|\alpha| \lesssim  \| \opL^{i\alpha} \|_{L^1 \to L^{1,\infty}} \lesssim_{\epsilon} (1+|\alpha|)^{1+\epsilon}
\]
for all $\alpha \in \RR$ and $\epsilon>0$; the upper bound is a direct consequence of Theorem \ref{thm:main}, while the lower bound follows via transplantation from the corresponding one for the Laplacian on $\RR^2$ (see, e.g., \cite{sikora_imaginary_2001}).

In the case where $V$ is a homogeneous polynomial, Grushin operators of the form \eqref{eq:grushin} can be lifted to homogeneous left-invariant sub-Laplacians on stratified nilpotent Lie groups \cite{rothschild_hypoelliptic_1976,folland_lifting,robinson_grushin_2016,dziubanski_sikora}, and a number of their properties can be recovered by the analysis of these group-invariant sub-Laplacians. As a matter of fact, the discovery that the topological dimension and not the homogeneous dimension may determine the optimal Mihlin--H\"ormander smoothness condition for non-elliptic operators was first made by Hebisch and by M\"uller and Stein in the case of sub-Laplacians on the Heisenberg groups \cite{hebisch_multiplier_1993,mueller_spectral_1994}. Since then, the problem of determining the optimal smoothness condition for sub-Laplacians on stratified groups and more general sub-Riemannian manifolds has been extensively investigated (see, e.g., \cite{martini_necessary_2016,martini_mueller_golo} and references therein), but remains widely open. The results in the present paper contribute to this research programme, by confirming among other things that the Euclidean threshold can be reached in the case of the simplest ``sum-of-square operators'' $-\partial_x^2 - (x^k \partial_y)^2$ on $\RR^2$, irrespective of the degree $k \in \NN$. This is especially striking given that the corresponding result for the lifted operator on a stratified group (specifically, a filiform group of step $k+1$) is only known for $k\leq 1$, and gives hope of shedding some light on the problem for higher-step stratified groups and sub-Laplacians.

\subsection{Proof strategy}
By relatively standard arguments, the proof of Theorem \ref{thm:main} reduces to that of the $L^1$-estimate
\[
\sup_{r>0} \| \mm(r^2 \opL) \|_{L^1 \to L^1} \lesssim_s \| \mm \|_{\sobolev{s}{\infty}}
\]
for all $s > 2/2$ and all continuous functions $\mm : \RR \to \CC$ supported in $[1/4,1]$. In the case $s>Q/2$, this estimate follows from the general heat kernel argument of \cite{hebisch_functional_1995,duong_plancherel-type_2002}, as a consequence via the Cauchy--Schwarz inequality of a weighted $L^2$ estimate for the integral kernel of $\mm(r^2 \opL)$:
\[
\sup_{r>0} \esssup_{z' \in \RR^2} \Vol(z',r) \int_{\RR^2} |\Kern_{\mm(r^2 \opL)}(z,z')|^2 (1+\dist(z,z')/r)^{2\alpha} \,dz \lesssim_{\alpha,\beta} \| \mm \|_{\sobolev{\beta}{\infty}}^2
\]
for all $\beta > \alpha \geq 0$, where $\dist$ is the control distance on $\RR^2$ associated with $\opL$, and $\Vol(z,r)$ is the Lebesgue measure of the $\dist$-ball of centre $z$ and radius $r$. To improve on this, here we look for some extra gain, by means of weighted $L^2$ estimates involving different weights and Sobolev norms.

As in other works on the subject, our analysis is based on the fact that, via a partial Fourier transform in the $y$-variable, the Grushin operator \eqref{eq:2dgrushin} corresponds to a one-parameter family of one-dimensional Schr\"odinger operators
\[
\opH[\tau V] = -\partial_x^2 +\tau V(x) ,
\]
where the parameter $\tau$ ranges in $(0,\infty)$. In particular, if $E_n(\tau V)$ and $\psi_n(\cdot;\tau V)$ denote the eigenvalues and eigenfunctions of $\opH[\tau V]$, then we can write the integral kernel of the operator $\mm(\opL)$ as
\begin{equation}\label{eq:kernel_formula_intro}
\Kern_{\mm(\opL)}(z,z') = \frac{1}{2\pi} \int_\RR \sum_n \mm(E_n(\xi^2 V)) \psi_n(x;\xi^2 V) \psi_n(x';\xi^2 V) \,e^{i\xi (y-y')} \,d\xi,
\end{equation}
where $z=(x,y)$ and $z'=(x',y')$ range in $\RR^2$.
In order to obtain the aforementioned extra gain, the strategy in \cite{martini_grushin_2012,chen_sharp_2013,dallara_martini} is to look for weighted $L^2$ estimates where the weight is essentially a power of $V(x)$ and therefore depends on the variable $x$ only; while this strategy leads to sharp results in some cases, it does not seem effective to treat cases where $V(x)$ is not comparable to a power of $|x|$ or when the degree is large.

To overcome this, here we use a different strategy, already exploited in \cite{martini_sharp_2014} in the case $V(x)=x^2$, and look for estimates with weights involving powers of $y-y'$. From the formula \eqref{eq:kernel_formula_intro}, one immediately sees that multiplication by $y-y'$ corresponds to differentiation in the dual Fourier variable $\xi$. As a consequence, not only do we need estimates for eigenvalues $E_n(\tau V)$ and eigenfunctions $\psi_n(\cdot;\tau V)$ which are suitably uniform in the parameter $\tau$ (this was one of the challenges tackled in \cite{dallara_martini}), but we also require analogous estimates for the $\tau$-derivatives of these objects. Because of the focus on $\tau$-derivatives, this strategy may be compared to that in \cite{jotsaroop_sanjay_thangavelu_2013}, where the case $V(x)=x^2$ is considered, and the operator-valued Fourier multiplier theorem from \cite{weis} is exploited to reduce the problem for $\opL$ to corresponding ones for the $\opH[\tau V]$; however, the strategy of \cite{jotsaroop_sanjay_thangavelu_2013} does not appear to be efficient enough to obtain the optimal condition $s>2/2$ even in the particular case considered there.

It is worth pointing out that, in the cases considered in \cite{martini_grushin_2012,chen_sharp_2013,jotsaroop_sanjay_thangavelu_2013,martini_sharp_2014}, the coefficient $V$ is homogeneous, so the operators $\opH[\tau V]$ for different values of $\tau$ are intertwined one another by suitable scalings; in these cases, uniformity in $\tau$ is automatically verified, and the analysis reduces to that of a single Schr\"odinger operator $\opH[V]$. Moreover, in the special case of the quadratic potential $V(x)=x^2$, the eigenfunctions $\psi_n(\cdot;\tau V)$ are suitably scaled Hermite functions, and known identities for Hermite functions allow one to write
\[
\tau\partial_\tau \psi_n(x;\tau V) = a_n \psi_{n+2}(x;\tau V) + b_n \psi_{n}(x;\tau V) + c_n \psi_{n-2}(x;\tau V)
\]
for certain explicit $\tau$-independent coefficients $a_n,b_n,c_n$; this identity plays a fundamental role in the analysis of \cite{martini_sharp_2014}. For arbitrary functions $V$ satisfying \eqref{eq:2dassumptions}, we have instead an infinite expansion
\[
\tau\partial_\tau \psi_n(x;\tau V) = \sum_{m} \matA_{nm}(\tau V) \psi_m(x;\tau V),
\]
where the coefficients $\matA_{nm}(\tau V)$ are not explicitly known, and need to be estimated in the three parameters $\tau,n,m$. An important reduction is provided by the formula
\begin{equation}\label{eq:ef_der_reduction}
\matA_{nm}(\tau V) = \frac{\matP_{nm}(\tau V)}{E_n(\tau V) - E_m(\tau V)}
\end{equation}
for $n\neq m$, where the $\matP_{nm}(\tau V)$ are the matrix coefficients of the operator of multiplication by the potential $\tau V(x)$ of the Schr\"odinger operator $\opH[\tau V]$:
\[
\tau V(x) \psi_n(x;\tau V) = \sum_{m} \matP_{nm}(\tau V) \psi_m(x;\tau V).
\]
These matrix coefficients are also related to the $\tau$-derivatives of the eigenvalues:
\begin{equation}\label{eq:ev_der_reduction}
\tau\partial_\tau E_n(\tau V) = \matP_{nn}(\tau V).
\end{equation}
Thanks to the formulas \eqref{eq:ef_der_reduction} and \eqref{eq:ev_der_reduction}, the study of $\tau$-derivatives is effectively reduced to the study of quantities that, while depending on $\tau$, refer to a specific operator $\opH[\tau V]$ and do not explicitly involve $\tau$-differentiation. In view of this, estimates for the matrix coefficients $\matP_{nm}(\tau V)$ of the potential, as well as estimates for the eigenvalue gaps $E_n(\tau V)-E_m(\tau V)$, become crucial for our analysis.

A substantial part of this paper is therefore devoted to the proof of suitable bounds for eigenvalues, eigenfunctions, eigenvalue gaps and matrix coefficients of the potential for one-dimensional Schr\"odinger operators $\opH[V]$ whose potential $V$ satisfies the assumptions \eqref{eq:2dassumptions}. Since the rescaled potentials $\tau V$ satisfy the same assumptions as $V$ (with the same implicit constants) for any $\tau \in (0,\infty)$, the desired uniformity in $\tau$ will be an immediate consequence of the ``universality'' of the estimates that we obtain here (that is, the fact that they only depend on the implicit constants in \eqref{eq:2dassumptions}). Some of these estimates are already available in the literature, at least in particular cases, but not always do the existing references provide information about their uniformity, which is crucial here. For other estimates, including those of eigenvalue gaps and matrix coefficients of the potential, we could not find existing references directly addressing them. 

For this reason, here we present an essentially self-contained derivation of the 	universal bounds for one-dimensional Schr\"odinger operators $\opH[V]$ that we require, which may be of independent interest, also due to the generality of our assumptions \eqref{eq:2dassumptions} on the potential. It is worth pointing out that many of the bounds are actually obtained under weaker assumptions, and most of them do not require the H\"older assumption \eqref{eq:2dassumptions_holder} on the first derivative; the H\"older assumption is effectively only used to prove the off-diagonal decay estimate
\[
|\matP_{nm}(V)| \lesssim \frac{E_n(V)}{1+|n-m|^{1+\epsilon}}
\]
for some $\epsilon>0$, in the range where $n$ and $m$ are comparable. Without the H\"older assumption on $V'$ we can obtain the above matrix bound with $\epsilon = 0$, but the lack of summability of that bound appears to prevent us from using it in our proof of the multiplier theorem.

\subsection{Structure of the paper}
In Section \ref{s:abstracttheorem} we show, in the generality of doubling metric spaces and operators satisfying heat kernel bounds, how the multiplier theorem of \cite{hebisch_functional_1995} can be sharpened under the assumption of a suitable $L^1$ estimate.

In Section \ref{s:grushin} Grushin operators on $\RR^2$ are introduced and some of their fundamental properties described, including precise estimates for the associated control distance. In Section \ref{s:redweightedplancherel} it is shown how, in case of Grushin operators, the $L^1$ estimate required for the sharpened multiplier theorem of Section \ref{s:abstracttheorem} follows from suitable ``weighted Plancherel estimates'', which then become the target of our approach.

Sections \ref{s:halflineschroedinger}, \ref{s:halflineregular}, \ref{basic_sec} and \ref{s:matrixbounds} are devoted to the analysis of one-dimensional Schr\"odinger operators. Due to our ``approximate parity'' assumption $V(-x) \simeq V(x)$, a number of results are based on the analysis of second-order differential equations on a half-line, developed in Sections \ref{s:halflineschroedinger} and \ref{s:halflineregular}. Spectral results for operators on the real line, including estimates for eigenvalues and their gaps, are discussed in Section \ref{basic_sec}, while the bounds on the matrix coefficients of the potential are discussed in Section \ref{s:matrixbounds}.

Finally, in Section \ref{s:proofweightedplancherel}, the crucial ``weighted Plancherel estimates'' are proved, yielding the ``extra gain'' that allows us to obtain the required $L^1$ estimate and our optimal multiplier theorem.

Most of the sections start with a summary including the statements of the main results, while the details of the proofs are discussed in the later parts of the sections. 
This structure should allow the reader to skip many technical details on a first reading and quickly access the proof in Section \ref{s:proofweightedplancherel}.

\subsection{Notation}
$\chr_A$ denotes the characteristic function of the set $A$. We set $\Rpos = (0,\infty)$ and $\Rnon = [0,\infty)$. $\NN$ denotes the set of natural numbers (including zero), while $\Npos = \NN \setminus \{0\}$ is the set of the positive integers. For an invertible function $V$, we write $V^\inv$ to denote its compositional inverse. 
For a measurable subset $A \subseteq \RR$ we denote by $|A|$ its Lebesgue measure.
We write $\Kern_T$ to denote the integral kernel of the operator $T$.

\subsection{Acknowledgments}
The second-named author would like to thank Detlef M\"uller and Adam Sikora for several stimulating discussions related to the subject of this work.

\section{A sharpened multiplier theorem}\label{s:abstracttheorem}

We state here a general multiplier theorem in the context of metric measure spaces and nonnegative self-adjoint operators on $L^2$, showing how the general result of \cite{hebisch_functional_1995} can be sharpened under the assumption of a suitable $L^1$ estimate.

\begin{thm}\label{thm:abstract}
Let $(X,\dist,\mu)$ be a metric measure space and $\opL$ be a nonnegative self-adjoint operator on $L^2(X)$. Let $k \in (0,\infty)$, $q \in [2,\infty]$, and $\varsigma \geq 1/q$. Assume that the following conditions hold.
\begin{enumerate}[label=(\alph*)]
\item\label{en:abstract_assdoubling} The doubling condition: for all $r >0$ and $z \in X$,
\[
\mu(B(z,2r)) \lesssim \mu(B(z,r)).
\]
\item\label{en:abstract_assheat} Heat kernel bounds: for all $N>0$, $r>0$ and $z,z' \in X$,
\[
|\Kern_{e^{-r^k \opL}}(z,z')| \lesssim_N \mu(B(z',r))^{-1} (1+\dist(z,z')/r)^{-N}.
\]
\item\label{en:abstract_assL1} $L^1$ estimate: for all $s>\varsigma$ and all continuous $\mm : \RR \to \CC$ with $\supp \mm \subseteq [1/4,1]$,
\[
\sup_{t>0}\|\mm(t \opL) \|_{1 \to 1} \lesssim_s \|\mm\|_{\sobolev{s}{q}}.
\]
\end{enumerate}
Then the following estimates hold.
\begin{enumerate}[label=(\roman*)]
\item\label{en:abstract_cpt} For all $s>\varsigma$, all continuous $\mm : \RR \to \CC$ such that $\supp \mm \subseteq [-1,1]$ and all $p \in [1,\infty]$,
\[
\sup_{t > 0} \| \mm(t\opL) \|_{L^p \to L^p} \lesssim_{p,s} \| \mm \|_{\sobolev{s}{q}}.
\]
\item\label{en:abstract_mh} Let $\eta \in C^\infty((0,\infty))$ be nonzero. For all $s>\varsigma$, all $\mm : \RR \to \CC$ continuous on $(0,\infty)$, and all $p \in (1,\infty)$,
\[
\| \mm(\opL) \|_{L^1 \to L^{1,\infty}} \lesssim \sup_{t\geq 0} \| \mm(t \cdot) \eta \|_{\sobolev{s}{q}}, \qquad \| \mm(\opL) \|_{L^p \to L^p} \lesssim_{p} \sup_{t\geq 0}\| \mm(t\cdot) \eta \|_{\sobolev{s}{q}}.
\]\end{enumerate}
\end{thm}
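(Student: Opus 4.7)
The plan is to follow the framework of Hebisch \cite{hebisch_functional_1995} and Duong--Ouhabaz--Sikora \cite{duong_plancherel-type_2002} for deriving Mihlin--H\"ormander-type theorems from $L^1 \to L^1$ bounds on compactly-supported spectral multipliers, with the twist that the $L^1$ estimate is taken as hypothesis \ref{en:abstract_assL1} rather than derived from a weighted Plancherel argument via the heat kernel bounds (which would force the threshold $s > Q/2$). Hypotheses \ref{en:abstract_assdoubling} and \ref{en:abstract_assheat} still play a crucial role, providing $L^p$ boundedness of the heat semigroup and, more importantly, the Gaussian spatial decay needed for the off-diagonal step in the Calder\'on--Zygmund argument.

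For part \ref{en:abstract_cpt} I would reduce to $p=1$: the $L^2 \to L^2$ bound is trivial via the spectral theorem and $\|\mm\|_\infty \lesssim_s \|\mm\|_{\sobolev{s}{q}}$ (Sobolev embedding, using $\varsigma \geq 1/q$); the $L^\infty \to L^\infty$ bound follows from $L^1 \to L^1$ by self-adjointness; and Riesz--Thorin yields all $p \in [1,\infty]$. For $L^1$, since $\opL \geq 0$ we may take $\supp \mm \subseteq [0,1]$; picking $\psi \in C^\infty_c([1/4,1])$ with $\sum_{j \geq 0} \psi(2^j \lambda) = 1$ on $(0,1]$ and decomposing $\mm = \sum_{j \geq 0} \mm_j$ with $\mm_j(\lambda) = \mm(\lambda)\psi(2^j\lambda)$, rescaling and applying \ref{en:abstract_assL1} gives $\|\mm_j(t\opL)\|_{1 \to 1} \lesssim \|\mm(2^{-j}\cdot)\psi\|_{\sobolev{s}{q}}$ uniformly in $t > 0$; an elementary scaling estimate then extracts a geometric factor $2^{-j\epsilon}$ for some $\epsilon > 0$ from the H\"older continuity of $\mm$ at $0$ (implied by Sobolev embedding, assuming $\mm(0)=0$ without loss of generality, as $\mm(0)\chi_{\{0\}}(\opL)$ is handled via the heat kernel bound on $e^{-\opL}$).

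For part \ref{en:abstract_mh}, decompose $\mm(\opL) = \sum_{j \in \ZZ} T_j$ via a dyadic partition $\tilde\eta \in C^\infty_c([1/2,2])$ with $\sum_j \tilde\eta(2^{-j}\lambda) = 1$ on $(0,\infty)$, setting $T_j = (\mm \cdot \tilde\eta(2^{-j}\cdot))(\opL)$. By part \ref{en:abstract_cpt} after rescaling, each $T_j$ satisfies $\|T_j\|_{p \to p} \lesssim \sup_{t>0}\|\mm(t\cdot)\eta\|_{\sobolev{s}{q}}$ uniformly in $j$, which alone is not summable. The weak type $(1,1)$ estimate is therefore proved directly via a Calder\'on--Zygmund decomposition $f = g + \sum_k b_k$ at height $\alpha$: the good part is dispatched by the $L^2 \to L^2$ bound, while for the bad part one factors
\[
T_j = F_j(\opL) \cdot e^{-r_j^k \opL}, \qquad r_j = 2^{-j/k}, \quad F_j(\lambda) = \mm(\lambda)\tilde\eta(2^{-j}\lambda)\, e^{r_j^k \lambda},
\]
so that the $L^1 \to L^1$ bound for $F_j(\opL)$ from \ref{en:abstract_assL1} (whose right-hand side remains controlled by $\sup_t \|\mm(t\cdot)\eta\|_{\sobolev{s}{q}}$, as the factor $e^{r_j^k \lambda}$ becomes the smooth bounded multiplier $e^\lambda$ after rescaling to $[1/2,2]$) combines with the Gaussian spatial decay of $e^{-r_j^k \opL}$ from \ref{en:abstract_assheat} to yield off-diagonal integrability of $\Kern_{T_j}$ at scale $r_j$. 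The $L^p$ bounds for $p \in (1,\infty)$ follow by Marcinkiewicz interpolation with the $L^2 \to L^2$ bound together with duality.

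The main obstacle is making the double sum, over $j \in \ZZ$ and over spatial dyadic annuli around each Calder\'on--Zygmund ball $B_k$, converge in the off-diagonal part of the bad-term analysis. The uniform-in-$j$ bounds from \ref{en:abstract_assL1} alone do not suffice; one picks an auxiliary smoothness $s_0 \in (\varsigma, s)$ and exploits the gain $\epsilon \defeq s - s_0 > 0$ to interpolate between the $L^1$ bound at smoothness $s_0$ and an $L^\infty$-valued kernel bound at smoothness $s$ (the latter obtained by combining Sobolev embedding with the Gaussian decay of the factor $e^{-r_j^k \opL}$, yielding arbitrary polynomial decay at distances $\gg r_j$). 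Summing the resulting geometric series against the doubling volume growth, both in the annular radius and in $|j|$, then closes the weak $(1,1)$ estimate and completes the proof.
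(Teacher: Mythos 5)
Your overall architecture matches the paper's: reduce part \ref{en:abstract_cpt} to $p=1$ via duality and interpolation, decompose $\mm$ dyadically, use \ref{en:abstract_assL1} on the rescaled pieces with a geometric gain coming from choosing an auxiliary regularity $s_0 \in (\varsigma,s)$, and prove the weak $(1,1)$ bound in part \ref{en:abstract_mh} via a Calder\'on--Zygmund scheme with the heat semigroup $e^{-t^k\opL}$ as approximate identity (the paper packages this step using Duong--McIntosh, but this is the same argument you sketch by hand).

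There is, however, a genuine gap in your treatment of the off-diagonal kernel estimate, which is the crux of part \ref{en:abstract_mh}. You claim that the factorisation $T_j = F_j(\opL)\, e^{-r_j^k\opL}$, the $L^1\to L^1$ bound of $F_j(\opL)$ from \ref{en:abstract_assL1}, and the Gaussian decay of $e^{-r_j^k\opL}$ together "yield off-diagonal integrability of $\Kern_{T_j}$ at scale $r_j$." This does not close: writing $\Kern_{T_j}(z,z')=\int \Kern_{F_j(\opL)}(z,w)\,\Kern_{e^{-r_j^k\opL}}(w,z')\,dw$ and splitting the $w$-integral according to whether $\dist(w,z')$ is large or small, the first regime is indeed controlled by the Gaussian decay together with $\|F_j(\opL)\|_{1\to1}$, but in the second regime one is left needing $\int_{\dist(z,w)>R/2}|\Kern_{F_j(\opL)}(z,w)|\,dz$ to be small, and the $L^1\to L^1$ bound from \ref{en:abstract_assL1} carries no spatial localisation whatsoever. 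The fallback you offer — interpolating the $L^1$ bound at regularity $s_0$ with "an $L^\infty$-valued kernel bound at smoothness $s$ obtained by combining Sobolev embedding with Gaussian decay of the factor $e^{-r_j^k\opL}$" — is the right shape of argument, but the second endpoint cannot be obtained so cheaply. A pointwise bound $|\Kern_{\mm(r^k\opL)}(z,z')|\lesssim \mu(B(z',r))^{-1}\|\mm\|_\infty$ is indeed trivial from sandwiching with $e^{-r^k\opL/2}$, but it carries no decay in $\dist(z,z')/r$: composing two off-diagonal Gaussian factors around a bounded $L^2$ operator destroys spatial localisation, and inserting cutoffs does not rescue it because the middle factor scrambles supports. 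The correct derivation of the weighted bound
\[
\sup_{r>0}\esssup_{z'}\int_X|\Kern_{\mm(r^k\opL)}(z,z')|\,(1+\dist(z,z')/r)^\alpha\,dz\lesssim_{\alpha,\beta}\|\mm\|_{\sobolev{\beta}{\infty}},\qquad \beta>\alpha+Q/2,
\]
from the doubling condition and the heat kernel bounds is a non-trivial Hebisch-type argument — expand $\mm(\lambda)e^{\lambda}$ in a Fourier series, represent $\mm(\opL)$ as an integral of heat propagators at complex times $1-i\ell$, and control the complex-time heat kernel via Phragm\'en--Lindel\"of with polynomial loss in $|\ell|$, which the Sobolev norm of $\mm$ compensates. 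The paper invokes precisely this machinery by citing an existing result of this type, and only then interpolates against \ref{en:abstract_assL1} (which supplies the unweighted $\alpha=0$, $\beta>\varsigma$ endpoint) to obtain the weighted estimate with the reduced threshold $\beta>\alpha+\varsigma$. Your proposal needs to replace the "Sobolev embedding plus Gaussian decay" step with this Hebisch-type weighted estimate; with that fixed, the rest of your outline, including the dyadic and annular double sum in the Calder\'on--Zygmund bad-term analysis, goes through.
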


\begin{rem}\label{rem:abstract}
Under the assumptions \ref{en:abstract_assdoubling} and \ref{en:abstract_assheat}, the remaining assumption \ref{en:abstract_assL1} is automatically satisfied for $q=\infty$ and $\varsigma = Q/2$, where $Q$ is the homogeneous dimension of the doubling space $X$, that is,
\begin{equation}\label{eq:doubling_Q}
\mu(B(z,\lambda r)) \lesssim \lambda^Q \mu(B(z,r))
\end{equation}
for all $z \in X$, $\lambda \geq 1$ and $r >0$ (see \eqref{eq:standard_sobemb_lq} below). This leads to the general multiplier theorem with $L^\infty$ condition $s>Q/2$. The point of the above statement is that, if we can prove the $L^1$ estimate \ref{en:abstract_assL1} for some smaller values of $\varsigma$ and/or $q$, then this improvement automatically transfers to the multiplier theorem.
\end{rem}

The strategy used here should be compared to that in, e.g., \cite{cowling_spectral_2001,duong_plancherel-type_2002}, where the role of assumption \ref{en:abstract_assL1} is played by suitable $L^2$ estimates (``Plancherel-type estimates''); the approach in those works, however, does not appear to yield optimal results for the entire class of Grushin operators considered here. In addition, those works require Gaussian-type heat kernel bounds for $\opL$ (that is, superexponential spatial decay for the heat kernel) or finite propagation speed for the associated wave equation (which implies Gaussian-type bounds \cite{sikora_wave_2004}), while assumption \ref{en:abstract_assheat} only requires polynomial decay (of arbitrary order), as in \cite{hebisch_functional_1995}.

The above result will not be surprising to experts, and the proof is a combination of a number of techniques available in the literature. For the reader's convenience, we include a brief sketch of the proof here.

\begin{proof}[Proof of Theorem \ref{thm:abstract}]
Since $(X,\dist,\mu)$ is doubling by assumption \ref{en:abstract_assdoubling}, it has a homogeneous dimension $Q$ satisfying \eqref{eq:doubling_Q}.
Thanks to the heat kernel bound in assumption \ref{en:abstract_assheat}, we can apply \cite[Theorem 6.1(ii)]{martini_crsphere} to the operator $\opL$ and deduce that
\begin{equation}\label{eq:standard_l1}
\sup_{r>0} \esssup_{z' \in X} \int_X |\Kern_{\mm(r^k \opL)}(z,z')|  \, (1+\dist(z,z')/r)^\alpha \,dz \lesssim_{\alpha,\beta} \|\mm\|_{\sobolev{\beta}{\infty}}.
\end{equation}
for all continuous $\mm : \RR \to \CC$ supported in $[-1,1]$ and all $\alpha \geq 0$ and $\beta > \alpha+Q/2$.
By Sobolev's embedding, this implies that
\begin{equation}\label{eq:standard_sobemb_lq}
\sup_{r>0} \esssup_{z' \in X} \int_X |\Kern_{\mm(r^k \opL)}(z,z')|  \, (1+\dist(z,z')/r)^\alpha \,dz \lesssim_{\alpha,\beta} \|\mm\|_{\sobolev{\beta}{q}}.
\end{equation}
for all continuous $\mm : \RR \to \CC$ supported in $[-1,1]$ and all $\alpha \geq 0$ and $\beta > \alpha+Q/2+1/q$.
On the other hand, the assumption \ref{en:abstract_assL1} can be rewritten as
\[
\sup_{r>0} \esssup_{z' \in X} \int_X |\Kern_{\mm(r^k \opL)}(z,z')| \,dz \lesssim_{s} \|\mm\|_{\sobolev{s}{q}}.
\]
for all $s > \varsigma$ and all continuous $\mm : \RR \to \CC$ supported in $[1/4,1]$. Interpolation of the last two estimates (cf.\ \cite[proof of Lemma 1.2]{mauceri_vectorvalued_1990}) gives
\begin{equation}\label{eq:improved_l1_off0}
\sup_{r>0} \esssup_{z' \in X} \int_X |\Kern_{\mm(r^k \opL)}(z,z')| \, (1+\dist(z,z')/r)^\alpha \,dz \lesssim_{\alpha,\beta} \|\mm\|_{\sobolev{\beta}{q}}.
\end{equation}
for all continuous $\mm : \RR \to \CC$ supported in $[1/4,1]$ and all $\alpha \geq 0$ and $\beta > \alpha+\varsigma$.

To prove part \ref{en:abstract_cpt}, by duality and interpolation it is enough to discuss the case $p=1$. However this estimate readily reduces to the case $\alpha=0$ of the estimates \eqref{eq:standard_l1} and \eqref{eq:improved_l1_off0} via a dyadic decomposition (cf.\ \cite[eqs.\ (7.10)-(7.12)]{casarino_ultrasphericalgrushin}).

As for part \ref{en:abstract_mh}, again by duality and interpolation we only need to prove the weak type $(1,1)$ bound. For this, we can employ the boundedness result for singular integral operators in \cite[Theorem 1]{duong_mcintosh_1999}. Indeed, thanks to the assumption \ref{en:abstract_assheat}, we can use the heat propagator $e^{-t^k \opL}$ as the ``approximate identity'' $A_t$ in \cite[Theorem 1]{duong_mcintosh_1999}. Moreover, from \eqref{eq:improved_l1_off0} we obtain that for all $s > \varsigma$ there exists $\epsilon>0$ such that
\[
\esssup_{z' \in X} \int_{X \setminus B(z',t)} |\Kern_{\mm(\opL) (1-A_t)}(z,z')| \,dz \lesssim_{s} \frac{\min\{1,(Rt)^k\}}{(1+Rt)^{\epsilon}} \|\mm(R^{k} \cdot)\|_{\sobolev{s}{q}} 
\]
for all $R,t>0$ and all continuous $\mm : \RR \to \CC$ supported in $[R^k/4,R^k]$. Now, for an arbitrary $\mm : \RR \to \CC$ continuous on $(0,\infty)$, a dyadic decomposition (cf.\ \cite[eqs.\ (4.18)-(4.19)]{duong_plancherel-type_2002}) finally yields the required off-diagonal bound for $\Kern_{\mm(\opL)}$ \cite[eq.\ (7)]{duong_mcintosh_1999}.
\end{proof}

\section{Grushin operators and their geometry}\label{s:grushin}

\subsection{Summary of the results}

Let $V : \RR \to \Rnon$ be continuous and not identically zero.
Let $\opL$ be the Grushin operator on $\RR^2$ associated to $V$, that is, the unbounded second-order differential operator defined on the domain $C^\infty_c(\RR^2)$ by
\begin{equation}\label{eq:grushin_def}
\opL=-\partial_x^2-V(x)\partial_y^2.
\end{equation}
Of course, $\opL : C^\infty_c(\RR^2) \to L^2(\RR^2)$. 

\begin{prp}[Essential self-adjointness]\label{prp:grushin_selfadj}
The Grushin operator $\opL$ just defined is essentially self-adjoint on $L^2(\RR^2)$.
\end{prp}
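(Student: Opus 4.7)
The plan is to use the partial Fourier transform in the $y$-variable to reduce the question to essential self-adjointness of the one-parameter family of one-dimensional Schr\"odinger operators $H_\eta := -\partial_x^2 + \eta^2 V(x)$ on $L^2(\RR)$, which is a classical fact for nonnegative, locally bounded potentials.

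First, integration by parts on $C^\infty_c(\RR^2)$ yields
$\langle \opL \phi, \phi \rangle = \|\partial_x \phi\|_2^2 + \|\sqrt{V}\,\partial_y \phi\|_2^2 \geq 0$,
so $\opL$ is symmetric and nonnegative. A standard criterion for essential self-adjointness of a nonnegative symmetric operator then reduces the problem to showing that any $u \in L^2(\RR^2)$ with $\opL^* u = -u$ must vanish. Unravelling the definition of $\opL^*$ (noting that for continuous $V$ the pairing $\langle V \partial_y^2 \phi, u\rangle$ against $\phi \in C^\infty_c(\RR^2)$ makes sense, so $V \partial_y^2 u$ is well-defined distributionally), this amounts to the distributional equation $-\partial_x^2 u - V(x)\partial_y^2 u + u = 0$ on $\RR^2$.

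Now apply the partial Fourier transform $\mathcal{F}_y$, which is unitary on $L^2(\RR^2)$ and commutes with $\partial_x$ and with multiplication by the $x$-dependent function $V$. The equation becomes
\[
-\partial_x^2 \hat{u} + \bigl(\eta^2 V(x) + 1\bigr)\hat{u} = 0 \quad \text{in } \mathcal{D}'(\RR^2),
\]
where $\hat{u} = \mathcal{F}_y u \in L^2(\RR^2)$ and $\eta$ is the dual variable of $y$. Pairing with tensor-product test functions $\phi(x)\psi(\eta)$ and invoking Fubini (so that $\hat{u}(\cdot,\eta) \in L^2(\RR_x)$ for almost every $\eta$), a routine countability argument gives that, for a.e.\ $\eta \in \RR$, the slice $v_\eta := \hat{u}(\cdot,\eta)$ is an $L^2(\RR_x)$ distributional solution of the ODE $-v_\eta'' + (\eta^2 V(x) + 1) v_\eta = 0$.

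The final step is a classical one-dimensional input: since $V \geq 0$ is continuous, the potential $\eta^2 V$ is nonnegative and locally bounded, and hence $H_\eta$ is essentially self-adjoint on $C^\infty_c(\RR)$ in $L^2(\RR)$; in particular $\ker((H_\eta + I)^*) = \{0\}$, forcing $v_\eta = 0$ for a.e.\ $\eta$. By unitarity of $\mathcal{F}_y$ we conclude $u = 0$. I expect the only mildly delicate point to be the slicing step, i.e.\ the careful passage from the distributional identity on $\RR^2$ to the distributional ODE on $\RR_x$ for a.e.\ $\eta$; this is routine but needs a brief check with separable test functions, whereas everything else is a direct application of the one-dimensional theory.
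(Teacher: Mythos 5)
Your proof is correct and takes essentially the same approach as the paper: both reduce essential self-adjointness of $\opL$, via the partial Fourier transform in $y$ and slicing over tensor-product test functions, to the classical essential self-adjointness of the one-dimensional Schr\"odinger operators $\opH[\xi^2 V]$ on $L^2(\RR)$. The only (cosmetic) difference is your use of the nonnegativity criterion $\ker(\opL^* + I) = \{0\}$, whereas the paper invokes the general criterion $\ker(\opL^* - \bar\lambda) = \{0\}$ for $\lambda \in \CC\setminus\RR$; both feed into the same one-dimensional input.
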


In light of the above result, $\opL$ has a unique self-adjoint extension, which we will still denote by $\opL$.

As $\opL$ is a nonnegative second-order differential operator on $\RR^2$, its symbol, thought of as a quadratic form on the cotangent space, defines a degenerate Riemannian cometric on $\RR^2$, which in turn induces a distance function $\dist$ on $\RR^2$ (see Section \ref{ss:controldistance} below for details), also known as the \emph{control distance} for $\opL$.

An important property relating the operator $\opL$ and the control distance $\dist$ is the \emph{finite propagation speed} for solutions of the wave equation associated to $\opL$.

\begin{prp}[Finite propagation speed]\label{prp:fps}
For all $t \in \RR$ and $f \in L^2(\RR^2)$,
\[
\supp \cos(t \sqrt{\opL}) f \subseteq \{ z \in \RR^2 \tc \dist(z,\supp f) \leq |t| \}.
\]
\end{prp}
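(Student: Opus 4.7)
The plan is to deduce finite propagation speed from a standard energy estimate for the wave equation $\partial_s^2 u + \opL u = 0$, adapted to the degenerate cometric structure of $\opL$. By a standard density argument (approximating $f \in L^2(\RR^2)$ in $L^2$-norm by smooth compactly supported functions with support in a shrinking $\dist$-neighborhood of $\supp f$), it suffices to treat $f \in C^\infty_c(\RR^2)$, in which case $u(s,\cdot) := \cos(s\sqrt{\opL}) f$ lies in $\mathrm{Dom}(\opL)$ and solves the wave equation classically in $s$ with initial conditions $u(0) = f$ and $\partial_s u(0) = 0$. It then suffices to prove the local statement: whenever $z_0 \in \RR^2$ and $0 < r < \dist(z_0, \supp f)$, the function $u(s, \cdot)$ vanishes on an open neighborhood of $z_0$ for every $s \in [0, r]$; the case $t<0$ then follows from the evenness of cosine.

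The crux is a local energy estimate on the shrinking backward cone with apex $(r, z_0)$. Set $\rho(z) := \dist(z, z_0)$. By the very definition of $\dist$ as the sub-Riemannian distance associated to the degenerate cometric $(\xi_x, \xi_y) \mapsto \xi_x^2 + V(x)\xi_y^2$ dual to the symbol of $\opL$ (to be developed in Section~\ref{ss:controldistance}), $\rho$ is $1$-Lipschitz with respect to $\dist$ and satisfies the eikonal-type inequality
\[
|\partial_x \rho(z)|^2 + V(x)|\partial_y \rho(z)|^2 \leq 1 \quad \text{for a.e.\ } z \in \RR^2.
\]
For $s \in [0, r]$, let $\Omega_s := \{z : \rho(z) < r - s\}$, and consider the local energy
\[
E(s) := \frac{1}{2} \int_{\Omega_s} \bigl( |\partial_s u|^2 + |\partial_x u|^2 + V(x)|\partial_y u|^2 \bigr)(s, z)\,dz.
\]
Differentiating $E$ in $s$: the bulk contribution from differentiating the integrand vanishes after one integration by parts thanks to the wave equation $\partial_s^2 u = -\opL u$, leaving only a Neumann-type flux on $\partial \Omega_s$. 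Combined with the boundary contribution coming from the shrinking of $\Omega_s$ itself, and using Cauchy--Schwarz on the flux together with the eikonal inequality, this yields $E'(s) \leq 0$. Since $f \equiv 0$ on $\Omega_0 \subseteq B_\dist(z_0, r)$ and $\partial_s u(0) \equiv 0$, we have $E(0) = 0$, whence $E(s) \equiv 0$ on $[0, r]$; in particular $u(s, \cdot) \equiv 0$ on the open set $\Omega_s$, which contains $z_0$.

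The main technical obstacle is the low regularity of $\rho$: being only Lipschitz (indeed, typically non-smooth at the cut locus and at $z_0$ itself), the sub-level sets $\Omega_s$ need not have $C^1$ boundary, so the integration-by-parts steps in the energy computation are only formal. The plan is to overcome this by running the entire energy argument with $\rho$ replaced by a smooth approximation $\rho_\epsilon$ obtained by Euclidean mollification, which by Jensen's inequality and the continuity of $V$ satisfies $|\partial_x \rho_\epsilon|^2 + V(x)|\partial_y \rho_\epsilon|^2 \leq 1 + o_\epsilon(1)$ locally uniformly in $z$; the $C^\infty$ sub-level sets $\{\rho_\epsilon < r - s\}$ then have smooth boundary for a.e.\ level value by Sard's lemma, so the estimate can be carried out rigorously with $\rho_\epsilon$ in place of $\rho$, and one concludes by letting $\epsilon \to 0$ and choosing $r$ arbitrarily close to $\dist(z_0, \supp f)$.
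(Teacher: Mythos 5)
Your approach — a direct local energy estimate on shrinking backward cones — is genuinely different from the paper's, which first establishes the characterisation of $\dist$ in terms of $\dist$-Lipschitz test functions (Proposition~\ref{prp:distance_gradient_char}), shows that the quadratic form \eqref{eq:dirichlet} is a strong local regular Dirichlet form, invokes \cite[Proposition 4.1]{robinson_analysis_2008} to obtain Davies--Gaffney estimates, and then uses the known equivalence between Davies--Gaffney estimates and finite propagation speed \cite{sikora_wave_2004,coulhon_sikora_2008}. The paper explicitly notes that direct wave-equation arguments (as in \cite{melrose1986,cowling_martini_2013}) are known for \emph{smooth} $V$; the Dirichlet-form route is chosen precisely because $V$ is only assumed continuous. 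Your blueprint would buy self-containedness, but as written it has a genuine gap at the degenerate set $\{x=0\}$.

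The gap is in the mollification step. You claim that Euclidean mollification $\rho_\epsilon = \rho * \phi_\epsilon$ satisfies $|\partial_x\rho_\epsilon|^2 + V(x)|\partial_y\rho_\epsilon|^2 \leq 1 + o_\epsilon(1)$ \emph{locally uniformly}. After Jensen, the error term is $\int \big(V(x) - V(x-w_1)\big)\,|\partial_y\rho(z-w)|^2\,\phi_\epsilon(w)\,dw$, and the only a.e.\ control available on $|\partial_y\rho|^2$ from the eikonal inequality is $|\partial_y\rho|^2 \leq 1/V$. This bounds the error by quantities like $\sup_{|h|\leq\epsilon}\big|V(x)/V(x-h) - 1\big|$, which does \emph{not} tend to $0$ uniformly on compact sets meeting $\{x=0\}$: for a merely continuous $V$ vanishing at the origin (e.g.\ $V$ vanishing to high or infinite order), the ratio $V(x)/V(x')$ can be arbitrarily large for $|x-x'| < \epsilon$ with $x,x'$ both small. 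A related problem underlies your preliminary claim that $\rho = \dist(\cdot,z_0)$ ``satisfies the eikonal-type inequality a.e.'': since $|\partial_y\rho|$ can be as large as $V(x)^{-1/2}$, the function $\rho$ is $\dist$-Lipschitz but need \emph{not} be Euclidean-Lipschitz (hence not in $W^{1,\infty}_\loc$) near $\{x=0\}$, so Rademacher and the a.e.\ eikonal inequality cannot be invoked for $\rho$ directly. (The paper's Proposition~\ref{prp:distance_gradient_char} is proved via the regularised distances $\dist_\epsilon$ associated with $V+\epsilon$, which \emph{are} locally Euclidean bi-Lipschitz, precisely to avoid this issue.)

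Both problems would disappear if you first replaced $V$ by $V+\eta$ with $\eta>0$ (uniformly elliptic), ran your energy argument there (the ratio $\big(V(x)+\eta\big)/\big(V(x')+\eta\big)$ \emph{does} tend to $1$ uniformly on compacts as $|x-x'|\to 0$, so the mollification step is rigorous), obtained finite propagation speed for $\opL_\eta$ at the distance $\dist_\eta$, and then let $\eta \to 0^+$ using $\dist = \sup_\eta \dist_\eta$ (established in the proof of Proposition~\ref{prp:distance_gradient_char}) together with strong resolvent convergence $\opL_\eta \to \opL$. Without this intermediate regularisation, the step you flag as the ``main technical obstacle'' is not merely a matter of smoothing level sets via Sard; the mollified eikonal inequality itself fails where $V$ degenerates, and the argument does not close.
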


For the next results, we assume
that $V : \RR \to \Rnon$ is continuous, not identically zero, and satisfies the estimates
\begin{equation}\label{eq:V_doubling}
V(-x) \simeq V(x) \leq V(\lambda x) \lesssim \lambda^D V(x)
\end{equation}
for some $D > 0$ and all $x \in \RR$ and $\lambda \geq 1$.

Under the assumption \eqref{eq:V_doubling}, we can give a precise estimate of the distance $\dist$ associated to $\opL$. To this purpose, we introduce the auxiliary function $U : \Rnon \to \Rnon$, defined by
\begin{equation}\label{eq:U_def}
U(t) = |\{ x \in \RR \tc |x|V(x)^{1/2} \leq t \}| .
\end{equation}
We point out that the implicit constants in the estimates below may depend on $D$ and the implicit constants in \eqref{eq:V_doubling}.

\begin{prp}[Distance and volume estimates]\label{prp:distance}
Assume that $V$ satisfies \eqref{eq:V_doubling} for some $D > 0$. Then
\begin{equation}\label{eq:distance}
\dist(z,z') \simeq |x-x'| + \min \left\{ \frac{|y-y'|}{\max\{V(x),V(x')\}^{1/2}} , U(|y-y'|) \right\}.
\end{equation}
for all $z=(x,y),z'=(x',y') \in \RR^2$, where $U$ is as in \eqref{eq:U_def}.
In particular, the volume $\Vol(z,r)$ of the ball centred at $z =(x,y) \in \RR^2$ and radius $r \geq 0$ satisfies
\begin{equation}\label{eq:volume}
\Vol(z,r) \simeq r^2 \max\{V(r),V(x)\}^{1/2}.
\end{equation}
Consequently
\begin{equation}\label{eq:vol_doubling}
\Vol(z,\lambda r) \lesssim \lambda^{Q} \Vol(z,r)
\end{equation}
for all $r \geq 0$, $z \in \RR^2$ and $\lambda \geq 1$, where
\begin{equation}\label{eq:homdim}
Q \defeq 2+D/2.
\end{equation}
\end{prp}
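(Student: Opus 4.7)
The plan is to first establish the distance estimate \eqref{eq:distance}, then deduce the volume bound \eqref{eq:volume} by direct integration, and finally obtain the doubling bound \eqref{eq:vol_doubling} from the growth condition on $V$. Throughout I use the characterization of $\dist$ as the sub-Riemannian distance associated with the horizontal vector fields $\partial_x$ and $V(x)^{1/2}\partial_y$, so that an absolutely continuous horizontal curve $\gamma=(x,y):[0,1]\to\RR^2$ satisfies $\dot y(t)=b(t)V(x(t))^{1/2}$ for some measurable $b$, with length $L(\gamma)=\int_0^1(|\dot x(t)|^2+b(t)^2)^{1/2}\,dt$.

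For the upper bound on $\dist(z,z')$ I would exhibit two competing families of connecting curves. The first is the two-segment path moving first in $x$ from $(x,y)$ to $(x',y)$ (length $|x-x'|$) and then in $y$ from $(x',y)$ to $(x',y')$ (length $|y-y'|/V(x')^{1/2}$); reversing the order gives the analogous bound with $V(x)$ in place of $V(x')$, and taking the minimum of the two yields $\dist(z,z')\lesssim|x-x'|+|y-y'|/\max\{V(x),V(x')\}^{1/2}$. The second is a detour path that travels in $x$ from $(x,y)$ to a point $(\xi,y)$ with $|\xi|\simeq U(|y-y'|)$ chosen so that $|\xi|V(\xi)^{1/2}\simeq|y-y'|$, then in $y$ at that $x$-level (length $\simeq|\xi|$), and finally in $x$ down to $(x',y')$. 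Its total length is $\lesssim|x|+|x'|+|\xi|$; using \eqref{eq:V_doubling} and the definition of $U$, one checks that this quantity is dominated by the right-hand side of \eqref{eq:distance} precisely in the regime where $U(|y-y'|)$ is the smaller of the two terms inside the minimum.

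For the matching lower bound, consider any horizontal curve $\gamma$ from $z$ to $z'$ of length $L$. Projection onto the $x$-axis immediately gives $|x-x'|\leq L$. Setting $M=\max_{t\in[0,1]}V(x(t))^{1/2}$, I estimate $|y-y'|\leq\int_0^1|b(t)|V(x(t))^{1/2}\,dt\leq ML$, while the total variation of the $x$-component is at most $L$, so that $\max_t|x(t)|\leq\min\{|x|,|x'|\}+L$ and hence, using the monotonicity of $V$ in $|x|$ built into \eqref{eq:V_doubling}, $M^2\lesssim V(\min\{|x|,|x'|\}+L)$. When $L\lesssim\max\{|x|,|x'|\}$ the doubling condition gives $V(\min\{|x|,|x'|\}+L)\lesssim\max\{V(x),V(x')\}$, yielding $L\gtrsim|y-y'|/\max\{V(x),V(x')\}^{1/2}$; when $L\gtrsim\max\{|x|,|x'|\}$ one instead gets $V(\min\{|x|,|x'|\}+L)\lesssim V(L)$, so that $LV(L)^{1/2}\gtrsim|y-y'|$, which by the definition of $U$ translates into $L\gtrsim U(|y-y'|)$. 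Either way $L$ dominates the minimum on the right-hand side of \eqref{eq:distance}, and the hardest part will be carrying out this case split cleanly using only the very weak regularity of $V$ encoded in \eqref{eq:V_doubling}.

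Finally, the volume estimate \eqref{eq:volume} follows by integrating the distance estimate against Lebesgue measure. The ball $B(z,r)$ sits inside the strip $\{|x'-x|\lesssim r\}$ and, for $x'$ in this strip, is characterized by $|y-y'|$ being small enough that the minimum in \eqref{eq:distance} is $\lesssim r$. When $|x|\gtrsim r$ the dominant constraint is $|y-y'|\lesssim rV(x)^{1/2}$, yielding volume $\simeq r^2V(x)^{1/2}$; when $|x|\lesssim r$ the constraint $U(|y-y'|)\lesssim r$ forces $|y-y'|\lesssim rV(r)^{1/2}$ via the definition of $U$, yielding volume $\simeq r\cdot rV(r)^{1/2}$. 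In both cases one recovers \eqref{eq:volume}, and the doubling bound \eqref{eq:vol_doubling} with $Q=2+D/2$ is then immediate from \eqref{eq:V_doubling}: replacing $r$ by $\lambda r$ multiplies $r^2$ by $\lambda^2$ and the factor $\max\{V(r),V(x)\}^{1/2}$ by at most $\lambda^{D/2}$.
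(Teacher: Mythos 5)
Your proposal is correct and follows essentially the same route as the paper: concatenated axis-parallel paths (including a detour to $|\xi|\simeq U(|y-y'|)$) for the upper bound, projection estimates plus a case split for the lower bound, comparison of the ball to a box for the volume, and a direct doubling computation. The one organizational difference worth noting is in the lower bound: the paper fixes the point $x_*=\gamma_1(t_*)$ where $|\gamma_1|$ is maximal and splits on whether $|x_*|\le 2\max\{|x|,|x'|\}$, whereas you work directly with $M=\max_t V(x(t))^{1/2}$ together with the variation bound $\max_t|x(t)|\le\min\{|x|,|x'|\}+L$ and split on whether $L\lesssim\max\{|x|,|x'|\}$. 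Both splits lead to the same two inequalities $L\gtrsim|y-y'|/\max\{V(x),V(x')\}^{1/2}$ and $L\gtrsim U(|y-y'|)$, and both rely only on the approximate monotonicity and doubling of $V$ encoded in \eqref{eq:V_doubling}; your version is, if anything, marginally more self-contained since it does not need to isolate a single extremal point of the curve. The remaining details you flag as ``the hardest part'' do go through as you sketch them, once the cases are made precise with explicit constants.
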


The estimate \eqref{eq:vol_doubling} tells us in particular that $\RR^2$ with the control distance $\dist$ and the Lebesgue measure is a doubling metric measure space of homogeneous dimension $Q$ given by \eqref{eq:homdim} (see Remark \ref{rem:abstract}).

Define the family of weights $\weight_r : \RR^2 \times \RR^2 \to \Rnon$ by
\begin{equation}\label{eq:weight}
\weight_r(z,z') = \frac{|y-y'|}{r \max\{V(r),V(x')\}^{1/2}} 
\end{equation}
for all $r > 0$ and $z=(x,y),z'=(x',y') \in \RR^2$. Effectively these are just the weight $|y-y'|$, suitably scaled in accordance with the volume growth of balls described in Proposition \ref{prp:distance}.

A crucial property of these weights is the following integrability property, relating them to the distance $\dist$. 
Again, the implicit constants in the estimates below may depend on $D$ and the implicit constants in \eqref{eq:V_doubling}.

\begin{prp}[Weight estimates]\label{prp:weights}
Under the same assumptions as in Proposition \ref{prp:distance}, 
the weights $\weight_r$ satisfy the estimate
\begin{equation}\label{eq:weight_int}
\int_{\RR^2} (1+\dist(z,z')/r)^{-\alpha} (1+\weight_r(z,z'))^{-\beta}  \,dz \lesssim_{\alpha,\beta} \Vol(z',r)
\end{equation}
for all $z' \in \RR^2$ and $r > 0$, whenever $\beta \in [0,1)$ and $\alpha > 1+(1+D/2)(1-\beta)$.
\end{prp}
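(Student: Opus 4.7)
The plan is to perform a dyadic decomposition of the integral with respect to both the rescaled distance $\dist(z, z')/r$ and the weight $\weight_r(z, z')$, and to bound each dyadic piece by combining the volume estimate of Proposition~\ref{prp:distance} with a complementary ``cylindrical'' bound that exploits the elementary inequality $\dist(z, z') \gtrsim |x - x'|$ coming from \eqref{eq:distance}.

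Set $R \defeq \max\{V(r), V(x')\}^{1/2}$, so that \eqref{eq:volume} gives $\Vol(z', r) \simeq r^2 R$ and $\weight_r(z, z') = |y - y'|/(rR)$. For each $k, j \ge 0$ I will introduce
\[
S_{k, j} = \{z \in \RR^2 \tc \dist(z, z') \le 2^{k+1} r,\ |y - y'| \le 2^{j+1} rR\},
\]
so that routine dyadic bookkeeping dominates the left-hand side of \eqref{eq:weight_int} by $\sum_{k, j \ge 0} 2^{-k\alpha - j\beta} |S_{k, j}|$ and the task reduces to estimating these measures. I intend to bound $|S_{k,j}|$ in two complementary ways. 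First, since $\dist(z, z') \gtrsim |x - x'|$, the set $S_{k, j}$ lies inside the cylinder $\{|x - x'| \lesssim 2^k r,\ |y - y'| \le 2^{j+1} rR\}$, whose measure is $\lesssim 2^{k+j} r^2 R \simeq 2^{k+j} \Vol(z', r)$. Second, $S_{k, j} \subseteq B(z', 2^{k+1} r)$, and \eqref{eq:volume} gives $|B(z', 2^{k+1} r)| \simeq 2^{2k} r^2 R_k$ with $R_k \defeq \max\{V(2^{k+1}r), V(x')\}^{1/2}$.

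The second bound is used mainly to restrict the admissible range of $(k, j)$: using \eqref{eq:distance} I will argue that $S_{k, j}$ is empty unless the slab $|y - y'| \le 2^{j+1} rR$ meets a ball whose ``$y$-extent'' is $\lesssim 2^k r R_k$, which forces $2^j R \lesssim 2^k R_k$. A case analysis in $|x'| \lessgtr r$ combined with the one-sided doubling estimate in \eqref{eq:V_doubling} yields $R_k/R \lesssim 2^{kD/2}$ in every configuration, so that the admissible pairs lie in the region $j \le k(1+D/2) + O(1)$. Feeding the cylindrical bound into the dyadic sum and restricting to this range,
\[
\sum_{k, j} 2^{-k\alpha - j\beta} |S_{k, j}| \lesssim \Vol(z', r) \sum_{k \ge 0} 2^{k(1-\alpha)} \sum_{0 \le j \lesssim k(1+D/2)} 2^{j(1-\beta)}.
\]
Since $\beta < 1$ the inner geometric sum is dominated by its last term $\simeq 2^{k(1+D/2)(1-\beta)}$, so the whole double sum converges precisely when $\alpha > 1 + (1+D/2)(1-\beta)$, which is the hypothesis.

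The main technical obstacle is the verification of the nonemptiness constraint, i.e.\ the uniform bound $R_k/R \lesssim 2^{kD/2}$ and the corresponding control on the $y$-extent of balls; both require a careful case distinction driven by the position of $x'$ relative to $r$ and on the doubling estimate \eqref{eq:V_doubling}. Everything else is a straightforward consequence of the geometric information already packaged in Proposition~\ref{prp:distance}.
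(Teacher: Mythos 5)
Your argument is correct, but it takes a genuinely different route from the paper's. The paper rewrites $\dist \simeq \min\{\dist_1, \dist_2\}$, where $\dist_1$ involves $\max\{V(x),V(x')\}$ and $\dist_2$ involves the auxiliary function $U$, treats the two cases separately, and in each case integrates directly by splitting $\alpha = \alpha'+\alpha''$ judiciously and exploiting the doubling of $V$ or of $U$ respectively. You instead perform a double dyadic decomposition in $\dist/r$ and in $\weight_r$, bound each dyadic piece by a cylinder estimate using only $\dist \gtrsim |x-x'|$, and close the argument with a geometric counting of the admissible $(k,j)$ range: the annulus at scale $(k,j)$ is nonempty only if $2^j R \lesssim 2^k R_k$, which via doubling forces $j \lesssim k(1+D/2)$. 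Summing then reproduces exactly the threshold $\alpha > 1 + (1+D/2)(1-\beta)$. This packages the geometry into a single admissibility constraint rather than two separate integrations, and is arguably more transparent about where the threshold comes from: it is the competition between the cylindrical volume growth $2^{k+j}$ and the constraint $j\lesssim k(1+D/2)$. One phrasing quibble: the sets $S_{k,j}$ as you define them are nested and never empty (they all contain $z'$), so the "emptiness" assertion should be made for the dyadic annuli $A_{k,j}$ where $\dist/r \approx 2^k$ and $\weight_r \approx 2^j$, not for $S_{k,j}$; this matters because the cylinder bound $|S_{k,j}| \lesssim 2^{k+j}\Vol(z',r)$ does not decay in $j$, so without the cutoff in $j$ the sum over $j$ would diverge for $\beta$ near $0$. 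With that reading your argument is complete. Also, the case split in $|x'|$ versus $r$ is unnecessary for the bound $R_k/R \lesssim 2^{kD/2}$: by \eqref{eq:V_doubling}, $R_k^2 = \max\{V(2^{k+1}r),V(x')\} \lesssim 2^{(k+1)D}\max\{V(r),V(x')\} = 2^{(k+1)D}R^2$, in one line.
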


We note that, as $\beta$ grows from $0$ to $1$, the condition on $\alpha$ in the previous lemma goes from $\alpha > Q$ to $\alpha>1$, and correspondingly the lower bound to $\alpha+\beta$ goes from $Q$ to $2$. This indicates how the introduction of these weights is instrumental in pushing down the smoothness condition in the multiplier theorem from half the homogeneous dimension to half the topological dimension.

\begin{rem}
All the above results are already known under different assumptions on $V$. For example, when $V$ is smooth, the finite propagation speed property is contained in the results of \cite{melrose1986}, and also of \cite{cowling_martini_2013}. Moreover both Propositions \ref{prp:fps} and \ref{prp:distance} are proved in \cite{robinson_analysis_2008} in the case $V$ is comparable to a function of the form \eqref{eq:twopowerlaw}. Finally Proposition \ref{prp:weights} is proved in \cite{martini_sharp_2014} in the case $V(x) = x^2$.
\end{rem}

The rest of the section is devoted to the proofs of the above results.

\subsection{Essential self-adjointness}

Here we prove Proposition \ref{prp:grushin_selfadj}, that is, the essential self-adjointness of $\opL : C_c^\infty(\RR^2) \to L^2(\RR^2)$. The proof presented here works more generally for any nonnegative, locally square-integrable potential $V$ on $\RR$.

By a well-known criterion \cite[Corollary to Theorem VIII.3]{reed-simon-I}, it is enough to check that $\opL^*-\overline{\lambda}$ is injective when $\lambda\in \CC\setminus \RR$. Here $\opL^*$ denotes the Hilbert space adjoint. In other words, we have to show that a function $f\in L^2(\RR^2)$ such that 
\begin{equation*}
\int_{\RR^2} f(x,y) \, \overline{\left(-\partial_x^2-V(x)\partial_y^2-\lambda\right)\varphi(x,y)} \,dx\,dy = 0   \qquad\forall \varphi\in C^\infty_c(\RR^2),
\end{equation*} 
must necessarily vanish. The choice $\varphi(x,y)=u(x)v(y)$ for $u,v\in C^\infty_c(\RR)$ gives
\[\begin{split}
0 
&= \int_{\RR^2} f(x,y) \, \overline{\left(-\partial_x^2-V(x)\partial_y^2-\lambda\right)\varphi(x,y)} \,dx \,dy \\
&= \int_{\RR} \left( \int_{\RR} \widetilde{f}(x,\xi) \, \overline{(\opH[\xi^2 V]-\lambda) u(x)} \,dx \right) \overline{\widehat{v}(\xi)} \,d\xi,
\end{split}\]
where $\opH[\xi^2 V] \defeq -\partial_x^2+\xi^2 V(x)$ is the Schr\"odinger operator with potential $\xi^2 V$ on $\RR$, while $\hat v$ denotes the Fourier transform of $v$ (with a suitable normalisation) and $\widetilde{f}$ is the partial Fourier transform of $f$ in the variable $y$.
Hence, for all $u \in C^\infty_c(\RR)$,
the locally integrable function
\[
\xi\mapsto \int_{\RR} \widetilde{f}(x,\xi) \, \overline{(\opH[\xi^2 V]-\lambda)u(x)} \,dx
\]
vanishes as a temperate distribution, and consequently it vanishes almost everywhere, that is,
\begin{equation*}
\int_{\RR} \widetilde{f}(x,\xi) \, \overline{(\opH[\xi^2 V]-\lambda) u(x)} \,dx = 0 \qquad\forall u\in C^\infty_c(\RR), \ \text{for a.e. } \xi\in \RR.
\end{equation*}
Since the Schr\"odinger operator $\opH[\xi^2 V]$ is essentially self-adjoint on $L^2(\RR)$ for every $\xi\in\RR$ \cite[Theorem X.28]{reed-simon-II}, $\widetilde{f}(\cdot,\xi)$ must vanish as an element of $L^2(\RR)$ for almost every $\xi$. This immediately implies that $f=0$ almost everywhere.

\subsection{The control distance}\label{ss:controldistance}

Let $V : \RR \to \Rnon$ be continuous and not identically zero.
The Grushin operator $\opL$ associated to $V$ defined in \eqref{eq:grushin_def} is a second-order differential operator on $\RR^2$, whose symbol is a nonnegative quadratic form on the cotangent space of $\RR^2$. This quadratic form can be thought of as a degenerate Riemannian cometric on $\RR^2$:
\[
|(\xi,\eta)|^*_{(x,y)} = \sqrt{|\xi|^2 + V(x) |\eta|^2}
\]
for $(\xi,\eta) \in \RR^2  \cong T^*_{(x,y)} \RR^2$. Polarisation gives the degenerate Riemannian metric
\[
|v|_{(x,y)} = \sup \{ \omega(v) \tc \omega \in \RR^2, \, |\omega|^*_{(x,y)} \leq 1 \}
\]
for all $v \in \RR^2 \cong T_{(x,y)} \RR^2$. This in turn induces a distance $\dist$ on $\RR^2$, by setting
\[
\dist(z,z') = \inf \{ r > 0 \tc \exists \gamma \in AC([0,r]; \RR^2), \, |\gamma'|_{\gamma} \leq 1 \text{ a.e.}, \gamma(0) = z, \, \gamma(r) = z'\}
\]
for all $z,z' \in \RR^2$. Here $AC([0,r]; \RR^2)$ is the set of all absolutely continuous curves $\gamma : [0,r] \to \RR^2$; those curves that satisfy $|\gamma'|_{\gamma} \leq 1$ a.e.\ are called \emph{subunit curves}.

For more details on this construction, we refer the reader to \cite[Section 4]{cowling_martini_2013}, where a more general theory is developed for continuous cometrics on manifolds. In particular, we recall that the topology induced by $\dist$ is as least as fine as the Euclidean topology, and that a curve $\gamma : [0,T] \to \RR^2$ is subunit if and only if it is $1$-Lipschitz with respect to $\dist$ \cite[Proposition 4.6]{cowling_martini_2013}.

We remark that, since $V$ is not identically zero, any pair of points $z,z' \in \RR^2$ can be joined by a subunit curve (made of suitable line segments parallel to the coordinate axes), so $\dist(z,z')$ is finite. Moreover, if $\gamma : [0,r]  \to \RR^2$ is subunit, then
\begin{equation}\label{eq:subunit_bound}
\gamma([0,r]) \subseteq R_V(\gamma(t),r) \qquad \text{for all } t \in [0,r],
\end{equation}
where
\begin{equation}\label{eq:rectangle_V}
R_V(z,r) \defeq [x-r,x+r] \times \left[y-r \max_{[x-r,x+r]} V^{1/2}, y+r\max_{[x-r,x+r]} V^{1/2}\right]
\end{equation}
for all $z=(x,y) \in \RR^2$; this is easily seen by observing that, if $|(v_1,v_2)|_{(x,y)} \leq 1$, then $|v_1| \leq 1$ and $|v_2| \leq V(x)^{1/2}$, and by applying these estimates to the components of the velocity $\gamma'$ of a subunit curve $\gamma$. From this remark and the local boundedness of $V$ one easily deduces that $\dist$-bounded sets in $\RR^2$ are also Euclidean-bounded, and also that $\dist$ is a complete metric on $\RR^2$ \cite[Proposition 4.16]{cowling_martini_2013}.

We now state an equivalent characterisation of the distance $\dist$. In the statement, $W^{1,\infty}(\RR^2;\RR)$ denotes the space of bounded real-valued Lipschitz functions on $\RR^2$.

\begin{prp}\label{prp:distance_gradient_char}
For all $z,z' \in \RR^2$,
\begin{multline}\label{eq:distance_gradient_char}
\dist(z,z') \\
= \sup \{ \psi(z)-\psi(z') \tc \psi \in W^{1,\infty}(\RR^2;\RR), \, (\partial_x \psi)^2 + V(x) (\partial_y \psi)^2 \leq 1 \text{ a.e.} \}.
\end{multline}
\end{prp}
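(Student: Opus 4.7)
My plan is to prove the two inequalities in \eqref{eq:distance_gradient_char} separately, using a mollification argument for $(\geq)$ and a Riemannian regularisation for $(\leq)$.

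For the $(\geq)$ direction, fix an admissible $\psi \in W^{1,\infty}(\RR^2;\RR)$ and a subunit curve $\gamma : [0, r] \to \RR^2$ joining $z'$ to $z$. Consider mollifications $\psi_\delta = \psi * \rho_\delta$ by standard mollifiers. By Jensen's inequality applied to the convex map $(\xi, \eta) \mapsto \xi^2 + V(x) \eta^2$ at fixed $x$, together with the continuity of $V$ and the bound $\|\nabla\psi\|_\infty < \infty$, one obtains $(\partial_x \psi_\delta)^2 + V(x)(\partial_y \psi_\delta)^2 \leq 1 + \omega(\delta)$ uniformly on compact sets, where $\omega(\delta) \to 0$ as $\delta \to 0$ (the error arises because Jensen first produces the bound with $V(x-\zeta_1)$ in place of $V(x)$, and the difference is controlled by the modulus of continuity of $V$ times $\|\partial_y\psi\|_\infty^2$). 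For the smooth $\psi_\delta$, the fundamental theorem of calculus along $\gamma$ combined with Cauchy--Schwarz applied to the cometric yields $\psi_\delta(z) - \psi_\delta(z') \leq (1 + \omega(\delta))^{1/2} r$. Letting $\delta \to 0$ (uniform convergence on compacts, as $\psi$ has a continuous representative) and then taking the infimum over subunit curves delivers the required inequality.

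For the $(\leq)$ direction, the natural candidate $w \mapsto \dist(w, z')$ may fail to lie in $W^{1,\infty}(\RR^2;\RR)$: for instance, when $V(x)=x^2$ it is only H\"older--$1/2$ near $z'$ along the $y$-axis. To bypass this, I introduce Riemannian regularisations: for each $\epsilon > 0$, let $\dist_\epsilon$ be the distance associated with the positive-definite cometric $(\xi,\eta) \mapsto \sqrt{\xi^2 + (V(x) + \epsilon) \eta^2}$, and set $\psi_R^\epsilon(w) \defeq \min(\dist_\epsilon(w, z'), R)$ for some $R > \dist(z, z')$. Since the $\epsilon$-cometric is locally equivalent to the Euclidean one, $\dist_\epsilon$ is locally Euclidean-Lipschitz; combined with the truncation at $R$ this gives $\psi_R^\epsilon \in W^{1,\infty}(\RR^2;\RR)$. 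Being $1$-Lipschitz with respect to the Riemannian distance $\dist_\epsilon$, it satisfies the pointwise bound $(\partial_x \psi_R^\epsilon)^2 + (V(x) + \epsilon)(\partial_y \psi_R^\epsilon)^2 \leq 1$ a.e.\ by Rademacher's theorem, which a fortiori implies the cometric bound in \eqref{eq:distance_gradient_char}. Hence $\psi_R^\epsilon$ is admissible and $\psi_R^\epsilon(z) - \psi_R^\epsilon(z') = \dist_\epsilon(z, z')$.

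It then remains to show that $\dist_\epsilon(z, z') \to \dist(z, z')$ as $\epsilon \to 0^+$. The inequality $\dist_\epsilon \leq \dist$ is immediate since any $V$-subunit curve is also $(V+\epsilon)$-subunit. For the reverse, take $\epsilon_n$-subunit curves $\gamma_n : [0,1] \to \RR^2$ from $z'$ to $z$ with $|\gamma_n'|_{\epsilon_n} \leq r_n$ and $r_n \to \liminf \dist_{\epsilon_n}(z, z')$. These curves are equi-Lipschitz and confined to a fixed compact, so Arzel\`a--Ascoli and weak-* compactness of $L^\infty$ yield a subsequence with $\gamma_n \to \gamma$ uniformly and $\gamma_n' \rightharpoonup^* \gamma'$ in $L^\infty$. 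A Kantorovich--Rubinstein-type argument---testing against smooth pairs $(\xi(t), \eta(t))$ with $\xi(t)^2 + V(\gamma_1(t))\eta(t)^2 \leq 1-\delta$, observing that these also satisfy $\xi^2 + (V(\gamma_{n,1}) + \epsilon_n)\eta^2 \leq 1$ for large $n$ thanks to uniform convergence and continuity of $V$, and then taking the supremum over admissible test pairs---yields $\int_0^1 |\gamma'|_V \,dt \leq \liminf_n r_n$, so $\gamma$ is $V$-rectifiable and $\dist(z,z') \leq \liminf_n r_n$. The main obstacle is precisely this last semicontinuity step, which requires reconciling the weak-* convergence of velocities with the pointwise cometric bound through a duality argument, since strong convergence of derivatives is not available.
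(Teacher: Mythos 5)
Your architecture matches the paper's: prove the two inequalities separately, and for the hard inequality ($\dist$ bounded by the supremum) pass through the Riemannian regularisations $\dist_\epsilon$ with $V$ replaced by $V+\epsilon$. Where you genuinely diverge is in the easy inequality, which you handle via mollification and Jensen's inequality (controlling the error through the local modulus of continuity of $V$), whereas the paper delegates the reduction from $C^1$ to $W^{1,\infty}_\loc$ test functions to an approximation result in \cite{cowling_martini_2013}; your argument is correct and more self-contained. Your choice $\psi_R^\epsilon = \min(\dist_\epsilon(\cdot,z'),R)$ is also a small simplification over the paper's $(\dist_\epsilon(z,z')-\dist_\epsilon(\cdot,z'))_+$. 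Note, however, that both constructions rely on the eikonal inequality $(\partial_x\psi)^2 + (V+\epsilon)(\partial_y\psi)^2 \leq 1$ a.e.\ for $\dist_\epsilon$-Lipschitz functions; the paper proves the analogue for $\dist$ explicitly via Peano integral curves (and then exploits $\dist_\epsilon \leq \dist$), while you invoke it without justification, and since the coefficients are merely continuous the step is not completely routine.

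The gap you flag in the convergence $\dist_\epsilon(z,z') \to \dist(z,z')$ is real, and your proposed weak-$*$ duality route runs into a concrete difficulty coming from the degeneracy of $V$: on the set $\{t : V(\gamma_1(t))=0\}$ the constraint $\xi^2+V(\gamma_1)\eta^2 \leq 1-\delta$ leaves $\eta(t)$ unbounded, so the test covectors need not be integrable, the pairing against $\gamma_n' \in L^\infty$ is not controlled, and the supremum over admissible test pairs does not obviously reproduce $\int_0^1 |\gamma'|_V\,dt$. A cleaner route (essentially the one behind the paper's citation of \cite{cowling_martini_2013}) avoids weak-$*$ compactness of velocities entirely: for each fixed $\epsilon>0$ the nondegenerate Riemannian length functional $L_\epsilon$ is lower semicontinuous under uniform convergence of curves (being a supremum of sums of $\dist_\epsilon$-increments over partitions), and it is monotone nonincreasing in $\epsilon$; hence $L_\epsilon(\gamma) \leq \liminf_n L_{\epsilon_n}(\gamma_n) \leq \liminf_n r_n$ for every $\epsilon>0$, and monotone convergence in $\epsilon$ gives $L_0(\gamma) = \sup_{\epsilon>0} L_\epsilon(\gamma) \leq \liminf_n r_n$. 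Reparametrising $\gamma$ by $V$-arclength then produces a subunit curve of length at most $\liminf_n r_n$, whence $\dist(z,z') \leq \liminf_n \dist_{\epsilon_n}(z,z')$, closing the argument.
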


Similar equivalent characterisations of control distances associated to second-order differential operators can be found in many places in the literature (see, e.g., \cite{jerison_sanchez_calle_1987,garofalo_nhieu_1998,cowling_martini_2013}), though often with additional smoothness assumptions on the coefficients, and with different choices of the space of ``test functions'' $\psi$. For this reason, we give a proof tailored to our case.

\begin{proof}
Let $\dist_*(z,z')$ denote the right-hand side of \eqref{eq:distance_gradient_char}.
We would like to show that $\dist_* = \dist$.

If $\gamma = (\gamma_1,\gamma_2) \in AC([0,r];\RR^2)$ is a subunit curve joining $z$ to $z'$, then
\[\begin{split}
|\psi(z)-\psi(z')| &= |\psi(\gamma(r))-\psi(\gamma(0))| \leq \int_{0}^r |(\psi\circ \gamma)'(t)| \,dt \\
&= \int_{0}^r | \partial_x \psi(\gamma(t)) \gamma_1'(t) + \partial_y \psi(\gamma(t)) \gamma_2'(t) | \,dt \leq r
\end{split}\]
whenever $\psi \in C^1(\RR^2;\RR)$ satisfies $(\partial_x \psi)^2 + V(x) (\partial_y \psi)^2 \leq 1$. Thanks to the approximation result in \cite[Corollary 3.5]{cowling_martini_2013}, the same inequality extends to all $\psi \in W^{1,\infty}_\loc(\RR^2;\RR)$ satisfying $(\partial_x \psi)^2 + V(x) (\partial_y \psi)^2 \leq 1$. Hence $\dist_*(z,z') \leq \dist(z,z')$ for all $z,z' \in \RR^2$.

It remains to discuss the opposite inequality.
Let us first show 
that, if $\psi \in W^{1,\infty}_\loc(\RR^2;\RR)$ is $1$-Lipschitz with respect to $\dist$, then $(\partial_x \psi)^2 + V(x) (\partial_y \psi)^2 \leq 1$ almost everywhere. Note that $\psi$ is locally Lipschitz, so by the Rademacher theorem it is almost everywhere differentiable. Let $z \in \RR^2$ be such that $\psi$ is differentiable at $z$; to conclude it will be enough to show that $|\alpha \partial_x \psi(z) + \beta V(x)^{1/2} \partial_y \psi(z)| \leq 1$ for all unit vectors $(\alpha,\beta) \in \RR^2$. Let $Z = \alpha\partial_x + \beta V(x)^{1/2}\partial_y$; this is a vector field on $\RR^2$ with continuous coefficients, hence by Peano's Theorem there exist $\delta>0$ and an integral curve $\gamma : (-\delta,\delta) \to \RR^2$ of $Z$ (of class $C^1$) such that $\gamma(0) = z$. Since $\psi$ is differentiable at $z$, by the Chain Rule,
\[
Z\psi(z) = \left.\frac{d}{dt}\right|_{t=0} \psi(\gamma(t)) = \lim_{t \to 0} \frac{\psi(\gamma(t))-\psi(\gamma(0))}{t}.
\]
Since $\psi$ is $1$-Lipschitz with respect to $\dist$ and $\gamma$ is subunit, the quotient in the right-hand side is bounded in absolute value by one, hence we conclude that $|Z\psi(z)| \leq 1$ as desired.

We now consider, for $\epsilon \in (0,1]$, distances $\dist_\epsilon$ associated with the cometrics
\[
|(\xi,\eta)|^{*,\epsilon}_{(x,y)} = \sqrt{|\xi|^2+ (\epsilon+V(x)) |\eta|^2 }.
\]
It is easily checked that the induced Riemannian distances $\dist_\epsilon$ on $\RR^2$ are locally bi-Lipschitz-equivalent with the Euclidean distance. 
Moreover clearly
\[
\dist_{\epsilon'} \leq \dist_\epsilon \leq \dist \quad\text{whenever } 0 < \epsilon \leq \epsilon' \leq 1.
\]
Furthermore, by using the argument of \cite[proof of Proposition 5.9]{cowling_martini_2013} (see also \cite[proof of Proposition 1.3]{jerison_sanchez_calle_1987}), one can show that 
\[
\dist(z,z') = \sup_{\epsilon \in (0,1]} \dist_\epsilon(z,z').
\]
The idea is that quasi-minimising subunit curves for $\dist_\epsilon$ joining two fixed points $z,z' \in \RR^2$ stay in a fixed compact set (one can take $R_{1+V}(z,2\dist(z,z'))$, as defined in \eqref{eq:rectangle_V}); so, by the Ascoli--Arzel\`a Theorem, one can form a convergent sequence of such curves, including $\dist_\epsilon$-subunit curves for arbitrary small $\epsilon>0$, and the limit curve is easily proved to be subunit for $\dist$.

Now, for fixed $z' \in \RR^2$, the function $\dist_\epsilon(\cdot,z')$ is in $W^{1,\infty}_\loc(\RR^2)$ and is $1$-Lipschitz with respect to $\dist$. Consequently, for given $z,z' \in \RR^2$, the function $\psi(w) = (\dist_\epsilon(z,z')-\dist_\epsilon(w,z'))_+$ is in $W^{1,\infty}(\RR^2)$ and is $1$-Lipschitz with respect to $\dist$, and therefore $(\partial_x \psi)^2 + V(x) (\partial_y \psi)^2 \leq 1$; from the definition of $\dist_*$ we then obtain
\[
\dist_*(z,z') \geq \psi(z') - \psi(z) = \dist_\epsilon(z,z'),
\]
and by taking the supremum in $\epsilon$ we finally obtain the inequality $\dist_* \geq \dist$.
\end{proof}

\subsection{Finite propagation speed}

Thanks to the characterisation of the control distance $\dist$ in Proposition \ref{prp:distance_gradient_char}, we can now apply the results of \cite[Section 4]{robinson_analysis_2008} to obtain that the Grushin operator $\opL$ satisfies the finite propagations speed property for the associated wave equation, thus proving Proposition \ref{prp:fps}.

Indeed, the bilinear form
\begin{equation}\label{eq:dirichlet}
\formE(f,g) = \langle f, \opL g \rangle = \int_{\RR^2} (\partial_x f) (\partial_x g) + V(x) (\partial_y f) (\partial_y g) \,dx\,dy,
\end{equation}
initially defined on $C_c^\infty(\RR^2)$, is a strong local, regular Markovian symmetric form on $L^2(\RR^2)$ \cite[Example 1.2.1, page 6]{fukushima_oshima_takeda_2011}, which is easily seen to be closable \cite[Exercise 1.1.2, page 4]{fukushima_oshima_takeda_2011}, hence its closure (which we still denote by $\formE$) is a strong local, regular Dirichlet form \cite[Theorems 3.1.1 and 3.1.2 and Exercise 3.1.1, pages 109-111]{fukushima_oshima_takeda_2011}. In view of the correspondence between closed symmetric forms and nonnegative definite self-adjoint operators \cite[Theorem 1.3.1 and Corollary 1.3.1, pages 20-21]{fukushima_oshima_takeda_2011} and the essential self-adjointness of $\opL$ (Proposition \ref{prp:grushin_selfadj}), we conclude that the Dirichlet form $\formE$ is the form associated with the self-adjoint operator $\opL$, so the domain of $\formE$ is the domain of $\sqrt{\opL}$ and the first identity in \eqref{eq:dirichlet} holds for all $f$ in the domain of $\formE$ and $g$ in the domain of $\opL$.

Thanks to the just-discussed closability of $\formE$, its closure coincides with its relaxation $\formE_0$ (as defined in \cite[Section 4]{robinson_analysis_2008}), hence \cite[Proposition 4.1]{robinson_analysis_2008} gives Davies--Gaffney estimates for the heat semigroup associated with $\opL$:
\[
|\langle f, e^{-t\opL} g \rangle| \leq e^{-\dist(A,B)^2/(4t)} \|f\|_2 \|g\|_2
\]
for all $t>0$, open sets $A,B \subseteq \RR^2$, and $f,g \in L^2(\RR^2)$ such that $\supp f \subseteq A$ and $\supp g \subseteq B$. In turn, Davies--Gaffney estimates are equivalent to the desired finite propagation speed property \cite{sikora_wave_2004,coulhon_sikora_2008}.

\subsection{The auxiliary function \texorpdfstring{$U$}{U}}

We record here some useful estimates involving the auxiliary function $U$ defined in \eqref{eq:U_def}. The implicit constants in the estimates below may depend on the constants in \eqref{eq:V_doubling}.

\begin{lem}
Assume that $V$ satisfies \eqref{eq:V_doubling} for some $D>0$. Then the function $U$ defined in \eqref{eq:U_def} is continuous, increasing and invertible. Moreover, for all $t,r \in [0,\infty)$ and $\lambda \geq 1$,
\begin{equation}\label{eq:U_iden}
U(t) \, V(U(t))^{1/2} \simeq t, \qquad U(r V(r)^{1/2}) \simeq r
\end{equation}
and
\begin{equation}\label{eq:U_doubling}
\lambda U(t) \geq U(\lambda t) \gtrsim \lambda^{1/(1+D/2)} U(t).
\end{equation}
\end{lem}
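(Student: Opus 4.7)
My plan is to analyze the auxiliary function $\Phi : \RR \to \Rnon$, $\Phi(x) \defeq |x| V(x)^{1/2}$, whose sublevel sets define $U$. First I would show that $V > 0$ on $\RR \setminus \{0\}$: since $V$ is continuous and not identically zero, $V(x_0) > 0$ for some $x_0 > 0$ (using $V(-x) \simeq V(x)$); applying the upper bound in \eqref{eq:V_doubling} to $x \in (0, x_0]$ with $\lambda = x_0/x$ yields $V(x) \gtrsim (x/x_0)^D V(x_0) > 0$, while $V \geq V(x_0)$ on $[x_0,\infty)$ by the lower bound, and the sign $x < 0$ is handled symmetrically. The lower bound $V(x) \leq V(\lambda x)$ further says that $V$ is non-decreasing in $|x|$, so the restrictions $\Phi_+(x) \defeq x V(x)^{1/2}$ and $\Phi_-(x) \defeq x V(-x)^{1/2}$ for $x \in \Rnon$ are continuous, strictly increasing bijections of $\Rnon$ onto itself (they tend to $\infty$ as $x \to \infty$ because $\Phi_\pm(\lambda x_0) \geq \lambda \Phi_\pm(x_0)$ for $\lambda \geq 1$); they therefore admit continuous strictly increasing inverses $U_\pm : \Rnon \to \Rnon$.

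Next I would exploit $V(-x) \simeq V(x)$ to deduce $\Phi_+ \simeq \Phi_-$, and consequently $U_+ \simeq U_-$ after using the doubling of $V$ to control the distortion under inversion. Writing the sublevel set from \eqref{eq:U_def} as $\{x \in \RR : \Phi(x) \leq t\} = [-U_-(t), U_+(t)]$ then gives $U(t) = U_+(t) + U_-(t) \simeq U_+(t)$, so $U$ is manifestly continuous, strictly increasing, and invertible, with $U(0) = 0$ and $U(t) \to \infty$ at infinity. From here the first identity in \eqref{eq:U_iden} reduces to $U_+(t) V(U_+(t))^{1/2} = t$, which holds by the very definition of $U_+$, combined with the comparability $V(U(t)) \simeq V(U_+(t))$ guaranteed by the doubling of $V$ together with $U(t) \simeq U_+(t)$; the second identity in \eqref{eq:U_iden} is immediate by evaluating $U_+$ at $\Phi_+(r) = r V(r)^{1/2}$.

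Finally, the doubling estimates \eqref{eq:U_doubling} translate the scaling properties of $\Phi_\pm$ into scaling properties of $U_\pm$. The lower bound $V(\lambda x) \geq V(x)$ gives $\Phi_\pm(\lambda x) \geq \lambda \Phi_\pm(x)$ for $\lambda \geq 1$, which inverts to the crisp inequality $U_\pm(\lambda t) \leq \lambda U_\pm(t)$ with no implicit constant, and summing over the two signs yields $U(\lambda t) \leq \lambda U(t)$. In the opposite direction, the upper bound $V(\lambda x) \lesssim \lambda^D V(x)$ gives $\Phi_\pm(\lambda x) \lesssim \lambda^{1+D/2} \Phi_\pm(x)$, which inverts to $U_\pm(\lambda t) \gtrsim \lambda^{1/(1+D/2)} U_\pm(t)$, whence the matching lower bound for $U$. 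The main obstacle here is bookkeeping: one has to ensure that the sharp constant $1$ in $\lambda U(t) \geq U(\lambda t)$ survives when combining the two summands $U_+$ and $U_-$, but this works precisely because each of the inequalities $U_\pm(\lambda t) \leq \lambda U_\pm(t)$ holds without any implicit constant before summing.
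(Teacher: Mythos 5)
Your proposal is correct and follows essentially the same route as the paper: decompose $U$ via the one-sided functions $\Phi_\pm(x) = xV(\pm x)^{1/2}$ (the paper calls them $W_\pm$), establish that each is a strictly increasing bijection of $\Rnon$ with the right scaling in $\lambda$, invert these scaling inequalities to get the doubling of $U_\pm = \Phi_\pm^{\inv}$, note $U = U_+ + U_- \simeq U_\pm$, and then read off \eqref{eq:U_iden} and \eqref{eq:U_doubling}. The only cosmetic difference is that you prove positivity and monotonicity of $V$ explicitly before concluding $\Phi_\pm$ is strictly increasing, whereas the paper extracts both facts directly from the two-sided bound $\lambda W_\pm(x)\leq W_\pm(\lambda x)\leq \kappa\lambda^{1+D/2}W_\pm(x)$; your observation that the constant-free inequality $U_\pm(\lambda t)\leq\lambda U_\pm(t)$ sums to give $U(\lambda t)\leq\lambda U(t)$ exactly is the same point the paper makes.
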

\begin{proof}
Define $W_\pm : \Rnon \to \Rnon$ by
\[
W_\pm(x) = x V(\pm x)^{1/2} .
\]
Then, by \eqref{eq:V_doubling}, the functions $W_\pm$ are not identically zero and satisfy
\begin{gather}
W_\pm(x) \leq \kappa W_\mp(x), \label{eq:W_parity}\\
 \lambda W_\pm(x) \leq W_\pm(\lambda x) \leq \kappa \lambda^{1+D/2} W_\pm(x) \label{eq:W_doubling}
\end{gather}
for all $x \in \Rnon$ and $\lambda \geq 1$, where $\kappa \geq 1$ depends on the implicit constants in \eqref{eq:V_doubling}. In particular, the $W_\pm : \Rnon \to \Rnon$ are strictly increasing and invertible. Moreover
\[
U(t) =  |\{ W_+ \leq t \}| + |\{ W_- \leq t \}| = W_+^\inv(t) + W_-^\inv(t),
\]
which proves that $U$ is continuous, increasing and invertible too.

Now, for all $t \geq 0$ and $\lambda \geq 1$, if $x = W_\pm^\inv(t)$, then
\begin{equation}\label{eq:Winv_first}
W_\pm^\inv(\lambda t) = W_\pm^\inv(\lambda W_\pm(x)) \leq W_\pm^\inv (W_\pm(\lambda x)) = \lambda x = \lambda W_\pm^\inv(t),
\end{equation}
where we used the first inequality in \eqref{eq:W_doubling} and the fact that $W_\pm^\inv$ is increasing. Similarly
\begin{multline}\label{eq:Winv_second}
\lambda^{1/(1+D/2)} W_\pm^\inv(t) = W_\pm^\inv(W_\pm(\lambda^{1/(1+D/2)} x)) \\
\leq W_\pm^\inv(\kappa \lambda W_\pm(x)) \leq \kappa W_\pm^\inv(\lambda t),
\end{multline}
where we used the second inequality in \eqref{eq:W_doubling} and \eqref{eq:Winv_first}. Together, \eqref{eq:Winv_first} and \eqref{eq:Winv_second} give
\begin{equation}\label{eq:Winv_together}
\kappa^{-1} \lambda^{1/(1+D/2)} W_\pm^\inv(t) \leq W_\pm^\inv(\lambda t) \leq \lambda W_\pm^\inv(t).
\end{equation}
By summing the estimates in \eqref{eq:Winv_together}, we obtain 
\eqref{eq:U_doubling}.

Moreover, by \eqref{eq:W_parity} and \eqref{eq:Winv_first},
\[
W_\pm^\inv(t) = |\{ W_\pm \leq t\}| \leq |\{ W_\mp \leq \kappa t \}| = W_\mp^\inv(\kappa t) \leq \kappa W_\mp^\inv(t),
\]
which implies that
\begin{equation}\label{eq:U_Winv}
W_\pm^\inv(t) \leq U(t) \leq (1+\kappa) W_\pm^\inv(t).
\end{equation}

Consequently, for all $t,r \geq 0$, by \eqref{eq:W_doubling} and \eqref{eq:U_Winv},
\[
U(t) V(U(t))^{1/2} = W_+(U(t)) \simeq W_+(W_+^\inv(t)) = t
\]
and
\[
U(rV(r)^{1/2}) = U(W_+(r)) \simeq W_+^\inv(W_+(r)) = r,
\]
which proves \eqref{eq:U_iden}.
\end{proof}

\subsection{Distance and volume estimates}
Here we prove Proposition \ref{prp:distance}, under the assumption that $V$ satisfies \eqref{eq:V_doubling} for some $D >0$.

We start by proving the distance estimate
\begin{equation}\label{eq:distance_bis}
\dist(z,z') \simeq |x-x'| + \min \left\{ \frac{|y-y'|}{\max\{V(x),V(x')\}^{1/2}} , U(|y-y'|) \right\}.
\end{equation}

Let us first prove the inequality $\lesssim$.
In view of \eqref{eq:U_iden}, we are reduced to proving that
\[
\dist(z,z') \lesssim |x-x'|+ \frac{|y-y'|}{\max\{V(x),V(x'),V(U(|y-y'|))\}^{1/2}}. 
\]

If the maximum in the denominator is $V(x)$, then the upper bound to $\dist(z,z')$ is given by the length of the concatenation of the line segments through the points
\[
(x,y) \rightsquigarrow (x,y') \rightsquigarrow (x',y'),
\]
that is, $|y-y'|/V(x)^{1/2} + |x-x'|$.

Similarly, if the maximum in the denominator is $V(x')$, then we consider the concatenation of the line segments through the points
\[
(x,y) \rightsquigarrow (x',y) \rightsquigarrow (x',y'),
\]
whose length is $|x-x'|+|y-y'|/V(x')^{1/2}$.

Finally, if the maximum in the denominator is $V(U(|y-y'|))$, then $U(|y-y'|) \gtrsim |x|+|x'| \geq |x-x'|$ by \eqref{eq:V_doubling}. Hence, if we set $x_* = U(|y-y'|)$, then the concatenation of the line segments through the points
\[
(x,y) \rightsquigarrow (x_*,y) \rightsquigarrow (x_*,y') \rightsquigarrow (x',y')
\]
has length $|x-x_*| + |y-y'|/V(x_*)^{1/2} + |x_*-x'| \simeq U(|y-y|) \simeq |y-y|/V(U(|y-y'|))^{1/2}$ by \eqref{eq:U_iden}.

We now prove the inequality $\gtrsim$ in \eqref{eq:distance_bis}.
Let us consider an arbitrary subunit curve $\gamma = (\gamma_1,\gamma_2) \in AC([0,r];\RR^2)$ joining $z$ to $z'$. Since $\gamma$ is subunit, we deduce that
\[
|\gamma_1'(t)| \leq 1, \qquad |\gamma_2'(t)| \leq V(\gamma_1(t))^{1/2}.
\]

Let $t_* \in [0,r]$ be such that $|\gamma_1(t_*)| = \max_{[0,r]} |\gamma_1|$, and set $x_* = \gamma_1(t_*)$. Then
\begin{equation}\label{eq:subunit_dist_est1}
|x-x'| \leq |x-x_*|+|x_*-x'| \leq \int_0^r |\gamma_1'(t)| \,dt \leq r
\end{equation}
and
\begin{equation}\label{eq:subunit_dist_est2}
|y-y'| \leq \int_0^r |\gamma_2'(t)| \,dt \leq r \max_{t \in [0,r]} V(\gamma_1(t))^{1/2} \simeq r V(x_*)^{1/2}.
\end{equation}

If $|x_*| \leq 2\max\{|x|,|x'|\}$, then $V(x_*) \simeq \max\{V(x),V(x')\}$ and the previous inequalities \eqref{eq:subunit_dist_est1} and \eqref{eq:subunit_dist_est2} imply that
\[
r \gtrsim |x-x'| + \frac{|y-y'|}{\max\{V(x),V(x')\}^{1/2}}.
\]
If instead $|x_*| \geq 2\max\{|x|,|x'|\}$, then $|x-x_*|+|x_*-x'| \geq |x_*|$, whence, by \eqref{eq:subunit_dist_est1} and \eqref{eq:subunit_dist_est2},
\[
r \gtrsim |x-x'|+ |x_*| + \frac{|y-y'|}{V(x_*)^{1/2}} \gtrsim |x-x'|+ \min_{t \geq 0} \left( t + \frac{|y-y'|}{V(t)^{1/2}} \right) \simeq |x-x'| + U(|y-y'|),
\]
where \eqref{eq:U_iden} was used in the last step.

So in any case
\[
r \gtrsim |x-x'|+\min\left\{ \frac{|y-y'|}{\max\{V(x),V(x')\}^{1/2}}, U(|y-y'|) \right\} .
\]
Since $\dist(z,z')$ is the infimum of such $r$, this proves the inequality $\gtrsim$ in \eqref{eq:distance_bis}.

To conclude the proof of Proposition \ref{prp:distance}, we must prove the volume estimates 
\begin{align}
\Vol(z,r) &\simeq r^2 \max\{V(r),V(x)\}^{1/2}, \label{eq:volume_bis} \\
\Vol(z,\lambda r) &\lesssim \lambda^{Q} \Vol(z,r), \label{eq:vol_doubling_bis}
\end{align}
where $z = (x,y) \in \RR^2$ and $Q$ is as in \eqref{eq:homdim}.

In order to prove \eqref{eq:volume_bis} from \eqref{eq:distance_bis}, it is enough to show that the ball of centre $z=(x,y)$ and radius $r\geq 0$ is comparable to the rectangle
\[
(x,y)+[-r,r] \times [-r\max\{V(x),V(r)\}^{1/2},r\max\{V(x),V(r)\}^{1/2}],
\]
in the sense that the ball is contained and contains suitably scaled versions of the above rectangle (with respect to the centre). Indeed, from \eqref{eq:distance_bis} we obtain that
\begin{equation}\label{eq:ball_dist}
\dist(z,z') \lesssim r
\end{equation}
if and only if both
\begin{equation}\label{eq:ball_x}
|x-x'| \lesssim r
\end{equation}
and
\begin{equation}\label{eq:ball_y}
|y-y'| \lesssim r \max\{V(x),V(x')\}^{1/2} \quad\text{or}\quad U(|y-y'|) \lesssim r.
\end{equation}
In view of \eqref{eq:U_iden} and \eqref{eq:U_doubling}, the condition \eqref{eq:ball_y} is equivalent to
\begin{equation}\label{eq:ball_y2}
|y-y'| \lesssim r \max\{V(x),V(x'),V(r)\}^{1/2}.
\end{equation}
However, in view of \eqref{eq:V_doubling}, under the condition \eqref{eq:ball_x} we also have
\[
\max\{V(x),V(x'),V(r)\} \simeq \max\{V(x),V(r)\}
\]
and therefore, under \eqref{eq:ball_x}, the condition \eqref{eq:ball_y2} is equivalent to
\begin{equation}\label{eq:ball_y3}
|y-y'| \lesssim r \max\{V(x),V(r)\}^{1/2}.
\end{equation}
In conclusion, \eqref{eq:ball_dist} is equivalent to the conjunction of \eqref{eq:ball_x} and \eqref{eq:ball_y3}, which proves the statement about comparability of balls and rectangles, and therefore \eqref{eq:volume_bis}.

Finally, from \eqref{eq:volume_bis} and \eqref{eq:V_doubling} we deduce that, for all $\lambda \geq 1$,
\[\begin{split}
\Vol(z,\lambda r) &\simeq (\lambda r)^2 \max\{V(\lambda r),V(x)\}^{1/2} \\
&\lesssim \lambda^{2+D/2} r^2 \max\{V(r),V(x)\}^{1/2} \\
&\simeq \lambda^Q \Vol(z,r),
\end{split}\]
which proves \eqref{eq:vol_doubling_bis}.

\subsection{Weight estimates}
Here we assume again that $V$ satisfies \eqref{eq:V_doubling}, and prove Proposition \ref{prp:weights} about the family of weights
$\weight_r : \RR^2 \times \RR^2 \to [0,\infty)$ defined in \eqref{eq:weight}.

According to Proposition \ref{prp:distance},
\[
\dist(z,z') \simeq \min \{ \dist_1(z,z'), \dist_2(z,z') \},
\]
where
\begin{equation*}
\dist_1(z,z') \defeq |x-x'|+\frac{|y-y'|}{V(x,x')^{1/2}}, \qquad \dist_2(z,z') \defeq |x-x'|+ U(|y-y'|) 
\end{equation*}
and
\begin{equation*}
V(x,x') \defeq \max\{V(x),V(x')\}.
\end{equation*}
Hence it is enough to prove the desired estimate
\eqref{eq:weight_int} with $\dist$ replaced by $\dist_i$ for $i=1,2$.

In the case $i=1$, we note that, under the assumptions $\beta \in [0,1)$ and $\alpha > 1+(1+D/2)(1-\beta)$, we can decompose $\alpha =\alpha'+\alpha''$ so that $\alpha''>1-\beta$ and $\alpha' > 1+ \alpha'' D/2$. Therefore
\[\begin{split}
&\int_{\RR^2} (1+\dist_1(z,z')/r)^{-\alpha} (1+\weight_r(z,z'))^{-\beta} \,dz \\
&\leq \int_{\RR^2} \left(1+\frac{|x-x'|}{r}\right)^{-\alpha'} \left(1+\frac{|y-y'|}{r V(x,x')^{1/2}}\right)^{-\alpha''}
\left(1+\frac{|y-y'|}{r V(r,x')^{1/2}}\right)^{-\beta} \,dz \\
&= r V(r,x')^{1/2} \int_{\RR^2} \left(1+\frac{|x-x'|}{r}\right)^{-\alpha'} 
\left(1+|y|\frac{V(r,x')^{1/2}}{V(x,x')^{1/2}}\right)^{-\alpha''} \left(1+|y|\right)^{-\beta} \,dz \\
&\lesssim_{\alpha,\beta} r V(r,x')^{1/2} \int_{\RR} \left(1+\frac{|x-x'|}{r}\right)^{-\alpha'} 
\left(1+\frac{V(x,x')^{1/2}}{V(r,x')^{1/2}}\right)^{\alpha''} \,dx ,
\end{split}\]
since $\alpha''+\beta>1$.

We now observe that, by \eqref{eq:V_doubling}, $V(x,x') \simeq V(x') + V(|x-x'|)$ and
\[
1+\frac{V(x,x')}{V(r,x')} \lesssim 1 + \frac{V(|x-x'|)}{V(r)} \lesssim \left(1+\frac{|x-x'|}{r}\right)^D.
\]
Hence we can continue the previous series of inequalities and obtain that
\[\begin{split}
&\int_{\RR^2} (1+\dist_1(z,z')/r)^{-\alpha} (1+\weight_r(z,z'))^{-\beta} \,dz \\
&\lesssim r V(r,x')^{1/2} \int_{\RR} \left(1+\frac{|x-x'|}{r}\right)^{-\alpha'+\alpha''D/2} \,dx \\
&\lesssim_{\alpha} r^2 V(r,x')^{1/2},
\end{split}\]
as desired; in the last inequality we used that $\alpha'-\alpha''D/2>1$.

In the case $i=2$, instead, we observe that, under the assumptions $\beta \in [0,1)$ and $\alpha > 1+(1+D/2)(1-\beta)$, we can decompose $\alpha =\alpha'+\alpha''$ so that $\alpha' > 1$ and $\alpha'' > (1+D/2)(1-\beta)$. Hence
\[\begin{split}
&\int_{\RR^2} (1+\dist_2(z,z')/r)^{-\alpha} (1+\weight_r(z,z'))^{-\beta} \,dz \\
&\leq \int_{\RR^2} \left(1+\frac{|x-x'|}{r}\right)^{-\alpha'} \left(1+\frac{U(|y-y'|)}{r}\right)^{-\alpha''} 
\left(1+\frac{|y-y'|}{r V(r,x')^{1/2}}\right)^{-\beta} \,dz \\
&= \int_{\RR^2} \left(1+\frac{|x|}{r}\right)^{-\alpha'} \left(1+\frac{U(|y|)}{r}\right)^{-\alpha''} 
\left(1+\frac{|y|}{r V(r,x')^{1/2}}\right)^{-\beta} \,dz \\
&\lesssim
\left(\frac{V(r,x')}{V(r)}\right)^{\beta/2}  \int_{\RR^2} \left(1+\frac{|x|}{r}\right)^{-\alpha'} 
\left(1+\frac{U(|y|)}{r}\right)^{-\alpha''} \left(1+\frac{|y|}{r V(r)^{1/2}}\right)^{-\beta} \,dz \\
\end{split}\]
by translation-invariance. Now, by \eqref{eq:U_iden} and \eqref{eq:U_doubling},
\[
1+\frac{U(|y|)}{r} \simeq 1+\frac{U(|y|)}{U(rV(r)^{1/2})} \gtrsim \left(1+\frac{|y|}{r V(r)^{1/2}}\right)^{1/(1+D/2)}.
\]
Since $\beta < 1$, from the previous inequalities we deduce that
\[\begin{split}
&\int_{\RR^2} (1+\dist_2(z,z')/r)^{-\alpha} (1+\weight_r(z,z'))^{-\beta} \,dz \\
&\lesssim_\alpha  \left(\frac{V(r,x')}{V(r)}\right)^{1/2} \int_{\RR^2} \left(1+\frac{|x|}{r}\right)^{-\alpha'} 
\left(1+\frac{|y|}{r V(r)^{1/2}}\right)^{-\beta-\alpha''/(1+D/2)} \,dz \\
&\simeq_{\alpha,\beta}
 r^2 V(r,x')^{1/2} ,
\end{split}\]
as $\alpha',\beta+\alpha''/(1+D/2)>1$,
and we are done.

\section{Reduction to weighted Plancherel estimates}\label{s:redweightedplancherel}

In this section we show that the proof of our optimal multiplier theorem (Theorem \ref{thm:main}) for Grushin operators $\opL$ on $\RR^2$ reduces to that of a ``weighted Plancherel estimate'' involving the weights $\weight_r$ from Section \ref{s:grushin}.

\begin{thm}\label{thm:conditional_weightedplancherel}
Let $V : \RR \to \Rnon$ be continuous and satisfy the assumption \eqref{eq:V_doubling}.
Let $\opL$ be the Grushin operator on $\RR^2$ associated to $V$, as in \eqref{eq:grushin_def}. Let the weights $\weight_r$ on $\RR^2$ be defined as in \eqref{eq:weight}.
Let $q \in [2,\infty]$. 
Assume that:
\begin{enumerate}[label=(\Alph*)]
\item\label{en:ass_w_planch} For all $\vartheta \in [0,1/2)$, the estimate
\begin{equation}\label{eq:weightedpl_abstract}
\sup_{r>0} \, \esssup_{z' \in \RR^2} \, \Vol(z',r) \int_{\RR^2} \left|(1+\weight_r(z,z'))^\vartheta \Kern_{\mm(r^2 \opL)}(z,z') \right|^2 \,dz \lesssim_\vartheta \|\mm\|_{\sobolev{\vartheta}{q}}^2, 
\end{equation}
holds for all continuous $\mm : \RR \to \CC$ with $\supp \mm \subseteq [1/4,1]$.
\item\label{en:ass_planch} The estimate analogous to \eqref{eq:weightedpl_abstract} with $\vartheta=0$ and $q=\infty$ also holds for all continuous $\mm : \RR \to \CC$ with $\supp \mm \subseteq [-1,1]$.
\end{enumerate}
Then the two estimates 
\ref{en:abstract_cpt} and \ref{en:abstract_mh}
of Theorem \ref{thm:abstract}
hold for $\opL$ with $\varsigma=2/2$.
\end{thm}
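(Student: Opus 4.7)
The plan is to verify the three hypotheses of Theorem \ref{thm:abstract}, with $k=2$, the given $q$, and $\varsigma = 1 = 2/2$. The doubling condition \ref{en:abstract_assdoubling} follows immediately from \eqref{eq:vol_doubling} in Proposition \ref{prp:distance}. The polynomial heat kernel bound \ref{en:abstract_assheat} is a standard consequence of finite propagation speed (Proposition \ref{prp:fps}) combined with the unweighted Plancherel assumption \ref{en:ass_planch} (case $\vartheta=0$, $q=\infty$), via the Sikora-type scheme of expressing $e^{-r^2\opL}$ as a superposition of cosines truncated in $t$ at scale $\sim\dist/r$ and controlling the Schwartz-class tails in $L^2$. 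The entire new content therefore lies in verifying the $L^1$-estimate \ref{en:abstract_assL1}, namely
\[
\sup_{r>0}\,\esssup_{z' \in \RR^2} \int_{\RR^2} |\Kern_{\mm(r^2\opL)}(z,z')| \, dz \lesssim_s \|\mm\|_{\sobolev{s}{q}}
\]
for any $s > 1$ and every continuous $\mm$ with $\supp \mm \subseteq [1/4, 1]$.

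To obtain this, the strategy is to pit the weighted Plancherel \ref{en:ass_w_planch} against the weight integrability of Proposition \ref{prp:weights}. By Cauchy--Schwarz with weight $(1+\weight_r)^{2\vartheta}$,
\[
\int |\Kern_{\mm(r^2\opL)}(z,z')| \, dz \le \bigg(\int |\Kern|^2 (1+\weight_r)^{2\vartheta} dz\bigg)^{\!1/2} \bigg(\int_S (1+\weight_r)^{-2\vartheta} dz\bigg)^{\!1/2},
\]
with $S$ the support of $\Kern_{\mm(r^2\opL)}(\cdot, z')$, but the second factor diverges on all of $\RR^2$. To supply the missing distance decay, I would perform a Fourier--cosine dyadic decomposition $\mm = \sum_{k \ge 0} \mm_k$, chosen so that $\Kern_{\mm_k(r^2\opL)}(\cdot, z')$ is supported in the ball $B(z', C 2^k r)$ by Proposition \ref{prp:fps}. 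Applying the inequality above to each $\mm_k$ and \ref{en:ass_w_planch}, then trading the indicator of $B(z', C 2^k r)$ for $(1+\dist/r)^{\alpha}$ at the cost of a factor $2^{k\alpha}$ and invoking Proposition \ref{prp:weights} with $\beta = 2\vartheta$, one obtains
\[
\int |\Kern_{\mm_k(r^2\opL)}(z,z')| \, dz \lesssim 2^{k\alpha/2} \|\mm_k\|_{\sobolev{\vartheta}{q}}
\]
whenever $\vartheta \in [0, 1/2)$ and $\alpha > 1 + (1+D/2)(1-2\vartheta)$.

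Summing over $k$ via a standard Besov-type argument controls $\sum_k 2^{k\alpha/2} \|\mm_k\|_{\sobolev{\vartheta}{q}}$ by $\|\mm\|_{\sobolev{s}{q}}$ for any $s > \vartheta + \alpha/2$. Letting $\vartheta \nearrow 1/2$ forces $\alpha$ only to satisfy $\alpha > 1$, so $\vartheta + \alpha/2$ approaches $1$, and the required $L^1$-estimate holds for every $s > 1 = \varsigma$. This is precisely the ``extra gain'' provided by the $\weight_r$-weighted Plancherel over the distance-weighted one, and reflects the fact that, as $\beta \to 1^-$ in Proposition \ref{prp:weights}, the required power of the distance weight drops from $Q$ to the topological dimension $2$. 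The principal technical hurdle I anticipate is carrying out the Fourier--cosine dyadic decomposition in a manner compatible with the support constraint $\supp \mm_k \subseteq [1/4, 1]$ required by \ref{en:ass_w_planch}: the standard remedy is to decompose on the cosine-transform side and reinstate the cutoff by multiplication with a smooth bump, absorbing the resulting error terms via the unweighted Plancherel \ref{en:ass_planch} and the rapid decay of Schwartz tails (compare the proof of \cite[Theorem 6.1]{martini_crsphere}).
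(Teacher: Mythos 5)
Your overall strategy is sound and matches the paper: verify hypotheses \ref{en:abstract_assdoubling}--\ref{en:abstract_assL1} of Theorem \ref{thm:abstract} with $k=2$, $\varsigma=1$; obtain \ref{en:abstract_assdoubling} from Proposition \ref{prp:distance} and \ref{en:abstract_assheat} from the Sikora scheme combining assumption \ref{en:ass_planch}, finite propagation speed and doubling. For \ref{en:abstract_assL1}, however, you take a genuinely different route than the paper. You unfold the Hebisch machinery: Fourier-cosine decomposition $\mm=\sum_k\mm_k$, finite propagation to localise $\Kern_{\mm_k(r^2\opL)}(\cdot,z')$ to $B(z',C2^kr)$, then Cauchy--Schwarz against $(1+\weight_r)^{2\vartheta}$ and Proposition \ref{prp:weights} on each piece, summing via Besov calculus. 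The paper instead treats the unfolded machinery as a black box: it invokes \cite[Theorem 6.1(ii)]{martini_crsphere} to get the distance-weighted $L^1$ bound \eqref{eq:standard_l1_grushin} for $\alpha\geq 0$, $\beta>\alpha+Q/2+1/q$; separately, it applies a single Cauchy--Schwarz (no dyadic decomposition) to get \eqref{eq:standard_l1_grushin} for $\alpha<-1/2$, $\beta=1/2$ from assumption \ref{en:ass_w_planch} and Proposition \ref{prp:weights}; and then it interpolates the two ranges to reach $\alpha=0$, $\beta>1$. Both arguments exploit exactly the same phenomenon --- as $\beta=2\vartheta\nearrow 1$ in Proposition \ref{prp:weights}, the required decay exponent $\alpha$ drops from $Q$ toward $1$. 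The advantage of the paper's interpolation route is precisely that it sidesteps your acknowledged ``principal technical hurdle'': in your scheme the constraints $\supp\mm_k\subseteq[1/4,1]$ (needed for \ref{en:ass_w_planch}) and $\supp\widehat{\mm_k}\subseteq[-C2^k,C2^k]$ (needed for the finite-speed ball support) are mutually incompatible by the uncertainty principle, so one must pass through the Hebisch trick of factoring via the heat semigroup and carefully controlling the resulting error tails --- this is exactly what the proof of \cite[Theorem 6.1]{martini_crsphere} does, and the paper avoids re-deriving it by quoting the result and interpolating. Your sketch is correct in spirit and leads to the same conclusion, but the technical work it hides is more substantial than the remark ``absorbing error terms via \ref{en:ass_planch} and rapid decay'' suggests, whereas in the paper the new contribution is cleanly isolated in the $\alpha<-1/2$ estimate plus interpolation.
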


\begin{rem}\label{rem:conditional_weightedplancherel}
In view of Proposition \ref{prp:distance}, the estimate \eqref{eq:weightedpl_abstract} is equivalent to
\begin{multline}\label{eq:equivalentlyrewritten}
\sup_{r>0} \, \esssup_{z' \in \RR^2} \, r^{2-2\vartheta} \max\{V(r),V(x')\}^{1/2-\vartheta} \int_{\RR^2} |y-y'|^{2\vartheta} \left|\Kern_{\mm(r^2 \opL)}(z,z') \right|^2 \,dz \\
\lesssim_\vartheta \|\mm\|_{\sobolev{\vartheta}{\infty}}^2.
\end{multline}
More precisely, the left-hand side of \eqref{eq:weightedpl_abstract} is comparable to the sum of the left-hand side of \eqref{eq:equivalentlyrewritten} and the corresponding one for $\vartheta=0$.
\end{rem}

\begin{proof}[Proof of Theorem \ref{thm:conditional_weightedplancherel}]
From the estimate in assumption \ref{en:ass_planch}, together with a dyadic decomposition, one easily obtains (cf., e.g., \cite[proof of Corollary 4.5]{casarino_grushinsphere}) that
\[
\sup_{r>0} \, \esssup_{z' \in \RR^2} \, \Vol(z',r) \int_{\RR^2} \left| \Kern_{e^{-r^2 \opL}}(z,z') \right|^2 \,dz \lesssim 1. 
\]
This information, combined with the fact that $\dist$ is doubling (Proposition \ref{prp:distance}) and $\opL$ has the finite propagation speed property (Proposition \ref{prp:fps}), implies Gaussian-type heat kernel bounds for $\opL$ \cite{sikora_wave_2004,coulhon_sikora_2008}: there exists $b>0$ such that
\begin{equation}\label{eq:gaussian_heat_kernel}
|\Kern_{e^{-r^2 \opL}}(z,z')| \lesssim \Vol(z',r)^{-1} \exp(-b\,\dist(z,z')^2/r^2).
\end{equation}
for all $r>0$ and $z,z' \in \RR^2$. As a consequence, assumptions \ref{en:abstract_assdoubling} and \ref{en:abstract_assheat} of Theorem \ref{thm:abstract} are satisfied with $k=2$; to conclude, we only need to check assumption \ref{en:abstract_assL1} with $\varsigma = 2/2$.

 As in the proof of Theorem \ref{thm:abstract}, the doubling condition and the heat kernel bounds allow us to apply \cite[Theorem 6.1(ii)]{martini_crsphere} to the operator $\opL$ and deduce that
\begin{equation}\label{eq:standard_l1_grushin}
\sup_{r>0} \esssup_{z' \in \RR^2} \int_{\RR^2} |\Kern_{\mm(r^2\opL)}(z,z')|  (1+\dist(z,z')/r)^\alpha \,dz \lesssim_{\alpha,\beta} \|\mm\|_{\sobolev{\beta}{\infty}}.
\end{equation}
for all continuous $\mm : \RR \to \CC$ supported in $[1/4,1]$ and all $(\alpha,\beta)$ such that
\begin{equation}\label{eq:standard_range}
\alpha \geq 0 \qquad\text{and}\qquad \beta > \alpha+Q/2+1/q.
\end{equation}
On the other hand, from assumption \ref{en:ass_w_planch}, Proposition \ref{prp:weights} and the Cauchy--Schwarz inequality, we easily deduce that \eqref{eq:standard_l1_grushin} also holds for all $(\alpha,\beta)$ such that
\begin{equation}\label{eq:special_range}
\alpha<-1/2 \qquad\text{and}\qquad \beta=1/2.
\end{equation}
Indeed, if $\alpha<-1/2$, then $-2\alpha>1$. So, if we choose $\vartheta \in [0,1/2)$ sufficiently large that $-2\alpha>1+(1+D/2)(1-2\vartheta)$, then
\[\begin{split}
&\int_{\RR^2} |\Kern_{\mm(r^2\opL)}(z,z')|  (1+\dist(z,z')/r)^\alpha  \\
&\leq \left(\int_{\RR^2} (1+\dist(z,z')/r)^{2\alpha} (1+\weight_r(z,z'))^{-2\vartheta} \,dz \right)^{1/2}\\
&\qquad\times \left(\int_{\RR^2} \left |(1+\weight_r(z,z'))^{\vartheta} \Kern_{\mm(r^2\opL)}(z,z') \right|^2\,dz \right)^{1/2}\\
&\lesssim_{\alpha} \|\mm\|_{\sobolev{\vartheta}{q}} \\
&\lesssim_\alpha \|\mm\|_{\sobolev{1/2}{q}}.
\end{split}\]
Interpolation between the two ranges \eqref{eq:standard_range} and \eqref{eq:special_range} finally yields that \eqref{eq:standard_l1_grushin} also holds for
\[
\alpha=0 \qquad\text{and}\qquad \beta>1,
\]
thus verifying assumption \ref{en:abstract_assL1} of Theorem \ref{thm:abstract} with $\varsigma = 2/2$.
\end{proof}

The rest of the paper is aimed at proving that, under the assumptions \eqref{eq:2dassumptions} on $V$, the weighted Plancherel estimate \eqref{eq:weightedpl_abstract} indeed holds in the case $q=\infty$; in light of Theorem \ref{thm:conditional_weightedplancherel}, this will prove Theorem \ref{thm:main}. As we shall see, the unweighted estimate ($\vartheta=0$) of assumption \ref{en:ass_planch} holds in greater generality, and the full strength of \eqref{eq:2dassumptions} will only be needed for the weighted estimate ($\vartheta > 0$) in assumption \ref{en:ass_w_planch}.

As mentioned in the introduction, the proof of the weighted Plancherel estimate will be given in Section \ref{s:proofweightedplancherel}. Instrumental in the proof is the theory of one-dimensional Schr\"odinger operators developed in Sections \ref{s:halflineschroedinger} to \ref{s:matrixbounds}.

\section{Schr\"odinger equations on a half-line: generalities}\label{s:halflineschroedinger}

This section is devoted to establishing some general properties of real-valued solutions of a second-order ODE of the form
\[
-u''+Vu=w
\]
on a half-line, where the real-valued potential $V$ is assumed to be continuous. In particular, in Section \ref{ss:existence_recessive} we discuss the existence of recessive solutions for appropriate choices of the inhomogeneous term $w$. In Section \ref{ss:agmon} the exponential decay at infinity of such solutions is established. Finally, in Section \ref{ss:smoothdependence} some results on the smooth dependence on parameters are obtained.

While most proofs are relatively straightforward adaptations of standard techniques, the generality and uniformity of the obtained results will be crucial in the subsequent developments.

\subsection{Existence of solutions vanishing at infinity}\label{ss:existence_recessive}

The proof of the following result is an adaptation of arguments in \cite[Section 2.3]{berezin-shubin}, where the case of the homogeneous ODE ($w=0$) is considered.

\begin{thm}\label{thm:existence}
Let $I \subseteq \RR$ be an interval with $\sup I = +\infty$, and $V : I \to \RR$ be continuous and such that 
\begin{equation}\label{eq:potpos}
\liminf_{x\to+\infty} V > 0.
\end{equation}
Let $w : I \to \RR$ be continuous and such that 
\begin{equation}\label{eq:rhs}
\lim_{x \to +\infty} \frac{w(x)}{V(x)} = 0.
\end{equation}
Then, for all solutions $u$ on $I$ of
\begin{equation}\label{eq:ode}
-u'' + V u = w,
\end{equation}
the limit $\lim_{x \to \infty} u(x)$ exists and is one of $+\infty,-\infty,0$. Solutions with each of these limits exist, and those with limit zero form a one-dimensional affine subspace of $C^2(I;\RR)$.
\end{thm}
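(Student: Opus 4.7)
Since $\liminf_{x\to+\infty}V>0$, after replacing $I$ by a tail I may assume $V\geq c>0$ on $[a,\infty)=I$; any $C^2$-solution on $[a,\infty)$ extends uniquely back to the original $I$ by backwards integration of the IVP. Set $h\defeq w/V$; by \eqref{eq:rhs} $h$ is continuous on $[a,\infty)$ with $h\to 0$, hence bounded. The ODE reads $u''=V(u-h)$. My plan is to establish the trichotomy of limits first, and then construct representative solutions of each type.

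\textbf{Trichotomy.} The central observation is the following pointwise bound at critical points: if $x_0\in(a,\infty)$ is a local maximum of $u$, then $0\geq u''(x_0)=V(x_0)(u(x_0)-h(x_0))$ forces $u(x_0)\leq h(x_0)$; symmetrically $u(x_0)\geq h(x_0)$ at local minima. Combined with $h\to 0$, this shows that all sufficiently large local extrema of $u$ are close to $0$. I would then argue: if $\limsup u>L>0$, choose the threshold $X$ beyond which $|h|<L/2$, and let $x_1\geq X$ be the first point where $u$ upward-crosses the level $L$; $u$ cannot return to $L$ afterwards, since a second (downward) crossing $x_2$ would sandwich an interior local maximum $x^*\in(x_1,x_2)$ with $u(x^*)>L>L/2>h(x^*)$, contradicting the max-bound. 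Thus $u>L$ on a final tail, on which $u''=V(u-h)\geq cL/2>0$ forces $u\to+\infty$. Symmetrically $\liminf u<0$ yields $u\to-\infty$, and the remaining regime $\liminf u\geq 0\geq\limsup u$ gives $\lim u=0$.

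\textbf{Existence and $1$-dimensionality.} For a nonzero recessive homogeneous solution I would use a classical shooting construction: for each $b>a$, the Dirichlet problem $-u''+Vu=0$ on $[a,b]$ with $u(a)=1,\ u(b)=0$ is uniquely solvable (the homogeneous version vanishes by multiplying by $u$ and integrating, using $V>0$), and the maximum principle forces $0\leq\phi_b\leq 1$. Uniform $C^2$-bounds on compact subintervals via the ODE plus a diagonal Arzel\`a--Ascoli extraction yield a $C^1_{\loc}$-limit $\phi^*$: a bounded homogeneous solution with $\phi^*(a)=1$, hence $\phi^*\to 0$ by the trichotomy. To see that the recessive subspace equals $\RR\phi^*$: the IVP solution $\psi$ with $\psi(a)=0,\ \psi'(a)=1$ has $\psi,\psi'>0$ on $(a,\infty)$ (positivity propagates forward since $\psi''=V\psi$), so $\psi\to+\infty$ by the trichotomy; hence any recessive homogeneous solution vanishing at $a$ must also vanish there to first order, and therefore vanishes identically. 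Applying the same BVP--compactness strategy to the inhomogeneous equation with zero Dirichlet data, and using the max-principle bound $|u_b|\leq\sup_{[a,\infty)}|h|$ (an interior maximum of $u_b$ satisfies $Vu_b\leq w$), yields a bounded particular solution $u_p$, which again satisfies $u_p\to 0$ by the trichotomy.

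\textbf{Conclusion and main obstacle.} The full solution set is the two-parameter affine family $u_p+c_1\phi^*+c_2\psi$ with $c_1,c_2\in\RR$. Since $\phi^*\to 0$ and $\psi\to+\infty$, the solutions with $\lim u=0$ are exactly those with $c_2=0$, forming the $1$-dimensional affine subspace $u_p+\RR\phi^*$; taking $c_2>0$ and $c_2<0$ produces solutions tending to $+\infty$ and $-\infty$ respectively. The main obstacle is the trichotomy: while the local-extremum bound is easy, the oscillatory scenario $\limsup u>0>\liminf u$ has to be ruled out by the propagation argument above, and one must select the threshold level $L$ in tandem with the rate at which $h\to 0$, since the convexity supplied by the ODE is only conditional on $u-h$ having a definite sign.
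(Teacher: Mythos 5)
Your proposal is correct, and for the existence and one-dimensionality parts it genuinely differs from the paper's. For the trichotomy you exploit the ODE $u''=V(u-h)$ at local extrema (local max $\Rightarrow u\leq h$, local min $\Rightarrow u\geq h$) and then propagate via level crossings; the paper's Step~0 instead isolates a one-sided growth criterion ($u'(x_*)>0$ and $u(x_*)\geq\sup_{[x_*,\infty)}|h|$ force $u\to+\infty$) and invokes the mean value theorem. Both hinge on the same convexity and are essentially equivalent, though your phrasing ``let $x_1\geq X$ be the first upward crossing'' tacitly assumes such a crossing exists; you should split off the case where $u>L$ on a tail of $[X,\infty)$ directly (which already yields $u\to+\infty$). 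For existence of a recessive solution you solve Dirichlet problems on $[a,b]$ and extract a $C^1_{\loc}$-limit by Arzel\`a--Ascoli, whereas the paper shoots by varying the initial slope $u'(x_0)=\lambda$ and uses connectedness of $\RR$ (the open sets $\Lambda_{\pm\infty}$ are nonempty, so $\Lambda_0\neq\emptyset$): your route needs a uniform bound on $\phi_b'(a)$ (which follows from $0\leq\phi_b\leq1$ and convexity of $\phi_b$, but is worth stating), while the paper's avoids compactness entirely. Finally, for one-dimensionality you construct the explicit basis $\{\phi^*,\psi\}$ and argue directly, while the paper cites Berezin--Shubin for the one-dimensionality of the recessive subspace; your version is more self-contained.
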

\begin{proof}
Note first that, by the classical theory of linear ODE, solutions on $I$ of the equation \eqref{eq:ode} exist and form a two-dimensional affine subspace of $C^2(I;\RR)$.

\setcounter{step}{-1}
\begin{step}\label{step:zero}
Let $u$ be a solution of \eqref{eq:ode} on $I$. If there exists $x_* \in I$ such that $V>0$ on $[x_*,+\infty)$, $u'(x_*) > 0$, and $u(x_*) \geq \sup_{[x_*,+\infty)} |w/V|$, then $\lim_{x \to +\infty} u(x) = +\infty$.
\end{step}
\begin{proof}
We claim that $u(x) \geq u(x_*)$ for all $x \geq x_*$. Indeed, if this is not the case and $x_2 = \inf \{ x > x_* \tc u(x) < u(x_*) \}$, then $x_2 > x_*$ (because $u'(x_*) > 0)$ and $u(x) \geq u(x_*)  \geq \sup_{[x_*,+\infty)} |w/V|$ for all $x \in (x_*,x_2)$, whence $u''(x) = V(x) ( u(x) - w(x)/V(x) ) \geq 0$ and $u'(x) \geq u'(x_*) > 0$ for all $x \in (x_*,x_2)$, so $u(x_2) > u(x_*)$, which contradicts (by continuity of $u$) the definition of $x_2$.
		
Now, since $u \geq u(x_*) \geq \sup_{[x_*,+\infty)} |w/V|$ on $[x_*,+\infty)$, from \eqref{eq:ode} we deduce again that $u'' \geq 0$ on $[x_*,+\infty)$, whence $u' \geq u'(x_*) > 0$ on $[x_*,+\infty)$, which clearly implies that $\lim_{x \to +\infty} u(x) = +\infty$.
\end{proof}
	
Let $\solU$ be the set of all solutions $u : I \to \RR$ of \eqref{eq:ode} which do not tend to $\pm\infty$ as $x \to +\infty$ (note that we do not assume \emph{a priori} that $\lim_{x \to +\infty} u(x)$ exists for $u \in \solU$; we only ask that, if the limit exists, then it is neither $+\infty$ nor $-\infty$).
	
\begin{step}\label{step:one}
If $u \in \solU$, then $\limsup_{x \to +\infty} u(x) \geq 0$ and $\liminf_{x \to +\infty} u(x) \leq 0$.
\end{step}
\begin{proof}
Assume for a contradiction that $\liminf_{x \to +\infty} u(x) > 0$. Then, by \eqref{eq:ode} and \eqref{eq:rhs}, $\liminf_{x\to+\infty} \frac{u''(x)}{V(x)} > 0$, whence, by \eqref{eq:potpos}, $\liminf_{x\to+\infty} u''(x) > 0$ too. This clearly implies that $\lim_{x \to +\infty} u(x) =+ \infty$, thus contradicting that $u \in \solU$. In a similar way one rules out that $\limsup_{x \to +\infty} u(x) < 0$.
\end{proof}
	
\begin{step}\label{step:two}
If $u \in \solU$, then $\lim_{x \to +\infty} u(x) = 0$.
\end{step}
\begin{proof}
Assume for a contradiction that $u$ does not vanish at $+\infty$. From Step \ref{step:one} we deduce that $\limsup_{x \to +\infty} u(x)$ and $\liminf_{x \to +\infty} u(x)$ must be different, so one of them must be nonzero. Without loss of generality, we may assume that $\delta \defeq \limsup_{x \to +\infty} u(x) > 0$ and $\liminf_{x \to +\infty} u(x) \leq 0$. By \eqref{eq:potpos} and \eqref{eq:rhs} we can find $M \in I$ such that $|w/V| \leq \delta/3$ and $V > 0$ on $[M,+\infty)$. In addition, we may find $x_1 > x_0 > M$ such that $u(x_0) \leq 2\delta/3$ and $u(x_1) > 2\delta/3$. If $\bar x = \sup \{x \in [x_0,x_1) \tc u(x) \leq 2\delta/3 \}$, then, by continuity of $u$, $u(\bar x) = 2\delta/3 < u(x_1)$; consequently, by Lagrange's Mean Value Theorem there exists $x_* \in (\bar x,x_1)$ such that $u'(x_*) > 0$ and $u(x_*) > 2\delta/3$. By Step \ref{step:zero}, this implies that $\lim_{x \to +\infty} u(x) = +\infty$, thus contradicting that $u \in \solU$.
\end{proof}
	
Note that Step \ref{step:two} shows that all solutions $u$ of \eqref{eq:ode} on $I$ have limit at $+\infty$ and the limit must be one of $+\infty,-\infty,0$. We now show that solutions with each of these limits exist.
	
Take $x_0 \in I$ such that $\inf_{[x_0,+\infty)} V > 0$, and for all $\lambda \in \RR$ let $u_\lambda$ be the solution of \eqref{eq:ode} on $I$ such that $u(x_0) = 0$ and $u'(x_0) = \lambda$.
	
\begin{step}\label{step:three}
If $\lambda > \lambda'$ then $u_\lambda(x) > u_{\lambda'}(x)$ for all $x > x_0$.
\end{step}
\begin{proof}
The function $v = u_\lambda-u_{\lambda'}$ is a solution of the homogeneous ODE $-v'' + Vv = 0$ with $v(x_0) = 0$ and $v'(x_0) = \lambda-\lambda' > 0$, hence $v$ cannot vanish on $(x_0,+\infty)$ \cite[Corollary of Theorem 3.2, Section 2.3]{berezin-shubin} and consequently it must be strictly positive there.
\end{proof}
	
From Step \ref{step:two} we deduce that $\RR = \Lambda_{-\infty} \cup \Lambda_0 \cup \Lambda_{+\infty}$, where $\Lambda_\mu = \{ \lambda \in \RR \tc \lim_{x \to \infty} u_\lambda(x) = \mu \}$; moreover, by Step \ref{step:three}, the $\Lambda_\mu$ are intervals and $\Lambda_{\mu} < \Lambda_{\mu'}$ (in the sense that the inequality holds for any respective elements) whenever $\mu < \mu'$.
	
\begin{step}\label{step:four}
$\Lambda_{-\infty}$ and $\Lambda_{+\infty}$ are open and nonempty.
\end{step}
\begin{proof}
We only consider $\Lambda_{+\infty}$, the argument for $\Lambda_{-\infty}$ being analogous.
		
Let $M = \sup_{[x_0,x_0+1]} |u_0''(x)|$. Then, by Step \ref{step:three}, for all $\lambda>0$,
\[
u''_\lambda = Vu_\lambda - w \geq Vu_0-w = u_0'' \geq -M
\]
on $[x_0,x_0+1]$. Consequently, by Taylor's Theorem,
\[
u_\lambda'(x_0+1) \geq \lambda - M, \qquad u_\lambda(x_0+1) \geq \lambda - M/2.
\]
This shows that, if we take $\lambda$ sufficiently large, then we may ensure that $u_\lambda'(x_0+1) > 0$ and $u_\lambda(x_0+1) > \sup_{[x_0+1,+\infty)} |w/V|$, and Step \ref{step:zero} yields that $\lim_{x \to +\infty} u_\lambda(x) = +\infty$ in this case. This shows that $\Lambda_{+\infty}$ is nonempty.
		
Let now $\lambda_0 \in \Lambda_{+\infty}$, that is $\lim_{x \to +\infty} u_{\lambda_0}(x) = +\infty$. Then from \eqref{eq:ode} and \eqref{eq:rhs} we deduce that $\lim_{x \to+\infty} u_{\lambda_0}''(x) = +\infty$, whence also $\lim_{x \to +\infty} u_{\lambda_0}'(x) = +\infty$. In particular, we can find $x_* > x_0$ such that $u_{\lambda_0}'(x_*) > 0$ and $u_{\lambda_0}(x_*) > \sup_{[x_0,+\infty)} |w/V|$. By classical ODE theory, $u_\lambda(x_*)$ and $u'_\lambda(x_*)$ depend continuously on $\lambda$, hence the inequalities $u_{\lambda}'(x_*) > 0$ and $u_{\lambda}(x_*) > \sup_{[x_0,+\infty)} |w/V|$ also hold for all $\lambda$ sufficiently close to $\lambda_0$, and, for these values of $\lambda$, Step \ref{step:zero} yields that $\lim_{x \to +\infty} u_\lambda(x) = +\infty$, that is, $\lambda \in \Lambda_{+\infty}$. This shows that $\Lambda_{+\infty}$ is open.
\end{proof}
	
In conclusion, since $\RR$ is the disjoint union of $\Lambda_{-\infty}$, $\Lambda_0$ and $\Lambda_{+\infty}$, and moreover $\Lambda_{-\infty}$ and $\Lambda_{+\infty}$ are open and nonempty by Step \ref{step:four}, from the fact that $\RR$ is connected we deduce that $\Lambda_0 \neq \emptyset$.
	
This shows in particular that $\solU$ is nonempty. We also note that, for all $u_1,u_2 \in \solU$, the difference $v = u_1 - u_2$ is a solution of the homogeneous ODE $-v''+Vu = 0$ such that $\lim_{x \to +\infty} v(x) = 0$. Since the space of these solutions is one-dimensional \cite[Theorem 3.3, Section 2.3]{berezin-shubin}, we deduce that $\solU$ is a one-dimensional affine subspace of $C^2(I;\RR)$.
\end{proof}

\subsection{Agmon estimates}\label{ss:agmon}
In this section we exploit a classical technique due to Agmon \cite{agmon} to obtain information about the decay of recessive solutions.

\begin{thm}\label{agmon_general_thm}
Let $V : \Rpos \to \RR$ be continuous. Let $\rho : \Rpos\to \RR$ be Lipschitz and such that
\begin{equation}\label{agmon_metric_bound}
(\rho')^2\leq V_+,
\end{equation}
where $V_+(x) \defeq \max\{V(x),0\}$.
Assume that $w : \Rpos \to \RR$ is continuous and $u$ is a solution of
\begin{equation}\label{eq:agmon_ODE}
-u''(x) + V(x) \, u(x) = w(x) \quad (x\in \Rpos),
\end{equation}  
such that $\limsup_{x\to +\infty}|u(x)|<+\infty$. Then
\begin{equation}\label{agmon}
\int_b^{+\infty} e^{2\gamma\rho} V u^2 \lesssim_\gamma \int_a^{+\infty} e^{2\gamma\rho} \frac{w^2}{V} + \frac{1}{(b-a)^2} \int_a^b e^{2\gamma\rho} u^2
\end{equation}
for every $\gamma\in [0,1)$ and $a,b \in \Rpos$ such that $b \in (a,+\infty) \subseteq \{V>0\}$.
\end{thm}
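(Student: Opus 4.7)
The proof follows Agmon's classical integration-by-parts trick. Choose a Lipschitz cutoff $\chi=\chi_{R}$ with compact support in $[a,R+1]$, satisfying $\chi=0$ on $[0,a]$, $\chi$ linear on $[a,b]$ (slope $1/(b-a)$), $\chi=1$ on $[b,R]$, and $\chi$ linear on $[R,R+1]$. Multiply the ODE by $\chi^{2}e^{2\gamma\rho}u$ and integrate over $\Rpos$. Integration by parts, with no boundary contributions thanks to the compact support of $\chi$, yields
\begin{equation*}
\int\chi^{2}e^{2\gamma\rho}\bigl[(u')^{2}+Vu^{2}\bigr]
=\int\chi^{2}e^{2\gamma\rho}uw-\int\bigl(2\chi\chi'+2\gamma\chi^{2}\rho'\bigr)e^{2\gamma\rho}uu'.
\end{equation*}

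For $\gamma\in[0,1)$, I would now apply Young's inequality to each ``bad'' term on the right, introducing parameters $\epsilon_{1}\in(\gamma^{2},1)$, $\epsilon_{2}\in(0,1-\epsilon_{1})$, and $\epsilon_{3}>0$ small. The key input is the Agmon metric bound $(\rho')^{2}\le V_{+}$, which on $\supp\chi\subseteq(a,+\infty)\subseteq\{V>0\}$ coincides with $(\rho')^{2}\le V$. This gives
\begin{equation*}
|2\gamma\chi^{2}\rho'uu'|\le\epsilon_{1}\chi^{2}(u')^{2}+\frac{\gamma^{2}}{\epsilon_{1}}\chi^{2}Vu^{2},\qquad
|2\chi\chi'uu'|\le\epsilon_{2}\chi^{2}(u')^{2}+\frac{1}{\epsilon_{2}}(\chi')^{2}u^{2},
\end{equation*}
and analogously $|\chi^{2}uw|\le\epsilon_{3}\chi^{2}Vu^{2}+(4\epsilon_{3})^{-1}\chi^{2}w^{2}/V$. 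The constraint $\gamma<1$ is precisely what allows $\epsilon_{1},\epsilon_{2},\epsilon_{3}$ to be picked so that $\epsilon_{1}+\epsilon_{2}<1$ and $\gamma^{2}/\epsilon_{1}+\epsilon_{3}<1$, so that absorbing the corresponding terms back into the left-hand side produces a strictly positive coefficient for $\int\chi^{2}e^{2\gamma\rho}Vu^{2}$. The resulting inequality reads
\begin{equation*}
c_{\gamma}\int\chi^{2}e^{2\gamma\rho}Vu^{2}\lesssim_{\gamma}\int\chi^{2}e^{2\gamma\rho}\frac{w^{2}}{V}+\int(\chi')^{2}e^{2\gamma\rho}u^{2}.
\end{equation*}

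Since $\chi\equiv1$ on $[b,R]$, the left-hand side bounds $\int_{b}^{R}e^{2\gamma\rho}Vu^{2}$. The function $(\chi')^{2}$ is supported on $[a,b]\cup[R,R+1]$ with values $(b-a)^{-2}$ and $1$ respectively, and therefore contributes the desired $(b-a)^{-2}\int_{a}^{b}e^{2\gamma\rho}u^{2}$ from the left ramp, plus a boundary remainder $\int_{R}^{R+1}e^{2\gamma\rho}u^{2}$ from the right ramp. Sending $R\to+\infty$ and invoking Fatou's lemma on the (monotone in $R$) left-hand integral reduces the theorem to showing that the boundary remainder vanishes along some sequence $R_{n}\to+\infty$.

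The main technical obstacle lies in this last step: under the bare assumption $\limsup_{x\to+\infty}|u(x)|<+\infty$, the weight $e^{2\gamma\rho}$ may itself grow, so the remainder need not tend to zero in an obvious way. I would address this by widening the right ramp to $[R,R+s]$ with slope $1/s$, turning the remainder into $s^{-2}\int_{R}^{R+s}e^{2\gamma\rho}u^{2}$, and then optimizing the pair $(R,s)$: boundedness of $u$ combined with the Lipschitz bound $|\rho'|\le V_{+}^{1/2}$ controls the growth of the integrand enough that, for a suitable choice of $s=s(R)$, the remainder can be driven to zero along a subsequence. This closes the argument and yields the stated estimate.
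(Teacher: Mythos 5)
The main body of your argument—testing the equation against $\chi^{2}e^{2\gamma\rho}u$, applying Young's inequality with $(\rho')^{2}\le V_{+}$, and absorbing into the left side using $\gamma<1$—is correct and is essentially what the paper does (the paper phrases it via the localisation identity for the Dirichlet form, but the computations are the same, and the paper's trapezoidal cutoff with ramp on $[b,c]$ versus your flat-top cutoff with ramp on $[R,R+s]$ is an inessential difference).

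However, the last step contains a genuine gap, and you have identified exactly where it is without actually resolving it. Your claim that ``boundedness of $u$ combined with the Lipschitz bound $|\rho'|\le V_{+}^{1/2}$ controls the growth of the integrand enough that, for a suitable $s=s(R)$, the remainder can be driven to zero along a subsequence'' is not true. The hypothesis $(\rho')^{2}\le V_{+}$ places \emph{no a priori bound} on the growth of $\rho$, because $V$ is arbitrary continuous and may grow as fast as one likes. For example, if $V(x)\ge 4x^{2}$ one may take $\rho(x)=x^{2}$; then for every choice of $R,s$ one has
\begin{equation*}
\frac{1}{s^{2}}\int_{R}^{R+s}e^{2\gamma\rho}u^{2}\gtrsim
\frac{1}{s^{2}}\int_{R}^{R+s}e^{2\gamma x^{2}}\,dx\gtrsim \min\Bigl\{\frac{e^{2\gamma R^{2}}}{s},\ \frac{e^{2\gamma(R+s)^{2}}}{s^{2}(R+s)}\Bigr\},
\end{equation*}
and one can check that no choice of $s=s(R)$ makes this tend to zero. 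So optimising the pair $(R,s)$ cannot salvage the argument. The missing idea, which the paper uses, is to \emph{truncate $\rho$}: first prove the inequality under the additional assumption that $\rho$ is bounded above — in that case $e^{2\gamma\rho}\le C$ and $u$ is bounded on $[b,\infty)$, so the remainder is $\lesssim CM^{2}/s\to 0$ — and then remove the extra assumption by applying the bounded case to $\min\{\rho,T\}$ (which is still Lipschitz with $(\partial_{x}\min\{\rho,T\})^{2}\le V_{+}$) and letting $T\to+\infty$ by monotone convergence. Without this truncation step your proof does not close.
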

\begin{proof}
Let $\formE(u_1,u_2) \defeq \int_{\Rpos} u_1' u_2'+\int_{\Rpos} Vu_1u_2$ and $\formE(u) \defeq \formE(u,u)$. One first observes that, by the Leibniz rule,
the \emph{localisation identity}
\begin{equation*}
\formE(\eta v) = \formE(\eta^2 v, v) + \int_{\Rpos}(\eta')^2 v^2
\end{equation*}
holds for any $v$ Lipschitz and $\eta$ Lipschitz and compactly supported. Additionally, by \eqref{eq:agmon_ODE} and integration by parts,
\[
\formE(\zeta,u) = \int_{\Rpos} \zeta w
\]
for any $\zeta$ Lipschitz and compactly supported.
We apply the above identities to $v = u$, $\zeta = \eta^2 u$, $\eta=\chi_c e^{\gamma\rho}$, with $\gamma$ and $\rho$ as in the statement, and
\begin{equation*}
\chi_c(x) \defeq \begin{cases}
0 &\text{ if }x\leq a,\\
(x-a)/(b-a) &\text{ if }a\leq x\leq b,\\
(c-x)/(c-b) &\text{ if } b\leq x\leq c,\\
0 &\text{ if }x\geq c
\end{cases}
\end{equation*}
for $c>b>a$. The result is:
\[\begin{split}
	\int_{\Rpos} \chi_c^2 e^{2\gamma\rho} V u^2 
	&\leq \formE(\eta u)	\\
	&= \int_{\Rpos}\chi_c^2 e^{2\gamma\rho}uw +\int_{\Rpos}\left(\chi_c'+\gamma\rho'\chi_c\right)^2e^{2\gamma\rho}u^2 \\
	&\leq  \varepsilon\int_{\Rpos} \chi_c^2 e^{2\gamma\rho}Vu^2+ \frac{1}{4\varepsilon}\int_{\Rpos} \chi_c^2 e^{2\gamma\rho} \frac{w^2}{V} \\
	&\quad+ (1+\varepsilon^{-1})\int_{\Rpos} (\chi_c')^2e^{2\gamma\rho} u^2 + (1+\varepsilon)\gamma^2\int_{\Rpos} (\rho')^2\chi_c^2e^{2\gamma\rho}u^2\\
	&\leq  (\varepsilon+(1+\varepsilon)\gamma^2)\int_{\Rpos} \chi_c^2 e^{2\gamma\rho}Vu^2+ \frac{1}{4\varepsilon}\int_{\Rpos} \chi_c^2 e^{2\gamma\rho} \frac{w^2}{V} \\
	&\quad+ \frac{(1+\varepsilon^{-1})}{(b-a)^2}\int_a^b e^{2\gamma\rho}u^2 + \frac{(1+\varepsilon^{-1})}{(c-b)^2}\int_b^c e^{2\gamma\rho}u^2 ,
\end{split}\]
where $\varepsilon>0$ is arbitrary, and we have used that $(a,+\infty) \subseteq \{V>0\}$ and \eqref{agmon_metric_bound}. Since $\gamma<1$, we can choose $\varepsilon$ so small that $(\varepsilon+(1+\varepsilon)\gamma^2)<1$. This allows to absorb the first term of our estimate into the left-hand side. Moreover, \emph{if $\rho$ is bounded above}, then we can let $c$ tend to $+\infty$ and use the assumption $\limsup_{x\to +\infty}|u(x)|<+\infty$ to conclude that \eqref{agmon} holds. To remove the additional boundedness assumptions, one may just apply the estimate to $\min\{\rho,T\}$ in place of $\rho$, where $T\in \RR$, and let $T$ tend to $+\infty$. We omit the easy details. 
\end{proof}

A particularly important instance of the previous estimate is given in the following statement.

\begin{thm}\label{agmon_thm}
Let $V : \Rpos \to \RR$ be continuous and strictly increasing. Assume that $w : \Rpos\to \RR$ is continuous and $u$ is a solution of \eqref{eq:agmon_ODE}
such that $\limsup_{x\to +\infty}|u(x)|<+\infty$. Let $A<B$ be nonnegative values of $V$. If $\gamma\in [0,1)$, then
\[\begin{split}
&\int_{V>B}\exp\left(2\gamma \int_{V^{\inv}(B)}^x \sqrt{V}\right) V(x) \, u(x)^2 \,dx \\
&\lesssim_{\gamma} \int_{V>B} \exp\left(2\gamma \int_{V^{\inv}(B)}^x \sqrt{V}\right) \frac{w(x)^2}{V(x)} \,dx \\
&\quad+ \int_{A<V<B} \frac{w(x)^2}{V(x)} \,dx + \frac{1}{|\{A<V<B\}|^2} \int_{A<V<B} u(x)^2 \,dx .
\end{split}\]
\end{thm}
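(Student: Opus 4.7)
The plan is to derive this statement as a direct specialization of Theorem \ref{agmon_general_thm}, with a specific choice of the weight $\rho$ adapted to the level sets of $V$. Since $V$ is strictly increasing and $A<B$ are values of $V$, set $a \defeq V^{\inv}(A)$ and $b \defeq V^{\inv}(B)$, so that $0<a<b$, $\{A<V<B\} = (a,b)$ (with $|\{A<V<B\}| = b-a$), and $\{V>B\}=(b,+\infty)$. Moreover, strict monotonicity gives $V(x)>V(a)=A\geq 0$ for $x>a$, so $(a,+\infty)\subseteq\{V>0\}$, which is the hypothesis needed to apply Theorem \ref{agmon_general_thm} with this choice of $a$ and $b$.

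Next, I would define $\rho : \Rpos \to \RR$ by
\[
\rho(x) = \begin{cases} 0 & \text{if } x \leq b, \\ \int_b^x \sqrt{V(t)}\,dt & \text{if } x > b. \end{cases}
\]
This $\rho$ is locally Lipschitz, with $\rho'(x) = \sqrt{V(x)}\,\chi_{(b,+\infty)}(x)$ a.e., so $(\rho')^2 \leq V_+$ everywhere (with equality on $(b,+\infty)$ and both sides vanishing on $(0,b)$). Although $\rho$ may fail to be globally Lipschitz when $V$ is unbounded, this is harmless: the final paragraph of the proof of Theorem \ref{agmon_general_thm} already handles unbounded weights via the truncation $\min\{\rho,T\}$ followed by $T\to+\infty$.

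With this data, Theorem \ref{agmon_general_thm} delivers the estimate \eqref{agmon} for the chosen $\rho$, $a$, and $b$. Since $\rho\equiv 0$ on $[a,b]$, one has $e^{2\gamma\rho}\equiv 1$ there, and since $\rho(x)=\int_{V^{\inv}(B)}^x\sqrt{V}$ on $(b,+\infty)$, the exponential factor on $(b,+\infty)$ matches exactly the one appearing in the statement. Splitting the right-hand side integral $\int_a^{+\infty} e^{2\gamma\rho}\,w^2/V$ according to the partition $(a,+\infty) = (a,b)\cup(b,+\infty)$, the three resulting pieces are precisely the three summands in the desired bound, while the left-hand side $\int_b^{+\infty} e^{2\gamma\rho}Vu^2$ is the claimed weighted integral over $\{V>B\}$.

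I do not expect a real obstacle here: essentially all the analytic work has already been done in Theorem \ref{agmon_general_thm}, and what remains is the careful matching of regions via the inverse $V^{\inv}$. The only verification of substance is the pointwise bound $(\rho')^2 \leq V_+$ for the candidate weight, which is immediate from the construction, and the observation that $(a,+\infty)\subseteq\{V>0\}$, which follows from the strict monotonicity of $V$ together with $A\geq 0$.
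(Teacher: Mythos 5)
Your proof is correct and takes essentially the same route as the paper, which also applies Theorem \ref{agmon_general_thm} with $a=V^{\inv}(A)$, $b=V^{\inv}(B)$; the paper's chosen weight is $\rho(x)=\int_{V^{\inv}(B)}^x\sqrt{V}$ without truncation, so that $\rho<0$ on $(a,b)$ and the middle two terms are bounded using $e^{2\gamma\rho}\le 1$ there, whereas your $\rho\equiv 0$ on $(0,b]$ makes them match exactly; either choice works. (One small slip: in your parenthetical, $V_+$ need not vanish on $(0,b)$ — only $(\rho')^2$ does — but the inequality $(\rho')^2\le V_+$ still holds, so the argument is unaffected.)
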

\begin{proof}	
Apply Theorem \ref{agmon_general_thm} with $a=V^{\inv}(A)$, $b=V^{\inv}(B)$, and
$\rho(x) = \int_{V^{\inv}(B)}^x \sqrt{V}$.
\end{proof}

The $L^2$ estimates of the previous theorem imply corresponding pointwise bounds, contained in the following corollary. 

\begin{cor}\label{pointwise_agmon_cor}
Let $V : \Rpos \to \RR$ be continuous and strictly increasing. Assume that $w : \Rpos \to \RR$ is continuous and $u$ is a solution of \eqref{eq:agmon_ODE}
such that $\lim_{x \to +\infty} u(x) = 0$. 
Let $A<B<C$ be nonnegative values of $V$ and $\varepsilon,\gamma\in (0,1)$. Define
\begin{equation*}
\auxC_j \defeq \int_{V>C} \exp\left(-2\varepsilon\int_{V^{\inv}(B)}^y\sqrt{V}\right) V(y)^j \, dy \qquad (j = 0,1)
\end{equation*} 
and 
\[\begin{split}
\auxD &\defeq \int_{V>B} \exp\left(2\gamma \int_{V^{\inv}(B)}^y \sqrt{V} \right) \frac{w(y)^2}{V(y)} \,dy \\
&\quad +\int_{A<V<B} \frac{w(y)^2}{V(y)} \,dy + \frac{1}{|\{A<V<B\}|^2} \int_{A<V<B} u(y)^2 \,dy.
\end{split}\]
If $x \in \{ V > C \}$ and $\beta \geq 0$ are such that $\beta+\varepsilon \leq \gamma$, then
\begin{equation}\label{eq:pointwise_agmon_der}
|u'(x)| \lesssim_{\gamma} \sqrt{\auxC_1\auxD} \exp\left(-\beta\int_{V^{\inv}(B)}^x\sqrt{V}\right);
\end{equation}	
if moreover $\beta+3\varepsilon \leq \gamma$, then
\begin{equation}\label{eq:pointwise_agmon}
|u(x)| \lesssim_{\gamma} \auxC_0\sqrt{\auxC_1\auxD} \exp\left(-\beta\int_{V^{\inv}(B)}^x\sqrt{V}\right).
\end{equation}
\end{cor}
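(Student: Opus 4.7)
Both estimates follow from the weighted $L^2$ bound of Theorem~\ref{agmon_thm} by combining the Fundamental Theorem of Calculus with Cauchy--Schwarz. The plan is to set $\rho(y) \defeq \int_{V^\inv(B)}^y \sqrt{V}$, so that the weights in $\auxC_j$ and $\auxD$ take the form $e^{-2\varepsilon\rho}$ and $e^{2\gamma\rho}$ respectively. Since $V$ is strictly increasing and $C > B$, for $x \in \{V > C\}$ one has $[x,+\infty) \subseteq \{V > C\} \subseteq \{V > B\}$, so $\rho$ is increasing on $[x,+\infty)$ with $\rho(x) \geq 0$; the resulting inequality $\rho(y) \geq \rho(x)$ for $y \geq x$ will play a decisive role.

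The first step is to justify the identity $u'(x) = -\int_x^{+\infty} u''(y)\, dy = -\int_x^{+\infty}(Vu - w)(y)\, dy$ by showing $u'(x) \to 0$ as $x \to +\infty$. Applying Cauchy--Schwarz to split each integrand against a factor $V^{1/2} e^{-\gamma\rho}$ and invoking Theorem~\ref{agmon_thm} shows that both $Vu$ and $w$ are integrable on $[x,+\infty)$; hence $u''$ is integrable there, so $u'$ has a finite limit at infinity which, since $u(x) \to 0$, must vanish. To prove \eqref{eq:pointwise_agmon_der}, the next step is to multiply the resulting integral identity by $e^{\beta\rho(x)}$ and split each integrand by Cauchy--Schwarz as $V(y)|u(y)| = (V^{1/2}(y)\, e^{\beta\rho(x)-\gamma\rho(y)}) \cdot (V^{1/2}(y)\, e^{\gamma\rho(y)}\, |u(y)|)$, and analogously $|w(y)| = (V^{1/2}(y)\, e^{\beta\rho(x)-\gamma\rho(y)}) \cdot (V^{-1/2}(y)\, e^{\gamma\rho(y)}\, |w(y)|)$. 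The high-weight factors are controlled by $\sqrt{\auxD}$ via Theorem~\ref{agmon_thm}, while the hypothesis $\beta + \varepsilon \leq \gamma$ combined with $\rho(y) \geq \rho(x)$ yields $e^{2\beta\rho(x) - 2\gamma\rho(y)} \leq e^{-2\varepsilon\rho(y)}$, whence the dual factor is bounded by $\sqrt{\auxC_1}$, giving \eqref{eq:pointwise_agmon_der}.

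Finally, for \eqref{eq:pointwise_agmon}, the plan is to write $u(x) = -\int_x^{+\infty} u'(y)\, dy$ and to insert the pointwise bound just obtained, now with parameter $\beta_1 \defeq \beta + 2\varepsilon$ in place of $\beta$; the hypothesis $\beta + 3\varepsilon \leq \gamma$ ensures $\beta_1 + \varepsilon \leq \gamma$, making \eqref{eq:pointwise_agmon_der} applicable. Multiplying by $e^{\beta\rho(x)}$ and using $\rho(y) \geq \rho(x)$ once more gives $e^{\beta\rho(x) - \beta_1 \rho(y)} \leq e^{-2\varepsilon\rho(y)}$, so the resulting integration over $[x,+\infty)$ produces a factor at most $\auxC_0$, completing the proof. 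The main obstacle is the exponent bookkeeping: each invocation of the inequality $\rho(x) \leq \rho(y)$ consumes an $\varepsilon$ in the weight budget, and the distinct hypotheses $\beta + \varepsilon \leq \gamma$ and $\beta + 3\varepsilon \leq \gamma$ record precisely how many such invocations are needed for each of the two estimates.
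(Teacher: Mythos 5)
Your proposal is correct and follows essentially the same route as the paper's own proof: you set $\rho(y) = \int_{V^\inv(B)}^y\sqrt{V}$ (the paper works with $I(y) = e^{\rho(y)}$), represent $u'(x)$ as $-\int_x^{+\infty} u''$ after establishing $u'(+\infty) = 0$, feed in the $L^2$ Agmon bound from Theorem~\ref{agmon_thm} via Cauchy--Schwarz, and consume $\varepsilon$'s of weight at each invocation of the monotonicity $\rho(x) \leq \rho(y)$ on $[x,+\infty)\subseteq \{V>C\}$. The only organizational difference is that you split $u'' = Vu - w$ before applying Cauchy--Schwarz, whereas the paper applies Cauchy--Schwarz to $\bigl(\int_x^{+\infty}|u''|\bigr)^2$ first and then uses $(u'')^2/V\le 2Vu^2+2w^2/V$; and for the second estimate you re-invoke the already-proved pointwise bound on $u'$ with the shifted parameter $\beta+2\varepsilon$, whereas the paper plugs in the optimal exponent $\gamma-\varepsilon$ and extracts the factor $I(x)^{-(\gamma-3\varepsilon)}$ by splitting off $I^{-2\varepsilon}$ inside the integral — the arithmetic is identical. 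The one small omission is that (as the paper notes) one should first reduce to the case $\auxC_1\auxD<\infty$, since otherwise the Cauchy--Schwarz factors may diverge and the integrability of $u''$ on $[x,+\infty)$, needed to conclude $u'(+\infty)=0$, is not justified; in that degenerate case the estimates hold trivially.
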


\begin{proof}
Define $I(y) = \exp\left(\int_{V^{\inv}(B)}^y \sqrt{V}\right)$ for $y \in \{ V > 0 \}$.

Let us first prove the estimate for $u'$. 
We may assume that $\auxC_1\auxD  < \infty$, otherwise the desired estimate holds trivially.

Let $x \in \{ V > C \}$. For every $x_*>x$,
\begin{multline}\label{eq:est_diffder}
|u'(x_*) - u'(x)|^2
\leq \left(\int_{x}^{+\infty} |u''| \right)^2 \\
\leq \int_{x}^{+\infty} I^{-2\gamma} V  
\cdot \int_{x}^{+\infty} I^{2\gamma} \frac{(u'')^2}{V} 
\eqdef J_1(x) \cdot J_2(x).
\end{multline}
Now, trivially, if $\varepsilon \leq \gamma$, then
\begin{equation}\label{eq:est_J1}
J_1(x) \leq \auxC_1 I(x)^{-2(\gamma-\varepsilon)},
\end{equation}
as $I$ is an increasing function. Moreover, as $u$ is a solution of \eqref{eq:agmon_ODE}, we deduce that
\[
\frac{(u'')^2}{V} \leq 2 V u^2 +2 \frac{w^2}{V},
\]
and therefore Theorem \ref{agmon_thm} gives that
\begin{equation}\label{eq:est_J2}
J_2(x) \leq \int_{V>B} \exp\left(2\gamma \int_{V^{\inv}(B)}^y \sqrt{V}\right) \frac{u''(y)^2}{V(y)} \,dy \lesssim_\gamma \auxD.
\end{equation}
As $\varepsilon \leq \gamma$, the estimates \eqref{eq:est_J1} and \eqref{eq:est_J2} imply in particular that the integrals $J_1(x)$ and $J_2(x)$ are finite and tend to zero as $x \to +\infty$. From \eqref{eq:est_diffder} we then deduce that $\lim_{y \to \infty} u'(y)$ exists and is finite. On the other hand, $\lim_{y\to+\infty} u(y) =0$, so $u'$ must vanish at $+\infty$. Taking the limit as $x_* \to +\infty$ in \eqref{eq:est_diffder} and bounding the right-hand side by \eqref{eq:est_J1} and \eqref{eq:est_J2} finally gives the desired bound \eqref{eq:pointwise_agmon_der} for $u'(x)$.

Analogously we can prove the estimate for $u$. Again, we may assume that $\auxC_0 \sqrt{\auxC_1 \auxD} < \infty$.
Since $\lim_{y \to +\infty}u(y)=0$, by \eqref{eq:pointwise_agmon_der},
\[
|u(x)|
\leq \int_x^{+\infty} |u'|
\lesssim_{\gamma} \sqrt{\auxC_1 \auxD} \int_x^{+\infty} I^{-(\gamma-\varepsilon)} 
\lesssim_{\gamma} \auxC_0 \sqrt{\auxC_1 \auxD} I(x)^{-(\gamma-3\varepsilon)} 
\]
whenever $3\varepsilon \leq \gamma$, and \eqref{eq:pointwise_agmon} is proved.
\end{proof}

We now apply the above estimates to solutions of
\begin{equation}\label{equation}
-u''(x) + V(x) \, u(x) = E \, u(x) \qquad (x\in \Rpos),
\end{equation}
where the potential $V$ belongs to the following class.

\begin{dfn}
Let $\halfpot$ be the class of the functions $V : \Rpos \to \Rpos$ which are continuous, strictly increasing, and such that $\lim_{x \to 0^+}V(x)=0$ and $\lim_{x \to +\infty}V(x)=+\infty$.
\end{dfn}

In other words, a $V \in \halfpot$ is ``half of a potential'' in the class $\pot$ introduced in Section \ref{basic_sec} below.

\begin{cor}\label{agmon_second_cor}
Let $V\in \halfpot$ and $E>0$, and set $\delta = E \, |\{E< V<2E\}|^2$. Let $u$ be a solution of \eqref{equation} such that $\limsup_{x \to +\infty} |u(x)| < \infty$.
Then
\begin{align}
\int_{V\geq 2E} V  u^2  &\lesssim \delta^{-1} E \int_{E < V < 2E} u^2 , \label{eq:auxenbd} \\
\int_{V\geq 2E} u^2  &\lesssim \delta^{-1} \int_{E<V<2E} u^2 . \label{eq:agmon_second_cor}
\end{align}
\end{cor}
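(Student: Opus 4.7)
The plan is to reduce the bound in \eqref{equation} to an application of Theorem~\ref{agmon_thm} for a suitably shifted potential, with the inhomogeneous term taken to be zero.

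First, I would rewrite \eqref{equation} as $-u'' + \tilde V u = 0$ with $\tilde V := V - E$. Since $V \in \halfpot$ is continuous and strictly increasing on $\Rpos$, so is $\tilde V$, so the hypotheses of Theorem~\ref{agmon_thm} are satisfied by $\tilde V$ with $w \equiv 0$, and the boundedness assumption $\limsup_{x\to+\infty}|u(x)|<\infty$ is provided by the statement. The key observation is the correspondence of level sets: taking $A = 0$ and $B = E$ as nonnegative values of $\tilde V$, we have
\[
\{\tilde V > B\} = \{V > 2E\}, \qquad \{A < \tilde V < B\} = \{E < V < 2E\}.
\]

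Next, I would apply Theorem~\ref{agmon_thm} to $\tilde V$ with $\gamma = 0$ (so the exponential weights become~$1$) and $w = 0$, obtaining
\[
\int_{V > 2E} (V-E)\, u^2 \lesssim \frac{1}{|\{E<V<2E\}|^2} \int_{E<V<2E} u^2.
\]
On $\{V \geq 2E\}$ the inequality $V - E \geq V/2$ holds, so the left-hand side dominates $\tfrac{1}{2}\int_{V \geq 2E} V u^2$. Multiplying by $E$ and recalling $\delta = E\,|\{E<V<2E\}|^2$ gives \eqref{eq:auxenbd} at once.

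Finally, for \eqref{eq:agmon_second_cor} I would simply use $V \geq 2E$ on the region of integration, which gives $u^2 \leq V u^2/(2E)$; dividing \eqref{eq:auxenbd} by~$2E$ then yields the second bound.

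I do not foresee any real obstacle: the whole argument is a direct specialisation of the $w=0$, $\gamma=0$ case of Theorem~\ref{agmon_thm} to the homogeneous form of the eigenvalue equation, with the only subtlety being the bookkeeping of level sets under the shift $V \mapsto V - E$.
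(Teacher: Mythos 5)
Your proof is correct and is essentially the paper's argument: both apply Theorem~\ref{agmon_thm} to the shifted potential $V-E$ with $w=0$, $\gamma=0$, $A=0$, $B=E$, then use the lower bound for $V-E$ on $\{V\geq 2E\}$ to pass to $\int Vu^2$, and obtain the second inequality from the first via $V\geq 2E$. The only difference is cosmetic: you make explicit the step $V-E\geq V/2$ on $\{V\geq 2E\}$, which the paper absorbs into the $\gtrsim$.
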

\begin{proof}
By applying Theorem \ref{agmon_thm} to the potential $V-E$ with $w=0$, $\gamma=0$, $A=0$, and $B=E$, we immediately obtain that
\begin{equation}\label{eq:agmon_first_cor}
\int_{E<V<2E} u^2  \gtrsim |\{E< V<2E\}|^2 \int_{V\geq 2E} V u^2 .
\end{equation}
Multiplying both sides of \eqref{eq:agmon_first_cor}
 by $E$ gives \eqref{eq:auxenbd}, and
 \eqref{eq:agmon_second_cor} follows immediately from \eqref{eq:auxenbd} and the trivial bound $\int_{V\geq 2E} V  u^2  \geq 2E \int_{V\geq 2E} u^2$.
\end{proof}

Finally, we record here an elementary proposition for solutions $u$ of \eqref{equation}, which will be used in Section \ref{ss:gaps}.

\begin{prp}\label{prp:elementaryids}
Let $V : \Rpos \to \RR$ be continuous and $E \in \RR$. Let $u$ be a solution of \eqref{equation} that is recessive at $+\infty$ (that is, $\lim_{x \to +\infty} u(x) = \lim_{x \to +\infty} u'(x) = 0$).
\begin{enumerate}[label=(\roman*)]
\item\label{en:half_energy_prp} For every $a>0$,
\begin{equation}
\int_a^{+\infty} (u')^2 + \int_a^{+\infty} V u^2  = E \int_a^{+\infty} u^2 - u(a) \, u'(a).
\end{equation}
\item\label{en:v(0)_prp}
If moreover $V \in C^1(\Rpos)$ and $\lim_{x\to +\infty} V(x) u(x)^2 = 0$, then, for every $a>0$,
\begin{equation}\label{eq:sonin}
(E-V(a)) \, u(a)^2 + u'(a)^2 = \int_a^{+\infty} V' \, u^2 .
\end{equation}
\end{enumerate}
\end{prp}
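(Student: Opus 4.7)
The plan for both parts is to multiply the eigenfunction equation \eqref{equation} by an appropriate test quantity ($u$ for part \ref{en:half_energy_prp}, $u'$ for part \ref{en:v(0)_prp}), integrate over a finite interval $[a,T]$, apply integration by parts, and finally pass to the limit $T \to +\infty$ using the decay hypotheses on $u$, $u'$ and (for \ref{en:v(0)_prp}) $V u^2$.

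For part \ref{en:half_energy_prp}, I would multiply \eqref{equation} by $u$ and integrate on $[a,T]$, obtaining after one integration by parts
\[
\int_a^T (u')^2 + \int_a^T V u^2 = E \int_a^T u^2 - u(a)\,u'(a) + u(T)\,u'(T).
\]
Since $u$ and $u'$ both tend to $0$ at $+\infty$ by the recessive assumption, the boundary term $u(T)\,u'(T)$ vanishes in the limit, yielding the desired identity (with the convergence of each of the three integrals on the left either being assumed finite or read off from the identity, in the sense that their limits as $T \to +\infty$ are consistent).

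For part \ref{en:v(0)_prp}, I would instead multiply \eqref{equation} by $2u'$ and integrate on $[a,T]$. Using $2u''u' = ((u')^2)'$ and $2uu' = (u^2)'$, and integrating by parts the term $\int V(u^2)'$, I get
\[
u'(a)^2 - u'(T)^2 + V(T)u(T)^2 - V(a)u(a)^2 - \int_a^T V'\,u^2 = E\,u(T)^2 - E\,u(a)^2.
\]
The recessive condition gives $u'(T) \to 0$ and $u(T) \to 0$, while the extra hypothesis $V(x) u(x)^2 \to 0$ kills the remaining boundary term $V(T) u(T)^2$. Rearranging produces \eqref{eq:sonin}.

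Both steps are textbook integration-by-parts arguments; the only mildly delicate point is the vanishing of the boundary contributions at $+\infty$, which is handled by the recessive hypothesis in \ref{en:half_energy_prp} and by the additional assumption $\lim_{x\to+\infty} V(x)u(x)^2 = 0$ in \ref{en:v(0)_prp}. No further machinery is needed.
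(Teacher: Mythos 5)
Your proof is correct and takes essentially the same approach as the paper: multiply the ODE by $u$ (resp.\ $2u'$), integrate by parts on a finite interval, and send the right endpoint to $+\infty$ using the recessive hypothesis (and, for part (ii), the extra decay of $Vu^2$). For part (ii) the paper phrases it as integrating the pointwise identity $\bigl((E-V)u^2 + (u')^2\bigr)' = -V'u^2$, which is exactly what your ``multiply by $2u'$ and integrate by parts'' computation produces, just packaged as a single exact derivative.
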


\begin{proof}
The equation \eqref{equation} and an integration by parts yields
\[
\int_a^b (u')^2 + \int_a^b V u^2 = E \int_a^b u^2 
+ u(b) u'(b) - u(a) u'(a).
\]
Letting $b$ tend to $+\infty$ gives part \ref{en:half_energy_prp}.

As for part \ref{en:v(0)_prp}, by integrating the identity
\begin{equation}
\left( (E-V) u^2 + (u')^2 \right)' = -V' u^2,
\end{equation}
we get, for all $a<b$,
\begin{equation*}
\left. (E-V) u^2 + (u')^2\right|_a^b = - \int_a^b V' u^2 .
\end{equation*}
Letting $b$ tend to $+\infty$ and using that $\lim_{x\to +\infty} \left( u(x)^2 + u'(x)^2 + V(x) u(x)^2 \right) = 0$ gives \eqref{eq:sonin}. 
\end{proof}

\subsection{Smooth dependence on parameters}\label{ss:smoothdependence}

Let $I \subseteq \RR$ be a closed upper half-line, $V : I \to \RR$ be continuous, and $\alpha \geq 0$. Define the spaces of real-valued functions
\begin{gather*}
\banP_V = \Biggl\{ W \in C^0(I;\RR) \tc \sup_I \frac{|W|}{1+|V|} < \infty \Biggr\},\\
\banR_{V,\alpha} = \Biggl\{ w \in C^0(I;\RR) \tc \lim_{x \to \infty} \frac{w(x)}{1+|V(x)|} = 0,\, \int_I e^{2\alpha x} \frac{w(x)^2}{1+|V(x)|} \,dx < \infty \Biggr\}, \\
\banD_{V,\alpha} = \{ u \in C^2(I;\RR) \tc (1+|V|) u \in \banR_{V,\alpha}, \, u'' \in \banR_{V,\alpha} \}.
\end{gather*}
Clearly $\banP_V$, $\banR_{V,\alpha}$ and $\banD_{V,\alpha}$ are Banach spaces with the norms
\begin{gather*}
\|W\|_{\banP_V} = \|W/(1+|V|)\|_\infty,\\
\|w\|_{\banR_{V,\alpha}} = \|w/(1+|V|)\|_\infty + \| e^{\alpha \cdot} w/\sqrt{1+|V|} \|_2,\\
\|u\|_{\banD_{V,\alpha}} = \|u\|_\infty + \|u''/(1+|V|)\|_\infty + \|e^{\alpha \cdot}  u \sqrt{1+|V|} \|_2 + \| e^{\alpha \cdot} u''/\sqrt{1+|V|} \|_2.
\end{gather*}
Note that $V \in \banP_V$.

\begin{prp}\label{u'_prp}
Let $I$ be a closed upper half-line, $V : I \to \RR$ be continuous, and $\alpha \geq 0$. Assume that $K_{V,\alpha} \defeq \left(\int_I (1+|V(x)|) \, e^{-2\alpha x} \,dx\right)^{1/2} < \infty$. Then, for all $u \in \banD_{V,\alpha}$, $\lim_{x \to \infty} u'(x) = 0$. Moreover, for all $\beta \in (0,\alpha)$, if $K_{V,\alpha-\beta} < \infty$, then
\[
|u(x)| \leq \beta^{-1} K_{V,\alpha-\beta} \|u\|_{\banD_{V,\alpha}} e^{-\beta x}, \qquad |u'(x)| \leq K_{V,\alpha-\beta} \|u\|_{\banD_{V,\alpha}} e^{-\beta x}
\]
for all $x \in I$.
\end{prp}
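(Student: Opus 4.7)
\medskip

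The plan is to reconstruct $u'$ and $u$ from $u''$ via the fundamental theorem of calculus, and to bound the resulting integrals by Cauchy--Schwarz, splitting the weight $e^{2\alpha y} = e^{2(\alpha-\beta)y} e^{2\beta y}$ so that the $e^{-2\beta y}$ factor yields the exponential decay. The first ingredient is to check that $u'' \in L^1(I)$: writing
\[
\int_I |u''(y)| \,dy = \int_I \frac{|u''(y)|}{\sqrt{1+|V(y)|}} e^{\alpha y} \cdot \sqrt{1+|V(y)|}\, e^{-\alpha y} \,dy,
\]
Cauchy--Schwarz yields the bound $K_{V,\alpha} \, \|u\|_{\banD_{V,\alpha}}$. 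Hence $u'(x)$ has a finite limit $L$ at $+\infty$, obtained by integrating $u''$.

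To prove that $L=0$, I would invoke the fact that $u \in L^\infty(I)$ (the norm contains $\|u\|_\infty$); if $L \neq 0$ then $u(x) \to \pm\infty$, a contradiction. In fact, something stronger holds: $(1+|V|) u \in \banR_{V,\alpha}$ by definition of $\banD_{V,\alpha}$, and the defining condition $\lim_{x\to\infty} w(x)/(1+|V(x)|) = 0$ applied to $w = (1+|V|)u$ shows that $u(x) \to 0$ at infinity. This will be needed in the final step.

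With $u'(x) \to 0$ at infinity established, I would write $u'(x) = -\int_x^{\infty} u''(y)\,dy$ and again apply Cauchy--Schwarz, but now over $[x,\infty)$:
\[
|u'(x)| \leq \Bigl\| e^{\alpha\cdot} u''/\sqrt{1+|V|} \Bigr\|_2 \left( \int_x^{\infty} (1+|V(y)|)\, e^{-2\alpha y} \,dy \right)^{1/2}.
\]
The second factor is bounded by $e^{-\beta x} K_{V,\alpha-\beta}$ because $e^{-2\alpha y} = e^{-2(\alpha-\beta)y} e^{-2\beta y}$ and $e^{-2\beta y} \leq e^{-2\beta x}$ for $y \geq x$. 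The first factor is at most $\|u\|_{\banD_{V,\alpha}}$. This delivers the claimed bound on $|u'(x)|$.

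Finally, since $u(x) \to 0$ at $+\infty$ by the above remark, I would write $u(x) = -\int_x^\infty u'(y)\,dy$ and plug in the just-established pointwise bound to get
\[
|u(x)| \leq K_{V,\alpha-\beta} \, \|u\|_{\banD_{V,\alpha}} \int_x^\infty e^{-\beta y}\,dy = \beta^{-1} K_{V,\alpha-\beta}\, \|u\|_{\banD_{V,\alpha}}\, e^{-\beta x},
\]
which matches the stated bound. There is no genuine obstacle here: the entire argument is a careful bookkeeping exercise in Cauchy--Schwarz combined with the observation that membership in $\banD_{V,\alpha}$ forces both $u$ and $u'$ to vanish at $+\infty$.
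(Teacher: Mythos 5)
Your proposal is correct and follows essentially the same route as the paper's proof: establish that $u''\in L^1(I)$ via Cauchy--Schwarz with the weight split $e^{\alpha y}\cdot e^{-\alpha y}$, deduce that $u'$ has a limit at $+\infty$ which must be $0$ because $u$ is bounded (the paper frames this slightly differently, by first observing $\liminf|u'|=0$, but it is the same observation), then write $u'(x)=-\int_x^\infty u''$ and $u(x)=-\int_x^\infty u'$ and apply Cauchy--Schwarz with the split $e^{-2\alpha y}=e^{-2(\alpha-\beta)y}e^{-2\beta y}$. The bookkeeping of constants matches the statement, and you correctly note that $u(x)\to 0$ (via the $\banR_{V,\alpha}$ condition on $(1+|V|)u$) is what justifies the final integral representation of $u$, a point the paper leaves implicit.
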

\begin{proof}
Note first that $\liminf_{x \to \infty} |u'(x)| = 0$ (otherwise $\lim_{x \to \infty} u(x) = \pm\infty$, which is not the case, since $u \in \banD_{V,\alpha}$). Now, for all $x_1,x_2 \in I$ with $x_1 \leq x_2$, by the Cauchy--Schwarz inequality,
\[
|u'(x_2)-u'(x_1)| \leq \int_{x_1}^{x_2} |u''|  \leq \|u\|_{\banD_{V,\alpha}} \left(\int_{x_1}^\infty (1+|V(t)|) \, e^{-2\alpha t} \,dt \right)^{1/2} , 
\]
and the last integral tends to zero as $x_1 \to \infty$ under our assumptions, so the limit $\lim_{x \to \infty} u'(x)$ exists in $\RR$, and therefore it must be zero.

Assume now that $K_{V,\alpha-\beta} <\infty$ for some $\beta \in (0,\alpha)$; then, by taking the limit as $x_2 \to \infty$ in the previous inequality we obtain that
\[
|u'(x_1)| \leq \|u\|_{\banD_{V,\alpha}} \left(\int_{x_1}^\infty (1+|V(t)|) \, e^{-2\alpha t} \,dt \right)^{1/2} \leq \|u\|_{\banD_{V,\alpha}} K_{V,\alpha-\beta} e^{-\beta x_1}, 
\]
and consequently
\[
|u(x_1)| \leq \int_{x_1}^\infty |u'|  \leq \|u\|_{\banD_{V,\alpha}} K_{V,\alpha-\beta} \int_{x_1}^\infty e^{-\beta t} \,dt = \beta^{-1} \|u\|_{\banD_{V,\alpha}} K_{V,\alpha-\beta} e^{-\beta x_1}
\]
for all $x_1 \in I$.
\end{proof}

In order to present the results below about smooth dependence on parameters, it is convenient to make use of real analyticity of maps between Banach spaces; for basic definitions and results about real-analytic maps in the context of Banach spaces, we refer to \cite[Chapter 4]{buffoni_toland}.

\begin{lem}\label{lem:banach}
Let $I \subseteq \RR$ be a closed upper half-line, $V : I \to \RR$ be continuous, and let $\alpha \geq 0$.
\begin{enumerate}[label=(\roman*)]
\item\label{en:corex_bdd} For all $W \in \banP_V$, the Schr\"odinger operator $\opH[W] = -\partial_x^2+ W$ maps $\banD_{V,\alpha}$ into $\banR_{V,\alpha}$ boundedly.
\item\label{en:corex_smooth} The map $\Theta : \banP_V \times \banD_{V,\alpha} \ni (W,u) \mapsto \opH[W] u \in \banR_{V,\alpha}$ is real-analytic and
\[
d\Theta_{(W,u)}(H,f) = \opH[W] f + Hu
\]
for all $(W,u),(H,f) \in \banP_V \times \banD_{V,\alpha}$.
\item\label{en:corex_gamma} The map $\RR \times \RR \ni (t,E) \mapsto tV-E \in \banP_V$ is linear and bounded.
\end{enumerate}
Assume now that $\liminf_{x \to +\infty} V(x) > \alpha^2$.
\begin{enumerate}[label=(\roman*),resume]
\item\label{en:corex_surj} The operator $\opH[V] : \banD_{V,\alpha} \to \banR_{V,\alpha}$ is surjective.
\item\label{en:corex_kernel} The kernel of $\opH[V]$ in $\banD_{V,\alpha}$ is one-dimensional, and coincides with the set of the $u \in C^2(I;\RR)$ such that $-u''+Vu = 0$ and $\lim_{x \to \infty} u(x) = 0$.
\end{enumerate}
\end{lem}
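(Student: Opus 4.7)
Parts \ref{en:corex_bdd}, \ref{en:corex_smooth}, and \ref{en:corex_gamma} are routine unwindings of the definitions. For \ref{en:corex_bdd}, the plan is to factor $Wu=\bigl(W/(1+|V|)\bigr)\cdot(1+|V|)u$: the first factor is pointwise bounded by $\|W\|_{\banP_V}$ and the second lies in $\banR_{V,\alpha}$ by definition of $\banD_{V,\alpha}$, so $Wu\in\banR_{V,\alpha}$; combined with $u''\in\banR_{V,\alpha}$ this gives $\opH[W]u\in\banR_{V,\alpha}$ with norm $\lesssim(1+\|W\|_{\banP_V})\,\|u\|_{\banD_{V,\alpha}}$. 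For \ref{en:corex_smooth}, the only nonlinear part of $\Theta$ is the bounded bilinear map $(W,u)\mapsto Wu$, so $\Theta$ is a polynomial of degree at most two, hence real-analytic, with the stated differential. Part \ref{en:corex_gamma} is immediate from $\|tV-E\|_{\banP_V}\leq|t|+|E|$.

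The substance of the lemma is in \ref{en:corex_surj} and \ref{en:corex_kernel}, where the strict inequality $\liminf_{x\to+\infty}V(x)>\alpha^2$ enters decisively. For \ref{en:corex_kernel}, the first observation is that every $u\in\banD_{V,\alpha}$ automatically vanishes at infinity, since $(1+|V|)u\in\banR_{V,\alpha}$ forces $u=(1+|V|)u/(1+|V|)\to 0$. By Theorem \ref{thm:existence} applied to the homogeneous equation, the space of $C^2$-solutions of $-u''+Vu=0$ that vanish at infinity is one-dimensional; hence the kernel of $\opH[V]$ sits inside this space, and the reverse inclusion will follow once I show that every such recessive solution actually lies in $\banD_{V,\alpha}$. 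For \ref{en:corex_surj}, given $w\in\banR_{V,\alpha}$, Theorem \ref{thm:existence} likewise produces a $C^2$-solution $u$ of $-u''+Vu=w$ with $u(x)\to 0$; its hypotheses are met because $\liminf V>\alpha^2\geq 0$ and $w/V\to 0$ (the latter because $V$ is eventually bounded below by a positive constant and $w/(1+|V|)\to 0$). In both cases, the remaining task is to promote the pointwise vanishing of $u$ to full $\banD_{V,\alpha}$-norm control.

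The key technical step, common to \ref{en:corex_surj} and \ref{en:corex_kernel}, is an application of the Agmon estimate of Theorem \ref{agmon_general_thm}. Setting $L\defeq\liminf_{x\to+\infty}V(x)>\alpha^2$, the plan is to pick $\alpha'\in(\alpha,\sqrt{L}\,)$, set $\gamma\defeq\alpha/\alpha'\in(0,1)$, and use the Lipschitz weight $\rho(x)=\int_0^x\min\{\alpha',\sqrt{V_+(y)}\}\,dy$, which satisfies $(\rho')^2\leq V_+$ globally and $\rho(x)=\alpha' x+O(1)$ at infinity (since $V\geq(\alpha')^2$ eventually). Choosing $a<b$ with $[a,+\infty)\subseteq\{V>0\}$, Theorem \ref{agmon_general_thm} yields
\[
\int_b^{+\infty} e^{2\gamma\rho}\, V u^2 \lesssim_\gamma \int_a^{+\infty} e^{2\gamma\rho}\, \frac{w^2}{V} + \frac{1}{(b-a)^2}\int_a^b e^{2\gamma\rho}\, u^2.
\]
Because $2\gamma\rho(x)=2\alpha x+O(1)$ for $x$ large, the left-hand side is comparable to $\int_b^{+\infty} e^{2\alpha x}\,V u^2$, while the right-hand side is bounded by $C\|w\|_{\banR_{V,\alpha}}^2+C\|u\|_{L^\infty([a,b])}^2<\infty$. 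Together with the already-established $u\to 0$, this gives $(1+|V|)u\in\banR_{V,\alpha}$; using $u''=Vu-w$ one reads off $u''\in\banR_{V,\alpha}$ as well, completing the verification that $u\in\banD_{V,\alpha}$.

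The main obstacle anticipated is recovering the \emph{exact} rate $\alpha$ in the weighted $L^2$-norms: Agmon naturally loses a factor of $\gamma<1$, so one has to pick up the slack from the strict inequality $\liminf V>\alpha^2$ by tuning $\alpha'>\alpha$ so that $\gamma\alpha'=\alpha$ exactly. Once this balance is struck, the remaining norm estimates are standard.
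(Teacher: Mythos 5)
Your proposal is correct and follows essentially the same route as the paper: parts \ref{en:corex_bdd}--\ref{en:corex_gamma} by direct verification of the definitions, and parts \ref{en:corex_surj}--\ref{en:corex_kernel} via Theorem \ref{thm:existence} to produce a recessive solution followed by the Agmon estimate of Theorem \ref{agmon_general_thm} with $\gamma$ close to $1$ to upgrade it to $\banD_{V,\alpha}$-membership. The only noteworthy difference is that your truncated weight $\rho(x)=\int_0^x\min\{\alpha',\sqrt{V_+}\}$ is somewhat tidier than the paper's linear choice $\gamma\rho(x)=\alpha x$, since yours satisfies the hypothesis $(\rho')^2\leq V_+$ of Theorem \ref{agmon_general_thm} globally rather than only on a terminal half-line.
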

\begin{proof}
Parts \ref{en:corex_bdd} to \ref{en:corex_gamma} are easily checked; we only remark that the map $\Theta$ can be written as the sum of $(W,u) \mapsto -u''$ and $(W,u) \mapsto Wu$, which are a bounded linear map and a bounded bilinear map respectively.
	 	
Assume now that $\liminf_{x \to +\infty} V(x) > \alpha^2$. Let $w \in \banR_{V,\alpha}$. Then, by Theorem \ref{thm:existence}, there exists $u \in C^2(I;\RR)$ such that $\opH[V] u = -u''+Vu = w$ and $\lim_{x \to \infty} u(x) = 0$. We now show that every such $u$ is in $\banD_{V,\alpha}$. Since $w \in \banR_{V,\alpha}$, we know that $\lim_{x \to +\infty} w(x)/V(x) = 0$, so from the ODE we deduce that $\lim_{x \to +\infty} u''(x)/V(x) =0$ too. Moreover, from Theorem \ref{agmon_general_thm}, applied with $\gamma\rho(x) = \alpha x$ and $\gamma$ sufficiently close to $1$, and the fact that $\int_I e^{2\alpha x} w(x)^2/(1+|V(x)|) \,dx < +\infty$, we deduce that $\int_I e^{2\alpha x} (1+|V(x)|) u(x)^2 \,dx < +\infty$, so the ODE implies that $\int_I e^{2\alpha x} u''(x)^2/(1+|V(x)|)  \,dx < +\infty$. Hence $u \in \banD_{V,\alpha}$.
	 	
The above argument shows that $\opH[V] : \banD_{V,\alpha} \to \banR_{V,\alpha}$ is surjective, and that, moreover, the kernel of $\opH[V]$ in $\banD_{V,\alpha}$ coincides with the set of the $u \in C^2(I;\RR)$ such that $-u''+Vu = 0$ and $\lim_{x \to \infty} u(x) = 0$, which we know to be a one-dimensional subspace of $C^2(I;\RR)$. This proves parts \ref{en:corex_surj} and \ref{en:corex_kernel}.
\end{proof}
	 
\begin{thm}\label{thm:smoothdependence}
Let $I \subseteq \RR$ be a closed upper half-line. Let $V : I \to \RR$ be continuous, $\alpha \geq 0$, and set $\Omega_{V,\alpha} = \{ (t,E) \in \RR^+ \times \RR \tc \liminf_{x \to \infty} tV(x) > E + \alpha^2 \}$. Then, for all $(t,E) \in \Omega_{V,\alpha}$, there exists a unique $u_{t,E} \in C^2(I;\RR)$ such that
\[
-u_{t,E}'' + tV u_{t,E} = E u_{t,E}
\]
on $I$, $\int_I u_{t,E}^2 = 1$, and $u_{t,E}(x) > 0$ for all sufficiently large $x$. 
Moreover, $u_{t,E} \in \banD_{V,\alpha}$ and the map $\Omega_{V,\alpha} \ni (t,E) \mapsto u_{t,E} \in \banD_{V,\alpha}$ is real-analytic.
\end{thm}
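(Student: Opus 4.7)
The plan splits into two parts: the pointwise existence and uniqueness of $u_{t,E}$, and its real-analytic dependence on $(t,E)$. For the first, I would fix $(t,E)\in\Omega_{V,\alpha}$ and apply Lemma \ref{lem:banach}\ref{en:corex_kernel} to the potential $W=tV-E$, whose hypothesis $\liminf W>\alpha^2$ is precisely the definition of $\Omega_{V,\alpha}$. This supplies a one-dimensional kernel of $\opH[W]$ inside $\banD_{V,\alpha}$, consisting of $C^2$ solutions of the ODE tending to $0$ at $+\infty$. Picking a generator $v$, second-order linear ODE theory forces $v$ to have only isolated zeros, hence constant sign near $+\infty$; after a sign flip we may take $v(x)>0$ for large $x$. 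Since $v\in\banD_{V,\alpha}\subset L^2(I)$, setting $u_{t,E}=v/\|v\|_2$ delivers the required solution. Uniqueness in $C^2(I;\RR)$ follows because any such solution lies in $L^2$ by the normalization, hence vanishes at $+\infty$ by Theorem \ref{thm:existence}, hence belongs to the same one-dimensional kernel; the normalization and sign conditions fix the scalar.

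For the analytic dependence, the plan is to invoke the real-analytic implicit function theorem in Banach spaces. Fixing $(t_0,E_0)\in\Omega_{V,\alpha}$ and writing $u_0=u_{t_0,E_0}$, I would consider
\[
F \tc \Omega_{V,\alpha}\times\banD_{V,\alpha}\to\banR_{V,\alpha}\times\RR, \qquad F(t,E,u)=(\opH[tV-E]u,\,\langle u,u_0\rangle_{L^2}-1),
\]
which by Lemma \ref{lem:banach}\ref{en:corex_smooth}--\ref{en:corex_gamma} is real-analytic (the second component is a bounded linear functional in $u$ because the $\banD_{V,\alpha}$-norm dominates $\|u\|_{L^2}$, since $e^{\alpha x}\sqrt{1+|V|}$ is bounded below by a positive constant on $I$). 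The linearization
\[
\partial_u F(t_0,E_0,u_0) v=(\opH[t_0V-E_0]v,\,\langle v,u_0\rangle_{L^2})
\]
is bijective: surjectivity follows from Lemma \ref{lem:banach}\ref{en:corex_surj} by correcting any preimage with a suitable multiple of $u_0$, while injectivity follows from Lemma \ref{lem:banach}\ref{en:corex_kernel} combined with $\langle u_0,u_0\rangle_{L^2}=1$. By the open mapping theorem it is a Banach isomorphism, and the analytic IFT yields a real-analytic branch $(t,E)\mapsto u(t,E)\in\banD_{V,\alpha}$ in a neighborhood of $(t_0,E_0)$. The rescaled $\tilde u(t,E)=u(t,E)/\|u(t,E)\|_{L^2}$ is still analytic (since $\|u_0\|_{L^2}=1\ne 0$), has unit $L^2$-norm, solves the ODE, and is $\banD_{V,\alpha}$-close --- hence $L^\infty$-close --- to $u_0$, so remains positive at $+\infty$. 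By the uniqueness already established, $\tilde u(t,E)=u_{t,E}$, yielding local and thus global real-analyticity on $\Omega_{V,\alpha}$.

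The main hurdle will be verifying the bijectivity of $\partial_u F$, which crucially exploits the fact that $u_0$ has nonzero self-pairing under the $L^2$ inner product and that the inclusion $\banD_{V,\alpha}\hookrightarrow L^2(I)$ is bounded; without these, the normalization functional would be ill-suited to pick out a complement to the one-dimensional kernel. The remaining steps --- confirming real-analyticity of $F$ as a composition of bounded linear and bilinear maps, identifying the analytic output of the IFT with an element of the one-dimensional kernel given by Lemma \ref{lem:banach}\ref{en:corex_kernel}, and checking that the final rescaling by $\|u\|_{L^2}$ preserves both analyticity and positivity at infinity --- are routine bookkeeping once the IFT setup is in place.
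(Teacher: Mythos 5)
Your proposal is correct and follows essentially the same strategy as the paper: the one-dimensional kernel and surjectivity of $\opH[tV-E]$ from Lemma~\ref{lem:banach}, combined with the real-analytic implicit function theorem. The only variation is that the paper builds the quadratic normalization $\int_I u^2 = 1$ directly into the map $\Phi$ and thereby avoids any final rescaling, whereas you use the linear constraint $\langle u, u_0\rangle_{L^2} = 1$ and then renormalize by $\|u\|_{L^2}$; both work equally well.
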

\begin{proof}
Note first that, if $(t,E) \in \Omega \defeq \Omega_{V,\alpha}$, then a solution $u \in C^2(I;\RR)$ of $-u''+tVu = Eu$ tends to $0$ or $\pm\infty$ at $+\infty$, so if $u \in L^2(I)$ then necessarily $\lim_{x \to +\infty} u(x) = 0$. 
	 	
Note also that, for all $(t,E) \in \RR^+ \times \RR$, $\banD_{tV-E,\alpha}=\banD_{V,\alpha}$ and $\banR_{tV-E,\alpha}=\banR_{V,\alpha}$, with equivalent norms. In particular, from Lemma \ref{lem:banach}, if $(t,E) \in \Omega$, then the set
\[
\banK_{tV-E} = \left\{ u \in C^2(I;\RR) \tc -u''+tVu=Eu, \, \lim_{x\to+\infty}u(x) = 0 \right\}
\]
is the kernel of $\opH[tV-E]$ in $\banD_{V,\alpha}$ and is a one-dimensional subspace thereof; moreover, by classical Sturm--Liouville theory \cite[Corollary of Theorem 3.2, Section 2.3]{berezin-shubin}, every nonzero $u \in \banK_{tV-E}$ is strictly positive or strictly negative in a neighbourhood of $+\infty$. Since $\banD_{V,\alpha} \subseteq L^2(I)$, there exist exactly two elements $u \in \banK_{tV-E}$ with $\int_{I} u^2 = 1$, and $u_{t,E}$ is uniquely determined by additionally requiring that $u_{t,E} > 0$ in a neighbourhood of $+\infty$.
	 	
Define now the map $\Phi : \RR^2 \times \banD_{V,\alpha} \to \RR \times \banR_{V,\alpha}$ by
\[
\Phi(t,E;u) = \left(\int_I u^2, \opH[tV-E] u \right).
\]
By Lemma \ref{lem:banach}, the map $\Phi$ is real-analytic and
\[
d\Phi_{(t,E;u)}(s,F;v) = \left(2\int_I u v, \opH[tV-E] v + (sV-F)u \right)
\]
for all $(t,E),(s,F) \in \RR^2$ and $u,v \in \banD_{V,\alpha}$. 
	 	
Let now $(t,E) \in \Omega$ and set $u=u_{t,E}$. We claim that the map
\[
\banD_{V,\alpha} \ni v \mapsto d\Phi_{(t,E;u)}(0,0;v) \in \RR \times \banR_{V,\alpha}
\]
is an isomorphism of Banach spaces, i.e., it is injective and surjective. Indeed, for all $v \in \banD_{V,\alpha}$, if $d\Phi_{(t,E;u)}(0,0;v) = 0$, then $\int_I u v = 0$ and $\opH[tV-E]v = 0$; the second condition tells us that $v \in \banK_{tV-E} = \RR u$, and together with the first condition this implies that $v = 0$. This shows that $\Phi$ is injective. As for surjectivity, if $w \in \banR_{V,\alpha}$ and $h \in \RR$, then by Lemma \ref{lem:banach} there exists $v_0 \in \banD_{V,\alpha}$ such that $\opH[tV-E] v_0 = w$; in particular, for all $\lambda \in \RR$, we also have $\opH[tV-E](v_0+\lambda u) = w$, and moreover $\int_I u(v_0+\lambda u) = \lambda +\int_I u v_0$, hence we can choose $\lambda \in \RR$ so that $v = v_0+\lambda u$ satisfies $2\int_I u v = h$.

By the Implicit Function Theorem (see, e.g., \cite[Theorem 4.5.4]{buffoni_toland}), there exist open neighbourhoods $A$ of $(t,E)$ in $\Omega$ and $B$ of $u_{t,E}$ in $\banD_{V,\alpha}$, and a real-analytic map $\Psi : A \to B$ such that $\Psi(t,E) = u_{t,E}$ and, for all $(s,F;v) \in A \times B$,
\[
\Phi(s,F;v) = (1,0) \qquad\text{if and only if}\qquad v = \Psi(s,F).
\]
In particular, for all $(s,F) \in A$, $\opH[sV-F] \Psi(s,F) = 0$ and $\int_I \Psi(s,F)^2 = 1$, which implies that $\Psi(s,F) = \pm u_{s,F}$. Up to shrinking $A$, we may assume that $A \subseteq [t_0,+\infty) \times (-\infty,E_0]$ for some $(t_0,E_0) \in \Omega$; in particular, we can find $x_* \in I$ such that $[x_*,+\infty) \subseteq \{ x \in I \tc t_0 V(x)-E_0>0 \} \subseteq \{ x \in I \tc sV(x)-F>0 \}$ for all $(s,F) \in A$. Hence, by Sturm--Liouville theory, $u_{s,F} > 0$ on $[x_*,+\infty)$; since $\Psi(s,F)(x_*)$ is a continuous function of $(s,F)$, we conclude that $\Psi(s,F) = u_{s,F}$, that is, $(s,F) \mapsto u_{s,F}$ is real-analytic in a neighbourhood of $(t,E)$.
\end{proof}

\section{Schr\"odinger equations on a half-line: regular potentials}\label{s:halflineregular}

\subsection{Summary of the results}
The general results of Section \ref{s:halflineschroedinger} are applied here to obtain pointwise and integral bounds for real-valued $L^2$ solutions of
\begin{equation}\label{eq:ODE_regularpot}
-u''(x) + V(x) \, u(x) = E \, u(x) \qquad (x\in \Rpos)
\end{equation} 
under a quantitative $C^1$ assumption on the potential.

\begin{dfn}
Let $\kappa>0$. We denote by $\halfpot_1(\kappa)$ the class of positive functions $V\in C^1(\Rpos)$ such that
\begin{equation*}
	\kappa^{-1} V(x) \leq xV'(x) \leq \kappa V(x) \qquad \forall x\in\Rpos.
\end{equation*}
\end{dfn}

Our estimates will have the desired uniformity as long as the natural ``adimensional'' quantity
\[
\sqrt{E} \, |\{V\leq E\}|
\]
is bounded away from zero. This condition will be automatically satisfied in later applications to eigenfunctions of Schr\"odinger operator on $\RR$, thanks to the eigenvalue estimates of Theorem \ref{thm:eigenvalues} below. We point out, however, that the following bounds apply to arbitrary recessive solutions of \eqref{eq:ODE_regularpot} on $\Rpos$, which need not correspond to eigenfunctions of Schr\"odinger operators on $\RR$; this generality will be crucial in the proof of the eigenvalue gaps in Section \ref{ss:gaps}.

\begin{thm}[Pointwise bounds]\label{thm:pointwise_eigen}
Let $V\in \halfpot_1(\kappa)$. If $\delta>0$, $E>0$ are such that 
\begin{equation}\label{sigma_bound}
	\sqrt{E} \, |\{V\leq E\}| \geq \delta,
\end{equation} 
and $u\in L^2(\Rpos)$ is a real-valued solution of \eqref{eq:ODE_regularpot},
then 
\begin{equation}\label{classical_eigen}
\sup_{V < E/2} \left\{ u^2 + \frac{(u')^2}{E} \right\} 
\lesssim_{\kappa,\delta} \frac{1}{|\{V<E\}|} \int_{E/2 < V < 2 E} u^2.
\end{equation}
Moreover, there exists $c=c(\kappa)>0$ such that the inequality
\begin{equation}\label{exponential_decay_eigen}
u(x)^2+\frac{u'(x)^2}{E}\lesssim_{\kappa,\delta}\frac{e^{-cx\sqrt{V(x)}}}{|\{V\leq E\}|}\int_{E<V<2E}u^2
\end{equation} holds for every $x$ such that $V(x)\geq 4E$.
\end{thm}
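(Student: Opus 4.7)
I would prove the two estimates separately, using quite different ideas.

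For the exponential decay \eqref{exponential_decay_eigen}, the natural tool is Corollary \ref{pointwise_agmon_cor}, applied to the shifted potential $\tilde V := V - E$ with $w = 0$, $A = 0$, $B = E$, $C = 3E$, and parameters $\beta, \varepsilon, \gamma \in (0,1)$ satisfying $\beta + 3\varepsilon \leq \gamma$. The hypothesis $u(x) \to 0$ as $x \to +\infty$ follows from Theorem \ref{thm:existence} applied to the equation in the form $-u'' + (V-E)u = 0$, combined with $u \in L^2(\Rpos)$. The corollary then yields $|u'|^2 \lesssim_\gamma \auxC_1 \auxD\,e^{-2\beta\sigma}$ and $|u|^2 \lesssim_\gamma \auxC_0^2 \auxC_1 \auxD\,e^{-2\beta\sigma}$ on $\{V \geq 4E\}$, where $\sigma(x) := \int_{V^\inv(2E)}^x \sqrt{V - E}$. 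The remaining work is (i) to estimate $\auxC_0 \lesssim_\varepsilon E^{-1/2}$ (immediate, since $V - E \geq 3E$ on the region), $\auxC_1 \lesssim_{\kappa,\delta,\varepsilon} E\,V^\inv(E)$ (using that $V \in \halfpot_1(\kappa)$ implies the power-law growth $V(\lambda y)/V(y) \in [\lambda^{1/\kappa}, \lambda^\kappa]$ for $\lambda \geq 1$, together with \eqref{sigma_bound} to ensure sufficiently fast decay of the integrand), and $\auxD \simeq_\kappa V^\inv(E)^{-2} \int_{E<V<2E} u^2$; these combine to give exactly $\auxC_0^2 \auxC_1 \auxD \lesssim V^\inv(E)^{-1} \int u^2$ and $\auxC_1 \auxD \lesssim (E/V^\inv(E)) \int u^2$, matching the target prefactors; and (ii) to verify $\sigma(x) \gtrsim_\kappa x\sqrt{V(x)}$ on $\{V \geq 4E\}$, which follows from $V - E \geq V/2$ on $\{V \geq 2E\}$, the estimate $(y\sqrt{V(y)})' \simeq_\kappa \sqrt{V(y)}$ (a direct consequence of $yV'/V \in [1/\kappa, \kappa]$), and the inequality $x\sqrt{V(x)} \gtrsim_\kappa V^\inv(2E)\sqrt E$ on $\{V(x) \geq 4E\}$, which absorbs the boundary term arising when integrating from $V^\inv(2E)$.

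The classical bound \eqref{classical_eigen} is more delicate, since Agmon-type estimates are ineffective in the oscillatory region $\{V < E\}$. My key idea is to introduce the WKB-style amplitude $\rho^2 := Q/(E - V)$ on $\{V < E\}$, where $Q := (u')^2 + (E - V) u^2$ is the conserved-type energy. Using $Q' = -V' u^2$ and the ODE, a short computation gives
\[
(\rho^2)' = \frac{V'\,(u')^2}{(E - V)^2} \geq 0,
\]
so $\rho^2$ is non-decreasing on $\{V < E\}$. Since $F := u^2 + (u')^2/E \leq \rho^2$ on $\{V < E\}$ (because $E - V \leq E$), this monotonicity yields
\[
\sup_{V \leq E/2} F \leq \rho^2(x_1) = \frac{2 Q(x_1)}{E}, \qquad x_1 := V^\inv(E/2).
\]
The problem thus reduces to bounding $Q(x_1)$.

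To bound $Q(x_1)$, I would use the Sonin identity $Q(x_1) = \int_{x_1}^\infty V' u^2$ from Proposition \ref{prp:elementaryids}\ref{en:v(0)_prp}; its decay hypotheses all follow from the exponential decay \eqref{exponential_decay_eigen}, already established. Splitting the integral at $x_2 := V^\inv(2E)$: on $[x_1, x_2]$, the bound $V' \lesssim_\kappa V/y \lesssim_\kappa E/V^\inv(E)$ (using $V \in \halfpot_1(\kappa)$ and $y \geq x_1 \simeq_\kappa V^\inv(E)$) immediately gives the target $(E/V^\inv(E)) \int_{E/2 < V < 2E} u^2$; on $[x_2, \infty)$, Corollary \ref{agmon_second_cor} gives $\int_{V \geq 2E} V u^2 \lesssim_\kappa V^\inv(E)^{-2} \int_{E < V < 2E} u^2$, and combining with $V'/V \lesssim_\kappa 1/V^\inv(E)$ yields $\int_{x_2}^\infty V' u^2 \lesssim_\kappa V^\inv(E)^{-3} \int_{E < V < 2E} u^2$. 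Here \eqref{sigma_bound} enters once more: $\sqrt E\,V^\inv(E) \geq \delta$ gives $V^\inv(E)^{-2} \leq E/\delta^2$, so the tail is also controlled by $(E/V^\inv(E)) \int_{E/2 < V < 2E} u^2$, completing the proof. I expect the classical bound to be the main obstacle, since the solution is oscillatory and Agmon's method does not apply; the monotonicity of the WKB amplitude $\rho^2$ is the crucial observation that bypasses this difficulty, reducing everything to a single-point energy bound tractable via Sonin plus Agmon in the classically forbidden region. A secondary delicacy, present in both parts, is tracking how \eqref{sigma_bound} enters the constants: in effect, it ensures that the length scale $V^\inv(E)^{-1}$ and the momentum scale $\sqrt E$ are compatible, which is essential for the estimates to close.
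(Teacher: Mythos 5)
Your proof is correct. For the exponential decay \eqref{exponential_decay_eigen}, you follow essentially the same route as the paper — Corollary \ref{pointwise_agmon_cor} applied to $V-E$ with the same choice of $A, B, C$ — differing only in bookkeeping: you bound $\auxC_0$ directly by $E^{-1/2}$ from the lower bound $\sqrt{V-E}\geq\sqrt E$ on $\{V\geq 2E\}$, whereas the paper bounds $\auxC_0\lesssim |\{V\leq E\}|$ via Proposition \ref{doubling_prp}\ref{en:auxiliary}; both close the estimate because \eqref{sigma_bound} ties these two scales together, as you note. For the classical bound \eqref{classical_eigen}, your quantity $\rho^2 = Q/(E-V)$ is identical to the paper's $g = u^2 + (u')^2/(E-V)$, and your monotonicity $(\rho^2)'\geq 0$ is precisely their \eqref{eq:titchmarsh_derivative}. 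Where you genuinely diverge is in how the boundary value $\rho^2(x_1)$ is controlled: the paper averages $g$ over $\{E/2 < V < 3E/4\}$ and bounds $\int (u')^2$ there via the energy estimate of Lemma \ref{kinetic_lem} (which itself rests on a Gagliardo–Nirenberg-type interpolation inequality), whereas you bound $Q(x_1)$ directly by the Sonin identity $Q(x_1) = \int_{x_1}^\infty V' u^2$ of Proposition \ref{prp:elementaryids}\ref{en:v(0)_prp}, then split the integral at $V^{\inv}(2E)$ and use $V'\lesssim_\kappa V/y$ together with Corollary \ref{agmon_second_cor}. Your route is more elementary for this particular theorem, bypassing Lemma \ref{kinetic_lem} and the Nirenberg inequality (though the paper needs that lemma later anyway, for Theorem \ref{thm:Vpowers_prp}). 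The logical dependence is also slightly reordered: your classical bound uses \eqref{exponential_decay_eigen} through the Sonin identity (to justify $\lim_{x\to\infty}Vu^2 = 0$) and through Corollary \ref{agmon_second_cor}; this is harmless, since the paper likewise proves the exponential part first, but it is worth noting that the order matters.
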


\begin{thm}[Integral bounds]\label{thm:Vpowers_prp}
Let $V\in \halfpot_1(\kappa)$, $\delta>0$, $E>0$, and $u\in L^2(\Rpos)$ be as in Theorem \ref{thm:pointwise_eigen}. Then:
\begin{enumerate}[label=(\roman*)]
\item\label{en:Vpowers_upp} If $W : \Rpos \to \Rnon$, $a,b \in \RR$ and $C > 0$ are such that
\begin{equation}\label{eq:xV_int_assumption}
W(x) + \frac{1}{x}\int_0^x W   \leq C x^a V(x)^b 
 \qquad\forall x > 0,
\end{equation}
then
\[
\int_{\Rpos} W  \left( u^2 + \frac{(u')^2}{E} \right) 
\lesssim_{a,b,\kappa,\delta} C \, |\{V\leq E\}|^a E^b \int_{\Rpos} u  .
\]
\item\label{en:Vpowers_ass} The assumption \eqref{eq:xV_int_assumption} holds whenever $W(x) = x^a V(x)^b$ and $a+\kappa^{-1} b>-1$, with $C = C(a,b,\kappa)$.
\item\label{en:Vpowers_low} For all $a,b \in \RR$,
\[
\int_{\Rpos} x^a V(x)^b \, u(x)^2 \,dx
\gtrsim_{a,b,\kappa, \delta} |\{V\leq E\}|^a E^b \int_{\Rpos} u^2 .
\]
\end{enumerate}
\end{thm}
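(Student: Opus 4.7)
The plan is to prove the three parts in reverse order of difficulty, handling the structural condition (ii) first, then the lower bound (iii), and finally the upper bound (i) which will use both of the pointwise and exponential decay estimates from Theorem \ref{thm:pointwise_eigen}.

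For part \ref{en:Vpowers_ass}, I would exploit the ``power-law'' character of $V \in \halfpot_1(\kappa)$. Integrating the inequality $\kappa^{-1}/x \leq V'(x)/V(x) \leq \kappa/x$ from $t$ to $x$ gives the two-sided comparison $(t/x)^\kappa \leq V(t)/V(x) \leq (t/x)^{1/\kappa}$ for $0 < t \leq x$. Applied to $W(t) = t^a V(t)^b$ this yields $W(t) \leq x^a V(x)^b (t/x)^{a + b'}$ for $t \leq x$, with $b' = b/\kappa$ or $b' = b\kappa$ according to the sign of $b$. Integrating in $t$ over $[0,x]$ and dividing by $x$ gives $\frac{1}{x}\int_0^x W \lesssim_{a,b,\kappa} x^a V(x)^b$, provided $a+b'+1 > 0$, which is exactly (a strengthening of) the hypothesis $a + \kappa^{-1} b > -1$.

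For part \ref{en:Vpowers_low}, the key observation is that $u$ is essentially concentrated on the turning-point region $\{E/2 < V < 2E\}$. Indeed, Corollary \ref{agmon_second_cor} bounds $\int_{V \geq 2E} u^2$ by $\delta^{-1}\int_{E<V<2E} u^2$, while the pointwise estimate \eqref{classical_eigen} of Theorem \ref{thm:pointwise_eigen}, integrated over $\{V < E/2\}$ (a set of measure $\leq |\{V \leq E\}|$), controls $\int_{V < E/2} u^2$ by $\int_{E/2 < V < 2E} u^2$. Hence $\int_{\Rpos} u^2 \lesssim_{\kappa,\delta} \int_{E/2 < V < 2E} u^2$. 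On this annulus, $V \simeq E$, and since $V$ is comparable to a power law, the equations $V(x) = E/2$ and $V(x) = 2E$ define points both comparable to $|\{V \leq E\}| = V^\inv(E)$; thus $x \simeq_\kappa |\{V \leq E\}|$ on this region, and integrating $x^a V^b u^2 \simeq |\{V \leq E\}|^a E^b u^2$ over $\{E/2 < V < 2E\}$ gives the desired lower bound.

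For part \ref{en:Vpowers_upp}, I would split $\Rpos$ into the three regions $\{V \leq E/2\}$, $\{E/2 < V < 4E\}$, and $\{V \geq 4E\}$. On the classically allowed region $\{V \leq E/2\}$, the pointwise bound \eqref{classical_eigen} yields $u^2 + (u')^2/E \lesssim |\{V \leq E\}|^{-1}\int_{E/2<V<2E} u^2$, and the weight is integrated using the hypothesis $\int_0^x W \lesssim C x^{a+1} V(x)^b$ evaluated at $x = V^\inv(E/2) \simeq |\{V \leq E\}|$. On the middle annulus $\{E/2 < V < 4E\}$, the hypothesis $W(x) \leq C x^a V(x)^b$ together with $x \simeq |\{V\leq E\}|$ and $V \simeq E$ reduces matters to showing $\int_{\Rpos}((u')^2/E + u^2) \lesssim_\delta \int u^2$; this follows from Proposition \ref{prp:elementaryids}\ref{en:half_energy_prp} at $a = 0$, after controlling the boundary term $u(0)u'(0)$ via the pointwise bound \eqref{classical_eigen} (applied at $x=0$) and the hypothesis $\sqrt{E}|\{V \leq E\}| \geq \delta$. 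On the classically forbidden region $\{V \geq 4E\}$, the Agmon-type estimate \eqref{exponential_decay_eigen} gives super-polynomial decay $u^2 + (u')^2/E \lesssim e^{-cx\sqrt{V(x)}} \cdot |\{V\leq E\}|^{-1} \int_{E<V<2E} u^2$; the polynomial growth of $W$ (controlled by $Cx^a V(x)^b$ and the $\halfpot_1(\kappa)$ bound $V(x) \lesssim E(x/x_E)^\kappa$ for $x \geq x_E := |\{V \leq E\}|$) is defeated by the stretched-exponential decay $e^{-c\delta(x/x_E)^{1+1/(2\kappa)}}$, and the substitution $s = x/x_E$ converts the integral into a finite $s$-integral times $x_E^{a+1} E^b$.

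The main obstacle will be the careful bookkeeping in the forbidden region: to obtain constants depending only on $a,b,\kappa,\delta$ one must verify that the doubly-sub-exponential decay produced by $x\sqrt{V(x)}$ really dominates every polynomial power of $x$ and $V$ uniformly, using $\sqrt{E}\, x_E \geq \delta$ to render the exponent universal. A secondary subtlety is that the solution $u$ need not vanish at $0$, so the boundary term in the energy identity must be absorbed using the pointwise bound and the hypothesis on $\delta$, rather than dropped.
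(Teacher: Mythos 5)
Your proposal is correct, and for parts \ref{en:Vpowers_ass} and \ref{en:Vpowers_low} it is essentially the paper's argument (parts (ii) and (iii) of the paper appeal to the abstract doubling machinery of Propositions~\ref{trivial_doubling_prp} and \ref{doubling_prp}, but the content is exactly your hands-on power comparison $(t/x)^\kappa \leq V(t)/V(x) \leq (t/x)^{1/\kappa}$ for $t \leq x$, and exactly your concentration-on-the-turning-annulus argument).

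For part \ref{en:Vpowers_upp} the decomposition into $\{V<E/2\}$, $\{E/2<V<4E\}$, $\{V>4E\}$ and the treatment of the first and third regions coincide with the paper's; the one genuine difference is your handling of the middle annulus. The paper bounds $\int_{V\geq E/2}(u')^2$ by a separate lemma (Lemma~\ref{kinetic_lem}), proved via a Nirenberg--Gagliardo interpolation inequality together with the already-established exponential decay and Agmon estimates; the resulting bound is local, depending only on $\int_{E/2\leq V\leq 2E}u^2$. You instead derive a global kinetic bound $\int_{\Rpos}(u')^2 \lesssim_{\kappa,\delta} E\int_{\Rpos}u^2$ from the energy identity of Proposition~\ref{prp:elementaryids}\ref{en:half_energy_prp}, absorbing the boundary term $u(0)u'(0)$ by the classical pointwise estimate \eqref{classical_eigen} combined with $\sqrt{E}\,|\{V\leq E\}|\geq\delta$. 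Both routes suffice here and yield the same constants; yours is arguably more elementary (it avoids the Nirenberg inequality), while the paper's version of the kinetic bound is also reused in the proof of \eqref{classical_eigen} itself. One small point to tidy up in your version: Proposition~\ref{prp:elementaryids}\ref{en:half_energy_prp} is stated for $a>0$, so at $a=0$ you either need to observe that $u,u'$ extend continuously to $0^+$ (because $V$ extends continuously to $[0,\infty)$ with $V(0)=0$, making the ODE regular at the endpoint) and pass to the limit, or apply the identity at some fixed $a$ with $V(a)<E/2$ and bound $\int_0^a(u')^2$ directly by $|a|\cdot\sup_{V<E/2}(u')^2$, which \eqref{classical_eigen} controls.

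You also correctly flag that your derivation of part \ref{en:Vpowers_ass} produces the condition $a+b\kappa>-1$ rather than $a+\kappa^{-1}b>-1$ when $b<0$: in that regime the two conditions differ, and what your argument (and the paper's, once unwound) actually needs is the stronger $a+b\kappa>-1$. This is immaterial for the paper's applications, which all have $b\geq 0$, so there is no real conflict.
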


The rest of the section is devoted to the proofs of the above results.

\subsection{An auxiliary class of functions on a half-line}

Let $d_1\leq d_2$ be real numbers. We denote by $\doub(d_1,d_2)$ the collection of positive functions $U\in C^1(\Rpos)$ such that
\begin{equation}\label{eq:doub_def}
	d_1 U(x) \leq xU'(x) \leq d_2 U(x) \qquad \forall x\in\Rpos.
\end{equation}
Notice that $\halfpot_1(\kappa)= \doub(\kappa^{-1}, \kappa)$. The next proposition collects a few elementary algebraic properties of the classes $\doub(d_1,d_2)$. 

\begin{prp}\label{trivial_doubling_prp}
Let $d_1 \leq d_2$ be real numbers. The following hold.
\begin{enumerate}[label=(\roman*)]
\item\label{en:doubling_cone} $\doub(d_1,d_2)$ is a convex cone: if $U,W\in \doub(d_1,d_2)$, then $\lambda U+\mu W\in \doub(d_1,d_2)$ for every $\lambda, \mu\geq 0$.
\item\label{en:doubling_pow} If $U\in \doub(d_1,d_2)$ and $b>0$,  then $U^b\in \doub(bd_1,bd_2)$.
\item\label{en:doubling_prod} If $U\in \doub(d_1,d_2)$ and $W\in \doub(d'_1,d'_2)$, then $UW\in \doub(d_1+d_1',d_2+d_2')$.
\item\label{en:doubling_inv} If $U\in \doub(d_1,d_2)$ with $d_1>0$, then $U : \Rpos \to \Rpos$ is invertible, and the inverse function $U^{\inv}$ is in $\doub(d_2^{-1},d_1^{-1})$.
\end{enumerate}
\end{prp}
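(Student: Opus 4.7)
The four claims are algebraic identities for the logarithmic derivative $xU'(x)/U(x)$, and my plan is to dispatch them directly from the defining inequality \eqref{eq:doub_def}. The underlying observation is that, for a positive $C^1$ function $U$ on $\Rpos$, the condition $U\in\doub(d_1,d_2)$ is equivalent to $d_1/x \leq (\log U)'(x) \leq d_2/x$, and all the operations in (i)--(iv) interact transparently with the logarithmic derivative.

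For part (i), I would simply differentiate $\lambda U + \mu W$ and multiply by $x$; since $\lambda,\mu\geq 0$, the bounds for $xU'$ and $xW'$ combine linearly to give $d_1(\lambda U+\mu W)\leq x(\lambda U+\mu W)'\leq d_2(\lambda U+\mu W)$. For part (ii), the chain rule gives $x(U^b)'=bU^{b-1}(xU')$, and the bound $d_1U\leq xU'\leq d_2 U$ transfers to $bd_1U^b\leq x(U^b)'\leq bd_2 U^b$. For part (iii), the Leibniz rule yields $x(UW)'=(xU')W+U(xW')$, and adding the upper (respectively lower) bounds for the two summands gives $(d_1+d_1')UW\leq x(UW)'\leq (d_2+d_2')UW$.

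Part (iv) is the only one requiring more than a one-line computation. Assuming $d_1>0$, I would first use $(\log U)'\geq d_1/x>0$ to conclude that $U$ is strictly increasing, and then integrate the two-sided inequality $d_1/x\leq (\log U)'\leq d_2/x$ between $0<a<b$ to obtain the power-law sandwich $(b/a)^{d_1}U(a)\leq U(b)\leq (b/a)^{d_2}U(a)$. Letting $b\to+\infty$ gives $U(b)\to+\infty$, and letting $a\to 0^+$ gives $U(a)\to 0^+$, so $U$ is a $C^1$ bijection of $\Rpos$. By the inverse function theorem $U^\inv$ is $C^1$ with $(U^\inv)'(y)=1/U'(U^\inv(y))$; writing $x=U^\inv(y)$, this identity rearranges to
\[
\frac{y\,(U^\inv)'(y)}{U^\inv(y)} = \frac{U(x)}{x\,U'(x)} \in [d_2^{-1},d_1^{-1}],
\]
which is exactly the statement that $U^\inv\in\doub(d_2^{-1},d_1^{-1})$.

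I do not expect any real obstacle: the only point that needs a touch of care is verifying that $d_1>0$ ensures $U$ is a bijection of $\Rpos$ onto itself (so that $U^\inv$ is defined on all of $\Rpos$), which follows from the explicit power-law bounds obtained by integrating the logarithmic derivative. Everything else is a direct calculation from the definition.
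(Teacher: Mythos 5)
Your proof is correct and takes essentially the same approach as the paper: parts (i)--(iii) are the same one-line differentiation arguments, and for (iv) your power-law sandwich obtained by integrating $(\log U)'$ between two points is merely a change of variables away from the paper's observation that $\upsilon(t)=\log U(e^t)$ has derivative bounded between $d_1$ and $d_2$. Your explicit computation via the inverse function theorem that $y(U^\inv)'(y)/U^\inv(y)=U(x)/(xU'(x))$ is a nice way to spell out the last step, which the paper leaves to the reader.
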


\begin{proof}
Parts \ref{en:doubling_cone} to \ref{en:doubling_prod} follow immediately from the definition and elementary differentiation rules.
	
Part \ref{en:doubling_inv} is also elementary, once one realizes that $U$ is invertible. To see this, notice that, if we define $\upsilon : \RR \to \RR$ by $\upsilon(t) = \log U(e^t)$, then the condition \eqref{eq:doub_def} can be equivalently rewritten as
\begin{equation}\label{eq:doub_equiv}
d_1 \leq \upsilon'(t) \leq d_2 \qquad \forall t \in \RR.
\end{equation}
In particular, if $d_1>0$, then $\upsilon$ is strictly increasing and $\lim_{t \to \pm \infty} \upsilon(t) = \pm\infty$, thus $\upsilon : \RR \to \RR$ is invertible by continuity, and therefore $U : \Rpos \to \Rpos$ is invertible too.
\end{proof}

We now discuss a number of ``doubling" properties satisfied by functions in the classes $\doub(d_1,d_2)$. 

\begin{prp}\label{doubling_prp}
Let $d_1 \leq d_2$ be real numbers. The following hold.
\begin{enumerate}[label=(\roman*)]
\item\label{en:doubling_doub} If $U\in \doub(d_1,d_2)$, then we have the \emph{doubling inequality}
\begin{equation}\label{doubling}
U\left(\lambda x\right)\leq \lambda^{d_2} U(x)\qquad\forall x\in \Rpos, \ \forall\lambda\geq 1,
\end{equation} 
and the \emph{reverse doubling inequality}
\begin{equation}\label{reverse_doubling}
U(\lambda x)\geq \lambda^{d_1}U(x) \qquad\forall x\in \Rpos, \ \forall\lambda\geq 1.
\end{equation} 
\item\label{en:sublevel_measure} If $U \in \doub(d_1,d_2)$ with $d_1>0$, then
\begin{gather*}
	c^{1/d_1} |\{U\leq E\}| \leq |\{U\leq cE\}| \leq c^{1/d_2} |\{U\leq E\}| \qquad\forall E>0,\ \forall c\in(0,1),\\
	|\{c E \leq U \leq E\}| \geq \frac{1-c^{1/d_2}}{T^{1/d_1}} |\{U\leq T E\}| \qquad\forall E>0,\ \forall c\in(0,1),\ \forall T\geq 1.
\end{gather*} 
\item\label{en:doubling_int} If $U\in \doub(d_1,d_2)$ with $d_1>-1$, then 
\begin{equation*}
\int_0^a U  \lesssim_{d_1,\lambda} \int_{a/\lambda}^a U  \qquad\forall a>0, \ \forall \lambda> 1. 
\end{equation*}
\item\label{en:auxiliary} If $U \in \doub(d_1,d_2)$ with $d_1>0$, and $E>0$ is such that 
\begin{equation*}
\sqrt{E} \, |\{U\leq E\}|\geq \delta,
\end{equation*} 
then 
\begin{equation*}
	\int_{U\geq E} x^a U(x)^b e^{-\varepsilon x\sqrt{U(x)}} \,dx \lesssim_{d_1,d_2, \delta, a, b, \varepsilon} |\{U\leq E\}|^{a+1} E^b
\end{equation*}
for every $a,b\in \RR$ and $\varepsilon > 0$.
\end{enumerate}
\end{prp}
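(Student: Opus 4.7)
For part \ref{en:doubling_doub}, I will exploit the reparametrization $\upsilon(t)\defeq\log U(e^t)$ already used in the proof of Proposition \ref{trivial_doubling_prp}\ref{en:doubling_inv}, which rewrites the defining inequality of $\doub(d_1,d_2)$ as the two-sided bound $d_1\leq\upsilon'(t)\leq d_2$; integrating this between $\log x$ and $\log(\lambda x)$ and exponentiating yields \eqref{doubling} and \eqref{reverse_doubling} at once. For part \ref{en:sublevel_measure}, when $d_1>0$ the compositional inverse $U^\inv$ lies in $\doub(d_2^{-1},d_1^{-1})$ by Proposition \ref{trivial_doubling_prp}\ref{en:doubling_inv}, and since $|\{U\leq E\}|=U^\inv(E)$, the sublevel inequalities reduce to applying part \ref{en:doubling_doub} to $U^\inv$ with dilation factor $c^{-1}\geq 1$. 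The annular inequality then follows by writing $|\{cE\leq U\leq E\}|=U^\inv(E)-U^\inv(cE)\geq(1-c^{1/d_2})U^\inv(E)$ and using reverse doubling for $U^\inv$ to bound $U^\inv(E)\geq T^{-1/d_1}U^\inv(TE)$.

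For part \ref{en:doubling_int} the subtle point is that the implicit constant must depend only on $d_1$ and $\lambda$; any naive pointwise comparison of $U$ on $[a/\lambda,a]$ with $U(a)$ would introduce a spurious $d_2$-dependence. I will instead argue by rescaling: the change of variables $x=y/\lambda$ combined with the reverse doubling bound $U(y/\lambda)\leq\lambda^{-d_1}U(y)$ from part \ref{en:doubling_doub} gives
\[
\int_0^{a/\lambda} U(x)\,dx = \frac{1}{\lambda}\int_0^a U(y/\lambda)\,dy \leq \lambda^{-(d_1+1)} \int_0^a U(y)\,dy.
\]
Since the hypothesis $d_1>-1$ ensures $\lambda^{-(d_1+1)}<1$ for $\lambda>1$, subtracting from $\int_0^a U$ produces $\int_{a/\lambda}^a U \geq \bigl(1-\lambda^{-(d_1+1)}\bigr)\int_0^a U$, which is the desired estimate.

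For part \ref{en:auxiliary}, I will reduce the weighted integral to a convergent one-dimensional one via the substitution $x=sx_0$, where $x_0\defeq U^\inv(E)=|\{U\leq E\}|$, so that the region $\{U\geq E\}$ becomes $s\in[1,+\infty)$. Part \ref{en:doubling_doub} applied at the base point $x_0$ provides the two-sided polynomial control $s^{d_1}E\leq U(sx_0)\leq s^{d_2}E$, which in turn supplies bounds of the form $(sx_0)^a\,U(sx_0)^b\lesssim_{a,b,d_1,d_2}s^N x_0^a E^b$ for a suitable exponent $N$ depending on the signs of $a$ and $b$. Crucially, combining the lower bound on $U(sx_0)$ with the standing hypothesis $\sqrt{E}\,x_0\geq\delta$ yields
\[
sx_0\sqrt{U(sx_0)} \geq \delta\, s^{1+d_1/2},
\]
where $1+d_1/2>1$ since $d_1>0$; hence the exponential factor $\exp(-\varepsilon\delta\, s^{1+d_1/2})$ dominates any polynomial in $s$, so the $s$-integral is finite with a bound depending on the stated parameters, and the remaining prefactor $x_0 \cdot x_0^a E^b$ is exactly $|\{U\leq E\}|^{a+1}E^b$.

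The main obstacle throughout is simply the bookkeeping needed to match each implicit constant to the announced parameter dependencies; the substance of each statement drops out of the differential inequality $d_1\leq\upsilon'\leq d_2$ together with the single quantitative hypothesis $\sqrt{E}\,|\{U\leq E\}|\geq\delta$ in part \ref{en:auxiliary}. In particular, identifying the rescaling $x=y/\lambda$ as the right manoeuvre for part \ref{en:doubling_int}, rather than a pointwise lower bound on $U$, is what keeps the constant independent of $d_2$.
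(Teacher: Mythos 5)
Your proof is correct and follows essentially the same strategy as the paper throughout: the reparametrisation $\upsilon(t)=\log U(e^t)$ for part \ref{en:doubling_doub}, reduction to $U^\inv$ for part \ref{en:sublevel_measure}, and the rescaling $x=s x_0$ with $x_0=U^\inv(E)$ together with the lower bound $sx_0\sqrt{U(sx_0)}\geq\delta s^{1+d_1/2}$ for part \ref{en:auxiliary}. The only real deviation is in part \ref{en:doubling_int}: where the paper telescopes $\int_0^a U$ over the dyadic pieces $[a/\lambda^{j+1},a/\lambda^j]$ and sums a geometric series, you apply reverse doubling once and subtract; this is cleaner, but it tacitly assumes $\int_0^a U<\infty$ (needed to subtract), which you should note is a consequence of reverse doubling and $d_1>-1$ — the paper's geometric-sum version gets this finiteness for free. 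Also, a minor slip in part \ref{en:sublevel_measure}: the inequality $U^\inv(E)\geq T^{-1/d_1}U^\inv(TE)$ is the \emph{doubling}, not the reverse doubling, inequality for $U^\inv$ (recall $U^\inv\in\doub(d_2^{-1},d_1^{-1})$).
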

			
\begin{proof}
Part \ref{en:doubling_doub} follows from the observation that, if we define $\upsilon : \RR \to \RR$ by $\upsilon(t) = \log U(e^t)$, then the conditions \eqref{doubling} and \eqref{reverse_doubling} can be equivalently rewritten as
\[
\upsilon(t) + d_1 s \leq \upsilon(t+s) \leq \upsilon(t) + d_2 s \qquad\forall t \in \RR, \, \forall s \geq 0,
\]
and in turn these inequalities are elementary consequences of the rephrasing \eqref{eq:doub_equiv} of the condition $U \in \halfpot_1(d_1,d_2)$.

Part \ref{en:sublevel_measure} follows from part \ref{en:doubling_doub} and Proposition \ref{trivial_doubling_prp}\ref{en:doubling_inv} plus the trivial observation that $|\{U\leq E\}|=U^{\inv}(E)$ for all $E>0$.
		
Part \ref{en:doubling_int} follows from the reverse doubling inequality \eqref{reverse_doubling}: 
\[\begin{split}
\int_0^a U(x) \,dx
&=\sum_{j=0}^{\infty} \int_{a/\lambda^{j+1}}^{a/\lambda^j} U(x) \,dx\\
&= \sum_{j=0}^{\infty} \lambda^{-j} \int_{a/\lambda}^{a} U(x/\lambda^j) \,dx\\
&\leq \sum_{j=0}^{\infty} \lambda^{-j(1+d_1)} \int_{a/\lambda}^{a} U(x) \,dx.
\end{split}\]

Let us prove part \ref{en:auxiliary}. Set $x_0 \defeq U^{\inv}(E)=|\{U\leq E\}|$ and $D \defeq \max\{bd_1,bd_2\}$. If $\lambda \geq 1$, then Proposition \ref{trivial_doubling_prp}\ref{en:doubling_pow} and part \ref{en:doubling_doub} give that
\[\begin{split}
		&(\lambda x_0)^a U(\lambda x_0)^b \exp\left(-\varepsilon\lambda x_0 \sqrt{U(\lambda x_0)}\right) \\
		&\leq  \lambda^{a+D} x_0^a U(x_0)^b \exp\left(-\varepsilon\lambda^{1+d_1/2} x_0 \sqrt{U(x_0)}\right) \\
		&= |\{U\leq E\}|^a E^b \lambda^{a+D} \exp\left(-\varepsilon\lambda^{1+d_1/2} \sqrt{E} \,|\{U\leq E\}|\right) \\
		&\leq |\{U\leq E\}|^a E^b\lambda^{a+D} \exp\left(-\delta\varepsilon\lambda^{1+d_1/2}\right) .
\end{split}\]
Hence
\[\begin{split}
	&\int_{U\geq E} x^a U(x)^b \exp\left(-\varepsilon x\sqrt{U(x)}\right) \,dx \\
	&= x_0\int_1^{+\infty} (\lambda x_0)^a U(\lambda x_0)^b \exp\left(-\varepsilon \lambda x_0\sqrt{U(\lambda x_0)}\right) \,d\lambda \\
	&\leq |\{U\leq E\}|^{a+1} E^b \int_1^{+\infty}\lambda^{a+D} \exp\left(-\delta\varepsilon\lambda^{1+d_1/2}\right) \,d\lambda \\
	&\lesssim_{d_1,d_2, \delta,a,b, \varepsilon} |\{U\leq E\}|^{a+1} E^b,
\end{split}\]
as desired.
\end{proof}

\subsection{Pointwise bounds in the exponential region}\label{ss:half_exponential}

The goal of this section is to prove the exponential decay part of Theorem \ref{thm:pointwise_eigen}. More precisely, we are given $V\in \halfpot_1(\kappa)$, $\delta>0$, $E>0$ such that	$\sqrt{E} \, |\{V\leq E\}|\geq \delta$, and a real-valued solution $u\in L^2(\Rpos)$ of
\begin{equation*}
	-u''(x) + V(x) u(x) = E u(x) \quad (x\in \Rpos),
\end{equation*}
and we want to prove that, for every $x$ such that $V(x)\geq 4E$, 
\begin{equation*} 
u(x)^2+\frac{u'(x)^2}{E}\lesssim_{\kappa,\delta}\frac{1}{|\{V\leq E\}|}e^{-c(\kappa)x\sqrt{V(x)}}\int_{E<V<2E}u^2. 
\end{equation*}
To achieve this, we invoke Agmon theory, in the form of Corollary \ref{pointwise_agmon_cor}, applied to the potential $V-E$, with $w=0$ and with the following choices of parameters:
\begin{equation*} 
	A=0,\ B=E,\ C=3E,\ \beta=1/2, \ \varepsilon=1/12,\ \gamma=3/4.
\end{equation*} 
So we obtain that, for all $x$ such that $V(x) \geq 4E$,
\begin{equation}\label{eq:prel_est_psi}
u(x)^2 + \frac{u'(x)^2}{E} \lesssim \left(\auxC_0^2 + \frac{1}{E} \right) \auxC_1 \auxD \exp\left(-\int_{V^{\inv}(2E)}^x \sqrt{V-E}\right) ,
\end{equation}
where
\begin{align*}
\auxC_j &= \int_{V > 4E} \exp\left(-\frac{1}{6} \int_{V^\inv(2E)}^y \sqrt{V-E} \right) (V(y)-E)^j \,dy,\\
\auxD &= \frac{1}{|\{E < V < 2E \}|^2} \int_{E < V < 2E} u^2.
\end{align*}

To bound the above quantities, it is useful to notice that
\begin{equation}\label{x_sqrtV}
 \int_{V^{\inv}(2E)}^x \sqrt{V-E} \simeq_\kappa	x \sqrt{V(x)} \qquad \forall x\colon \ V(x) \geq 4 E.
\end{equation}
In fact, the upper bound is trivial, and the reverse doubling inequality \eqref{reverse_doubling} of Proposition \ref{doubling_prp} applied to $V^\inv$ gives (for $V(x) \geq 4 E$) that
\[\begin{split}
\int_{V^{\inv}(2E)}^x \sqrt{V-E}
&\geq 2^{-1/2} \int_{V^{\inv}(2E)}^x \sqrt{V} \\
&\geq  2^{-1} \sqrt{V(x)} \, (x-V^{\inv}(V(x)/2))\\
& \geq 2^{-1} (1-2^{-1/\kappa}) \, x \sqrt{V(x)}.
\end{split}\]

Now, by \eqref{x_sqrtV} and Proposition \ref{doubling_prp}\ref{en:auxiliary}, we see immediately that 
\begin{equation*}
\auxC_0 \lesssim_{\kappa,\delta} |\{V\leq E\}|, \qquad  \auxC_1 \lesssim_{\kappa,\delta} |\{V\leq E\}| \, E
\end{equation*}
for every $x$ such that $V(x)\geq 4E$. Moreover, part \ref{en:sublevel_measure} of Proposition \ref{doubling_prp} yields 
\[
\auxD \lesssim_{\kappa} |\{V\leq E\}|^{-2}\int_{E<V<2E}u^2.
\]
Hence, by \eqref{eq:prel_est_psi}, in the range where $V(x)\geq 4E$,
\[\begin{split}
&u(x)^2+\frac{u'(x)^2}{E} \\
&\lesssim_{\kappa,\delta} \left( E \, |\{V\leq E\}| + \frac{1}{|\{V\leq E\}|} \right) \exp\left(-\int_{V^{\inv}(2E)}^x \sqrt{V-E}\right) \int_{E<V<2E}u^2 \\
&\leq \frac{x^2V(x)+1}{|\{V\leq E\}|} e^{-c(\kappa)x\sqrt{V(x)}} \int_{E<V<2E}u^2 \\
&\lesssim_{\kappa} \frac{1}{|\{V\leq E\}|} e^{-\frac{c(\kappa)}{2} x \sqrt{V(x)}} \int_{E<V<2E} u^2,
\end{split}\]
where we used \eqref{x_sqrtV} and the fact that $t^2 \leq 2a^{-2}\, e^{at}$ for every $t, a>0$. This completes the proof of inequality \eqref{exponential_decay_eigen} of Theorem \ref{thm:pointwise_eigen}. 

\subsection{Pointwise bounds in the classical region}

We now come to the first half of Theorem \ref{thm:pointwise_eigen}. Under the usual assumptions that $V\in \halfpot_1(\kappa)$, $\delta>0$, $E>0$, $\sqrt{E} \,|\{V\leq E\}|\geq \delta$, this time we want to prove that every real-valued solution $u\in L^2(\Rpos)$ of
\begin{equation}\label{equation2}
	-u''(x) + V(x) \, u(x) = E \, u(x) \qquad (x\in \Rpos)
\end{equation}
satisfies the bound
\begin{equation*}
\sup_{V < E/2} \left\{ u^2 + \frac{(u')^2}{E} \right\} 
\lesssim_{d_1,d_2,\delta} \frac{1}{|\{V<E\}|} \int_{E/2 < V < 2 E} u^2.
\end{equation*}
Before giving the details of its proof, let us remark that the key ingredient is the positivity of a certain derivative (see \eqref{eq:titchmarsh_derivative} below), already exploited in \cite[Theorem 8.3]{titchmarsh} (see also \cite[Theorem 7.3.1]{szego}) to deduce bounds for eigenfunctions of Schr\"odinger operators in the classical region. Since here we do not assume any boundary condition on the solutions of \eqref{equation2}, we cannot directly invoke the theory in \cite{titchmarsh}.	

We need the following energy estimate. 

\begin{lem}\label{kinetic_lem}
Let $V\in \halfpot_1(\kappa)$, $\delta>0$, $E>0$ be such that $\sqrt{E} \, |\{V\leq E\}|\geq \delta$ holds, and let $u\in L^2(\Rpos)$ be a real-valued solution of \eqref{equation2}. Then
\begin{equation*}
\int_{V\geq E/2} (u')^2 \lesssim_{\kappa, \delta} E \int_{E/2 \leq V\leq 2E} u^2.
\end{equation*}	
\end{lem}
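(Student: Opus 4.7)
The plan is to apply Proposition \ref{prp:elementaryids}\ref{en:half_energy_prp} on the half-line $[a,+\infty)$ with $a\defeq V^\inv(E/2)$, so that $\{V\geq E/2\}=[a,+\infty)$, and to handle the resulting boundary term $u(a)u'(a)$ via the companion identity in Proposition \ref{prp:elementaryids}\ref{en:v(0)_prp}. As a preliminary, note that $u$ is recessive at $+\infty$: the $L^2(\Rpos)$ integrability together with Theorem \ref{thm:existence} forces $u(x)\to 0$, while the pointwise exponential decay already established in Section \ref{ss:half_exponential} (inequality \eqref{exponential_decay_eigen}) yields $u'(x)\to 0$ and $V(x)u(x)^2\to 0$. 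Thus both parts of Proposition \ref{prp:elementaryids} apply, and discarding the nonnegative term $\int_{V\geq E/2}Vu^2$ in the energy identity yields
\[
\int_{V\geq E/2}(u')^2 \leq E\int_{V\geq E/2}u^2 + |u(a)\,u'(a)|.
\]

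The bulk term on the right will be handled via Corollary \ref{agmon_second_cor}. Since $V\in\halfpot_1(\kappa)=\doub(\kappa^{-1},\kappa)$, the doubling estimates of Proposition \ref{doubling_prp}\ref{en:sublevel_measure}, combined with the hypothesis $\sqrt{E}\,|\{V\leq E\}|\geq \delta$, give $|\{E<V<2E\}|\gtrsim_\kappa |\{V\leq E\}|\gtrsim_\kappa \delta/\sqrt{E}$, so the quantity $E\,|\{E<V<2E\}|^2$ appearing in Corollary \ref{agmon_second_cor} is bounded below by a constant depending only on $\kappa$ and $\delta$. That corollary then gives $\int_{V\geq 2E}u^2\lesssim_{\kappa,\delta}\int_{E<V<2E}u^2$ and $\int_{V\geq 2E}Vu^2\lesssim_{\kappa,\delta}E\int_{E<V<2E}u^2$, which, combined with the trivial bound $V\leq 2E$ on $\{E/2\leq V\leq 2E\}$, produce
\[
\int_{V\geq E/2}u^2 \lesssim_{\kappa,\delta} \int_{E/2\leq V\leq 2E}u^2, \qquad \int_{V\geq E/2}Vu^2 \lesssim_{\kappa,\delta} E\int_{E/2\leq V\leq 2E}u^2.
\]

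The main obstacle is the boundary term $|u(a)u'(a)|$, which cannot be handled by pointwise bounds on $u$ and $u'$ in the classical region, since those are precisely what the full Theorem \ref{thm:pointwise_eigen} is meant to establish. The key idea is to apply Proposition \ref{prp:elementaryids}\ref{en:v(0)_prp} at the point $a=V^\inv(E/2)$, where $V(a)=E/2$: this yields the exact identity $\tfrac{E}{2}u(a)^2+u'(a)^2=\int_a^{+\infty}V'u^2$. Since $V\in\halfpot_1(\kappa)$, the bound $V'(x)\leq \kappa V(x)/x\leq \kappa V(x)/a$ for $x\geq a$ shows that this integral is at most $(\kappa/a)\int_{V\geq E/2}Vu^2 \lesssim_{\kappa,\delta}(E/a)\int_{E/2\leq V\leq 2E}u^2$. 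The assumption $\sqrt{E}\,|\{V\leq E\}|\geq \delta$ together with the doubling estimate $|\{V\leq E/2\}|\gtrsim_\kappa|\{V\leq E\}|$ furnishes the quantitative lower bound $a\gtrsim_\kappa \delta/\sqrt{E}$, whence $E/a\lesssim_{\kappa,\delta}E^{3/2}$. Separating the two summands on the left-hand side of the identity gives $u(a)^2\lesssim_{\kappa,\delta}E^{1/2}\int_{E/2\leq V\leq 2E}u^2$ and $u'(a)^2\lesssim_{\kappa,\delta}E^{3/2}\int_{E/2\leq V\leq 2E}u^2$, and the elementary inequality $|u(a)u'(a)|\leq (u(a)^2\,u'(a)^2)^{1/2}$ finally yields $|u(a)u'(a)|\lesssim_{\kappa,\delta}E\int_{E/2\leq V\leq 2E}u^2$. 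Combining with the bulk estimate completes the proof.
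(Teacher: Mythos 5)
Your proof is correct and takes a genuinely different route from the paper. The paper invokes the Gagliardo--Nirenberg interpolation inequality $\int_{V\geq E/2}(u')^2 \lesssim (\int_{V\geq E/2} u^2)^{1/2}(\int_{V\geq E/2}(u'')^2)^{1/2}$, converts $u''$ to $(V-E)u$ via the ODE, and then controls $\int_{V\geq E/2}V^2u^2$ using the exponential decay estimate \eqref{exponential_decay_eigen} together with Proposition \ref{doubling_prp}\ref{en:auxiliary}; the reduction to $\int_{E/2\leq V\leq 2E}u^2$ then comes from Corollary \ref{agmon_second_cor}. Your argument replaces the external interpolation inequality with the two ODE-based identities of Proposition \ref{prp:elementaryids}: the energy identity gives $\int_{V\geq E/2}(u')^2$ directly in terms of $E\int_{V\geq E/2}u^2$ and a boundary contribution, and the Sonin-type identity \eqref{eq:sonin} controls the boundary term. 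The clever point is evaluating \eqref{eq:sonin} exactly at the turning point $a=V^\inv(E/2)$, where $E-V(a)=E/2>0$, so both $u(a)^2$ and $u'(a)^2$ are separately controlled by $\int_a^{+\infty}V'u^2$; the condition $xV'(x)\leq\kappa V(x)$ and the lower bound $a\gtrsim_\kappa \delta/\sqrt{E}$ then close the estimate. Your route is more self-contained (it avoids citing Nirenberg's inequality and only uses \eqref{exponential_decay_eigen} to verify recessiveness, rather than to estimate an integral), but it is slightly longer and leans essentially on the $C^1$ hypothesis on $V$ through \eqref{eq:sonin}, whereas the paper's interpolation step is insensitive to the regularity of $V$. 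Both proofs use Corollary \ref{agmon_second_cor} and the doubling properties of Proposition \ref{doubling_prp} in the same way to pass from $\{V\geq E/2\}$ to $\{E/2\leq V\leq 2E\}$.
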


\begin{proof}
By \cite[Inequality (2.6)]{nirenberg_1959} and \eqref{equation2}, 
\[\begin{split}
\int_{V \geq E/2} (u')^2 &\lesssim \sqrt{\int_{V \geq E/2} u^2 \int_{V\geq E/2} (u'')^2}\\
&\lesssim \sqrt{\int_{V \geq E/2} u^2 \int_{V\geq E/2} V^2 u^2} + E \int_{V \geq E/2} u^2.
\end{split}\]
By the already established exponential decay \eqref{exponential_decay_eigen}, 
\[\begin{split}
\int_{V\geq E/2} V^2 u^2
&\leq 16 E^2 \int_{E/2 \leq V \leq 4 E} u^2 + \int_{V\geq 4 E} V^2 u^2 \\
&\lesssim_{\kappa,\delta} E^2 \int_{E/2 \leq V \leq 4 E} u^2 \\
&\qquad+ \frac{1}{|\{V\leq E\}|} \int_{V\geq 4E} V(x)^2 e^{-c x \sqrt{V(x)}} \,dx \cdot \int_{E \leq V \leq 2 E} u^2 \\
&\lesssim_{\kappa,\delta} E^2 \int_{V\geq E/2} u^2,
\end{split}\]
where Proposition \ref{doubling_prp}\ref{en:auxiliary} was used in the last inequality. Thus, 
\begin{equation*}
\int_{V\geq E/2} (u')^2 \lesssim_{\kappa,\delta} E \int_{V\geq E/2} u^2\lesssim_{\kappa, \delta} E \int_{E/2\leq V\leq 2E} u^2,
\end{equation*} 
where we used once more Agmon theory, in the form of Corollary \ref{agmon_second_cor}.
\end{proof}

Thanks to the energy estimate of Lemma \ref{kinetic_lem}, we can run the positivity argument alluded to above. Consider the function 
\begin{equation*}
g \defeq u^2+\frac{(u')^2}{E-V}
\end{equation*}
and notice that 
\[\begin{split}
\int_{E/2 < V < 3E/4} g 
&\leq	\int_{E/2 < V < 3E/4} u^2 + \frac{4}{E} \int_{E/2 < V < 3E/4} (u')^2 \\
&\lesssim_{\kappa,\delta} \int_{E/2 < V < 2 E} u^2,
\end{split}\]
where the last bound follows from Lemma \ref{kinetic_lem}. A simple computation using the equation \eqref{equation2} yields
\begin{equation}\label{eq:titchmarsh_derivative}
g'(x) = \frac{V'(x) \, u'(x)^2}{(E-V(x))^2} >0 \qquad \forall x \tc V(x) \neq E.
\end{equation}
It follows that the maximum of $g$ on the interval $\{V < E/2\}$ is attained at the right endpoint and it must be less than or equal to
\[
\frac{1}{|\{E/2 < V < 3E/4\}|}\int_{E/2 < V < 3E/4} g 
\lesssim_{\kappa,\delta} \frac{1}{|\{V<E\}|} \int_{E/2 < V < 2 E} u^2
\]
(here Proposition \ref{doubling_prp}\ref{en:sublevel_measure} was also used). This is exactly the content of inequality \eqref{classical_eigen} of Theorem \ref{thm:pointwise_eigen}, whose proof is now complete. 

\subsection{Integral bounds}
Here we prove Theorem \ref{thm:Vpowers_prp}.

Let $W : \Rpos \to \Rnon$, $a,b \in \RR$ and $C > 0$ be such that
\begin{equation}\label{eq:xV_int_assumption2}
	W(x) \leq C x^a V(x)^b \quad\text{and}\quad
	\int_0^x W  \leq C x^{a+1} V(x)^b \qquad\forall x > 0.
\end{equation}
Theorem \ref{thm:Vpowers_prp}\ref{en:Vpowers_upp}, namely the integral inequality 
\[
\int_{\Rpos} W  \left( u^2 + \frac{(u')^2}{E} \right) 
\lesssim_{a,b,\delta,\kappa} C \, |\{V\leq E\}|^a E^b \int_{\Rpos} u^2 ,
\]
follows from the pointwise bounds of Theorem \ref{thm:pointwise_eigen}. Let us see the details. 

Clearly we may assume $C=1$. Define
\[
v \defeq u^2 + \frac{(u')^2}{E}.
\]
We decompose our integral as follows:
\[
	\int_{\Rpos} W  v 
	= \int_{V<E/2} + \int_{E/2<V<4E} + \int_{V>4E}
	\eqdef \romI + \romII + \romIII.
\]
For the first term, the classical-region bound \eqref{classical_eigen} of Theorem \ref{thm:pointwise_eigen} gives that
\[
\romI
\lesssim_{\kappa,\delta} 
\frac{1}{|\{V<E\}|} \int_{\Rpos} u^2 \cdot \int_{V<E/2} W .
\]
Now, by \eqref{eq:xV_int_assumption2}
and Proposition \ref{doubling_prp},
\[
\int_{V<E/2} W \leq |\{V<E/2\}|^{a+1} (E/2)^b \simeq_{\kappa,a,b} |\{V \leq E\}|^{a+1} E^b.
\]
Hence
\[
\romI
\lesssim_{\kappa,\delta,a,b} \{V\leq E\}|^a E^b \int_{\Rpos} u^2.
\]
Next, since $W(x) \leq x^a V(x)^b$,
\[\begin{split}
	\romII
	&\lesssim_{b}  E^b\max\{|\{V\leq E/2\}|^a, |\{V\leq 4E\}|^a\} \int_{E/2<V<4E} v\\
	&\lesssim_{\kappa,a,b} |\{V\leq E\}|^a E^b \int_{\Rpos} u^2 .
\end{split}\]
In the last step we used the energy estimate of Lemma \ref{kinetic_lem} and the usual doubling property of Proposition \ref{doubling_prp}. Finally, the exponential decay part of Theorem \ref{thm:pointwise_eigen} and Proposition \ref{doubling_prp}\ref{en:auxiliary} take care of the tail of the integral: 
\[\begin{split}
	\romIII
	&\lesssim_{\kappa, \delta, a,b}  \frac{1}{|\{V\leq E\}|}\int_{V\geq 4E} x^a V(x)^b e^{-cx\sqrt{V(x)}} \,dx \cdot \int_{\Rpos} u^2 \\
	&\lesssim_{\kappa, \delta, a,b}  |\{V\leq E\}|^a E^b \int_{\Rpos} u^2 ,
\end{split}\]
and this completes the proof of part \ref{en:Vpowers_upp} of Theorem \ref{thm:Vpowers_prp}.

\smallskip

We now verify part \ref{en:Vpowers_ass}, that is, the assertion that the assumption \eqref{eq:xV_int_assumption2} holds whenever $W(x) = x^a V(x)^b$ and $a+\kappa^{-1} b>-1$, with $C = C(a,b,\kappa)$.
Indeed, note that such $W$ lies in $\doub(a+b\kappa^{-1}, a+b\kappa)$ by parts \ref{en:doubling_pow} and \ref{en:doubling_prod} of Proposition \ref{trivial_doubling_prp}. Hence, if 
$a+b\kappa^{-1}>-1$, then the desired assertion 
follows from parts \ref{en:doubling_doub} and \ref{en:doubling_int} of Proposition \ref{doubling_prp}

\smallskip

We are left with the proof of part \ref{en:Vpowers_low} of Theorem \ref{thm:Vpowers_prp}. For all $a,b \in \RR$,
\[\begin{split}
	\int_{\Rpos} x^a V(x)^b u(x)^2 \,dx 
	&\geq \int_{E/2 \leq V \leq 2 E} x^a V(x)^b u(x)^2 \,dx \\
	&\gtrsim_b E^b \min\{|\{V\leq E/2\}|^a,|\{V\leq 2E\}|^a\} \int_{E/2 \leq V \leq 2 E} u^2 \\
	&\gtrsim_{\kappa,a,b} E^b \, |\{V\leq E\}|^a  \int_{E/2 \leq V\leq 2E} u^2.
\end{split}\]
The desired estimate follows by Corollary \ref{agmon_second_cor} and the pointwise bounds in the classical region, which imply that
\[
\int_{E/2 \leq V \leq 2 E} u^2 \gtrsim_{\kappa,\delta} \int_{\Rpos} u^2.
\]

\section{One-dimensional Schr\"odinger operators}\label{basic_sec}

\subsection{Summary of the results}\label{summary_line_sec}

We now switch to the study of Schr\"odinger equations on the real line $\RR$. We begin by introducing three useful classes of ``single-well" potentials on $\RR$.

\begin{dfn}\label{dfn:potentials_singlewell}
Let $\pot$ be the class of continuous nonnegative potentials $V : \RR \to \Rnon$ such that:
\begin{enumerate}
	\item $V(0)=0$,
	\item $V$ is strictly increasing for $x\geq0$ and strictly decreasing for $x\leq 0$,
	\item $\lim_{x \to \pm\infty}V(x)=+\infty$.
\end{enumerate}
Let $\Tpot\subset \pot$ be the subclass of potentials satisfying the additional temperate growth condition
\begin{enumerate}[resume]
	\item $V(x) \leq Ce^{M|x|}$ for some $C,M > 0$ and all $x \in \RR$.
\end{enumerate}
Finally, let $\Tpotone$ be the class of potentials $V \in \Tpot$ which are continuously differentiable on $\RR \setminus \{0\}$ and such that
\begin{enumerate}[resume]
\item $xV'(x) \leq C e^{M|x|}$ for some $C,M>0$ and all $x \in \RR\setminus \{0\}$.
\end{enumerate}
\end{dfn}

Fix a potential $V\in\pot$. As is well-known (see Remark \ref{rem:schr_selfadj} below), the Schr\"odinger operator 
\begin{equation}\label{eq:def_opH}
	\opH[V] \defeq -\partial_x^2+V(x),
\end{equation}
initially defined on $C^\infty_c(\RR)$, is essentially self-adjoint and its unique self-adjoint extension has pure point spectrum consisting of a divergent sequence of simple eigenvalues $\{E_n(V) \tc n\geq 1\}$. We denote by $\psi_n(x;V)$ the eigenfunction of $\opH[V]$ corresponding to the eigenvalue $E_n(V)$, i.e., 
\begin{equation*}
-\psi_n''(x;V) + V(x) \, \psi_n(x;V) = E_n(V) \, \psi_n(x;V) \qquad (x\in \RR),
\end{equation*} 
normalised by the two conditions 	
\begin{equation}\label{eigenfunction_normalization}
\begin{aligned}
\int_\RR\psi_n(x;V)^2 \,dx &= 1, \\
 \psi_n(x;V)&>0 \quad\text{for all $x>0$ large enough}
\end{aligned}
\end{equation} 
(see Section \ref{sec:sturm} for details). Whenever the potential $V$ is clear from the context, the lighter notation $E_n$ and $\psi_n(x)$ will be used. 

\begin{rem}\label{rem:schr_selfadj}
Essential self-adjointness of $\opH[V]$ holds more generally for locally square-integrable nonnegative potentials \cite[Theorem X.28]{reed-simon-II}, and the spectrum of $\opH[V]$ consists entirely of a divergent sequence of simple eigenvalues if and only if
\[
\lim_{x \to \infty} \int_{x-r}^{x+r} V(y) \,dy = +\infty
\]
for every $r>0$ (\cite{molchanov_1953}, cited in the introduction of \cite{mazya_shubin_2005}). 
\end{rem}

Under the aforementioned temperate growth conditions on the potential $V$, we shall derive the following important relations.

\begin{thm}[Virial theorem]\label{thm:virial}
Let $V \in \Tpot$ and $n \in \Npos$. Then $E_n(t V)$ is real-analytic as a function of $t>0$ and 
\begin{equation}\label{virial_F}
F_n(V) \defeq \partial_t|_{t=1} E_n(t V)  = \int_\RR V(x) \,\psi_n(x;V)^2 \,dx.
\end{equation}
If moreover $V \in \Tpotone$, then
\begin{equation}\label{virial_identity}
\int_\RR x V'(x) \,\psi_n(x;V)^2 \,dx = 2 \int_\RR \psi_n'(x;V)^2 \,dx.
\end{equation}
\end{thm}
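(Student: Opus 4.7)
The proof splits into the two displayed identities: the Hellmann--Feynman formula \eqref{virial_F} and the classical virial identity \eqref{virial_identity}.

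For \eqref{virial_F}, the plan is first to promote the half-line smooth-dependence result (Theorem \ref{thm:smoothdependence}) to real analyticity of the whole-line eigenpair, and then to run the standard Hellmann--Feynman computation. Applying Theorem \ref{thm:smoothdependence} separately to $V|_{[0,\infty)}$ and to $V(-\cdot)|_{[0,\infty)}$ produces real-analytic families of recessive half-line solutions $u_\pm(\cdot;t,E)$ for $(t,E)$ in a joint open subset of $\Rpos \times \RR$. The eigenvalues of $\opH[tV]$ are then exactly the zeros in $E$ of the real-analytic Wronskian
\[
W(t,E) = u_+(0;t,E)\, u_-'(0;t,E) - u_+'(0;t,E)\, u_-(0;t,E);
\]
since the spectrum of $\opH[tV]$ is simple (Remark \ref{rem:schr_selfadj}), $E_n(tV)$ is a simple zero of $E \mapsto W(t,E)$, and the implicit function theorem yields real analyticity of $t \mapsto E_n(tV)$ and, after proportional rescaling of $u_\pm$ followed by $L^2$-normalisation, of $t \mapsto \psi_n(\cdot;tV)$ as an element of a suitable weighted space such as $\banD_{V,\alpha}$. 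The Hellmann--Feynman step then consists of differentiating $\opH[tV]\psi_n(\cdot;tV) = E_n(tV)\,\psi_n(\cdot;tV)$ in $t$, pairing the result in $L^2$ against $\psi_n(\cdot;tV)$, and using self-adjointness of $\opH[tV]$ together with $\|\psi_n\|_2 = 1$ to cancel the contribution of $\partial_t\psi_n$.

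For \eqref{virial_identity}, my plan is to multiply the eigenvalue equation $-\psi_n'' + V\psi_n = E_n\psi_n$ by $2x\psi_n'$ and integrate on $[-R,0] \cup [0,R]$ (splitting at the origin because $V'$ may have a jump there). Using $2\psi_n''\psi_n' = ((\psi_n')^2)'$ and $2V\psi_n\psi_n' = V(\psi_n^2)'$, two integrations by parts yield
\[
\int_{-R}^R (\psi_n')^2 - \int_{-R}^R (V + xV')\psi_n^2 + E_n \int_{-R}^R \psi_n^2 = \mathrm{BT}(R),
\]
where the inner boundary contributions at $0$ cancel since the prefactor $x$ vanishes there, and $\mathrm{BT}(R)$ is a linear combination of $x(\psi_n')^2$, $xV\psi_n^2$ and $x\psi_n^2$ evaluated at $\pm R$. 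The exponential decay of $\psi_n$ and $\psi_n'$ provided by Theorem \ref{thm:pointwise_eigen} overwhelms the at-most-exponential growth of $V$ and $xV'$ encoded in $\Tpotone$, so $\mathrm{BT}(R) \to 0$. Combining the resulting identity with the energy relation $E_n = \int_\RR (\psi_n')^2 + \int_\RR V\psi_n^2$ (obtained by testing the eigenvalue equation against $\psi_n$) and rearranging then produces \eqref{virial_identity}. The main delicate point is this control of boundary terms, together with the integrability needed to move $\partial_t$ across the integral in the Hellmann--Feynman step, both of which ultimately rest on matching the half-line decay estimates of Section \ref{s:halflineregular} against the temperate growth conditions defining $\Tpot$ and $\Tpotone$.
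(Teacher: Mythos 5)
Your proposal is correct but follows a genuinely different path from the paper's on both halves. For the real-analyticity underlying \eqref{virial_F}, the paper appeals directly to Kato's perturbation theory (self-adjoint holomorphic family of type (B)) rather than reconstructing $E_n(tV)$ as a simple zero of the Wronskian of the two half-line recessive families from Theorem \ref{thm:smoothdependence}; your route works and stays entirely within the half-line machinery, though it additionally needs the simplicity of the zero of $E\mapsto W(t,E)$, which rests on $\partial_E W$ being proportional to $\int u_+ u_-$ (a classical Sturm--Liouville fact, not written out here). For \eqref{virial_identity} the divergence is larger: you carry out the classical device of multiplying the eigenvalue equation by $2x\psi_n'$ and integrating by parts, whereas the paper runs the Hellmann--Feynman computation a second time along the \emph{dilation} family $V(t\cdot)$ (Proposition \ref{diff_parameter_prp} and Corollary \ref{cor:one_parameter_Vt}) and combines that with the elementary scaling relation $E_n(V(t\cdot))=t^2 E_n(t^{-2}V)$ and the energy identity $E_n=\int(\psi_n')^2+\int V\psi_n^2$. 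The scaling route sidesteps explicit boundary-term estimates by absorbing the decay control into the smooth-dependence theorem; your integration by parts is more elementary and self-contained but needs the boundary terms controlled by hand.

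On that last point, the reference you give is misplaced: Theorem \ref{thm:pointwise_eigen} requires $V\in\halfpot_1(\kappa)$, i.e.\ $xV'(x)\simeq V(x)$, which is strictly stronger than $V\in\Tpotone$ and is not assumed in Theorem \ref{thm:virial}. What is actually available under $\Tpotone$ is the super-exponential decay of the recessive solutions furnished by Agmon's estimate (Theorem \ref{agmon_general_thm} or Corollary \ref{pointwise_agmon_cor}), or equivalently by Proposition \ref{u'_prp} combined with Proposition \ref{prp:one_parameter_tV}: since $V(x)\to+\infty$, the Agmon metric $\int^x\sqrt{V}$ eventually dominates $\beta x$ for every $\beta>0$, so $\psi_n$ and $\psi_n'$ decay faster than any exponential. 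That does beat the at-most-exponential growth of $V$ and $xV'$ built into $\Tpotone$, so $\mathrm{BT}(R)\to 0$ and your argument closes once the citation is corrected.
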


\begin{rem} Under the additional assumption $x V'(x) \simeq V(x)$, the identity \eqref{virial_identity} tells us that potential and kinetic energy associated to eigenfunctions are comparable. This justifies the terminology ``virial theorem'', cf.\ \cite{fock,weidmann,georgescu_virial_1999}.
\end{rem}

We now introduce a more stringent growth condition on potentials $V$, which corresponds to the assumption \eqref{eq:2dassumptions_doubling} discussed in the introduction.
In what follows, for any $f : \RR \to \CC$, we denote by $f_\oplus,f_\ominus : \Rnon \to \CC$ the functions defined by
\begin{equation}\label{eq:pm_parts}
	f_\oplus(x) = f(x), \qquad f_\ominus(x) = f(-x)
\end{equation}
for all $x \in \Rnon$.

\begin{dfn}
Let $\kappa\geq 1$. We denote by $\pot_1(\kappa)$ the class of potentials $V : \RR \to [0,+\infty)$ such that:
\begin{enumerate}[label=(\arabic*)]
	\item $V(0)=0$;
	\item\label{en:p1_compar} $V(-x)\leq \kappa V(x)$ for every $x\in \RR$;
	\item\label{en:p1_doubl} both $V_\oplus$ and $V_\ominus$ lie in $\halfpot_1(\kappa)$.
\end{enumerate} 
\end{dfn}
Notice that condition \ref{en:p1_compar} amounts to comparability of $V_\oplus$ and $V_\ominus$, while condition \ref{en:p1_doubl} amounts to comparability of $x V'(x)$ and $V(x)$ (see the definition of $\halfpot_1(\kappa)$ at the beginning of Section \ref{s:halflineregular}). From the doubling properties discussed in Proposition \ref{doubling_prp}, one immediately deduces that $\pot_1(\kappa) \subseteq \Tpotone$.

Under the assumption $V \in \pot_1(\kappa)$, we shall prove the following fundamental estimates for eigenvalues and eigenfunctions of $\opH[V]$.

\begin{thm}[Eigenvalue and eigenvalue gaps estimates]\label{thm:eigenvalues}
Let $V\in \pot_1(\kappa)$. Then, for every $n \geq 1$,
\begin{equation}\label{approx_bohr_sommerfeld}
E_n(V)^{1/2} \, |\{V \leq E_n(V)\} | \simeq_{\kappa} n
\end{equation}
and, if $1 \leq m \leq n$, then
\begin{equation*}
E_n(V) - E_m(V) \simeq_{\kappa} \frac{E_n(V)}{n} (n-m).
\end{equation*}
\end{thm}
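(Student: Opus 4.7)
The plan is to first establish the Bohr--Sommerfeld-type estimate \eqref{approx_bohr_sommerfeld} for individual eigenvalues, and then bootstrap it to the gap estimate via doubling properties of the counting function $N(E) \defeq E^{1/2}\,|\{V\leq E\}|$.

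For \eqref{approx_bohr_sommerfeld}, I would prove $\#\{k : E_k(V)\leq E\} \simeq_\kappa N(E)$ by sandwiching the spectral counting function between two multiples of $N(E)$. The lower bound comes from min--max applied to $k\simeq_\kappa N(E)$ smooth bumps of unit $L^2$-norm supported on pairwise disjoint congruent subintervals of $\{V\leq E/2\}$: each Rayleigh quotient is at most $C(k/|\{V\leq E/2\}|)^2 + E/2$, which is $\lesssim_\kappa E$ as soon as $k\lesssim_\kappa \sqrt{E}\,|\{V\leq E/2\}| \simeq_\kappa N(E)$ (the last comparability coming from the doubling inequality in Proposition \ref{doubling_prp}\ref{en:sublevel_measure}). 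The upper bound uses Sturm oscillation: $\psi_n$ has exactly $n-1$ real zeros; at most two of them lie outside the classical region $\{V<E_n\}$ by a convexity argument for recessive solutions on the exponentially decaying tails, while those inside the classical region are controlled by Sturm comparison with $-u''=E_n u$ to yield $O(\sqrt{E_n}\,|\{V<E_n\}|) = O(N(E_n))$ oscillations.

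With \eqref{approx_bohr_sommerfeld} in hand, Proposition \ref{trivial_doubling_prp} gives that $|\{V\leq E\}|\simeq_\kappa V_\oplus^{\inv}(E)$ lies in $\doub(1/\kappa,\kappa)$, so $N$ is $C^1$ and comparable to an element of $\doub(1/2+1/\kappa,\,1/2+\kappa)$; in particular $N'(E)\simeq_\kappa N(E)/E$. Setting $\lambda\defeq E_n/E_m\geq 1$, the case $\lambda\geq 2$ follows from reverse doubling of $N$, which forces $n/m \gtrsim_\kappa 2^{1/2+1/\kappa}>1$, so both $n-m\simeq n$ and $E_n-E_m \simeq E_n$, matching $(E_n/n)(n-m)\simeq E_n$. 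For $\lambda\in[1,2]$, the mean value theorem applied to the smooth doubling function $N$ yields $N(E_n)-N(E_m)\simeq_\kappa (E_n-E_m)\,n/E_n$, which is the crucial quantitative link between eigenvalue and index differences.

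The main obstacle is that \eqref{approx_bohr_sommerfeld} has only multiplicative precision, whereas converting the relation $N(E_n)-N(E_m)\simeq_\kappa (E_n-E_m)n/E_n$ into the desired $n-m\simeq_\kappa (E_n-E_m)n/E_n$ in the close-indices regime requires \emph{additive} precision $|N(E_k)-\alpha k|\leq C_\kappa$. To close this I would refine \eqref{approx_bohr_sommerfeld} through a Pr\"ufer-phase analysis of $-u''+(V-E)u=0$, exploiting doubling of $V$ to pin the total phase swept across $\RR$ by a recessive solution to $\pi$ times the number of zeros up to a universal additive error. The remaining finite range of very small $n-m$, including consecutive eigenvalues, would then be absorbed by a direct Sturm--Liouville comparison yielding $E_{k+1}-E_k \simeq_\kappa E_k/k$, which sums cleanly across an interval of $k$'s on which $E_k$ is doubling-comparable to cover the remaining configurations.
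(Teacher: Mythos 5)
Your plan for the Bohr--Sommerfeld estimate \eqref{approx_bohr_sommerfeld} is broadly sound and partly diverges from the paper's. One direction (an upper bound $n$ in terms of test functions) agrees with the paper's use of Wirtinger's inequality and min--max, while your other direction via Sturm oscillation counting (all $n-1$ zeros of $\psi_n$ lie in the classical region, with Sturm comparison against $-u''=E_n u$ bounding their number by $\sqrt{E_n}\,|\{V<E_n\}|/\pi+1$) is a legitimate alternative to the paper's min--max computation. Note, though, that this gives nothing for $n=1$: you get $0\leq\cdots$, so you would still need a separate argument showing $\sqrt{E_1}\,|\{V\leq E_1\}|\gtrsim_\kappa 1$ (the paper handles this via a Donoho--Stark uncertainty argument in Theorem \ref{bohr_sommerfeld_thm}); this lower bound at $n=1$ is genuinely used downstream, e.g.\ in \eqref{delta_eigenfunctions}.

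The real problem is the gap estimate, and you have correctly identified where the difficulty sits but not how to resolve it. Your route is: (1) deduce $E_{k+1}-E_k\simeq_\kappa E_k/k$ by ``direct Sturm--Liouville comparison''; (2) sum this to get the general gap estimate; and in parallel (3) prove an additive Bohr--Sommerfeld $|N(E_k)-\alpha k|\leq C_\kappa$ via Pr\"ufer phase and combine with the MVT on $N$. But (3) is logically superfluous once you have (1), and more importantly (1) is precisely the technical core of the theorem and you give no argument for it. Sturm comparison gives spacing of zeros, which yields the one-sided inequality $n-1\lesssim\sqrt{E_n}|\{V<E_n\}|$ used for \eqref{approx_bohr_sommerfeld}, but it does not by itself give the crucial \emph{lower} bound $E_{n+1}-E_n\gtrsim_\kappa E_n/n$ — that requires a quantitative formula for how fast the Pr\"ufer phase moves as $E$ varies. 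The paper obtains it from Proposition \ref{der_prufer_prp}, namely the exact identity
\[
\partial_E\theta(0;E,\lambda)=\lambda\,\frac{\int_0^\infty v(y;E)^2\,dy}{\lambda^2v(0;E)^2+\partial_xv(0;E)^2},
\]
combined with the MVT for $\theta+\widetilde\theta$ at $E'_n\in[E_n,E_{n+1}]$ and the Sonin-type identity $(E-V(0))u(0)^2+u'(0)^2=\int_0^\infty V'u^2$ from Proposition \ref{prp:elementaryids}\ref{en:v(0)_prp}; the hypothesis $xV'(x)\simeq V(x)$ then enters through the integral bounds of Theorem \ref{thm:Vpowers_prp} to evaluate the right-hand side up to constants. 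None of that is a ``direct Sturm--Liouville comparison,'' and without it your argument has a hole exactly where the theorem has content. Finally, your proposed additive Bohr--Sommerfeld with a $\kappa$-uniform error would itself need a careful Pr\"ufer phase-integral analysis with turning-point control, which is delicate under a mere $C^1$ hypothesis on $V$; it is not a throwaway refinement.
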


\begin{thm}[Eigenfunction estimates]\label{thm:pointwise_eigenfcts}
Let $V\in \pot_1(\kappa)$.
\begin{enumerate}[label=(\roman*)]
\item\label{en:pointwise_eigenfcts} For every $n\in\NN$ and $x\in \RR$ such that $V(x)\geq 4E_n(V)$,
\begin{equation*}
	\psi_n(x;V)^2 + \frac{\psi_n'(x;V)^2}{E_n(V)} \lesssim_{\kappa}\frac{1}{|\{V\leq E_n(V)\}|} e^{-c(\kappa)|x|\sqrt{V(x)}}.
\end{equation*}
\item\label{en:integral_eigenfcts} If $W : \RR \to \Rnon$,  $a,b\in \RR$ and $C > 0$ are such that
\begin{equation}\label{eq:xV_int_assumption_bil}
	W(x) \leq C \, |x|^a V(x)^b \quad\text{and}\quad
	\int_{-|x|}^{|x|} W(t) \,dt \leq C \, |x|^{a+1} V(x)^b \qquad\forall x \in \RR,
\end{equation}
then
\begin{equation*}
	\int_{\RR} W(x) \left(\psi_n(x;V)^2 + \frac{\psi_n'(x;V)^2}{E_n(V)}\right) \,dx \lesssim_{a,b,\kappa}  C \, |\{V\leq E_n(V)\}|^a E_n(V)^b.
\end{equation*}
\item\label{en:ass_weight_eigenfcts} The assumption \eqref{eq:xV_int_assumption_bil} holds whenever $W(x) = |x|^a V(x)^b$ and $a+\kappa^{-1} b>-1$, with $C = C(a,b,\kappa)$.
\end{enumerate}
\end{thm}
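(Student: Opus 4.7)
The plan is to reduce all three parts to the corresponding half-line statements, Theorems \ref{thm:pointwise_eigen} and \ref{thm:Vpowers_prp}, applied separately to the two halves $V_\oplus, V_\ominus \in \halfpot_1(\kappa)$ of the potential. The restrictions $u_+(x) \defeq \psi_n(x;V)$ and $u_-(x) \defeq \psi_n(-x;V)$, viewed as functions on $\Rpos$, are real-valued $L^2(\Rpos)$ solutions of the half-line equations $-u_\pm'' + V_\pm u_\pm = E_n(V)\, u_\pm$; the only hypothesis of those theorems that is not immediate is the adimensional lower bound $\sqrt{E_n(V)}\,|\{V_\pm \leq E_n(V)\}| \geq \delta$ with $\delta = \delta(\kappa) > 0$.

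Verifying this adimensional bound is the crux of the reduction. The eigenvalue asymptotics \eqref{approx_bohr_sommerfeld} give $\sqrt{E_n(V)}\,|\{V \leq E_n(V)\}| \simeq_\kappa n \geq 1$. Moreover, the condition $V(-x) \leq \kappa V(x)$ together with the doubling estimate of Proposition \ref{doubling_prp}\ref{en:sublevel_measure} applied to $V_\oplus$ and $V_\ominus$ yields $|\{V_\oplus \leq E\}| \simeq_\kappa |\{V_\ominus \leq E\}|$ for every $E > 0$. Since $|\{V \leq E\}| = |\{V_\oplus \leq E\}| + |\{V_\ominus \leq E\}|$, we conclude $\sqrt{E_n(V)}\,|\{V_\pm \leq E_n(V)\}| \simeq_\kappa n \geq 1$, so the required adimensional condition holds with $\delta$ depending only on $\kappa$.

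With this in hand, part \ref{en:pointwise_eigenfcts} follows by applying the exponential decay bound \eqref{exponential_decay_eigen} to $u_\pm$ on $\Rpos$, noting that $\int_{E_n < V_\pm < 2E_n} u_\pm^2 \leq \|\psi_n(\cdot;V)\|_2^2 = 1$ and that $|\{V_\pm \leq E_n\}| \simeq_\kappa |\{V \leq E_n\}|$. For part \ref{en:integral_eigenfcts}, splitting $\int_\RR = \int_0^\infty + \int_{-\infty}^0$ reduces matters to bounding $\int_0^\infty W_\pm \bigl(u_\pm^2 + (u_\pm')^2/E_n(V)\bigr)$; the two-sided hypothesis \eqref{eq:xV_int_assumption_bil} on $W$, combined with the positivity of $W$ and the comparability $V(-x)\simeq_\kappa V(x)$, implies the half-line hypothesis \eqref{eq:xV_int_assumption} for $W_\pm$ relative to $V_\pm$ (up to a $\kappa$-dependent constant absorbed into $C$), whence Theorem \ref{thm:Vpowers_prp}\ref{en:Vpowers_upp} yields the desired bound on each half, and summing completes the argument.

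Part \ref{en:ass_weight_eigenfcts} does not require the eigenfunction theory at all: for $W(x) = |x|^a V(x)^b$ with $a + \kappa^{-1} b > -1$, the pointwise bound in \eqref{eq:xV_int_assumption_bil} is tautological, while $\int_{-|x|}^{|x|} W \leq C|x|^{a+1} V(x)^b$ is obtained by splitting the integral into its positive and negative halves, invoking Theorem \ref{thm:Vpowers_prp}\ref{en:Vpowers_ass} applied separately to $V_\oplus, V_\ominus \in \halfpot_1(\kappa)$, and then using $V_\oplus \simeq_\kappa V_\ominus$ to collapse $V_\oplus(|x|)^b + V_\ominus(|x|)^b$ into $V(x)^b$. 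No single step presents a genuine difficulty, since the substantive analytic work has already been packaged into Sections \ref{s:halflineschroedinger}--\ref{s:halflineregular} and into Theorem \ref{thm:eigenvalues}; the only point demanding care is ensuring that all doubling and comparability constants from the definition of $\pot_1(\kappa)$ propagate cleanly to $V_\oplus$ and $V_\ominus$, so that the final constants depend only on $\kappa$ (and, in parts \ref{en:integral_eigenfcts}--\ref{en:ass_weight_eigenfcts}, on $a$ and $b$).
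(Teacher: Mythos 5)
Your proposal is correct and follows essentially the same route as the paper: both reduce to the half-line theorems (\ref{thm:pointwise_eigen} and \ref{thm:Vpowers_prp}) applied to $V_\oplus, V_\ominus$, with the adimensional bound $\sqrt{E_n}\,|\{V_\pm \leq E_n\}| \gtrsim_\kappa 1$ supplied by the Bohr--Sommerfeld identity \eqref{approx_bohr_sommerfeld} and Proposition \ref{pot1_trivial_prp}. The only cosmetic difference is that the paper invokes Proposition \ref{pot1_trivial_prp} by name where you re-derive the underlying comparability of $|\{V_\oplus \leq E\}|$ and $|\{V_\ominus \leq E\}|$ from scratch.
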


The rest of this section is devoted to the proofs of the above results.

\subsection{Auxiliary solutions of one-dimensional Schr\"odinger equations}\label{sec:sturm}

Let $V \in \pot$. By Sturm--Liouville theory, for every $E>0$, there exists a unique global solution $v(x)=v(x;E)$ of the stationary Schr\"odinger equation
\begin{equation}\label{eq:ODE_RR}
-v''(x) + V(x) \, v(x) = E \, v(x) \quad (x\in \RR)
\end{equation}
satisfying the following properties:
\begin{enumerate}[label=(\alph*)]
\item $v$ is recessive at $+\infty$, that is
\begin{equation*}
\lim_{x \to +\infty} v(x;E) = \lim_{x \to +\infty} \partial_x v(x;E) = 0.
\end{equation*} 
\item $v(x;E) > 0$ and $\partial_x v(x;E) < 0$ for $x$ large enough.
\item $\int_0^\infty v(x;E)^2 \,dx = 1$. 
\end{enumerate}
The spectrum of the self-adjoint operator $\opH[V] \defeq -\partial_x^2+V(x)$ coincides with the set of $E>0$ with the property that $v(\cdot;E)\in L^2(\RR)$. The eigenfunction $\psi_n(x; V)$, normalized as in \eqref{eigenfunction_normalization}, is the unique positive multiple of $v(x;E_n)$ whose $L^2$ norm on the whole real line is $1$. 

\subsection{The virial theorem}
Here we prove Theorem \ref{thm:virial}. Let us introduce the Banach spaces
\[
\banE^k_\beta = \left\{ u \in C^k(\RR;\RR) \tc \sup_{x \in \RR} e^{\beta |x|} \sum_{j=0}^k |u^{(j)}(x)| < \infty \right\},
\] where $k \in \NN$ and $\beta \geq 0$.

\begin{prp}\label{prp:one_parameter_tV}
Let $V \in \pot$. 
\begin{enumerate}[label=(\roman*)]
\item\label{en:one_parameter_tV_E} For all $n \in \Npos$, the function $t \mapsto E_n(tV)$ is real-analytic from $\Rpos$ to $\Rpos$.
\item\label{en:one_parameter_tV_psi} If $V \in \Tpot$, then, for all $\beta \geq 0$ and $n \in \Npos$, the map $t \mapsto \psi_n(\cdot;tV)$ is a real-analytic map from $\Rpos$ to the Banach space $\banE^1_\beta$.
\end{enumerate}
\end{prp}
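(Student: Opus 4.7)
The plan is to reduce both parts to the half-line theory of Section~\ref{ss:smoothdependence}, splitting the problem at $x=0$ and matching $C^1$ across the origin. Since $V_\oplus, V_\ominus \in \halfpot$ both tend to $+\infty$ at infinity, Theorem~\ref{thm:smoothdependence} applies for every $\alpha\geq 0$ and yields, on all of $\Rpos\times\RR$, real-analytic maps $(t,E)\mapsto u^\pm_{t,E}\in\banD_{V_{\oplus/\ominus},\alpha}$ (where $u^+$ is associated with the right half of $V$ and $u^-$ with the reflected left half), giving the unit-$L^2$-normalised recessive solutions on $[0,\infty)$ of $-u''+tV_{\oplus/\ominus}u=Eu$ (chosen positive near $+\infty$). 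Reflecting $u^-_{t,E}$ across the origin produces the recessive solution of $\opH[tV]u=Eu$ on $(-\infty,0]$.

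For part~\ref{en:one_parameter_tV_E}, I would choose $\alpha$ sufficiently large that the evaluation functionals $u\mapsto u(0)$ and $u\mapsto u'(0)$ are bounded on $\banD_{V_{\oplus/\ominus},\alpha}$ (this is accessible via Proposition~\ref{u'_prp} for $V\in\Tpot$, and the case $V\in\pot\setminus\Tpot$ may be covered by Kato's analytic perturbation theory for forms of type~(B)), and then form the real-analytic Wronskian-type function
\[
\Phi(t,E)\defeq u^+_{t,E}(0)\,(u^-_{t,E})'(0)+(u^+_{t,E})'(0)\,u^-_{t,E}(0)
\]
on $\Rpos\times\RR$. The identity $\Phi(t,E)=0$ is equivalent to the existence of a nontrivial $C^1$-matched combination of $u^+_{t,E}$ on the right and the reflected $u^-_{t,E}$ on the left, i.e., to $E$ being an eigenvalue of $\opH[tV]$. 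Since each eigenvalue $E_n(tV)$ is simple, the zero of $E\mapsto\Phi(t,E)$ at $E_n(tV)$ is simple too (a Green's-identity computation identifies $\partial_E\Phi$ at an eigenvalue with a positive multiple of the squared $L^2$-norm of the corresponding eigenfunction), so the real-analytic Implicit Function Theorem (cf.\ \cite[Theorem~4.5.4]{buffoni_toland}) delivers the real-analyticity of $t\mapsto E_n(tV)$.

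For part~\ref{en:one_parameter_tV_psi}, composition with the real-analytic map $t\mapsto(t,E_n(tV))$ turns $t\mapsto u^\pm_{t,E_n(tV)}$ into a real-analytic map into $\banD_{V_{\oplus/\ominus},\alpha}$ for every $\alpha\geq 0$. Given $\beta\geq 0$, the growth condition $V\in\Tpot$ ensures $K_{V_{\oplus/\ominus},\alpha-\beta}<\infty$ for $\alpha$ large enough, so Proposition~\ref{u'_prp} upgrades $\banD_{V_{\oplus/\ominus},\alpha}$-convergence into uniform $e^{-\beta x}$ control of $u^\pm$ and $(u^\pm)'$ on $[0,\infty)$, and after reflecting across $0$ one lands continuously in $\banE^1_\beta$. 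I would then build the global eigenfunction as
\[
\psi_n(x;tV)=c(t)\bigl[a(t)\,u^+_{t,E_n(tV)}(x)\,\chr_{x\geq 0}+b(t)\,u^-_{t,E_n(tV)}(-x)\,\chr_{x<0}\bigr],
\]
where $(a(t),b(t))$ is chosen locally in $t$ as either $(u^-_{t,E_n(tV)}(0),u^+_{t,E_n(tV)}(0))$ or $(-(u^-_{t,E_n(tV)})'(0),(u^+_{t,E_n(tV)})'(0))$---both produce $C^1$-matching at $0$ thanks to $\Phi(t,E_n(tV))=0$---and $c(t)$ is the real-analytic scalar enforcing $\int\psi_n^2=1$ and positivity near $+\infty$.

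The main obstacle I foresee is the clean global patching of these two local real-analytic prescriptions, together with the sign choice in $c(t)$: at isolated values of $t$ where $u^\pm_{t,E_n(tV)}(0)$ vanish one must switch from the first formula to the second (and vice versa when the derivatives at $0$ vanish), so one must verify that the two formulas yield proportional eigenfunctions wherever both are defined (by uniqueness of the one-dimensional eigenspace) and that $(u^\pm_{t,E_n(tV)}(0),(u^\pm_{t,E_n(tV)})'(0))$ never vanishes jointly (by nontriviality of $\psi_n$). Combined with a locally constant selection of sign, these observations deliver the global real-analyticity of $t\mapsto\psi_n(\cdot;tV)$ on $\Rpos$ with values in $\banE^1_\beta$.
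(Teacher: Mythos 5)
Your proof is correct in outline, and for part~\ref{en:one_parameter_tV_psi} it coincides with the paper's (analyticity of $t\mapsto E_n(tV)$ plus Theorem~\ref{thm:smoothdependence}, then $\banD_{V,\alpha}\hookrightarrow\banE^1_\beta$ via Proposition~\ref{u'_prp}, then gluing across $x=0$; the paper leaves the gluing implicit, whereas you spell it out). For part~\ref{en:one_parameter_tV_E} you take a genuinely different route: the paper invokes Kato's abstract theory of form-holomorphic families of type~(B) as a black box, while you construct a Wronskian-type function $\Phi(t,E)$ from the two half-line recessive solutions and invoke the real-analytic IFT, which is more self-contained at the cost of the patching analysis. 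Two small corrections. First, your worry about the evaluation functionals $u\mapsto u(0)$, $u\mapsto u'(0)$ on $\banD_{V_{\oplus/\ominus},\alpha}$ is unfounded: the $\banD_{V,\alpha}$-norm contains $\|u\|_\infty$ and $\|u''/(1+|V|)\|_\infty$, so $|u(0)|\leq\|u\|_\infty$ and $|u'(0)|\lesssim\|u\|_\infty+\int_0^1|u''|\lesssim_V\|u\|_{\banD_{V,\alpha}}$ hold for every $\alpha\geq 0$ and every continuous $V$; consequently your Wronskian argument already covers all $V\in\pot$, and the contingency fallback to Kato for $V\in\pot\setminus\Tpot$ is unnecessary. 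Second, writing $\psi_n=c\,u^+\chr_{x\geq 0}+d\,u^-(-\cdot)\chr_{x<0}$, the identity $u^\pm(0)\,\partial_E(u^\pm)'(0)-(u^\pm)'(0)\,\partial_E u^\pm(0)=\int_0^\infty(u^\pm)^2=1$ gives $\partial_E\Phi=(c^2+d^2)/(cd)$ at an eigenvalue, whose sign is $(-1)^{n-1}$ rather than always positive; this is harmless since nonvanishing is all the IFT requires.
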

\begin{proof}
According to \cite[Chapter VII, Section 4.8, Example 4.24, p.\ 409]{kato}, the operators $\opH[tV]$ constitute a ``selfadjoint holomorphic family of type (B)'', hence the eigenvalues $E_n(tV)$ are analytic functions of $t$ for all $n \in \Npos$ \cite[Chapter VII, Section 1.3, Theorem 1.8, p.\ 370]{kato}. This proves part \ref{en:one_parameter_tV_E}.

Moreover, combining this with Theorem \ref{thm:smoothdependence} easily gives that, for all $n \in \NN$ and $\alpha\geq 0$, the map $t \mapsto \psi_n(\cdot;tV)$ is real-analytic from $\Rpos$ to the Banach space
\[
\banD_{V,\alpha} \defeq \{ u \in C^2(\RR;\RR) \tc u_{\oplus} \in \banD_{V_\oplus,\alpha}, \, u_{\ominus} \in \banD_{V_{\ominus},\alpha} \}.
\]
If $V \in \Tpot$, then by Proposition \ref{u'_prp} the space $\banD_{V,\alpha}$ is continuously embedded in $\banE_\beta^1$ for all $\beta \geq 0$, and part \ref{en:one_parameter_tV_psi} is proved.
\end{proof}

\begin{cor}\label{cor:one_parameter_Vt}
Let $V \in \pot$. 
\begin{enumerate}[label=(\roman*)]
\item For all $n \in \Npos$, the function $t \mapsto E_n(V(t\cdot))$ is real-analytic from $\Rpos$ to $\Rpos$.
\item If $V \in \Tpot$, then, for all $\beta \geq 0$ and $n \in \Npos$, the map $t \mapsto \psi_n(\cdot;V(t\cdot))$ is a $C^1$ map from $\Rpos$ to the Banach space $\banE^0_\beta$.
\end{enumerate}
\end{cor}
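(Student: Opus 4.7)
The plan hinges on a scaling identity that reduces the dependence on the dilation parameter $t$ to the already understood dependence on the coefficient parameter treated in Proposition \ref{prp:one_parameter_tV}. Specifically, the substitution $y = tx$ in the eigenvalue equation $-\psi''(x) + V(tx) \psi(x) = E \psi(x)$ converts it into $-\phi''(y) + t^{-2} V(y) \phi(y) = t^{-2} E \, \phi(y)$ where $\phi(y) = \psi(y/t)$, and the $L^2$-normalisation gives the Jacobian factor $\sqrt{t}$. Keeping track of the sign convention (which is preserved because $t > 0$ and dilation by $t$ sends neighborhoods of $+\infty$ to themselves), one obtains the fundamental identities
\[
E_n(V(t\cdot)) = t^2 E_n\bigl(t^{-2} V\bigr), \qquad \psi_n(x;V(t\cdot)) = \sqrt{t}\,\psi_n\bigl(tx; t^{-2} V\bigr).
\]

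With this scaling in hand, part (i) is immediate: $s \mapsto E_n(sV)$ is real-analytic on $\Rpos$ by Proposition \ref{prp:one_parameter_tV}\ref{en:one_parameter_tV_E}, the map $t \mapsto t^{-2}$ is real-analytic on $\Rpos$, and multiplication by $t^2$ preserves real-analyticity. For part (ii), the strategy is to view the eigenfunction as the composition of the analytic map $s \mapsto \psi_n(\cdot;sV)$ with the ``dilation-and-rescale'' operator
\[
\Xi \colon \Rpos \times \banE_\alpha^1 \to \banE_\beta^0, \qquad \Xi(t,g)(x) = \sqrt{t}\, g(tx),
\]
and then invoke the chain rule. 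The value of $\alpha$ is chosen locally: given $t_0 > 0$, fix $\alpha = 2\beta/t_0$ so that $\alpha t > \beta$ for $t$ in a neighborhood of $t_0$. The bounds $|g(tx)| \leq e^{-\alpha t |x|}\|g\|_{\banE_\alpha^0}$ and $|x g'(tx)| \leq |x| e^{-\alpha t|x|}\|g\|_{\banE_\alpha^1}$ then show both that $\Xi$ is well-defined into $\banE_\beta^0$ and that its formal partial derivatives
\[
\partial_t \Xi(t,g)(x) = \tfrac{1}{2\sqrt{t}}\, g(tx) + \sqrt{t}\, x\, g'(tx), \qquad \partial_g \Xi(t,g)[h](x) = \sqrt{t}\, h(tx)
\]
take values in $\banE_\beta^0$ (the polynomial factor $|x|$ in the $t$-derivative is absorbed by the strict positivity of $\alpha t - \beta$). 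Continuity of these partial derivatives jointly in $(t,g)$ follows from standard exponentially-weighted uniform-convergence arguments, so $\Xi$ is $C^1$. Composing with the real-analytic maps $t \mapsto t^{-2}$ and $s \mapsto \psi_n(\cdot;sV) \in \banE_\alpha^1$ (Proposition \ref{prp:one_parameter_tV}\ref{en:one_parameter_tV_psi}, applicable because $sV \in \Tpot$ for each $s > 0$) yields the desired $C^1$ dependence of $\psi_n(\cdot;V(t\cdot)) = \Xi(t,\psi_n(\cdot;t^{-2}V))$.

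The main subtlety — and the reason only $C^1$ (and not real-analytic) regularity is asserted — is the mismatch of Banach spaces forced by the dilation: one must inflate the decay rate from $\beta$ to $\alpha \geq 2\beta/t_0$ on the ``coefficient'' side to compensate for the compression of the exponential weight under $x \mapsto tx$, and one must sacrifice one derivative (from $\banE_\alpha^1$ down to $\banE_\beta^0$) in order to absorb the $x$-factor produced by $\partial_t$. Higher $t$-derivatives would successively require higher derivatives of $g$, and although these could in principle be extracted from the ODE, they are not part of the conclusion of Proposition \ref{prp:one_parameter_tV}\ref{en:one_parameter_tV_psi}, which only provides $\banE_\beta^1$-valued analyticity. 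Nothing further is needed for the present corollary.
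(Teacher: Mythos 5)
Your proof is correct and takes essentially the same approach as the paper: the paper derives exactly the scaling identities $E_n(V(t\cdot)) = t^2 E_n(t^{-2}V)$ and $\psi_n(x;V(t\cdot)) = t^{1/2}\psi_n(tx;t^{-2}V)$ by rescaling the ODE, and then states that both assertions follow from them together with Proposition \ref{prp:one_parameter_tV}, leaving the deduction implicit. Your argument for part (ii), via the $C^1$ dilation map $\Xi(t,g)(x)=\sqrt{t}\,g(tx)$ from $\Rpos\times\banE^1_\alpha$ to $\banE^0_\beta$ with $\alpha$ chosen locally, supplies precisely the details the paper omits and correctly identifies why only $C^1$ regularity is obtained.
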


\begin{proof}
Both assertions follow from Proposition \ref{prp:one_parameter_tV} and the relations
\begin{align}
	E_n(V(t\cdot)) &= t^2E_n(t^{-2}V), \label{eq:scaling_energy}\\
	\psi_n(x;V(t\cdot)) &= t^{1/2} \psi_n(tx;t^{-2} V),
\end{align}
which are obtained rescaling the variable in the equation \eqref{eq:ODE_RR} with $E=E_n(V(t\cdot))$ and $v(x)=\psi_n(x;V(t\cdot))$.
\end{proof}

By the previous results, if $V_t$ is either of the families of potentials $tV$ or $V(t\cdot)$, then the corresponding eigenvalues $E_n(V_t)$ are analytic functions of $t$ for any $n \in \Npos$. In the next proposition, we obtain a formula for the derivative of the eigenvalues $\partial_t(E_n(V_t))$ involving $\partial_t V_t$. 

\begin{prp}\label{diff_parameter_prp}
Let $V \in \Tpot$. Then the identity
\begin{equation*}
\partial_t(E_n(V_t)) = \langle (\partial_t V_t) \psi_n(\cdot; V_t), \psi_n(\cdot; V_t) \rangle.
\end{equation*}
holds with $V_t = tV$. The same identity holds for $V_t = V(t\cdot)$ provided $V \in \Tpotone$.
\end{prp}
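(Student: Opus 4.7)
The identity to be established is the Hellmann--Feynman formula. To treat both cases uniformly and to sidestep delicate regularity questions for $\partial_t \psi_n(\cdot;V_t)$, my plan is to work with the symmetric bilinear form
\[
\formE_t(u,v) = \int_\RR u' v' + \int_\RR V_t \, u v,
\]
for which the eigenvalue relation reads $\formE_t(\psi_n^t, \phi) = E_n^t \langle \psi_n^t, \phi\rangle$ for every $\phi$ in the form domain of $\opH[V_t]$, where I abbreviate $\psi_n^t := \psi_n(\cdot;V_t)$ and $E_n^t := E_n(V_t)$. Applying this relation at parameters $t$ and $t+h$ with test functions $\psi_n^{t+h}$ and $\psi_n^t$ respectively, and subtracting, the ``kinetic'' integrals $\int (\psi_n^t)'(\psi_n^{t+h})'$ cancel by symmetry of $\formE$, leaving the cross identity
\[
(E_n^{t+h} - E_n^t)\, \langle \psi_n^t, \psi_n^{t+h}\rangle \;=\; \int_\RR (V_{t+h} - V_t)\, \psi_n^t \, \psi_n^{t+h}.
\]
The proof will then be completed by dividing by $h$ and passing to the limit $h \to 0$.

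For the left-hand side, differentiability of $t \mapsto E_n^t$ at $t$ is provided by Proposition \ref{prp:one_parameter_tV}\ref{en:one_parameter_tV_E} in case 1 and by Corollary \ref{cor:one_parameter_Vt}(i) in case 2, while $\langle \psi_n^{t+h}, \psi_n^t\rangle \to \|\psi_n^t\|_2^2 = 1$ follows from $L^2$-continuity of $t \mapsto \psi_n^t$, itself a consequence of the $\banE^0_\beta$-continuity (for any $\beta > 0$) guaranteed by Proposition \ref{prp:one_parameter_tV}\ref{en:one_parameter_tV_psi} or Corollary \ref{cor:one_parameter_Vt}(ii), since $\banE^0_\beta \hookrightarrow L^2$. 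For the right-hand side I invoke dominated convergence: pointwise, $(V_{t+h}-V_t)/h \to \partial_t V_t$, equal to $V$ in case 1 and to $xV'(tx)$ almost everywhere in case 2. A uniform $L^1$ dominant is produced by writing $|(V_{t+h}(x)-V_t(x))/h| \leq \sup_{s \in [t,t+h]} |\partial_s V_s(x)|$ via the mean value theorem and then absorbing the temperate growth given by $V \in \Tpot$ (case 1) or $V \in \Tpotone$ (case 2) into the arbitrarily fast exponential decay of $\psi_n^t \, \psi_n^{t+h}$, obtained by choosing $\beta$ sufficiently large in the $\banE^0_\beta$-continuity statement.

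The only nontrivial preliminary is to verify that both $\psi_n^t$ and $\psi_n^{t+h}$ lie in the form domain of $\formE_t$, so that the cross identity above is meaningful; this is immediate from the super-exponential Agmon-type decay of eigenfunctions (which can be read off from the $\banD$-valued smooth-dependence statement in the proof of Theorem \ref{thm:smoothdependence}, or directly from the Agmon estimates of Section \ref{ss:agmon}). The main obstacle I anticipate is the dominated-convergence step in case 2: this is the unique point at which the $\Tpotone$ hypothesis -- specifically the growth bound on $xV'(x)$ needed to control $\partial_t V_t(x) = xV'(tx)$ -- becomes essential, the remainder of the argument being essentially identical in the two cases.
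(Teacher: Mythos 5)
Your proposal is correct, and it takes a genuinely different route from the paper. The paper differentiates the Rayleigh quotient $E_n(V_t)=\langle\opH[V_t]\psi_n(\cdot;V_t),\psi_n(\cdot;V_t)\rangle$ directly, distributes $\partial_t$ by the Leibniz rule, and then kills the two terms involving $\partial_t\psi_n$ using self-adjointness, the eigenvalue equation, and the identity $\langle\partial_t\psi_n,\psi_n\rangle=0$ obtained by differentiating the normalisation; to justify all this it relies on the $\banE^1_\beta$-valued (resp.\ $\banE^0_\beta$-valued) differentiability of $t\mapsto\psi_n(\cdot;V_t)$ from Proposition \ref{prp:one_parameter_tV}\ref{en:one_parameter_tV_psi} and Corollary \ref{cor:one_parameter_Vt}, together with \eqref{eq:pot_tderiv}. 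You instead cross-test the weak eigenvalue relation at $t$ and $t+h$ against each other's eigenfunctions, producing an exact finite-difference identity in which $\partial_t\psi_n$ never appears, and then pass to the limit. The gain in your approach is that you need only continuity of $t\mapsto\psi_n(\cdot;V_t)$ in $\banE^0_\beta$ (for uniform exponential decay and $L^2$-convergence $\langle\psi_n^t,\psi_n^{t+h}\rangle\to1$), not differentiability, which makes the Feynman--Hellmann formula self-contained given eigenvalue differentiability; what the paper's approach buys is seamless integration with the already-established $\banD$/$\banE$-valued analyticity machinery of Section \ref{ss:smoothdependence}. Both approaches locate the need for $\Tpotone$ in case 2 at the same point, namely the control of $\partial_t V_t(x)=xV'(tx)$, which you need for the $L^1$-dominant in the limit passage and the paper needs for exponential decay of the $t$-derivative of $V_t\psi_n$.
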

\begin{proof}
From Proposition \ref{prp:one_parameter_tV}, Corollary \ref{cor:one_parameter_Vt}, and
\begin{equation}\label{eq:pot_tderiv}
	\partial_t(tV(x)) = V(x), \qquad \partial_t(V(tx))=xV'(tx),
\end{equation}
we easily deduce that, under our assumptions on $V$, any of the functions $\psi_n(x;V_t)$, $V_t(x) \psi_n(x;V_t)$, and $\psi_n''(x;V_t) = (V_t(x)-E_n(V_t)) \psi_n(x,t)$, as well as their first-order $t$-derivatives, decay exponentially as $|x| \to \infty$ with locally uniform bounds in $t$. This justifies differentiations under the integral sign and integrations by parts in the following argument.

First, from the $L^2$-normalization of eigenfunctions we deduce that
\begin{equation*}
\langle\partial_t\psi_n(\cdot; V_t), \psi_n(\cdot ;V_t) \rangle=0.
\end{equation*}
Now, 
\[\begin{split}
\partial_t(E_n(V_t)) 
&= \partial_t \langle \opH[V_t]\psi_n(\cdot; V_t), \psi_n(\cdot; V_t)\rangle \\
&= \langle (\partial_t V_t) \psi_n(\cdot; V_t), \psi_n(\cdot; V_t)\rangle \\
&\quad+ \langle \opH[V_t]\partial_t\psi_n(\cdot; V_t), \psi_n(\cdot; V_t)\rangle \\
&\quad+ \langle \opH[V_t]\psi_n(\cdot; V_t),\partial_t \psi_n(\cdot; V_t)\rangle.
\end{split}\]
On the other hand, the last two summands vanish, since
\[\begin{split}
&\langle \opH[V_t]\partial_t\psi_n(\cdot; V_t), \psi_n(\cdot; V_t)\rangle \\
&= \langle \partial_t\psi_n(\cdot; V_t),\opH[V_t] \psi_n(\cdot; V_t)\rangle  \\
&= E_n(V_t)\langle \partial_t\psi_n(\cdot; V_t),\psi_n(\cdot; V_t)\rangle = 0,
\end{split}\]
and we are done.
\end{proof}

We now have all the ingredients to prove Theorem \ref{thm:virial}. Formula \eqref{virial_F} is an immediate consequence of Proposition \ref{diff_parameter_prp} applied to $V_t = tV$.
To prove formula \eqref{virial_identity}, we apply Proposition \ref{diff_parameter_prp} to $V_t = V(t\cdot)$, combined with the identity
\begin{equation*}
\partial_t|_{t=1} E_n(V(t\cdot)) = 2E_n(V) - 2F_n(V),
\end{equation*}
which is an elementary consequence of \eqref{eq:scaling_energy}. 

\subsection{Elementary doubling properties of regular potentials}

We record here a couple of useful elementary properties of potentials in the class $\pot_1(\kappa)$. 

\begin{prp}\label{pot1_trivial_prp}
Let $V \in \pot_1(\kappa)$. Then, for all $a \in \Rpos$, 
\begin{equation}\label{pot_1_trivial_1}
	|\{ V_\oplus \leq a\}| \simeq_\kappa |\{ V_\ominus \leq a\}| \simeq_\kappa |\{V\leq a\}|,
\end{equation} 
and
\begin{equation}\label{pot_1_trivial_2}
	|\{V\leq E\}| \simeq_{\kappa, c} |\{V\leq cE\}| \qquad\forall E>0,\ c\in(0,1).
\end{equation}
\end{prp}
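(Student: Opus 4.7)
The plan is to reduce both comparabilities to the sublevel-set doubling property of Proposition \ref{doubling_prp}\ref{en:sublevel_measure}, applied to $V_\oplus, V_\ominus \in \halfpot_1(\kappa) = \doub(\kappa^{-1},\kappa)$, combined with the approximate parity condition defining $\pot_1(\kappa)$.

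First I would record the trivial set-theoretic identity
\[
|\{V\leq a\}| = |\{V_\oplus \leq a\}| + |\{V_\ominus \leq a\}|,
\]
obtained by splitting $\RR$ into its two half-lines and changing variable $x\mapsto -x$ on the negative one. This reduces \eqref{pot_1_trivial_1} to showing $|\{V_\oplus \leq a\}| \simeq_\kappa |\{V_\ominus \leq a\}|$.

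For this latter step, the approximate parity $V(-x) \leq \kappa V(x)$, applied both at $x$ and at $-x$, gives $\kappa^{-1} V_\oplus \leq V_\ominus \leq \kappa V_\oplus$ on $\Rpos$. In particular $\{V_\oplus \leq a/\kappa\} \subseteq \{V_\ominus \leq a\}$, so
\[
|\{V_\ominus \leq a\}| \geq |\{V_\oplus \leq a/\kappa\}| \gtrsim_\kappa |\{V_\oplus \leq a\}|,
\]
where the last step is Proposition \ref{doubling_prp}\ref{en:sublevel_measure} applied with $c = 1/\kappa$. Swapping the roles of $V_\oplus$ and $V_\ominus$ yields the opposite inequality, hence \eqref{pot_1_trivial_1}.

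For \eqref{pot_1_trivial_2}, I would apply Proposition \ref{doubling_prp}\ref{en:sublevel_measure} separately to $V_\oplus$ and to $V_\ominus$, obtaining $|\{V_{\oplus/\ominus}\leq cE\}| \simeq_{\kappa,c} |\{V_{\oplus/\ominus}\leq E\}|$, and then sum the two comparabilities together using the decomposition above. There is no real obstacle: the proposition is a routine repackaging of the half-line doubling facts proved in Proposition \ref{doubling_prp}, which is precisely why it is labelled elementary.
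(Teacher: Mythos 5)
Your proof is correct and follows essentially the same route as the paper's: both rely on the decomposition $|\{V\leq a\}| = |\{V_\oplus\leq a\}| + |\{V_\ominus\leq a\}|$, the inclusion of sublevel sets coming from the approximate parity $V(-x)\leq\kappa V(x)$, and the half-line doubling property of Proposition \ref{doubling_prp}\ref{en:sublevel_measure}. The only cosmetic difference is the direction in which you apply the doubling (to $V_\oplus$ at scale $a/\kappa$ rather than to $V_\ominus$ at scale $\kappa a$), which gives the same comparability.
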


\begin{proof}
By the definition of $\pot_1(\kappa)$ and part \ref{en:sublevel_measure} of Proposition \ref{doubling_prp},
\[
	|\{ V_\oplus \leq a\}| \leq |\{ V_\ominus \leq \kappa a\}| \lesssim_\kappa |\{ V_\ominus \leq a\}|.
\]
The reverse approximate inequality is proved analogously. Since
\begin{equation}\label{eq:sum_sublevel}
		|\{V\leq a\}| = |\{ V_\oplus \leq a\}| + |\{ V_\ominus \leq a\}|,
\end{equation}
the approximate identities \eqref{pot_1_trivial_1} follow, while \eqref{pot_1_trivial_2} is a consequence of \eqref{pot_1_trivial_1} and part \ref{en:sublevel_measure} of Proposition \ref{doubling_prp}.
\end{proof}

\subsection{Eigenvalue estimates}

In this section we prove the first half of Theorem \ref{thm:eigenvalues}, namely the approximate identity \eqref{approx_bohr_sommerfeld}, under the assumption $V \in \pot_1(\kappa)$. This will follow from the inequalities of Bohr--Sommerfeld type contained in the theorem below, which are valid more generally for potentials in the class $\pot$. 

\begin{thm}[Bohr-Sommerfeld inequalities]\label{bohr_sommerfeld_thm}
Let $V\in \pot$. Then 
\begin{equation}\label{first_eigenvalue_bohr_sommerfeld}
\sqrt{E_1(V)} \,|\{V< \varepsilon^{-2}E_1(V)\}|\geq \frac{4}{27}(1-\varepsilon)^3 \pi\qquad\forall \varepsilon\in (0,1),
\end{equation}
\begin{equation}\label{lower_bohr_sommerfeld}
\sqrt{E_n(V)} \,|\{V< E_n(V)\}|\geq \pi(n-1)\qquad\forall n\geq 1,
\end{equation}
and
\begin{equation}\label{upper_bohr_sommerfeld}
\sqrt{\frac{E_n(V)}{1+t}}\left|\left\{V< \frac{E_n(V)}{1+t}\right\}\right|\leq \frac{\pi n}{\sqrt{t}}\qquad\forall n\geq 1, \ \forall t>0.
\end{equation}	
\end{thm}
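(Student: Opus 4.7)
\medskip

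My plan is to prove all three inequalities by combining the Rayleigh--Ritz variational principle with a careful analysis of the Prüfer angle of the recessive solution $v(\cdot;E_n)=\psi_n$. The upper inequality \eqref{upper_bohr_sommerfeld} is the quickest: setting $\mu = E_n/(1+t)$ and $(\alpha,\alpha+L)=\{V<\mu\}$, I would use the $n$ sine functions $\varphi_k(x)=\sin(k\pi(x-\alpha)/L)$ ($k=1,\dots,n$), extended by zero, as an $n$-dimensional trial space. Since $V<\mu$ on the support, $\langle \varphi_k,\opH[V]\varphi_k\rangle/\|\varphi_k\|^2 \leq k^2\pi^2/L^2 + \mu$, and the min--max characterisation of $E_n$ gives $E_n\leq n^2\pi^2/L^2+\mu$, which rearranges to $\sqrt{E_n t/(1+t)}\,L\leq n\pi$ and hence \eqref{upper_bohr_sommerfeld}.

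The two lower inequalities I would handle together via a (scaled) Prüfer transformation. Setting $\psi_n(x)=\rho(x)\sin\theta(x)$, $\psi_n'(x)=\rho(x)\sqrt{E_n}\cos\theta(x)$, one gets
\[
\theta' \;=\; \sqrt{E_n}\,\left(1-\tfrac{V(x)}{E_n}\sin^2\theta\right),
\]
so $\theta'\leq \sqrt{E_n}$ everywhere. Since $\psi_n\in L^2$ and satisfies $\psi_n'/\psi_n\to \mp\sqrt{V-E_n}$ at $\pm\infty$, one has $\tan\theta\to 0$ at $\pm\infty$, so with the correct branch $\theta(\pm\infty)\in\pi\ZZ$, and by the Sturm oscillation theorem (counting the $n-1$ interior zeros plus the two ``half-zeros'' at $\pm\infty$) one obtains $\theta(+\infty)-\theta(-\infty)=n\pi$, say $\theta(-\infty)=0$ and $\theta(+\infty)=n\pi$.

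The crucial geometric observation is the following \emph{no-upward-crossing lemma}: if $V>E$ on an interval $J$, then $\theta$ cannot cross the value $\arcsin\sqrt{E_n/E}$ going upward in $J$, because at that angle the right-hand side above equals $\sqrt{E_n}(1-V/E)<0$, contradicting $\theta'\geq 0$ at an upward crossing. Applied with $E=E_n$ on the left forbidden region $(-\infty,a)$ of $\{V<E_n\}=(a,b)$ (where $V>E_n$), this gives $\theta(a)\leq\pi/2$; symmetrically $\theta(b)\geq n\pi-\pi/2$. Since $\theta'\leq\sqrt{E_n}$ on the classical region, $\sqrt{E_n}(b-a)\geq \theta(b)-\theta(a)\geq (n-1)\pi$, proving \eqref{lower_bohr_sommerfeld}. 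For \eqref{first_eigenvalue_bohr_sommerfeld}, I apply the same lemma with $n=1$ and $E=\varepsilon^{-2}E_1$ in the $\mu$-forbidden region where $V>\varepsilon^{-2}E_1$: upward crossings of $\arcsin\sqrt{E_1/\mu}=\arcsin\varepsilon$ are forbidden, so $\theta(a_\mu)\leq\arcsin\varepsilon$ and $\theta(b_\mu)\geq \pi-\arcsin\varepsilon$, giving $\sqrt{E_1}\,|\{V<\varepsilon^{-2}E_1\}|\geq 2\arccos\varepsilon$. The conclusion then follows from the elementary inequality $2\arccos\varepsilon\geq (4\pi/27)(1-\varepsilon)^3$ on $[0,1]$, which I would verify by showing that $g(\varepsilon):=2\arccos\varepsilon-(4\pi/27)(1-\varepsilon)^3$ is monotonically decreasing from $g(0)=23\pi/27>0$ to $g(1)=0$ (e.g.\ because $g'(\varepsilon)=-2/\sqrt{1-\varepsilon^2}+(4\pi/9)(1-\varepsilon)^2<0$ throughout).

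The main technical obstacle is the rigorous set-up of the Prüfer picture at $\pm\infty$ for a general $V\in\pot$: one has to check that the ``$n\pi$'' accounting of the phase is consistent with the asymptotic behaviour of a genuinely $L^2$ recessive solution, and that the no-upward-crossing argument is not spoiled by the possibility of $\theta$ being tangent to the critical angle (which I would handle by the usual continuity/ODE-uniqueness argument, noting that at a tangent crossing $\theta'\geq 0$ still contradicts $\theta'=\sqrt{E_n}(1-V/E)<0$ strictly where $V>E$). Everything else reduces to the three observations above: the Rayleigh--Ritz bound, the ODE for the Prüfer phase, and the algebraic check of the $4/27$ constant.
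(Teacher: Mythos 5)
Your proposal is correct, but it follows a genuinely different route from the paper for two of the three inequalities, so let me briefly compare.

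For the upper bound \eqref{upper_bohr_sommerfeld} you and the paper both use the min--max principle with a trial space of sines supported in $\{V<\mu\}$; the only difference is that the paper puts a single sine bump in each of $n$ equal subintervals (so the trial functions have disjoint supports), while you use the first $n$ frequencies on the whole interval (relying on $L^2$- and $\dot H^1$-orthogonality of the sines). Both are equally elementary.

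For the lower bound \eqref{lower_bohr_sommerfeld} and the ground-state bound \eqref{first_eigenvalue_bohr_sommerfeld} the paper does something quite different: \eqref{lower_bohr_sommerfeld} is obtained from the max--min principle applied to the codimension-$n$ constraint $\int_{I_j}\psi=0$ together with the Wirtinger inequality (zero-mean version), and \eqref{first_eigenvalue_bohr_sommerfeld} is derived from the Donoho--Stark uncertainty principle, showing the ground state is $\varepsilon$-concentrated on $\{V<\varepsilon^{-2}E_1\}$ and its Fourier transform is $\delta$-concentrated on $(-b,b)$, then optimising in $\delta$ (the constant $\tfrac{4}{27}$ arises precisely as $\max_{\delta}\delta(1-\varepsilon-\delta)^2/(1-\varepsilon)^3$). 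Your unified Prüfer-phase argument replaces both of these. It has the appealing feature that your version of \eqref{first_eigenvalue_bohr_sommerfeld} is strictly stronger: you get $\sqrt{E_1}\,|\{V<\varepsilon^{-2}E_1\}|\ge 2\arccos\varepsilon$, and the numerical inequality $2\arccos\varepsilon\ge\tfrac{4}{27}(1-\varepsilon)^3\pi$ that you verify is indeed correct (your $g'<0$ check reduces to $\tfrac{2\pi}{9}(1-\varepsilon)^2\sqrt{1-\varepsilon^2}\le\tfrac{2\pi}{9}<1$). The ``no-upward-crossing'' lemma is also correct as stated, and your ODE $\theta'=\sqrt{E_n}\bigl(1-\tfrac{V}{E_n}\sin^2\theta\bigr)$ is the right one for the $\sqrt{E_n}$-scaled Prüfer transform.

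The one place you should be more careful is the identification of the asymptotic phase. For a general $V\in\pot$ (merely continuous, not necessarily satisfying WKB-type hypotheses) the statement $\psi_n'/\psi_n\to\mp\sqrt{V-E_n}$ is not something you can simply cite, so the claims $\theta(-\infty)=0$ and $\theta(+\infty)=n\pi$ need to be established from first principles. This can be done: in the outer forbidden regions $(V>E_n)$ the recessive solution is convex and nonvanishing, so $\psi_n,\psi_n'$ have the signs you expect, pinning $\theta$ modulo $2\pi$ to a fixed quadrant; the no-crossing lemma then confines $\theta$ to $(0,\arcsin\sqrt{E_n/V})$, and since $V\to\infty$ and $\theta$ is bounded, the ODE forces $\theta\to0$ (otherwise $\theta'\to-\infty$ along a tail, contradicting boundedness). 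The count $\theta(+\infty)-\theta(-\infty)=n\pi$ then follows from the $n-1$ interior zeros, each being an upward crossing of a multiple of $\pi$, together with the sign constraints at $+\infty$. You flagged this as the main technical obstacle, and it is indeed where most of the rigorous work is; your sketch of how to handle tangential crossings is the right idea. Overall your proof is sound, more unified, and for \eqref{first_eigenvalue_bohr_sommerfeld} sharper, at the cost of being inherently one-dimensional (the paper's uncertainty-principle argument has a more robust flavour), and of requiring a careful Prüfer set-up that the paper's variational and Fourier-analytic proofs avoid.
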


It is clear that the desired approximate identity \eqref{approx_bohr_sommerfeld} can be obtained by combining the three inequalities above and Proposition \ref{pot1_trivial_prp}. 

\begin{proof}
Inequality \eqref{first_eigenvalue_bohr_sommerfeld} can be proved by using a formulation of the uncertainty principle due to Donoho and Stark \cite{donoho_stark_1989} (see also \cite[Section 8]{folland_sitaram_1997}). To state it, we need some terminology: $\psi\in L^2(\RR)$ is said to be $\varepsilon$-concentrated on a measurable set $A\subseteq \RR$ if $\int_{\RR\setminus A}|\psi|^2\leq \varepsilon^2\int_{\RR}|\psi|^2$. Then, the Donoho--Stark inequality says that if $\psi\in L^2(\RR)$ is $\varepsilon$-concentrated on $A$ and $\widehat{\psi}$ is $\delta$-concentrated on $B$, where $\delta,\varepsilon>0$ and $\delta+\varepsilon < 1$, then
\begin{equation}\label{donoho_stark}
|A|\, |B|\geq 2\pi (1-\varepsilon-\delta)^2;
\end{equation}
here $\widehat\psi(\xi) = (2\pi)^{-1/2} \int_{\RR} \psi(x) \, e^{-i \xi \cdot x} \,dx$. Let $V\in \pot$ and take $\psi =  \psi_1$, the ground-state eigenfunction of $\opH[V]$. Then, for any fixed $\varepsilon\in (0,1)$, 
\begin{equation*}
E_1 \int_\RR \psi^2 
= \int_\RR (\psi')^2 
+\int_\RR V\psi^2\geq \varepsilon^{-2} E_1 \int_{V\geq \varepsilon^{-2} E_1}\psi^2,
\end{equation*}
that is, the ground-state eigenfunction is $\varepsilon$-concentrated on the sublevel set $A=\{V< \varepsilon^{-2} E_1\}$. By \eqref{donoho_stark}, if $\widehat{\psi}$ is $\delta$-concentrated on $(-b,b)$, where $\delta\in (0,1-\varepsilon)$, then necessarily $|A| \, b\geq \pi (1-\varepsilon-\delta)^2$. Therefore, for any $b<\pi (1-\varepsilon-\delta)^2/|A|$,
\[\begin{split}
E_1 \int_\RR \psi^2 
&= \int_\RR (\psi')^2 
+\int_\RR V \psi^2 \\
&\geq \int_\RR \xi^2 |\widehat{\psi}(\xi)|^2 \,d\xi \\
&\geq b^2\int_{\RR\setminus (-b,b)} |\widehat{\psi}(\xi)|^2 \,d\xi \\
&\geq b^2\delta^2 \int_{\RR} |\widehat{\psi}(\xi)|^2 \,d\xi = b^2 \delta^2 \int_{\RR} \psi^2.
\end{split}\]
Thus,
\begin{equation*}
\sqrt{E_1} \,|\{V< \varepsilon^{-2} E_1\}|\geq \pi \delta(1-\varepsilon-\delta)^2
\end{equation*}
and optimizing in $\delta$ gives \eqref{first_eigenvalue_bohr_sommerfeld}.

\smallskip

For the remaining bounds, we make use of the classical inequality
\begin{equation}\label{eq:wirtinger}
\int_0^1 (f')^2 \geq \pi^2 \int_0^1 f^2,
\end{equation}
valid for all $f \in W^{1,2}([0,1];\RR)$ which are either vanishing at the endpoints (see, e.g., \cite[p.\ 47]{dym_mckean_1972}) or with zero mean (see, e.g., \cite[Sect.\ 1.1]{kuznetsov_nazarov_2015}).

Given $n\geq 1$ and $E>0$, we decompose $\{V<E\}$ into $n$ subintervals of equal length $I_1,\ldots, I_n$. We define $M(n,E)$ as the linear subspace of $L^2(\RR;\RR)$ defined by the $n$ conditions $\int_{I_j}\psi=0$ ($j=1,\ldots, n$). Then, by \eqref{eq:wirtinger},
\begin{multline*}
\int_\RR (\psi')^2  + \int_\RR V \psi^2 \geq \sum_{j=1}^n \int_{I_j} (\psi')^2 + E \int_{V\geq E} \psi^2 \\
\geq \sum_{j=1}^n \frac{\pi^2}{|I_j|^2} \int_{I_j} \psi^2  + E \int_{V\geq E} \psi^2 \geq \frac{\pi^2n^2}{|\{V<E\}|^2} \int_{V<E} \psi^2  + E \int_{V\geq E} \psi^2.
\end{multline*}
for every $\psi\in M(n, E)$. Now, for every $n$, there is a unique $E=\widetilde{E}_n>0$ such that $\pi^2 n^2 /|\{V<E\}|^2=E$. So, by the max-min theorem \cite[Chapter XII]{lieb_loss_2001}, 
\begin{equation*}
E_{n+1} = \sup_{\substack{M\subseteq L^2(\RR;\RR) \\ \codim M=n}}\inf_{\substack{\psi\in M \\ \int\psi^2=1}} \int_\RR (\psi')^2  + \int_\RR V \psi^2 \geq \widetilde{E}_n,
\end{equation*}
where the latter inequality follows by taking $M = M(n,\widetilde{E}_n)$.
This implies that
\begin{equation*}
E_{n+1} \, | \{V<E_{n+1}\} |^2 \geq \widetilde{E}_n \, |\{V<\widetilde{E_n}\}|^2 = \pi^2 n^2 \qquad\forall n\geq 1,
\end{equation*}
and \eqref{lower_bohr_sommerfeld} is proved.

\smallskip

Next, let us prove \eqref{upper_bohr_sommerfeld}. Let $\eta(x) \defeq \sqrt{2}\sin(\pi x) \chr_{[0,1]}(x)$ and notice that $\int \eta^2=1$ and $\int(\eta')^2=\pi^2$ (that is, $\eta$ extremises \eqref{eq:wirtinger}). Given any finite interval $I=[s,t]$, we let $\eta_I(x) \defeq \frac{1}{\sqrt{t-s}}\eta\left(\frac{x-s}{t-s}\right)$. Of course, $\int \eta_I^2=1$ and $\int(\eta_I')^2=\pi^2/|I|^2$.
	
Given $n\geq 1$ and $E>0$, we decompose $\{V<E\}=\bigcup_{j=1}^nI_j$ as before and denote by $L(n, E)$ the $\RR$-linear span of $\{\eta_{I_j} \tc j=1,\ldots, n\}$. Then, it is easily checked that
\begin{equation*}
\int_\RR (\psi')^2  +\int_\RR V \psi^2 \leq \left( \frac{\pi^2n^2}{|\{V<E\}|^2} + E \right) \int_\RR \psi^2
\end{equation*}
for every $\psi\in L(E,n)$. Now, for every $n\geq 1$, there is a unique $E=\widetilde{E}_n>0$ such that $\pi^2 n^2 /|\{V<E\}|^2=t E$, where $t>0$.
So, by the min-max theorem,
\begin{equation*}
E_n=\inf_{\substack{L\subseteq L^2(\RR;\RR) \\ \dim L=n}} \sup_{\substack{\psi\in L \\ \int\psi^2=1}} \int_\RR (\psi')^2  + \int_\RR V\psi^2 \leq (1+t)\widetilde{E}_n,
\end{equation*}
where the inequality follows by taking $L = L(E,n)$.
This readily implies that
\begin{equation*}
\frac{E_n}{1+t}\left|\left\{V<\frac{E_n}{1+t}\right \}\right|^2 \leq \widetilde{E}_n \,|\{V<\widetilde{E}_n\}|^2 = \frac{\pi^2 n^2}{t},
\end{equation*}
and we are done.
\end{proof}

We record here a consequence of Theorem \ref{bohr_sommerfeld_thm} for doubling potentials, which will be used later.

\begin{prp}\label{prp:eigenvalue_doub}
	Let $V\in \pot_1(\kappa)$.
\begin{enumerate}[label=(\roman*)]	
\item\label{en:energy_doubling_eigen} For every $m\leq n$,
\begin{equation*}
		\left(\frac{m}{n}\right)^{2\kappa/(2+\kappa)} \lesssim_\kappa \frac{E_m(V)}{E_n(V)}\lesssim_\kappa \left(\frac{m}{n}\right)^{2/(2\kappa+1)}. 
\end{equation*}
\item\label{en:energy_doubling_lbd} For all $E \in \Rpos$, if $E \geq E_1(V)$ then
\[
V(E^{-1/2}) \lesssim_\kappa E.
\]
\end{enumerate}
\end{prp}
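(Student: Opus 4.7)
The plan is to derive both parts from the Bohr--Sommerfeld asymptotics \eqref{approx_bohr_sommerfeld} of Theorem \ref{thm:eigenvalues}, combined with the two-sided doubling of the sublevel-set map $E \mapsto |\{V \leq E\}|$ that follows from the hypothesis $V \in \pot_1(\kappa)$.

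For part \ref{en:energy_doubling_eigen}, fix $m \leq n$ and set $\rho \defeq E_m(V)/E_n(V) \in (0,1]$. Then \eqref{approx_bohr_sommerfeld} gives at once
\[
\frac{|\{V \leq E_m(V)\}|}{|\{V \leq E_n(V)\}|} \simeq_\kappa \frac{m}{n}\, \rho^{-1/2}.
\]
On the other hand, since $V_\oplus, V_\ominus \in \halfpot_1(\kappa) = \doub(\kappa^{-1}, \kappa)$, applying part \ref{en:sublevel_measure} of Proposition \ref{doubling_prp} separately to $V_\oplus$ and $V_\ominus$ with $c = \rho$ and then passing to $V$ via \eqref{pot_1_trivial_1} yields
\[
\rho^{\kappa} \lesssim_\kappa \frac{|\{V \leq E_m(V)\}|}{|\{V \leq E_n(V)\}|} \lesssim_\kappa \rho^{1/\kappa}.
\]
Chaining the two displays gives $\rho^\kappa \lesssim_\kappa (m/n) \rho^{-1/2} \lesssim_\kappa \rho^{1/\kappa}$, whence the two claimed inequalities follow by elementary algebra, the exponents $2/(2\kappa+1) = 1/(\kappa+1/2)$ and $2\kappa/(2+\kappa) = 1/(1/\kappa+1/2)$ arising respectively from the left and right ``chained'' inequalities.

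For part \ref{en:energy_doubling_lbd}, I would first establish the auxiliary bound $|\{V \leq E\}| \gtrsim_\kappa E^{-1/2}$ for every $E \geq E_1(V)$. Since $E \mapsto |\{V \leq E\}|$ is nondecreasing, the case $n=1$ of \eqref{approx_bohr_sommerfeld} gives
\[
|\{V \leq E\}| \geq |\{V \leq E_1(V)\}| \simeq_\kappa E_1(V)^{-1/2} \geq E^{-1/2}.
\]
By \eqref{pot_1_trivial_1} the same lower bound then holds for $|\{V_\oplus \leq E\}| = V_\oplus^{\inv}(E)$, so there exists $c = c(\kappa) \in (0,1]$ (we may assume $c \leq 1$ without loss of generality) such that $V_\oplus^{\inv}(E) \geq c E^{-1/2}$; equivalently, by monotonicity of $V_\oplus$, $V_\oplus(c E^{-1/2}) \leq E$. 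Finally, I would apply the doubling inequality \eqref{doubling} for $V_\oplus \in \doub(\kappa^{-1}, \kappa)$ with $\lambda = 1/c \geq 1$ to remove the factor $c$:
\[
V(E^{-1/2}) = V_\oplus(E^{-1/2}) \leq c^{-\kappa} V_\oplus(c E^{-1/2}) \leq c^{-\kappa} E \lesssim_\kappa E.
\]

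Neither part hides any genuine obstacle: both are essentially algebraic consequences of the Bohr--Sommerfeld identity \eqref{approx_bohr_sommerfeld} and the doubling machinery for the class $\pot_1(\kappa)$ already catalogued in Propositions \ref{doubling_prp} and \ref{pot1_trivial_prp}. The only point requiring a bit of bookkeeping is tracking the correct direction of the doubling inequality when inverting the constant $c \in (0,1]$ in the final step of part \ref{en:energy_doubling_lbd}.
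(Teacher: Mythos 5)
Your proof is correct and follows essentially the same approach as the paper: both parts are derived from the approximate Bohr--Sommerfeld identity \eqref{approx_bohr_sommerfeld} together with the doubling properties of $V$ from Propositions \ref{trivial_doubling_prp}, \ref{doubling_prp} and \ref{pot1_trivial_prp}. The only cosmetic differences are that, for part \ref{en:energy_doubling_eigen}, the paper bundles the factors $E^{1/2}$ and $V^{\inv}(E)$ into a single auxiliary function $W(E) = \sqrt{E}\bigl(V_\oplus^{\inv}(E) + V_\ominus^{\inv}(E)\bigr) \in \doub(1/2+1/\kappa,\,1/2+\kappa)$ and applies its doubling and reverse doubling in one stroke while you track the two factors separately, and for part \ref{en:energy_doubling_lbd} the paper first proves the two-sided estimate $V(nE_n(V)^{-1/2}) \simeq_\kappa E_n(V)$ and then specializes to $n=1$ together with monotonicity of $V$, whereas you derive the one-sided lower bound $|\{V\leq E\}| \gtrsim_\kappa E^{-1/2}$ for $E\geq E_1(V)$ and invert it via the doubling inequality---both routes rely on exactly the same ingredients.
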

\begin{proof}
We first prove part \ref{en:energy_doubling_eigen}. Consider the function
\[
W(x) \defeq \sqrt{x}V_\oplus^{\inv}(x) + \sqrt{x}V_\ominus^{\inv}(x) \qquad(x>0).
\]
By Proposition \ref{trivial_doubling_prp}, $W\in \doub(1/2+1/\kappa,1/2+\kappa)$ and therefore the doubling and reverse doubling inequalities of Proposition \ref{doubling_prp} imply that
\begin{equation*}
		\left(\frac{E_n}{E_m}\right)^{1/2+1/\kappa} W(E_m)\leq W(E_n)\leq \left(\frac{E_n}{E_m}\right)^{1/2+\kappa} W(E_m)
\end{equation*}
for all $m,n \in \Npos$ with $m \leq n$. On the other hand, by the approximate Bohr--Sommerfeld identity \eqref{approx_bohr_sommerfeld}, 
\begin{equation}\label{eq:W_est}
W(E_n)=\sqrt{E_n} \,|\{V\leq E_n\}| \simeq_\kappa n
\end{equation}
for all $n \in \Npos$, and the desired estimates follow.

We now prove that, for all $n \in \Npos$, 
\begin{equation}\label{eq:energy_doubling_inv}
V(n E_n^{-1/2}) \simeq_{\kappa} E_n.
\end{equation}
Indeed, the estimate \eqref{eq:W_est} and  Proposition \ref{pot1_trivial_prp} also give that
\[
V_\oplus^{\inv}(E_n) = |\{V_\oplus \leq E_n\}| \simeq_{\kappa} n E_n^{-1/2}.
\]
Proposition \ref{doubling_prp} applied to $V_\oplus$ then yields  \eqref{eq:energy_doubling_inv}.

Finally, the case $n=1$ of  \eqref{eq:energy_doubling_inv} gives part \ref{en:energy_doubling_lbd} when $E = E_1$, and the general case follows from the monotonicity of $V$.
\end{proof}

\subsection{Eigenfunction estimates}

In this section we prove Theorem \ref{thm:pointwise_eigenfcts}. Let us begin with the pointwise bounds of part \ref{en:pointwise_eigenfcts}. Let $V\in \pot_1(\kappa)$ and $n\in\NN$. By the already established Bohr--Sommerfeld type approximate identity \eqref{approx_bohr_sommerfeld} of Theorem \ref{thm:eigenvalues}, we know in particular that 
\begin{equation*}
\sqrt{E_n} \, |\{V \leq E_n\}|\gtrsim_\kappa 1.
\end{equation*}
Hence, by Proposition \ref{pot1_trivial_prp}, we deduce that
\begin{equation}\label{delta_eigenfunctions}
	\sqrt{E_n} \,  |\{V_\oplus \leq E_n\}|, \sqrt{E_n} \,|\{V_\ominus \leq E_n\}|\gtrsim_\kappa 1.
\end{equation}
Thus, by applying the exponential decay inequality \eqref{exponential_decay_eigen} of Theorem \ref{thm:pointwise_eigen} to $\psi_n(x)$ and $\psi_n(-x)$ (where $x>0$), we obtain the estimates
\begin{align*}
	\psi_n(x)^2 + \frac{\psi_n'(x)^2}{E_n} &\lesssim_\kappa \frac{1}{|\{V_\oplus \leq E_n\}|} e^{-c(\kappa) |x| \sqrt{V(x)}} \qquad \forall x>0\tc V(x)\geq 4E_n,\\
	\psi_n(x)^2 + \frac{\psi_n'(x)^2}{E_n} &\lesssim_\kappa \frac{1}{|\{V_\ominus \leq E_n\}|} e^{-c(\kappa) |x| \sqrt{V(x)}} \qquad \forall x<0\tc V(x)\geq 4E_n.
\end{align*}
Another application of Proposition \ref{pot1_trivial_prp} finally gives the desired pointwise bound 	
\begin{equation*}
	\psi_n(x)^2 + \frac{\psi_n'(x)^2}{E_n} \lesssim_{\kappa}\frac{1}{|\{V\leq E_n\}|} e^{-c(\kappa)|x|\sqrt{V(x)}}
\end{equation*} for every $x$ in the range $V(x)\geq 4E_n$. 

Parts \ref{en:integral_eigenfcts} and \ref{en:ass_weight_eigenfcts} of Theorem \ref{thm:pointwise_eigenfcts} follow similarly from parts \ref{en:Vpowers_upp} and \ref{en:Vpowers_ass} of Theorem \ref{thm:Vpowers_prp}, by using \eqref{delta_eigenfunctions} and Proposition \ref{pot1_trivial_prp}.

\subsection{Eigenvalue gap estimates}\label{ss:gaps}

We now prove the eigenvalue gap estimate of Theorem \ref{thm:eigenvalues}, namely the approximate identity
\begin{equation*}
	E_n - E_m \simeq_\kappa \frac{E_n}{n} (n-m)\qquad n \geq m \geq 1,
\end{equation*}
under the assumption $V\in \pot_1(\kappa)$. Let us first show that this follows from
\begin{equation}\label{eigenvalue_gaps}
	E_{n+1}-E_n \simeq_{\kappa} \frac{E_n}{n}\qquad\forall n\geq 1.
\end{equation} 
In fact, by Proposition \ref{prp:eigenvalue_doub}, there exists $T_0=T_0(\kappa) \geq 2$ so large that $n\geq T_0 m$ implies $E_n\geq 2E_m$. Hence 
\[
E_n-E_m \simeq E_n \simeq \frac{E_n}{n} (n-m)\qquad\forall n \geq T_0 m.
\]
If instead $m \leq n \leq T_0 m$, then for $m \leq \ell < n$ we have $\ell \simeq_\kappa n$ and, by Proposition \ref{prp:eigenvalue_doub}, $E_\ell \simeq_{\kappa} E_n$. Thus
\[
E_n-E_m = \sum_{\ell=m}^{n-1} (E_{\ell+1}-E_\ell) \simeq_\kappa \sum_{\ell=m}^{n-1} \frac{E_\ell}{\ell} \simeq_\kappa \frac{E_n}{n} (n-m),
\]
as desired. 

\smallskip

We are now reduced to proving \eqref{eigenvalue_gaps}. The proof that we present follows the lines of arguments in \cite{kirsch_simon_1985}.

Assume, to begin with, that $V \in \Tpot$. Given $E>0$, let $v(x;E)$ be the solution defined in Section \ref{sec:sturm}. If $\lambda>0$ is a parameter, we introduce the Pr\"ufer variable $\theta(x;E,\lambda)$, defined as ``the'' argument of the complex number $\lambda v(x;E)+iv'(x;E)$. Notice that $v(x;E)$ and $v'(x;E)$ cannot vanish simultaneously, so $\theta(x;E,\lambda)$ is defined modulo a constant integer multiple of $2\pi$. We normalise it requiring that $\theta(x;E,\lambda)\in (-\pi/2,0)$ for $x$ in a neighbourhood of $+\infty$. This is possible, since $v'(x;E)<0<v(x;E)$ in a neighbourhood of $+\infty$. The key feature of $\theta(x;E,\lambda)$ is that $v(x;E)$ vanishes if and only if $\theta(x;E,\lambda)\in \pi/2+\pi\ZZ$.  

The relevance of the Pr\"ufer variable $\theta = \theta(x;E,\lambda)$ stems from the identities contained in the following proposition. 

\begin{prp}\label{der_prufer_prp}
Let $V \in \Tpot$.
Then the following identities hold.
\begin{equation}\label{eq:theta_der_iden}
\partial_x \theta = \lambda \frac{(V-E)v^2-(\partial_xv)^2}{\lambda^2v^2+(\partial_xv)^2}, \qquad
\partial_E \theta = \lambda \frac{\int_x^{+\infty} v(y;E)^2 \,dy}{\lambda^2v^2+(\partial_xv)^2}.
\end{equation}
\end{prp}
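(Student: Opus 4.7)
The plan is to work in polar coordinates on the Wronskian plane. Setting $R(x;E,\lambda) \defeq \sqrt{\lambda^2 v(x;E)^2 + v'(x;E)^2}$, which is strictly positive since $v$ and $v'$ cannot vanish simultaneously, the definition of $\theta$ is equivalent to the identity $\lambda v + i v' = R e^{i\theta}$; in particular $\cos\theta = \lambda v/R$ and $\sin\theta = v'/R$. Both formulas in \eqref{eq:theta_der_iden} will be obtained by differentiating this identity with respect to $x$ or $E$, multiplying through by $e^{-i\theta}$, and extracting the imaginary part.

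The first formula is immediate: differentiating in $x$ and using the Schr\"odinger equation $v'' = (V-E) v$ produces $\lambda v' + i(V-E) v = (\partial_x R + i R\, \partial_x\theta) e^{i\theta}$, and the imaginary part after multiplication by $e^{-i\theta}$, combined with the above substitutions for $\cos\theta$ and $\sin\theta$, gives $R^2\, \partial_x \theta = \lambda[(V-E)v^2 - (v')^2]$, as required.

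For the second formula, introduce $w \defeq \partial_E v$. Differentiating the Schr\"odinger equation in $E$ shows that $w$ solves the inhomogeneous ODE $-w'' + (V-E) w = v$, and in turn the Wronskian-type quantity $w'v - wv'$ has derivative $(w'v - wv')' = w''v - wv'' = -v^2$. Meanwhile, differentiating $\lambda v + iv' = R e^{i\theta}$ in $E$ and repeating the previous manipulation yields $R^2\, \partial_E \theta = \lambda(w'v - wv')$. Integrating the Wronskian identity from $x$ to $+\infty$ then gives the target formula, provided one can justify that $(w'v - wv')(x) \to 0$ as $x \to +\infty$.

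The main obstacle is precisely this boundary behaviour, which amounts to showing $w, w' \to 0$ at $+\infty$. This is where the hypothesis $V \in \Tpot$ is used, via the Banach-space framework of Section \ref{ss:smoothdependence}: for suitable $\alpha > 0$, the restriction $v(\cdot;E)|_{[0,+\infty)}$ coincides with the solution $u_{1,E}$ of Theorem \ref{thm:smoothdependence} and depends real-analytically on $E$ as an element of $\banD_{V_\oplus,\alpha}$. Consequently $w(\cdot;E)|_{[0,+\infty)}$ lies in the same space, and Proposition \ref{u'_prp} supplies the exponential decay of both $w$ and $w'$ at $+\infty$ required to close the argument (an entirely analogous consideration on $(-\infty,0]$ is not needed here, since the boundary term is taken only at $+\infty$).
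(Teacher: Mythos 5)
Your proof is correct and follows essentially the same route as the paper: you derive the formulas by differentiating the equation $\lambda v + i\partial_x v = Re^{i\theta}$ in $x$ and $E$, use the Schr\"odinger equation for $v$ and the inhomogeneous equation for $\partial_E v$ to identify the Wronskian $w'v - wv'$ and show its derivative equals $-v^2$, and justify the vanishing boundary term at $+\infty$ via Theorem~\ref{thm:smoothdependence} and Proposition~\ref{u'_prp}, which is precisely where $V\in\Tpot$ is used. The only cosmetic difference is that you work explicitly in polar coordinates $(R,\theta)$, whereas the paper computes $\partial\theta = \Im(\partial z/z)$ directly without naming $R$; these are the same computation.
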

\begin{proof}
If $t \mapsto z(t)$ is a $C^1$ complex-valued function without zeros, it is easily checked that $\arg(z)'(t)=\Im\left(\frac{z'(t)}{z(t)}\right)$. We now apply this to $z(x;E)=\lambda v(x;E) + i \partial_x v(x;E)$ and use the differential equation \eqref{eq:ODE_RR} satisfied by $v$ to compute the derivatives $\partial_x z$ and $\partial_E z$. The former identity in \eqref{eq:theta_der_iden} is now straightforward: 
\[
\partial_x \theta = \Im\left(\frac{\partial_x z}{z} \right) 
= \Im\left(\frac{\lambda \partial_x v + i (V-E) v}{\lambda v + i \partial_x v} \right)
= \lambda \frac{(V-E) v^2 - (\partial_x v)^2}{\lambda^2 v^2 + (\partial_x v)^2} ,
\]
as desired. As for the latter,
\[
\partial_E \theta = \Im\left(\frac{\partial_Ez}{z} \right)
= \Im\left(\frac{\lambda \partial_E v + i \partial_E \partial_x v}{\lambda v + i\partial_x v} \right) 
= \lambda \frac{w}{\lambda^2 v^2+(\partial_x v)^2},
\]
where $w \defeq v \partial_E \partial_x v - \partial_x v \partial_E v$. Now, the differential equation \eqref{eq:ODE_RR} implies that $\partial_x w =-v^2$. If we show that $\lim_{x \to +\infty} w(x)=0$, then we can conclude that $w(x)=\int_x^{+\infty} v^2$ and derive the desired identity.

To verify the vanishing of the limit of $w$, notice that by Theorem \ref{thm:smoothdependence}, for all closed upper half-lines $I \subseteq \RR$ and all $\alpha \geq 0$, the map $E \mapsto v(\cdot;E)|_I$ is real-analytic from $\RR$ to the Banach space $\banD_{V|_I,\alpha}$. In particular, if $V \in \Tpot$, then, by Proposition \ref{u'_prp},
\begin{equation}\label{eq:v_decay}
	\lim_{x \to +\infty} e^{\beta x} \partial_E^k v(x;E) = \lim_{x \to +\infty} e^{\beta x} \partial_E^k \partial_x v(x;E) = 0
\end{equation}
for all $k \in \NN$ and $\beta \geq 0$, and so $\lim_{x \to +\infty} w(x)=0$. 
\end{proof}

We can apply exactly the same construction to the reflected potential $\widetilde{V}(x) \defeq V(-x)$. Let us denote by $\widetilde{v}(x;E)$ the solutions of Section \ref{sec:sturm} associated to the reflected potential, and by $\widetilde{\theta}(x;E,\lambda)$ the corresponding Pr\"ufer variable. The next proposition describes the relationship between $\theta(x;E,\lambda)$ and $\widetilde{\theta}(x;E,\lambda)$, and between $v(x;E)$ and $\widetilde{v}(x;E)$. 

\begin{prp}\label{prp:gap_tilde}
Let $V \in \Tpot$ and $n \in \Npos$. Then, for all $x \in \RR$ and $\lambda > 0$,
\begin{equation}\label{eq:sum_theta}
\theta(x;E_n,\lambda)+\widetilde{\theta}(-x;E_n,\lambda) = (n-1)\pi,
\end{equation}
where $E_n = E_n(V) = E_n(\widetilde{V})$. Moreover, there exists $E_n' \in [E_n,E_{n+1}]$ such that
\begin{equation}\label{eq:gap_theta}
\frac{\pi}{(E_{n+1}-E_n)\sqrt{E_n}} = \frac{\int_0^{+\infty} v(y;E'_n)^2 \,dy}{E_nv^2(0;E'_n) + \partial_x v(0;E'_n)^2}+\frac{\int_0^{+\infty} \widetilde{v}(y;E'_n)^2 \,dy}{E_n\widetilde{v}^2(0;E'_n)+\partial_x \widetilde{v}(0;E'_n)^2}. 
\end{equation}
\end{prp}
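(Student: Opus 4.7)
The plan is to analyse the Pr\"ufer angles $\theta(x;E,\lambda)$ and $\tilde\theta(z;E,\lambda)$ along two fronts: first establish \eqref{eq:sum_theta} by showing that its left-hand side is independent of $x$ and then evaluating the resulting constant via an asymptotic analysis at $x\to+\infty$; then derive \eqref{eq:gap_theta} from \eqref{eq:sum_theta} by a mean-value argument in the variable $E$.

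For the $x$-constancy, I would differentiate the left-hand side of \eqref{eq:sum_theta} using the chain rule and apply the first identity of \eqref{eq:theta_der_iden} to both $V$ and $\tilde V$. The crucial ingredient is that $E_n$ is an eigenvalue: the function $u(x):=\tilde v(-x;E_n)$ solves $-w''+Vw=E_n w$ and is recessive at $-\infty$ (since $\tilde v$ is recessive at $+\infty$), and so is $v(\cdot;E_n)\propto\psi_n(\cdot;V)$ as an eigenfunction. Since the space of solutions recessive at $-\infty$ is one-dimensional, $u=cv(\cdot;E_n)$ for some nonzero $c\in\RR$, whence $\tilde v(-x;E_n)^2=c^2 v(x;E_n)^2$ and $[\partial_z\tilde v(-x;E_n)]^2=c^2[\partial_x v(x;E_n)]^2$. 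The $c^2$ factors then cancel in the $\partial_z\tilde\theta$-expression and, together with $\tilde V(-x)=V(x)$, yield $\partial_z\tilde\theta(z;E_n,\lambda)|_{z=-x}=\partial_x\theta(x;E_n,\lambda)$; the chain rule then gives $\tfrac{d}{dx}[\theta(x;E_n,\lambda)+\tilde\theta(-x;E_n,\lambda)]=\partial_x\theta-\partial_z\tilde\theta|_{z=-x}=0$.

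To identify the constant as $(n-1)\pi$, I would pass to the limit $x\to+\infty$. For the first summand, the normalisation $v(x;E_n)>0$, $\partial_x v(x;E_n)<0$ near $+\infty$ places $\theta(x;E_n,\lambda)\in(-\pi/2,0)$, while standard Riccati/WKB asymptotics (the recessive branch of $r=\partial_x v/v$ satisfies $r'=V-E_n-r^2$ and tends to $-\sqrt{V-E_n}$) force $r\to-\infty$, so $\tan\theta\to-\infty$ and $\theta(x;E_n,\lambda)\to-\pi/2$. For the second summand, the analogous analysis at $z\to-\infty$ (where $\tilde v$ and $\partial_z\tilde v$ share sign by the decay of $\tilde v$) gives $\partial_z\tilde v/\tilde v\to+\infty$, so $\tan\tilde\theta\to+\infty$ and the limit of $\tilde\theta(z;E_n,\lambda)$ as $z\to-\infty$ lies in $\pi/2+\pi\ZZ$; I pin down the exact integer by tracking the continuous function $\tilde\theta(\cdot;E_n,\lambda)$ backwards from $+\infty$. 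Since $\partial_z\tilde\theta=-\lambda<0$ at every zero of $\tilde v$ by \eqref{eq:theta_der_iden}, on each connected component of $\{\tilde v\neq 0\}$ the function $\tilde\theta$ stays in a single open interval $(\pi/2+(k-1)\pi,\pi/2+k\pi)$, and jumps upward to the next such interval as $z$ decreases across each zero. By Sturm's oscillation theorem, $\tilde v(\cdot;E_n)\propto\psi_n(\cdot;\tilde V)$ has exactly $n-1$ zeros; starting from $\tilde\theta\to-\pi/2$ at $z\to+\infty$ (the interval $k=0$) and passing through all $n-1$ zeros yields $\tilde\theta(z;E_n,\lambda)\to\pi/2+(n-1)\pi$ at $z\to-\infty$. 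Summing the two limits gives $-\pi/2+\pi/2+(n-1)\pi=(n-1)\pi$, as required.

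Formula \eqref{eq:gap_theta} would follow by applying \eqref{eq:sum_theta} at indices $n$ and $n+1$ and subtracting, obtaining
\[
[\theta(x;E_{n+1},\lambda)-\theta(x;E_n,\lambda)]+[\tilde\theta(-x;E_{n+1},\lambda)-\tilde\theta(-x;E_n,\lambda)]=\pi
\]
for every $x\in\RR$ and $\lambda>0$. Real-analytic dependence of $v(\cdot;E)$ on $E$ (Theorem \ref{thm:smoothdependence}) makes both $\theta$ and $\tilde\theta$ smooth in $E$, so the mean value theorem produces $E_n'\in[E_n,E_{n+1}]$ with $\pi=(E_{n+1}-E_n)\bigl[\partial_E\theta(x;E_n',\lambda)+\partial_E\tilde\theta(-x;E_n',\lambda)\bigr]$; specialising to $x=0$ and $\lambda=\sqrt{E_n}$ and substituting the formula for $\partial_E\theta$ from \eqref{eq:theta_der_iden} yields \eqref{eq:gap_theta} after dividing by $\sqrt{E_n}$. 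The hardest step is identifying the integer $n-1$ in $(n-1)\pi$, which requires combining the Riccati asymptotics for the logarithmic derivatives of $v$ and $\tilde v$ at their decay ends, the sign $\partial_x\theta=-\lambda<0$ at zeros of $v$ (and analogously for $\tilde\theta$), and Sturm's oscillation theorem counting the zeros of the $n$-th eigenfunction.
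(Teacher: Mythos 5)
Your derivation of \eqref{eq:gap_theta} coincides with the paper's: apply \eqref{eq:sum_theta} at $n$ and $n+1$, evaluate at $x=0$ and $\lambda=\sqrt{E_n}$, subtract, and use the mean value theorem together with the formula for $\partial_E\theta$ in \eqref{eq:theta_der_iden}. For \eqref{eq:sum_theta}, however, you take a genuinely different route, which is worth flagging.

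The paper's argument for \eqref{eq:sum_theta} is more direct: from $\tilde v(z;E_n)=\mu\,v(-z;E_n)$ one gets at once that $\lambda\tilde v(-x)+i\partial_z\tilde v(-x)=\mu\,\overline{\lambda v(x)+i\partial_x v(x)}$, hence $\theta(x)+\tilde\theta(-x)\in\pi\ZZ$ \emph{pointwise}; continuity then forces this integer multiple of $\pi$ to be constant, without ever computing a derivative. The integer is then read off by evaluating at the rightmost zero $x_0$ of $v(\cdot;E_n)$, where $\theta(x_0)=\pi/2$, and observing that $-x_0$ is the leftmost zero of $\tilde v$, where $\tilde\theta=\pi/2+(n-2)\pi$ (for $n\geq2$). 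You instead differentiate the sum, using the first identity in \eqref{eq:theta_der_iden} and the cancellation of the proportionality constant $c$, which also works; and you identify the constant from the limits at $\pm\infty$, which requires the additional Riccati fact that $\partial_x v(x;E_n)/v(x;E_n)\to-\infty$ as $x\to+\infty$ (and the mirror statement for $\tilde v$ at $-\infty$). That fact is correct --- one can show it by a standard Riccati comparison, since $r=v'/v$ satisfies $r'=V-E_n-r^2$ with $V\to\infty$ and $r<0$, so $r$ can neither stay bounded nor converge --- but it is an extra lemma the paper's argument avoids. On the other hand, your limit argument handles $n=1$ uniformly (where $v(\cdot;E_1)$ has no zeros, so the paper's ``evaluate at the rightmost zero'' step needs a separate word), which is a small advantage of your route. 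Either way, the statement is proved.
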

\begin{proof}
The first identity of Proposition \ref{der_prufer_prp} implies in particular that whenever $v(x; E)$ vanishes, $\partial_x \theta(x;E,\lambda) = -\lambda < 0$. Recalling our normalization of $\theta$, we see that if $x_0(E)>x_1(E)>\ldots$ are the zeros of $v(\cdot ,E)$ arranged in decreasing order, then $\theta(x_k(E);E,\lambda)=\pi/2+k\pi$.

By Sturm--Liouville theory, any nonzero eigenfunction of $\opH[V]$ of eigenvalue $E_n=E_n(V)$ has exactly $n-1$ zeros. Therefore, $\theta(x;E_n,\lambda)=\pi/2$ at the rightmost zero of $v(\cdot; E_n)$, and $\theta(x;E_n,\lambda)=\pi/2+(n-2)\pi$ at the leftmost zero of $v(\cdot; E_n)$. 

Clearly similar considerations hold for the solutions $\widetilde{v}(x;E)$ associated with the reflected potential $\widetilde{V}$.
Note now that $v$ solves \eqref{eq:ODE_RR} if and only if $\widetilde{v}(x) \defeq v(-x)$ solves the reflected equation $-u''+\widetilde{V}u=Eu$. Therefore, $\opH[V]$ and $\opH[\widetilde{V}]$ have the same eigenvalues and reflected eigenfunctions. Hence, when $E$ is an eigenvalue, $\widetilde{v}(x;E)=\mu(E) v(-x,E)$ for some $\mu(E)\in \RR\setminus\{0\}$. This means that 
\begin{equation}\label{reflecting_prufer}
\theta(x;E_n,\lambda)+\widetilde{\theta}(-x;E_n,\lambda)\in \pi\ZZ\qquad \forall x\in \RR.
\end{equation}
By continuity, the multiple of $\pi$ is independent of $x$. To compute it, we  evaluate the left-hand side of \eqref{reflecting_prufer} at the rightmost zero of $v(\cdot; E_n)$, so that $-x$ equals to the leftmost zero of $\widetilde{v}(x;E)$. This gives \eqref{eq:sum_theta}.

In particular, if we choose $\lambda = \sqrt{E_n}$, then \eqref{eq:sum_theta} and Lagrange's Mean Value Theorem imply that
\[\begin{split}
\pi 
&= \theta(0;E_{n+1},\sqrt{E_n}) + \widetilde{\theta}(0;E_{n+1},\sqrt{E_n}) - \theta(0;E_n,\sqrt{E_n}) - \widetilde{\theta}(0;E_n,\sqrt{E_n}) \\
&= (E_{n+1} - E_n) \partial_E(\theta + \widetilde{\theta})(0;E'_n,\sqrt{E_n})
\end{split}\]
for some $E'_n\in [E_n,E_{n+1}]$. The formula for $\partial_E\theta$ in Proposition \ref{der_prufer_prp} finally yields \eqref{eq:gap_theta}.
\end{proof}

Now that all the ingredients are in place, we can prove \eqref{eigenvalue_gaps}, under the assumption $V \in \pot_1(\kappa)$. Let $n \in \Npos$, and let $E_n'$ be as in Proposition \ref{prp:gap_tilde}.
By Proposition \ref{prp:eigenvalue_doub}, $E_n'\simeq_\kappa E_n$. Thus, the identity of Proposition \ref{prp:elementaryids}\ref{en:v(0)_prp}, combined with \eqref{delta_eigenfunctions} and Theorem \ref{thm:Vpowers_prp}, gives that
\[\begin{split}
E_n v^2(0;E'_n) + \partial_x v(0;E'_n)^2 
&\simeq_\kappa E_n' v^2(0;E'_n) + \partial_x v(0;E'_n)^2 \\
&= \int_0^{+\infty} V'(y) \, v(y;E'_n)^2 \,dy \\
&\simeq_\kappa \frac{E_n'}{|\{ V_\oplus \leq E_n'\}|} \int_0^{+\infty} v(y;E'_n)^2 \,dy.
\end{split}\]
Notice that the condition $V\in \pot_1(\kappa)$ has been used in the last step. 
At this point, Proposition \ref{pot1_trivial_prp} and \eqref{approx_bohr_sommerfeld} yield
\begin{equation*}
\frac{\int_0^{+\infty} v(y;E'_n)^2 \,dy}{E_n v^2(0;E'_n) + \partial_x v(0;E'_n)^2} \simeq_\kappa \frac{|\{V\leq E_n\}|}{E_n} \simeq_\kappa \frac{n}{E_n^{3/2}} .
\end{equation*}
Of course, a completely analogous argument gives the same estimate for the term involving $\widetilde{v}$, and the desired bound \eqref{eigenvalue_gaps} follows from \eqref{eq:gap_theta}.

The proof of Theorem \ref{thm:eigenvalues} is complete.

\section{Matrix and spectral projector bounds}\label{s:matrixbounds}

\subsection{Summary of the results}

Let $V$ be in the class $\Tpot$ (see Definition \ref{dfn:potentials_singlewell}). Define the ``differentiated eigenfunctions''
\begin{equation*}
\sigma_n(x;V) \defeq \partial_t|_{t=1} \psi_n(x;t V)
\end{equation*} 
for $n \geq 1$
(this makes sense by Proposition \ref{prp:one_parameter_tV}).
We define the real-valued matrices $\matP(V)$ and $\matA(V)$ associated to the potential $V$ as
\begin{equation*}
\matP_{nm}(V) = \langle V\psi_n(\cdot;V),\psi_m(\cdot;V) \rangle,
\end{equation*}
and 
\begin{equation*}
\matA_{nm}(V)=\langle \sigma_n(\cdot;V),\psi_m(\cdot;V)\rangle,
\end{equation*}
where $n,m \geq 1$ and $\langle\cdot,\cdot\rangle$ stands for the $L^2(\RR)$-inner product. These two matrices are crucially related as follows.

\begin{prp}\label{prp:A_formula}
Let $V \in \Tpot$. The matrix $\matP(V)$ is symmetric and the matrix $\matA(V)$ is antisymmetric. For all $n \neq m$,
\begin{equation}\label{AV_identity}
\matP_{nm}(V) = \matA_{nm}(V) \, (E_n(V)-E_m(V)) .
\end{equation}
\end{prp}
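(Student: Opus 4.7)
The symmetry of $\matP(V)$ is immediate, since $V$ is a real-valued multiplication operator: $\matP_{nm}(V) = \langle V\psi_n, \psi_m\rangle = \langle \psi_n, V\psi_m\rangle = \matP_{mn}(V)$.

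For the antisymmetry of $\matA(V)$, my plan is to differentiate the orthonormality relation $\langle \psi_n(\cdot;tV),\psi_m(\cdot;tV)\rangle = \delta_{nm}$ in $t$ at $t=1$. Proposition \ref{prp:one_parameter_tV}(ii) guarantees that $t \mapsto \psi_n(\cdot;tV)$ is real-analytic into $\banE^1_\beta$ for every $\beta \geq 0$, so in particular this map is $C^1$ into $L^2(\RR)$ and the derivative of the inner product can be computed by the Leibniz rule, yielding $\matA_{nm}(V) + \matA_{mn}(V) = 0$.

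For the identity \eqref{AV_identity}, I will differentiate the eigenvalue equation $\opH[tV]\psi_n(\cdot;tV) = E_n(tV)\psi_n(\cdot;tV)$ in $t$ at $t=1$. Writing $E_n = E_n(V)$, $F_n = F_n(V)$ (from Theorem \ref{thm:virial}), $\psi_n = \psi_n(\cdot;V)$, and $\sigma_n = \sigma_n(\cdot;V)$, this yields
\begin{equation*}
V\psi_n + \opH[V]\sigma_n = F_n \psi_n + E_n \sigma_n,
\end{equation*}
where the $x$-derivatives can be interchanged with the $t$-derivative because, by Proposition \ref{prp:one_parameter_tV}(ii), $\sigma_n$ lies in $\banE^1_\beta$ for every $\beta\geq 0$, and consequently $\sigma_n'' = (V-E_n)\sigma_n + (F_n - V)\psi_n$ is well-defined and exponentially decaying together with $\sigma_n$ and $\sigma_n'$. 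Pairing the displayed identity with $\psi_m$ for $n\neq m$, using $\langle \psi_n,\psi_m\rangle = 0$, gives
\begin{equation*}
\matP_{nm}(V) + \langle \opH[V]\sigma_n, \psi_m\rangle = E_n \matA_{nm}(V).
\end{equation*}
Finally, an integration by parts, justified by the exponential decay of $\sigma_n, \sigma_n', \psi_m, \psi_m'$ at $\pm\infty$, moves $\opH[V]$ onto $\psi_m$ to give $\langle \opH[V]\sigma_n,\psi_m\rangle = E_m \langle \sigma_n,\psi_m\rangle = E_m \matA_{nm}(V)$, so that $\matP_{nm}(V) = (E_n - E_m)\matA_{nm}(V)$.

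The only delicate point is the regularity/decay needed to legitimately differentiate under the integral sign and integrate by parts against the unbounded operator $\opH[V]$; all such justifications are furnished by the real-analytic dependence into the Banach spaces $\banE^1_\beta$ established in Proposition \ref{prp:one_parameter_tV}, together with the pointwise relation for $\sigma_n''$ read off from the differentiated ODE. No further analytic ingredient is required.
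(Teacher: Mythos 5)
Your proof is correct and follows the same route as the paper's: differentiating the orthonormality relation for antisymmetry, differentiating the eigenvalue equation to get the inhomogeneous ODE for $\sigma_n$, pairing with $\psi_m$, and moving $\opH[V]$ by self-adjointness/integration by parts, with all regularity justified by Proposition \ref{prp:one_parameter_tV}. The only minor remark is that, to control $\sigma_n''$ directly rather than reading it off the differentiated ODE, one can note that the proof of Proposition \ref{prp:one_parameter_tV} actually establishes real-analyticity of $t\mapsto\psi_n(\cdot;tV)$ into the stronger spaces $\banD_{V,\alpha}$ (which carry a second-derivative norm), of which $\banE^1_\beta$ is merely a consequence.
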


The relation \eqref{AV_identity} is only meaningful away from the diagonal. As shown in Theorem \ref{thm:virial}, the on-diagonal terms of $\matP(V)$ are given by the ``differentiated eigenvalues'' $F_n(V) = \partial_t|_{t=1} E_n(tV)$.

An important part of this section is devoted to proving estimates for the matrix coefficients of $\matP(V)$ and $\matA(V)$ under the assumption $V \in \pot_1(\kappa)$. In light of the relation \eqref{AV_identity} and the gap estimates in Theorem \ref{thm:eigenvalues}, off-diagonal estimates for $\matP(V)$ imply corresponding estimates for $\matA(V)$ and vice versa, so we only need to state and prove our estimates for one of the two matrices.

\smallskip 

It is convenient to introduce some notation. For $T > 1$, let $\matN_T$ be the matrix whose $(n,m)$-entry is $1$ if $m \in [T^{-1} n, T n]$ and $0$ otherwise, and $\matF_T$ the matrix whose $(n,m)$-entry $0$ if $m \in [T^{-1} n, T n]$ and $1$ otherwise (here $\matN$ and $\matF$ stand for ``near'' and ``far'' from the diagonal). 
Moreover, let $\schur$ denote the \emph{Schur product} between matrices; namely, if $\matB$ and $\matB'$ are matrices, then $\matB \schur \matB'$ is given by
\begin{equation}\label{eq:schur_def}
(\matB \schur \matB')_{nm}=\matB_{nm} \matB'_{nm}.
\end{equation}
Note that, for all $T > 1$, every matrix $\matB$ decomposes as
\[
\matB = \matB \schur \matN_T + \matB \schur \matF_T;
\]
we can think of $\matB \schur \matN_T$ and $\matB \schur \matF_T$ as the near-diagonal and far-diagonal parts of $\matB$ respectively.
We will also write $|\matB|$ to denote the matrix whose components are the moduli $|\matB_{nm}|$ of the components of the matrix $\matB$.

\smallskip 

The following statement includes a useful ``a priori'' bound for the coefficients of $\matA(V)$, as well as a sharper bound in the far-diagonal region.

\begin{thm}[Matrix bounds: $\matA$]\label{thm:estimates_A}
Let $V \in \pot_1(\kappa)$. Then, for every $n\geq 1$,
\begin{equation}\label{eq:A_apriori}
\int_{\RR} \sigma_n(x;V)^2 \,dx = \sum_{m} \matA_{nm}(V)^2 \lesssim_{\kappa} n^2 .
\end{equation}
Moreover, for all $T > 1$,
\begin{equation}\label{far_diag_bound}
|\matA_{nm}(V)| \lesssim_{\kappa,T} \frac{1}{\sqrt{nm}} \left(\frac{E_m(V)}{E_n(V)}\right)^{3/4}, \qquad \forall n, m \tc n \geq T m,
\end{equation} 
Consequently,
\[
\||\matA(V)| \schur \matF_T\|_{\ell^2 \to \ell^2} \lesssim_{\kappa,T} 1
\]
for all $T > 1$.
\end{thm}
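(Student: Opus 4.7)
Plan.
The identity $\sum_m \matA_{nm}(V)^2 = \int_\RR \sigma_n(x;V)^2\,dx$ is Parseval's theorem applied to the expansion of $\sigma_n(\cdot;V)$ in the orthonormal basis $\{\psi_m(\cdot;V)\}_{m\geq 1}$ of $L^2(\RR)$, since by definition $\matA_{nm}(V)=\langle\sigma_n(\cdot;V),\psi_m(\cdot;V)\rangle$. For the \emph{a priori} bound $\sum_m\matA_{nm}(V)^2\lesssim_\kappa n^2$, I would invoke Proposition \ref{prp:A_formula} (together with the antisymmetry $\matA_{nn}=0$) to write
\[
\sum_m\matA_{nm}(V)^2 \;=\; \sum_{m\neq n}\frac{\matP_{nm}(V)^2}{(E_n(V)-E_m(V))^2}.
\]
Parseval applied to $V\psi_n(\cdot;V)\in L^2(\RR)$ gives $\sum_m\matP_{nm}(V)^2=\|V\psi_n\|_2^2$, which by Theorem \ref{thm:pointwise_eigenfcts}\ref{en:integral_eigenfcts}--\ref{en:ass_weight_eigenfcts} (with $W=V^2$, $a=0$, $b=2$) is $\lesssim_\kappa E_n^2$. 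The gap estimates of Theorem \ref{thm:eigenvalues} yield $|E_n-E_m|\simeq_\kappa\max\{E_n,E_m\}$ for $m\leq n/2$ or $m\geq 2n$, and $|E_n-E_m|\simeq_\kappa E_n|n-m|/n$ for $n/2<m<2n$, $m\neq n$; splitting the sum over $m$ into these three ranges, and using the trivial $(n-m)^{-2}\leq 1$ in the middle range, then readily gives $\sum_m\matA_{nm}(V)^2\lesssim_\kappa n^2$.

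The core step is the pointwise bound in the far-diagonal region $n\geq Tm$. Proposition \ref{prp:eigenvalue_doub}\ref{en:energy_doubling_eigen} gives $E_m/E_n\lesssim_\kappa T^{-2/(2\kappa+1)}$, and then Theorem \ref{thm:eigenvalues} yields $E_n-E_m\simeq_{\kappa,T} E_n$; by Proposition \ref{prp:A_formula} it is therefore enough to prove
\[
|\matP_{nm}(V)|\;\lesssim_{\kappa,T}\;\frac{E_n(V)^{1/4}\,E_m(V)^{3/4}}{\sqrt{nm}}.
\]
I would split $\matP_{nm}(V)=\int V\psi_n\psi_m\,dx$ at the level set $\{V=2E_m\}$. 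On the tunneling set $\{V>2E_m\}$ of $\psi_m$, Theorem \ref{thm:pointwise_eigenfcts}\ref{en:pointwise_eigenfcts} supplies exponential decay for $\psi_m$, and combining with the auxiliary integral bound of Proposition \ref{doubling_prp}\ref{en:auxiliary} directly produces an acceptable contribution. On the classical set $\{V\leq 2E_m\}$ --- which for $T$ large enough is contained in $\{V\leq E_n/4\}$ --- $\psi_n$ oscillates at the high frequency $\sqrt{E_n-V}\simeq\sqrt{E_n}$, while $V\psi_m$ varies at the much slower scale set by $\psi_m$. I would exploit this separation of scales by integration by parts based on the identity $V\psi_n=\psi_n''+E_n\psi_n$, combined with the orthogonality $\langle\psi_n,\psi_m\rangle=0$, reducing the integral to expressions of the form $\int \psi_n'R\,dx$ where $R$ involves $\psi_m$, $\psi_m'$, $V$, $V'$ and the small factor $(E_n-V)^{-1}\lesssim E_n^{-1}$; Cauchy--Schwarz together with the energy bound $\|\psi_n'\|_2^2\leq E_n$ from Proposition \ref{prp:elementaryids}\ref{en:half_energy_prp} and the weighted moment estimates of Theorem \ref{thm:pointwise_eigenfcts}\ref{en:integral_eigenfcts} for $\psi_m,\psi_m'$ then produce the extra $1/\sqrt{nm}$ gain over the naive Cauchy--Schwarz bound.

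Finally, the operator norm bound $\||\matA(V)|\schur\matF_T\|_{\ell^2\to\ell^2}\lesssim_{\kappa,T} 1$ follows from Schur's test applied to the pointwise far-diagonal bound: the row sum
\[
\sum_{m\leq n/T}\frac{1}{\sqrt{nm}}\left(\frac{E_m(V)}{E_n(V)}\right)^{3/4}
\]
is uniform in $n$ once $E_m/E_n\lesssim_\kappa(m/n)^{2/(2\kappa+1)}$ from Proposition \ref{prp:eigenvalue_doub}\ref{en:energy_doubling_eigen} is inserted and the resulting convergent series is summed, and the column sums are controlled identically via the antisymmetry $\matA_{nm}=-\matA_{mn}$. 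The main obstacle is the sharp pointwise bound on $\matP_{nm}$ in the far-diagonal region: the integration by parts has to be carried out respecting only the $C^1$-plus-doubling regularity of $V$ available here, and must carefully steer away from the turning points $\{V=E_n\}$ of $\psi_n$, which is precisely why the initial cut is made at the safely distant level $\{V=2E_m\}$.
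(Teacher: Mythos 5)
Your treatment of the $\ell^2$ bound \eqref{eq:A_apriori} and of the operator-norm bound matches the paper's in all essentials: the paper also reduces \eqref{eq:A_apriori} to Parseval, the gap estimate $|E_n-E_m|\gtrsim_\kappa E_n/n$, and $\sum_m\matP_{nm}(V)^2\leq\|V\psi_n\|_2^2\lesssim_\kappa E_n^2$ (the three-range split you describe collapses to this single computation); and the paper's Lemma~\ref{lem:Valpha_norm}\ref{en:matrix_minmax_bds} is exactly Schur's test with testing sequence $v_n=1/\sqrt{n}$, as you propose.

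For the far-diagonal bound \eqref{far_diag_bound} you take a genuinely different route, and as sketched it has a gap. The paper derives \eqref{far_diag_bound} from Proposition~\ref{prp:5/4_lem} with $U=V$, $\alpha=1$: a \emph{single global} integration by parts (Lemma~\ref{commutator_identities_lem}) gives $(E_m-E_n)\,\matP_{nm}(V)=\int_\RR V'(\psi_n\psi_m'-\psi_n'\psi_m)$ with no level-set splitting whatsoever, and then Cauchy--Schwarz is applied with the weight $|x|^{-1}V^\alpha$ shared \emph{symmetrically} between the two eigenfunctions, so that Theorem~\ref{thm:pointwise_eigenfcts}\ref{en:integral_eigenfcts} (applied with $a=-1$) produces \emph{both} factors $|\{V\leq E_n\}|^{-1}$ and $|\{V\leq E_m\}|^{-1}$, whose product is $\simeq_\kappa\sqrt{E_nE_m}/(nm)$; this is precisely where the $1/\sqrt{nm}$ in \eqref{far_diag_bound} originates. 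Your plan instead handles the $\psi_n$ factor by the \emph{unweighted} energy bound $\|\psi_n'\|_2^2\leq E_n$ and reserves the weighted moment estimates for $\psi_m,\psi_m'$ alone. That cannot deliver the required $1/\sqrt{n}$: any estimate of the form $|\int\psi_n'R|\leq\|\psi_n'\|_2\|R\|_2$ with $R$ involving only $\psi_m$ and $V$ gives (power of $E_n$) times (weighted norm of $\psi_m$), with no $n$-decay at all, and a short check using $E_m/E_n\lesssim_\kappa(m/n)^{2/(2\kappa+1)}$ shows the result misses \eqref{far_diag_bound} by a positive power of $n/m$ whenever $\kappa>1$. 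Since $1/\sqrt{n}\simeq_\kappa E_n^{-1/4}|\{V\leq E_n\}|^{-1/2}$ by \eqref{approx_bohr_sommerfeld}, that factor encodes the spatial localization of $\psi_n$ and is invisible to $\|\psi_n\|_2,\|\psi_n'\|_2$. The fix is to place an $|x|^{-1}$-type weight on the $\psi_n$ factor as well, which the commutator identity makes automatic because $|V'(x)|\leq\kappa V(x)/|x|$; with that in place, the splitting at $\{V=2E_m\}$ and the separate exponential-tunneling argument become unnecessary.
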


We also obtain bounds in the ``near-diagonal'' region, which for convenience will be stated in terms of the matrix $\matP(V)$. In order to prove bounds with the required summability properties, here we make an additional regularity assumption on the potential $V$, corresponding to the assumption \eqref{eq:2dassumptions_holder} of the introduction.

\begin{dfn}
For $\theta \in (0,1)$ and $\kappa \geq 1$, we define $\pot_{1+\theta}(\kappa)$ as the class of the $V \in \pot_1(\kappa)$ such that
\begin{equation}\label{eq:potholderclass}
\left|V'(e^h x) - V'(x) \right| \leq \kappa \, |V'(x)| \, |h|^\theta \qquad \forall x \in \RR \setminus \{0\}  \quad  \forall h \in [-1,1].
\end{equation}
\end{dfn}

We can now state the near-diagonal bounds for $\matP(V)$.

\begin{thm}[Matrix bounds: $\matP$]\label{thm:near_diag}
Let $V\in \pot_1(\kappa)$. Then, for all $T \geq 1$,
\begin{equation}\label{eq:near_diag_bound}
|\matP_{nm}(V)| \lesssim_{\kappa,T} \frac{E_n(V)}{1+|m-n|} \qquad\forall n,m \tc T^{-1} n\leq m\leq Tn.
\end{equation}
Assume moreover that $V \in \pot_{1+\theta}(\kappa)$ for some $\theta \in (0,1)$. 
Then there exists $\epsilon = \epsilon(\theta,\kappa) >0$ such that, for all $T \geq 1$,
\begin{equation}\label{eq:near_diag_bound_sharper}
|\matP_{nm}(V)| \lesssim_{\theta,\kappa,T} \frac{E_n(V)}{1+|m-n|^{1+\epsilon}} \qquad\forall n,m \tc T^{-1} n\leq m\leq Tn.
\end{equation}
\end{thm}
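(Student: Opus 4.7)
The plan is to start from a commutator-type identity: for $n \neq m$,
\begin{equation}\label{my_commutator}
(E_n(V) - E_m(V))\, \matP_{nm}(V) = \int_\RR V'(x)\, \bigl(\psi_n'(x)\psi_m(x) - \psi_n(x)\psi_m'(x)\bigr)\, dx.
\end{equation}
To derive this, I would start from $V\psi_n = E_n\psi_n + \psi_n''$ and the analogous relation for $\psi_m$, whence $(E_n - E_m)\psi_n\psi_m = \psi_n\psi_m'' - \psi_n''\psi_m = -(\psi_n'\psi_m - \psi_n\psi_m')'$. Multiplying by $V$ and integrating by parts separately on the half-lines $(-\infty,0)$ and $(0,\infty)$ (to accommodate the possible discontinuity of $V'$ at the origin) yields \eqref{my_commutator}: the boundary contributions at $\pm\infty$ vanish by the exponential decay of the eigenfunctions (Theorem \ref{thm:pointwise_eigenfcts}), and those at $0^\pm$ cancel because $V(0) = 0$.

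For the first bound \eqref{eq:near_diag_bound}, the plan is to estimate the right-hand side of \eqref{my_commutator} via the pointwise and integral bounds of Sections \ref{s:halflineregular} and \ref{basic_sec}. Setting $L_n \defeq |\{V \leq E_n(V)\}| \simeq_\kappa n/E_n(V)^{1/2}$ (by Theorem \ref{thm:eigenvalues}) and observing that in the near-diagonal regime $T^{-1}n \leq m \leq Tn$ one has $E_m \simeq_{\kappa,T} E_n$ and $L_m \simeq_{\kappa,T} L_n$, I would use on the classical region $\{V \leq E_n\}$ that $|V'(x)| \lesssim_\kappa V(x)/|x| \lesssim E_n/L_n$, together with $|\psi_n|, |\psi_m| \lesssim L_n^{-1/2}$ and $|\psi_n'|, |\psi_m'| \lesssim (E_n/L_n)^{1/2}$ (from Theorems \ref{thm:pointwise_eigen} and \ref{thm:pointwise_eigenfcts}); the contribution from outside the classical region is negligible by the exponential decay. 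This gives a bound of order $E_n^2/n$ on the right-hand side of \eqref{my_commutator}. Combined with the gap estimate $|E_n - E_m| \simeq_\kappa (E_n/n)|n-m|$ of Theorem \ref{thm:eigenvalues}, this yields $|\matP_{nm}(V)| \lesssim_{\kappa,T} E_n/|n-m|$ for $n \neq m$. The case $m = n$ (and the regime $|n-m| \leq 1$) is handled by the a priori bound $|\matP_{nm}(V)| \lesssim_\kappa E_n$ coming from Theorem \ref{thm:pointwise_eigenfcts}\ref{en:integral_eigenfcts} with $W = V$ and the Cauchy--Schwarz inequality.

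For the sharper bound \eqref{eq:near_diag_bound_sharper}, the idea is to exploit the scale-invariant H\"older regularity \eqref{eq:potholderclass} of $V'$ to extract an extra $|n-m|^{-\epsilon}$ decay. The approach is to introduce, for a parameter $h \in (0,1)$ to be chosen, a logarithmic mollification $V'_h$ of $V'$ at scale $h$, and decompose the right-hand side of \eqref{my_commutator} according to $V' = V'_h + (V' - V'_h)$. The H\"older condition yields $|V'(x) - V'_h(x)| \lesssim_\kappa |V'(x)|\, h^\theta$, so the rough piece $V' - V'_h$ contributes at most $h^\theta$ times the bound from the previous step. For the smooth piece $V'_h$, the point is that on the classical region the product $\psi_n \psi_m$ oscillates at logarithmic frequency $\simeq |n-m|$ (as can be seen from the identity $(\psi_n' \psi_m - \psi_n \psi_m')' = (E_m - E_n) \psi_n \psi_m$ combined with the WKB structure of the eigenfunctions and the gap $|E_n - E_m| \simeq E_n|n-m|/n$), so mollifying at scale $h$ (either of $V'$ or, equivalently by a change of variables, of the eigenfunction product) produces a gain of the form $(h|n-m|)^{-c}$ for some $c>0$ whenever $h|n-m| \geq 1$. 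Optimising over $h \simeq 1/|n-m|$ balances both pieces and produces the extra $|n-m|^{-\epsilon}$ factor, with $\epsilon = \epsilon(\theta,\kappa) > 0$.

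The main obstacle will be the rigorous implementation of the oscillatory estimate for the smooth piece in the sharper bound. A clean proof presumably requires either adapting stationary-phase-type arguments to the varying classical momentum $\sqrt{E_n - V}$, or iterating the integration by parts in \eqref{my_commutator} in a way that replaces the missing second derivative of $V$ by the H\"older increment \eqref{eq:potholderclass}. Secondary technicalities include treating the transition regions near the turning points $V \simeq E_n$, where the WKB approximation degenerates, and carefully tracking the dependence of the implicit constants on the parameter $T$ controlling the near-diagonal strip.
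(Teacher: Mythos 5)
Your derivation of the first commutator identity and your estimation strategy for the coarser bound \eqref{eq:near_diag_bound} match the paper's approach (Lemma \ref{commutator_identities_lem} and Proposition \ref{prp:5/4_lem} applied with $U=V$, $\alpha=1$): the only cosmetic difference is that you estimate $\int V'(\psi_n\psi_m'-\psi_n'\psi_m)$ by pointwise bounds on the classical region, whereas the paper uses Cauchy--Schwarz together with the weighted $L^2$ bounds of Theorem~\ref{thm:pointwise_eigenfcts}\ref{en:integral_eigenfcts}; both give $E_n^2/n$ on the right-hand side and then divide by the gap.

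For the sharper bound \eqref{eq:near_diag_bound_sharper}, however, your route diverges substantially from the paper's. The paper does \emph{not} attempt an oscillatory-integral or WKB argument. Instead, it derives a \emph{second} commutator identity (Lemma~\ref{commutator_identities2_lem}), which under a $C^2$-type assumption and a restriction $\alpha>\kappa$ yields the faster decay $E_n^\alpha/(1+|m-n|^2)$ (Proposition~\ref{prp:near_diag_lem_old}); it then introduces scale-invariant Besov-type norms $\|\cdot\|_{V^\alpha,s}$ and performs a complex interpolation (via a retract of a vector-valued $\ell^\infty$ of Besov spaces, Remark~\ref{rem:retract} and Proposition~\ref{prp:near_diag_sharper}) between the first- and second-commutator bounds, reaching $(\alpha,s)=(1,1+\theta)$ and hence decay $|n-m|^{-(1+\epsilon)}$. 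This is essentially your parenthetical alternative --- ``iterating the integration by parts in a way that replaces the missing second derivative of $V$ by the H\"older increment'' --- made precise not via a pointwise interpolant of $V$ but via interpolation of the \emph{estimates} themselves in the function-space parameter. Your primary proposal (log-mollify $V'$ at scale $h$, split into $V'_h$ and $V'-V'_h$, treat the smooth piece by exploiting oscillation of $\psi_n\psi_m$ at logarithmic frequency $\simeq|n-m|$, optimise $h\simeq|n-m|^{-1}$) is heuristically sound but carries real gaps: (a) making the oscillatory gain $(h|n-m|)^{-c}$ rigorous requires uniform-in-$V$ WKB error control and stationary-phase analysis that the paper deliberately never develops, and the WKB ansatz degenerates at turning points where the amplitude $(E_n-V)^{-1/4}$ and the difference phase derivative $(E_n-E_m)/\sqrt{E_n-V}$ are both singular; (b) the contribution of the sum-frequency term of $\psi_n\psi_m$ and the near-turning-point transition region have to be handled separately, and it is not clear this can be done with only the H\"older assumption on $V'$; (c) for the smooth piece, what you actually control after mollification is $|x(V'_h)'(x)|\lesssim h^{\theta-1}|V'(x)|$, which is exactly the kind of bound the paper's second commutator identity consumes --- so if you push in that direction, you are reinventing the interpolation argument but by hand, with worse bookkeeping. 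In short: the first part is fine; the second part as proposed would not close without substantial further work, and the cleaner path (which you glimpsed as an aside) is the one the paper takes.
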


Finally, we state some pointwise bounds for ``clusters'' of eigenfunctions and differentiated eigenfunctions of bounded energy. We note that the supremum in $x \in \RR$ of the left-hand side of \eqref{eq:spprojbd} below is the squared $L^2 \to L^\infty$ norm of the spectral projector of the Schr\"odinger operator $\opH[V]$ associated to the interval $[0,E_0]$; for this reason, by a slight abuse of language, we will refer to the estimates below as ``spectral projector bounds''.

\begin{thm}[Spectral projector bounds]\label{thm:spectral_proj}
Let $V\in\pot_1(\kappa)$. Then, for all $E_0>0$,
\begin{equation}\label{eq:spprojbd}
\sum_{E_n(V) \in [0,E_0]} \psi_n(x;V)^2 \lesssim_{\kappa} \sqrt{E_0} \left( \chr_{V \leq 4 E_0} +  e^{-c(\kappa) |x| \sqrt{V(x)}} \chr_{V \geq 4 E_0} \right).
\end{equation}
Let $T_0>1$, and define, for all $n \in \Npos$ and $x \in \RR$, the ``modified differentiated eigenfunctions''
\begin{equation}\label{eq:rho_def_elem}
\begin{split}
\rho_n(x;V) &\defeq \sigma_n(x;V) - \sum_m (\matA(V) \schur \matN_{T_0})_{nm}\psi_m(x;V) \\
&= \sum_m (\matA(V) \schur \matF_{T_0})_{nm}\psi_m(x;V) .
\end{split}
\end{equation} 
Then there exists $T_1=T_1(\kappa,T_0)$ such that, for all $E_0>0$,
\begin{equation*}
\sum_{E_n(V)\in [0,E_0]} \rho_n(x;V)^2 \lesssim_{\kappa,T_0} \sqrt{E_0} \left(\chr_{V\leq T_1 E_0} +  e^{-c(\kappa) |x| \sqrt{V(x)}} \chr_{V\geq T_1 E_0} \right).
\end{equation*}	
\end{thm}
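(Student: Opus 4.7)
The first bound is proved by splitting into the two regions $\{V \leq 4E_0\}$ and $\{V \geq 4E_0\}$. In the classical region $\{V \leq 4E_0\}$, I would invoke the elementary heat kernel comparison $\Kern_{e^{-t\opH[V]}}(x,x) \leq (4\pi t)^{-1/2}$, valid for any nonnegative potential $V$ by the Trotter product formula (or Feynman--Kac): spectrally expanding and setting $t = 1/E_0$ yields $\sum_{E_n \leq E_0} \psi_n(x)^2 \leq e\sum_n e^{-E_n/E_0} \psi_n(x)^2 \lesssim \sqrt{E_0}$. In the far region $\{V \geq 4E_0\}$, every eigenfunction $\psi_n$ with $E_n \leq E_0$ satisfies $V(x) \geq 4E_n$, so Theorem~\ref{thm:pointwise_eigenfcts}\ref{en:pointwise_eigenfcts} gives $\psi_n(x)^2 \lesssim e^{-c|x|\sqrt{V(x)}}/|\{V \leq E_n\}|$, which by the Bohr--Sommerfeld identity \eqref{approx_bohr_sommerfeld} is $\lesssim \sqrt{E_n}\,e^{-c|x|\sqrt{V(x)}}/n$. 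A dyadic grouping over $n \in [2^{k-1}, 2^k]$, together with the geometric control $E_n/E_{N_0} \lesssim (n/N_0)^{2/(2\kappa+1)}$ from Proposition~\ref{prp:eigenvalue_doub}\ref{en:energy_doubling_eigen}, makes the sum $\sum_{E_n \leq E_0} \sqrt{E_n}/n$ telescope to a geometric series bounded by $\sqrt{E_0}$.

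For the second bound, the starting point is the expansion $\rho_n = \sum_{m \in \mathrm{far}(n)} \matA_{nm}(V) \psi_m$, where $\mathrm{far}(n) = \{m < T_0^{-1}n\} \cup \{m > T_0 n\}$, and the splitting $\rho_n = f_n + g_n$ into the low-frequency and high-frequency pieces. The far-diagonal bound \eqref{far_diag_bound}, combined with the eigenvalue doubling of Proposition~\ref{prp:eigenvalue_doub}, gives $\sum_{m \in \mathrm{far}(n)} \matA_{nm}^2 \lesssim 1/n$ and, more generally, $\sum_m \matA_{nm}^2 (1 + E_m/E_n) \lesssim 1/n$. Applying Cauchy--Schwarz with weight $w_m = 1 + E_m/E_n$, I obtain $|f_n(x)|^2 + |g_n(x)|^2 \lesssim (1/n) \sum_m \psi_m(x)^2/(1 + E_m/E_n)$; a dyadic decomposition in $E_m$, combined with the already-proved first statement of the theorem, bounds the inner sum by $\sqrt{E_n}$ in the classical-ish region and by $\sqrt{E_n}\,e^{-c|x|\sqrt{V(x)}}$ in the far region. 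Summing $\sqrt{E_n}/n$ over $n \leq N_0$ (where $E_{N_0} \simeq E_0$) using $E_n \lesssim (n/N_0)^{2/(2\kappa+1)} E_0$ then gives $\sqrt{E_0}$ overall.

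The main obstacle I expect is propagating the exponential factor through the high-frequency contribution $g_n$ in the far region $\{V(x) \geq T_1 E_0\}$: some dyadic shells $\{E_m \in [2^k E_n, 2^{k+1} E_n]\}$ with $2^k E_n \gtrsim V(x)$ place $\psi_m$ back into its own classical region, so its pointwise bound alone carries no exponential decay. The rescue is that for such $m$ the matrix coefficient $\matA_{nm}$ acquires additional smallness $(E_n/E_m)^{3/4} \lesssim (E_0/V(x))^{3/4} \lesssim T_1^{-3/4}$, which combined with the classical pointwise bound $\psi_m(x)^2 \lesssim 1/|\{V \leq E_m\}|$ and the fast dyadic decay of $|\{V \leq E_m\}|^{-1}$ absorbs these ``residual'' contributions into the exponentially decaying main term, provided $T_1 = T_1(\kappa, T_0)$ is chosen sufficiently large. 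This is the step that ties the free parameter $T_1$ to $\kappa$ and $T_0$, and it exploits the far-diagonal estimate \eqref{far_diag_bound} beyond mere $\ell^2$-summability.
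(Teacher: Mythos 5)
Your proof of the first bound \eqref{eq:spprojbd} is essentially the same as the paper's. The diagonal heat kernel bound by Feynman--Kac plus the shift $t = 1/E_0$ is exactly what the paper does (phrased as an $L^1 \to L^2$ norm estimate for the projector), and the far-region exponential decay via Theorem~\ref{thm:pointwise_eigenfcts} and Bohr--Sommerfeld is the same.

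For the second bound, however, your proposal has a genuine gap, which you yourself correctly identify as the main obstacle but do not actually overcome. Working directly with $\rho_n = \sum_{m \in \mathrm{far}(n)} \matA_{nm}\psi_m$, you are forced to include eigenfunctions $\psi_m$ of arbitrarily large energy $E_m$, and once $E_m \gtrsim V(x)$ the point $x$ lies in the classical region of $\psi_m$, where $\psi_m(x)^2 \lesssim 1/|\{V \leq E_m\}|$ without any exponential factor. The rescue you propose --- that the far-diagonal estimate \eqref{far_diag_bound} provides additional smallness $(E_n/E_m)^{3/4} \lesssim (E_0/V(x))^{3/4} \lesssim T_1^{-3/4}$ --- yields at best a \emph{polynomial} gain in $E_0/V(x)$, not the \emph{exponential} decay $e^{-c(\kappa)|x|\sqrt{V(x)}}$ required by the statement. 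Summing the ``bad'' dyadic shells $E_m \in [2^k E_n, 2^{k+1}E_n]$ with $2^k E_n \gtrsim V(x)$ produces a residual of size $\sqrt{E_0}\,(E_0/V(x))^{\gamma}$ for some fixed $\gamma > 0$; this is not $O(\sqrt{E_0}\,e^{-c|x|\sqrt{V(x)}})$ on $\{V \geq T_1 E_0\}$, no matter how large $T_1$ is chosen, since a constant $T_1^{-3/4}$ cannot manufacture exponential decay in $|x|\sqrt{V(x)}$.

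The paper circumvents this by never summing the far-diagonal expansion directly in the tail region. It splits $\rho_n = \sigma_n + (\rho_n - \sigma_n)$ and treats the two pieces separately. The term $\sigma_n$ decays exponentially for $V(x) \geq 4E_n$ by the Agmon-type estimate of Proposition~\ref{sigma_pointwise_prp} (which exploits the inhomogeneous ODE \eqref{sigma_eq} satisfied by $\sigma_n$, with inhomogeneity $(F_n - V)\psi_n$ carrying the factor $\psi_n$); this is the key ingredient missing from your argument. The term $\rho_n - \sigma_n = \sum_m (\matN_{T_0}\schur\matA(V))_{nm}\psi_m$ is a \emph{near}-diagonal sum, so by Proposition~\ref{prp:eigenvalue_doub} all the $\psi_m$ appearing have $E_m \leq 2^{\ell_1}E_n \leq 2^{\ell_1}E_0$ for some $\ell_1 = \ell_1(\kappa,T_0)$; once $V(x) \geq 2^{\ell_1+2}E_0 \eqdef T_1 E_0$, \emph{every} such $\psi_m$ is exponentially small at $x$, and a crude Cauchy--Schwarz using \eqref{eq:A_apriori} ($\sum_m \matA_{nm}^2 \lesssim n^2$), together with $n \lesssim |x|\sqrt{V(x)}$, yields the exponential bound. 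The near-diagonal cut is what confines all energies to a range comparable to $E_0$, which your far-diagonal expansion structurally cannot provide.
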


The rest of the section is devoted to the proofs of the above results.

\subsection{Elementary properties of the matrices \texorpdfstring{$\matA$}{A} and \texorpdfstring{$\matP$}{P}}
Here we prove Proposition \ref{prp:A_formula}. Due to the assumption $V \in \Tpot$, all the differentiations used in the  proof are allowed in light of Proposition \ref{prp:one_parameter_tV}.

First, the symmetry of $\matP(V)$ is obvious, and the antisymmetry of $\matA(V)$ follows by differentiating in $t=1$ the identity $\langle \psi_n(\cdot;tV) , \psi_m(\cdot;tV) \rangle = \delta_{nm}$.

Differentiating in $t=1$ the equation $\opH[t V] \psi_n(x;t V) = E_n(t V) \, \psi_n(x;t V)$ gives the inhomogeneous equation
\begin{equation}\label{sigma_eq}
-\sigma_n''(x;V) + V(x) \, \sigma_n(x;V) = E_n(V) \, \sigma_n(x;V) + (F_n(V)-V(x)) \, \psi_n(x;V),
\end{equation}
where $F_n(V)$ is the differentiated eigenvalue, as in \eqref{virial_F}.
Multiplying both sides of \eqref{sigma_eq} by $\psi_m(x;V)$ with $m \neq n$ and integrating gives 
\[
E_n \matA(V)_{nm} - \matP(V)_{nm} = \langle \opH[V] \sigma_n, \psi_m \rangle = \langle \sigma_n, \opH[V] \psi_m \rangle = E_m \matA(V)_{nm} ,
\]
and rearranging yields the desired identity \eqref{AV_identity}.

\subsection{Differentiated eigenfunctions: \texorpdfstring{$L^2$}{L2} bound}\label{ss:sigma_l2bd}
Here we work under the assumption $V \in \pot_1(\kappa)$ and obtain the
$L^2$ bound \eqref{eq:A_apriori} for the differentiated eigenfunctions $\sigma_n = \sigma_n(\cdot;V)$, which proves the first part of Theorem \ref{thm:estimates_A}.

By Proposition \ref{prp:A_formula} and Theorem \ref{thm:eigenvalues}, for all $n \geq 1$,
\begin{multline*}
\int_{\RR} \sigma_n^2 
= \sum_{m\neq n} \matA_{nm}(V)^2 \\
= \sum_{m\neq n}\frac{\matP_{nm}(V)^2}{|E_m-E_n|^2} 
\lesssim_{\kappa} \frac{n^2}{E_n^2} \sum_{m\neq n} \matP_{nm}(V)^2 
\leq \frac{n^2}{E_n^2} \int_{\RR} V^2 \psi_n^2 
\lesssim_{\kappa} n^2.
\end{multline*}
In the last step we used Theorem \ref{thm:pointwise_eigenfcts}.

\subsection{Differentiated eigenfunctions: exponential decay}
Thanks to the above $L^2$ bound, we can now apply Agmon's theory (Corollary \ref{pointwise_agmon_cor}) to show that the differentiated eigenfunctions $\sigma_n(\cdot;V)$ have a similar exponential decay far from the classical region as the one proved for the eigenfunctions $\psi_n(\cdot;V)$ in Theorem \ref{thm:pointwise_eigenfcts}.

\begin{prp}\label{sigma_pointwise_prp}
Let $V \in \pot_1(\kappa)$. Then, for every $n \in \Npos$ and $x \in \RR$ such that $V(x) \geq 4 E_n(V)$,
\begin{equation*}
	\sigma_n(x;V)^2 \lesssim_\kappa \frac{1}{|\{V\leq E_n(V)\}|} e^{-c(\kappa)|x|\sqrt{V(x)}}.
\end{equation*}
\end{prp}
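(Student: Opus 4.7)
Working on the positive half-line $\Rpos$ (the negative half-line being analogous by symmetry of $\pot_1(\kappa)$), we view $\sigma_n := \sigma_n(\cdot; V)$ as a solution of the inhomogeneous Schr\"odinger equation
\[
-\sigma_n'' + (V_\oplus - E_n)\sigma_n = (F_n - V_\oplus)\psi_n,
\]
cf.\ \eqref{sigma_eq}. The plan is to apply Corollary \ref{pointwise_agmon_cor} to $u = \sigma_n|_{\Rpos}$ with the shifted potential $V_\oplus - E_n$ (in the role of $V$ in the corollary) and inhomogeneous term $w = (F_n - V_\oplus)\psi_n$. We choose $A < B < C$ as nonnegative values of $V_\oplus - E_n$ comparable to $E_n$, with $C = 3E_n$ (so the conclusion region coincides with $\{V_\oplus(x) > 4E_n\}$) and $A$ bounded away from $0$ by a multiple of $E_n$; the remaining parameters $\beta, \varepsilon, \gamma$ are chosen so that $\beta + 3\varepsilon \leq \gamma < 1$ and $\gamma$ is sufficiently small to be beaten by the decay constant $c(\kappa)$ of $\psi_n$ in \eqref{exponential_decay_eigen}.

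The quantities $\auxC_0, \auxC_1$ are estimated just as in Section \ref{ss:half_exponential}, via the comparison \eqref{x_sqrtV} and Proposition \ref{doubling_prp}\ref{en:auxiliary}, giving $\auxC_0 \lesssim_\kappa |\{V \leq E_n\}|$ and $\auxC_1 \lesssim_\kappa |\{V \leq E_n\}|\, E_n$. The novelty is the estimation of $\auxD$, which now has three contributions: (i)~the middle-region $\sigma_n^2$-term is handled by combining the global $L^2$-bound $\|\sigma_n\|_2^2 \lesssim_\kappa n^2$ established in Section \ref{ss:sigma_l2bd} with the Bohr--Sommerfeld identity $|\{V \leq E_n\}|^2 E_n \simeq_\kappa n^2$ of Theorem \ref{thm:eigenvalues}; (ii)~the middle-region $w^2/(V_\oplus - E_n)$-term is controlled using the trivial bound $|F_n| \leq E_n$ together with $V_\oplus - E_n \gtrsim E_n$ on this region (which holds precisely because $A$ was chosen of order $E_n$, thereby preventing a logarithmic singularity at $V_\oplus = E_n$); (iii)~the outer-region weighted integral is dominated by invoking the pointwise exponential decay \eqref{exponential_decay_eigen} of $\psi_n$ to beat the Agmon weight $e^{2\gamma\int\sqrt{V_\oplus - E_n}}$, with the resulting integral then estimated via Proposition \ref{doubling_prp}\ref{en:auxiliary}.

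Plugging these bounds into Corollary \ref{pointwise_agmon_cor} yields an estimate of the form $\sigma_n(x)^2 \lesssim_\kappa P(n)\, e^{-c_0 |x|\sqrt{V(x)}}$ for $V(x) \geq 4E_n$, for some $c_0 = c_0(\kappa) > 0$ and some $P(n)$ polynomial in $n$ (after using Bohr--Sommerfeld to convert powers of $|\{V \leq E_n\}|$ and $E_n$ into powers of $n$). The final step mirrors the absorption trick at the end of Section \ref{ss:half_exponential}: for $V(x) \geq 4E_n$, the comparison \eqref{x_sqrtV} and the Bohr--Sommerfeld identity together yield $|x|\sqrt{V(x)} \gtrsim_\kappa n$, so the elementary inequality $t^k \lesssim_k e^{\eta t}$ lets us absorb $P(n)$ into the exponential at the cost of a slightly smaller decay constant $c(\kappa) > 0$, which leaves the pre-factor $1/|\{V \leq E_n\}|$ in place of $P(n)$.

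The main technical obstacle is the joint calibration of the parameters: $\gamma$ must be small enough that the Agmon weight in contribution (iii) of $\auxD$ is strictly dominated by the decay of $\psi_n$, while $\beta$ must be large enough that the resulting exponent $-2\beta\int\sqrt{V_\oplus - E_n}$ is strong enough, after the final absorption step, to swallow the polynomial-in-$n$ pre-factor. This delicate balance is feasible because the decay rate supplied by \eqref{exponential_decay_eigen}, although only a small $\kappa$-dependent constant, is of the same order as the comparison constant in \eqref{x_sqrtV} that governs the growth of the Agmon weight, leaving a genuine positive gap to work with.
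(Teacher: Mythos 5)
Your proposal has the right skeleton — shift the potential to $V_\oplus - E_n$, apply Corollary~\ref{pointwise_agmon_cor} to $u=\sigma_n$ with $w=(F_n-V_\oplus)\psi_n$, estimate $\auxC_0,\auxC_1$ as in Section~\ref{ss:half_exponential}, and absorb the remaining polynomial factor at the end — and your treatment of the two ``middle-region'' contributions to $\auxD$ (the $\sigma_n^2$ term via the $L^2$ bound $\|\sigma_n\|_2^2 \lesssim_\kappa n^2$ and Bohr--Sommerfeld, the $w^2/(V_\oplus - E_n)$ term via $0\le F_n\le E_n$ and $V_\oplus - E_n\gtrsim E_n$) matches the paper. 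However, there is a genuine gap in contribution~(iii), the outer-region weighted integral.

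You propose to control
\[
\int_{V_\oplus > B+E_n} \exp\!\Bigl(2\gamma\!\int_{(V_\oplus)^\inv(B+E_n)}^{y}\!\sqrt{V_\oplus - E_n}\,\Bigr)\frac{w(y)^2}{V_\oplus(y)-E_n}\,dy
\]
by plugging in the pointwise exponential decay \eqref{exponential_decay_eigen} of $\psi_n$ and tuning $\gamma$ so small that the Agmon weight is dominated. But \eqref{exponential_decay_eigen} is available only where $V(y)\ge 4E_n$, whereas the outer region of integration starts at $V_\oplus = B+E_n \le 4E_n$ (indeed you must take $C\le 3E_n$ to reach the conclusion for $V(x)\ge 4E_n$, so $B < 3E_n$). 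In the transition strip $V_\oplus(y)\in(B+E_n,4E_n)$ the pointwise bound is unavailable, while the Agmon weight there is of size $e^{O(n)}$: the exponent $\gamma\int\sqrt{V_\oplus-E_n}$ reaches $\simeq\gamma\sqrt{E_n}\,|\{V\le E_n\}|\simeq_\kappa\gamma n$ by \eqref{x_sqrtV} and \eqref{approx_bohr_sommerfeld}. Having only the unweighted bound $\int\psi_n^2\le 1$ on that strip leaves a factor $e^{O(n)}$ that cannot be absorbed. (A secondary, non-fatal, complication: even where the pointwise decay does apply, beating the Agmon weight forces $\gamma < c(\kappa)/2$, which in turn forces $\beta$ small; the scheme still closes but only after a careful comparison of the constants in \eqref{x_sqrtV} with $c(\kappa)$.)

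The paper avoids both issues by not re-importing the pointwise eigenfunction decay at all: it bounds the outer-region term by applying the \emph{$L^2$} Agmon estimate, Theorem~\ref{agmon_thm}, directly to $\psi_n$ viewed as a solution of $-\psi_n''+(V_\oplus - E_n)\psi_n = 0$ (so $w=0$), with potential $V_\oplus - E_n$, $\gamma=3/4$, $A=0$, $B=E_n$. That single application controls the entire weighted integral over $\{V_\oplus>2E_n\}$, transition strip included, by $|\{E_n<V_\oplus<2E_n\}|^{-2}\int_{E_n<V_\oplus<2E_n}\psi_n^2\lesssim E_n/n^2$, with the fixed universal choice $\gamma=3/4$ (matching the other parameter choices $A=E_n/2$, $B=E_n$, $C=3E_n$, $\beta=1/2$, $\varepsilon=1/12$). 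Replacing your step (iii) with this application of Theorem~\ref{agmon_thm} closes the gap and removes the need to tune $\gamma$ against $c(\kappa)$.
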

\begin{proof}
We prove the statement for $x>0$. The statement for $x<0$ is proved identically, by considering $V(-x)$ in place of $V(x)$.

Write $E_n = E_n(V)$, $F_n = F_n(V)$, $\psi_n(x) = \psi_n(x;V)$ and $\sigma_n(x) = \sigma_n(x;V)$ for simplicity.
We know that $\sigma_n$ solves the differential equation \eqref{sigma_eq}.
Recall the definition of $V_\oplus$ from \eqref{eq:pm_parts}.
If we apply Corollary \ref{pointwise_agmon_cor} with potential $V_\oplus-E_n$ and $w=(F_n-V_\oplus)\psi_n$, with parameters
\[
A=E_n/2, \ B=E_n, \ C=3E_n, \ \beta=1/2, \  \varepsilon=1/12, \ \gamma=3/4,
\]
then we obtain, for all $x>0$ such that $V(x) \geq 4E_n$, that
\begin{equation}\label{eq:prel_est_sigma}
\sigma_n(x)^2 \lesssim \auxC_0^2 \auxC_1 \auxD \exp\left(-\int_{(V_\oplus)^{\inv}(2E_n)}^x \sqrt{V-E_n}\right),
\end{equation}
where 
\[
\auxC_j = \int_{V_\oplus > 4E_n} \exp\left(-\frac{1}{6} \int_{(V_\oplus)^\inv(2E_n)}^y \sqrt{V-E_n} \right) (V(y)-E_n)^j \,dy 
\]
and
\[\begin{split}
\auxD
&= \int_{V_\oplus>2E_n} \exp\left(\frac{3}{2} \int_{(V_\oplus)^{\inv} (2E_n)}^y \sqrt{V-E_n}\right) \frac{|(F_n-V(y)) \psi_n(y)|^2}{V(y)-E_n} \,dy \\
&\qquad+ \int_{3E_n/2<V_\oplus<2E_n} \frac{|(F_n-V(y)) \psi_n(y)|^2}{V(y)-E_n} \,dy \\
&\qquad+ \frac{1}{|\{3E_n/2<V_\oplus<2E_n\}|^2} \int_{3E_n/2<V_\oplus<2E_n} \sigma_n(y)^2 \,dy .
\end{split}\]

Since $\sqrt{E_n} \,|\{V_\oplus \leq E_n\}| \gtrsim_\kappa 1$ 
by \eqref{delta_eigenfunctions},
we obtain as in Section \ref{ss:half_exponential} that
\begin{equation*} 
\auxC_0 \lesssim_\kappa |\{V_\oplus \leq E_n\}| \quad\text{and}\quad \auxC_1 \lesssim_\kappa |\{V_\oplus \leq E_n\}| \, E_n.
\end{equation*}
Moreover, 
\begin{equation}\label{virial}
0 \leq F_n \leq E_n
\end{equation}
by Theorem \ref{thm:virial}, hence
\[\begin{split}
\auxD
&\leq 4\int_{V_\oplus>2E_n} \exp\left(\frac{3}{2} \int_{(V_\oplus)^{-1}(2E_n)}^y \sqrt{V-E_n}\right)(V(y)-E_n)| \psi_n(y)|^2 \,dy \\
&\qquad+ 3\int_{3E_n/2<V_\oplus<2E_n} V(y) \psi_n(y)^2 \,dy \\
&\qquad+\frac{1}{|\{3E_n/2<V_\oplus<2E_n\}|^2} \int_{3E_n/2<V_\oplus<2E_n} \sigma_n(y)^2 \,dy .
\end{split}\]
We can bound the first summand by Theorem \ref{agmon_thm} (applied with potential $V_\oplus - E_n$, $w=0$, $\gamma=3/4$, $A=0$, and $B = E_n$)  
 and the last summand by \eqref{eq:A_apriori}, thus obtaining
\[
\auxD
\lesssim_\kappa \frac{1}{|\{E_n<V_\oplus<2E_n\}|^2} + E_n + \frac{n^2}{|\{3E_n/2<V_\oplus<2E_n\}|^2} 
\lesssim_\kappa E_n;
\]
here we used that
\[
|\{E_n<V_\oplus<2E_n\}|^2 \simeq_\kappa |\{3E_n/2 <V_\oplus<2E_n\}|^2 \simeq_\kappa |\{V \leq E_n\}|^2 \simeq n^2 / E_n
\]
by
Propositions \ref{doubling_prp}
and \ref{pot1_trivial_prp}
and Theorem \ref{thm:eigenvalues}.
Hence \eqref{eq:prel_est_sigma} gives, for all $x>0$ such that $V(x) \geq 4 E_n$, that
\[\begin{split}
|\sigma_n(x)|^2 
&\lesssim_\kappa |\{V_\oplus \leq E_n\}|^3 E_n^2 \exp\left(-\int_{(V_\oplus)^{\inv}(2E_n)}^x \sqrt{V-E_n}\right) \\
&\leq \frac{1}{|\{V_\oplus \leq E_n\}|} x^4 V(x)^2 e^{-c(\kappa) x \sqrt{V(x)}} \\
&\lesssim_\kappa \frac{1}{|\{V\leq E_n\}|} e^{-\frac{c(\kappa)}{2} x \sqrt{V(x)}} ,
\end{split}\]
where Proposition \ref{pot1_trivial_prp} was again used in the last step.
\end{proof}

\subsection{Off-diagonal decay: the commutator argument}
\label{ss:matrixbounds}

This and the following two sections are aimed at proving the bounds for the matrix coefficients of $\matA(V)$ and $\matP(V)$ stated in Theorems \ref{thm:estimates_A} and \ref{thm:near_diag}. We point out that the bound \eqref{eq:A_apriori} for $\matA(V)$ has already been proved in Section \ref{ss:sigma_l2bd}, and that moreover the on-diagonal bounds ($n=m$) for $\matP(V)$ in Theorem \ref{thm:near_diag} are trivial. What effectively remains to prove is the off-diagonal decay that those bounds entail.

Fix $V$ and write $\psi_n(x)=\psi(x; V)$, $E_n = E_n(V)$, and $\opH=\opH[V]$ for simplicity. The main heuristics at the basis of this section is contained in the following formal chain of identities:
\begin{equation}\label{eq:commutator_idea}
\begin{split}
\langle [\opH,V] \psi_n, \psi_m \rangle 
&= \langle \opH V\psi_n, \psi_m \rangle - \langle V \opH \psi_n, \psi_m \rangle \\
&= \langle V \psi_n , \opH \psi_m \rangle - \langle V \opH \psi_n, \psi_m \rangle \\
&= ( E_m - E_n ) \, \langle V \psi_n, \psi_m \rangle.
\end{split}
\end{equation}
This looks promising, because it allows to convert any (yet to be proved) upper bound on $|\langle [\opH,V]\psi_n,\psi_m\rangle|$ into an off-diagonal decay for $\matP_{nm}(V)$ (and $\matA_{nm}(V)$, in virtue of \eqref{AV_identity}). Moreover, by iterating the above argument, one could potentially obtain an even faster off-diagonal decay by considering iterated commutators.

To rigorously justify the above identities \eqref{eq:commutator_idea}, one would need to ensure that eigenfunctions are in the natural domain of the operator $[\opH,V]$, namely that $V\psi_n$ is in the domain of $\opH$. This would require in particular some control on the second derivative $V''$. Correspondingly, higher-order derivatives of $V$ would need to be controlled in order to deal with iterated commutators.

Nevertheless, the theory developed below allows us to obtain off-diagonal decay for an arbitrary potential $V$ in $\pot_1(\kappa)$ or $\pot_{1+\theta}(\kappa)$, for which bounds on the second and higher-order derivatives may not be available.

As we shall see, the estimates for $\matP(V)$ will be derived from estimates for the matrix coefficients $\langle U \psi_n, \psi_m \rangle$ associated to a more general function $U : \RR \to \CC$. We begin by introducing some terminology that will be convenient when justifying integrations by parts.

\begin{dfn}
Let $U : \RR \to \CC$. We say that $U$ has \emph{moderate growth} if there exist $M,K >0$ such that $|U(x)| \leq e^{M|x|}$ whenever $|x| \geq K$. Moreover, we say that $U$ has \emph{fast decay} if for all $M > 0$ there exists $K>0$ such that $|U(x)| \leq e^{-M|x|}$ whenever $|x| \geq K$.
\end{dfn}

The following statement, applied with $U=V$, provides a replacement of \eqref{eq:commutator_idea} that does not require $V$ to be twice differentiable.

\begin{lem}\label{commutator_identities_lem}
Assume that $V \in \Tpot$. Let $U \in C^0(\RR) \cap C^1(\RR \setminus \{0\})$ be such that $U'$ is locally integrable at $0$, and moreover $U,U'$ have moderate growth. Then
\begin{equation}\label{first_commutator}
(E_m-E_n) \, \langle U \psi_n, \psi_m \rangle = \int_{\RR} U' (\psi_n\psi_m'-\psi_n'\psi_m) ,
\end{equation}
where the integrals are absolutely convergent.
\end{lem}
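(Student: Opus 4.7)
The plan is to rewrite the eigenvalue gap as the derivative of a Wronskian and then integrate by parts, splitting the real line at the origin to accommodate the possible non-differentiability of $U$ there.

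First, from the eigenfunction equations $-\psi_n'' + V\psi_n = E_n \psi_n$ and analogously for $\psi_m$, we derive the pointwise identity
\[
(\psi_n'\psi_m - \psi_n\psi_m')' = \psi_n''\psi_m - \psi_n\psi_m'' = (E_m-E_n) \psi_n \psi_m,
\]
so $(E_m-E_n) \langle U\psi_n,\psi_m\rangle = \int_\RR U \,(\psi_n'\psi_m-\psi_n\psi_m')'$. The hope is to integrate by parts to transfer the derivative onto $U$.

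Second, before doing so, I would establish that $\psi_n, \psi_n', \psi_m, \psi_m'$ all decay faster than any exponential. Indeed, since $V \in \Tpot \subseteq \pot$ we have $V(x) \to +\infty$, so for any $M>0$ one can pick $R$ with $V \geq 4 M^2$ on $|x|\geq R$; applying Theorem \ref{agmon_thm} to the potentials $V_\oplus-E_n$ and $V_\ominus-E_n$ with, say, $\gamma=1/2$, and then combining with Corollary \ref{pointwise_agmon_cor} to upgrade $L^2$ control to pointwise control of both $\psi$ and $\psi'$, yields $|\psi_n(x)|+|\psi_n'(x)| \lesssim_{n,M} e^{-M|x|}$ (and similarly for $\psi_m$). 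Together with the moderate growth of $U$ and $U'$, this shows that all integrals in sight are absolutely convergent and that the boundary contributions at $\pm\infty$ from the integration by parts vanish.

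Third, I would split $\RR = (-\infty,0] \cup [0,+\infty)$ (where $U$ is $C^1$ on each closed half-line since $U \in C^0(\RR)$ and $U \in C^1(\RR\setminus\{0\})$ with $U'$ locally integrable at $0$, so $U$ is in fact absolutely continuous across $0$) and integrate by parts on each half separately:
\[
\int_0^\infty U \,(\psi_n'\psi_m-\psi_n\psi_m')' = \bigl[U(\psi_n'\psi_m-\psi_n\psi_m')\bigr]_0^\infty - \int_0^\infty U' (\psi_n'\psi_m-\psi_n\psi_m'),
\]
and analogously on $(-\infty,0]$. The endpoint contributions at $\pm\infty$ vanish by the decay above, while the two boundary contributions at $x=0$ (one from each half) cancel: both involve $U(0)\bigl(\psi_n'(0)\psi_m(0)-\psi_n(0)\psi_m'(0)\bigr)$ evaluated from opposite sides and with opposite signs, and $U$, $\psi_n$, $\psi_m$, $\psi_n'$, $\psi_m'$ are all continuous at $0$. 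Summing the two halves gives
\[
(E_m-E_n)\langle U\psi_n,\psi_m\rangle = -\int_\RR U' (\psi_n'\psi_m-\psi_n\psi_m') = \int_\RR U' (\psi_n\psi_m'-\psi_n'\psi_m),
\]
which is \eqref{first_commutator}.

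The main subtlety is the justification of the super-exponential decay of eigenfunctions and their derivatives under the mere hypothesis $V \in \Tpot$ (rather than $V \in \pot_1$, under which the stronger pointwise bound of Theorem \ref{thm:pointwise_eigenfcts} would apply directly); however, since $V \to \infty$, the Agmon-type estimates of Section \ref{ss:agmon} give exponential decay at an arbitrarily fast rate, which is all that is needed here. The rest is routine integration by parts.
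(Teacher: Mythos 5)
Your proof is correct and is essentially a repackaging of the paper's argument. The paper starts from $(E_m-E_n)\langle U\psi_n,\psi_m\rangle = -\int U\psi_n\psi_m'' + \int U\psi_n''\psi_m$ and integrates by parts each term, with the $\int U\psi_n'\psi_m'$ contributions cancelling; you observe instead that $(E_m-E_n)\psi_n\psi_m$ is exactly the derivative of the Wronskian $\psi_n'\psi_m-\psi_n\psi_m'$ and integrate by parts once. The two computations are the same modulo the order of steps. The one genuine difference is in the justification of the integration by parts when $U$ is only $C^1$ off the origin with $U' \in L^1_{\loc}$: the paper invokes a weak integration-by-parts lemma valid for $W^{1,1}_{\loc}$ factors (citing Gilbarg--Trudinger and Cowling--Martini), whereas you split at $x=0$, integrate by parts on $[\epsilon,R]$ and $[-R,-\epsilon]$, and let $\epsilon\to0$, $R\to\infty$, relying on continuity of $U$ and the Wronskian at $0$ to cancel the inner boundary terms. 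Both are fine; yours is a bit more hands-on. One small remark: you re-derive the super-exponential decay of $\psi_n,\psi_n'$ from Agmon's estimates, but the paper already has this available more directly from Proposition \ref{prp:one_parameter_tV}, which shows $\psi_n(\cdot;tV)\in\banE^1_\beta$ for every $\beta\geq0$ when $V\in\Tpot$; you could cite that rather than redoing the Agmon argument. Also, "$U$ is $C^1$ on each closed half-line" is not quite what the hypotheses give (the one-sided limit $U'(0^\pm)$ need not exist), but your parenthetical about absolute continuity of $U$ across $0$, together with $U'\in L^1_{\loc}$, is exactly what makes the $\epsilon\to0$ limit work, so the argument stands.
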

\begin{proof}
Note that $\psi_n$, $\psi_n'$ and $\psi_n'' = (V-E)\psi_n$ are continuous and have fast decay (see Proposition \ref{prp:one_parameter_tV}).
So
\[\begin{split}
(E_m-E_n) \, \langle U \psi_n, \psi_m \rangle
&= \langle U \psi_n, \opH \psi_m \rangle - \langle U \opH \psi_n, \psi_m \rangle \\
&= \int_\RR U \psi_n (-\psi_m'' + V \psi_m)-\int_\RR U(-\psi_n'' + V \psi_n) \psi_m \\
&= -\int_\RR U \psi_n \psi_m''+ \int_\RR U \psi_n''\psi_m,
\end{split}\]
where the integrals are absolutely convergent because of the fast decay.
Since $U' \in L^{1}_\loc$, integration by parts (cf.\ \cite[eq.\ (7.18)]{gilbarg_trudinger} and \cite[Proposition 3.6]{cowling_martini_2013}) gives that
\begin{equation*}
\int_{\RR} U \psi_n'' \psi_m = -\int_{\RR} U' \psi_n' \psi_m - \int_{\RR} U \psi_n' \psi_m'.
\end{equation*} 
Clearly a similar identity holds with $n$ and $m$ swapped.
As a consequence, 
\begin{equation*}
-\int_{\RR} U \psi_n \psi_m'' + \int_{\RR} U \psi_n'' \psi_m = \int_{\RR} U' \psi_n \psi_m' - \int_{\RR} U' \psi_n' \psi_m.
\end{equation*}
The proof of \eqref{first_commutator} is complete.
\end{proof}

As we shall see, the off-diagonal decay provided by \eqref{first_commutator} will not be enough for our purposes. To obtain a faster decay, we can iterate the above argument, under additional  assumptions on $V$ and $U$.

\begin{lem}\label{commutator_identities2_lem}
Assume that $V \in \Tpotone$ and $V'$ is locally integrable at $0$.
Let $U \in C^1(\RR) \cap C^2(\RR \setminus \{0\})$ be such that $U''$ is locally integrable at $0$ and $U,U',U''$ have moderate growth.
Then
\begin{multline}\label{second_commutator}
(E_m-E_n)^2 \, \langle U \psi_n,\psi_m \rangle \\
= \int_{\RR} \left(2 V' U' + 2 V U''-(E_n+E_m) U'' \right) \psi_n \psi_m - 2 \int_{\RR} U'' \psi_n' \psi_m',
\end{multline}
where the integrals are absolutely convergent.
\end{lem}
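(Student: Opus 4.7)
The plan is to iterate the approach used in the proof of Lemma \ref{commutator_identities_lem}, exploiting the additional regularity of both $U$ and $V$ to justify a second round of integration by parts. First, combining the first commutator identity with one integration by parts (which is licit because $U'\in C^0(\RR)$, $U''\in L^1_\loc(\RR)$, and the exponential decay of $\psi_n,\psi_m,\psi_n',\psi_m'$ from Proposition \ref{prp:one_parameter_tV} kills all boundary terms at $\pm\infty$), I would record the two equivalent asymmetric forms of the first commutator identity:
\begin{align*}
(E_m-E_n)\langle U\psi_n,\psi_m\rangle &= -\int_\RR U''\psi_n\psi_m - 2\int_\RR U'\psi_n'\psi_m, \\
(E_m-E_n)\langle U\psi_n,\psi_m\rangle &= \phantom{-}\int_\RR U''\psi_n\psi_m + 2\int_\RR U'\psi_n\psi_m'.
\end{align*}

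Next, I would multiply each of these by $(E_m-E_n)$ and process the resulting cross term. For the first identity, I use the eigenfunction equation in the form $(E_m-E_n)\psi_m = -\psi_m'' + (V-E_n)\psi_m$ to rewrite $-2(E_m-E_n)\int U'\psi_n'\psi_m$ as $2\int U'\psi_n'\psi_m'' - 2\int U'(V-E_n)\psi_n'\psi_m$. Then I integrate $\int U'\psi_n'\psi_m''$ by parts, substitute $\psi_n''=(V-E_n)\psi_n$, and perform one further integration by parts on the remaining $\int U'(V-E_n)(\psi_n\psi_m)'$; this last step uses the absolutely continuous product rule $[U'(V-E_n)]' = U''(V-E_n) + U'V'$, which is precisely where the hypotheses $U''\in L^1_\loc$ at $0$ and $V'\in L^1_\loc$ at $0$ enter. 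The outcome is
\[
-2(E_m-E_n)\int_\RR U'\psi_n'\psi_m = -2\int_\RR U''\psi_n'\psi_m' + 2\int_\RR U''(V-E_n)\psi_n\psi_m + 2\int_\RR U'V'\psi_n\psi_m.
\]
The completely symmetric manipulation starting from the second form produces the same expression with $E_m$ in place of $E_n$.

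Finally, substituting these back into the two $(E_m-E_n)$-multiplied commutator identities and adding them, the terms $\mp(E_m-E_n)\int U''\psi_n\psi_m$ cancel, the two contributions $2\int U''(V-E_n)\psi_n\psi_m$ and $2\int U''(V-E_m)\psi_n\psi_m$ combine into $2\int U''(2V-E_n-E_m)\psi_n\psi_m$, and the $U'V'$ terms double; dividing by $2$ then produces exactly \eqref{second_commutator}. The main obstacle I anticipate is the bookkeeping required to rigorously justify each integration by parts: vanishing of the boundary contributions at $\pm\infty$ follows from the moderate growth of $U,U',U'',V,V'$ combined with the fast decay of $\psi_n,\psi_n',\psi_m,\psi_m'$, while absolute convergence of all integrals in sight and the validity of the distributional product rule near $x=0$ must be extracted from the $L^1_\loc$ hypotheses on $U''$ and $V'$ together with the continuity of $U'$ and the eigenfunctions.
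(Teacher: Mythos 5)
Your proof is correct and takes essentially the same approach as the paper's: iterate the commutator argument of Lemma~\ref{commutator_identities_lem} by multiplying by $(E_m-E_n)$, substituting the eigenvalue equation, and integrating by parts, with the local integrability of $U''$ and $V'$ at the origin and the fast decay of the $\psi_n$ justifying each step. The only organizational difference is that the paper keeps the symmetric integrand $U'(\psi_n\psi_m'-\psi_n'\psi_m)$ and passes through third derivatives $\psi_n''',\psi_m'''$ (noting $\psi''' = V'\psi + (V-E)\psi' \in L^1_\loc$) before integrating back down, whereas you split into the two asymmetric forms, stay at second order throughout via the absolutely-continuous product rule $[U'(V-E)]' = U''(V-E) + U'V'$, and symmetrize at the end; this is a slightly cleaner route to the same identity, using the hypotheses in the same places.
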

\begin{proof}
Note that, under our assumptions, $\psi_n,\psi_n',\psi_n'' = (V-E)\psi_n$, are all continuous on $\RR$ and have fast decay (see Proposition \ref{prp:one_parameter_tV}), while $\psi_n''' = V'\psi_n + (V-E)\psi_n'$ is continuous on $\RR \setminus \{0\}$, locally integrable at $0$, and has fast decay.

Invoking \eqref{first_commutator} yields
\[\begin{split}
&\left(E_m-E_n\right)^2 \left \langle U \psi_n, \psi_m\right\rangle \\
&= (E_m-E_n) \left(\int_{\RR} U' \psi_n \psi_m' - \int_{\RR} U' \psi_n' \psi_m \right) \\
&= \int_{\RR} U'\psi_n(-\psi_m''+V\psi_m)'-\int_{\RR} U'\psi_n'(-\psi_m'' + V \psi_m) \\
&+ \int_{\RR} U' (-\psi_n'' + V\psi_n)' \psi_m - \int_{\RR} U' (-\psi_n'' + V\psi_n) \psi_m' \\
&= 2\int_{\RR} V' U' \psi_n \psi_m + \int_{\RR} U' (\psi_n' \psi_m'' + \psi_n'' \psi_m'- \psi_n \psi_m''' - \psi_n''' \psi_m).
\end{split}\]
All the above integrals are absolutely convergent because of the aforementioned continuity, integrability and decay properties of the eigenfunctions and their derivatives, the local integrability of $V'$, the continuity of $U'$ and the moderate growth of $V'$ and $U'$.

As in the proof of Lemma \ref{commutator_identities_lem}, since $U'',\psi_m'''\in L^1_\loc$, integration by parts gives that
\begin{equation*}
-\int_{\RR} U'\psi_n\psi_m''' = \int_{\RR} U' \psi_n' \psi_m'' + \int_{\RR} U''\psi_n\psi_m'',
\end{equation*} 
and similarly
\begin{equation*}
\int_{\RR} U' \psi_n' \psi_m'' = -\int_{\RR} U' \psi_n'' \psi_m' - \int_{\RR} U'' \psi_n' \psi_m'.
\end{equation*} 
Moreover analogous identities hold with $n$ and $m$ swapped.
All in all, 
\[\begin{split}
&(E_m-E_n)^2 \, \langle U \psi_n, \psi_m \rangle \\
&= 2\int_{\RR} V' U' \psi_n\psi_m + 2 \int_{\RR} U'(\psi_n' \psi_m'' + \psi_n''\psi_m') + \int_{\RR} U'' (\psi_n''\psi_m + \psi_n \psi_m'') \\
&= 2\int_{\RR} V' U' \psi_n\psi_m - 2 \int_{\RR} U'' \psi_n' \psi_m' + \int_{\RR} U'' (\psi_n'' \psi_m + \psi_n \psi_m'') \\
&= \int_{\RR} \left(2 V'U' + 2 V U''- (E_n+E_m) U'' \right) \psi_n \psi_m - 2\int_{\RR} U'' \psi_n' \psi_m', 
\end{split}\]
and we are done.
\end{proof}

We now combine the identity in Lemma \ref{commutator_identities_lem} and the weighted $L^2$ eigenfunction bounds in Theorem \ref{thm:pointwise_eigenfcts} to obtain, under the assumption $V \in \pot_1(\kappa)$, the following bound for the matrix elements associated to $U$.

\begin{prp}\label{prp:5/4_lem}
Let $V\in \pot_1(\kappa)$ and $U \in C^0(\RR) \cap C^1(\RR \setminus \{0\})$. For all $\alpha > 0$ and for all $n, m\geq 1$ with $n \neq m$,
\begin{equation*}
|\langle U \psi_n, \psi_m \rangle| \lesssim_{\kappa,\alpha} \left\| \frac{xU'}{ V^\alpha }\right\|_\infty  \frac{1}{|E_n-E_m|} \sqrt{\frac{(E_n+E_m) E_n^{1/2+\alpha} E_m^{1/2+\alpha}}{nm}}.
\end{equation*}
Thus, for every $T\geq 1$,
\begin{equation*}
|\langle U \psi_n, \psi_m \rangle| \lesssim_{\kappa,T,\alpha} \max_{\ell=0,1} \left\| \frac{x^\ell U^{(\ell)}}{ V^\alpha }\right\|_\infty   \frac{E_n^{\alpha}}{1+|m-n|} \quad \forall n,m \tc T^{-1} n \leq m \leq Tn.
\end{equation*}
\end{prp}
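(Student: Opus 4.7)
The starting point is the commutator identity \eqref{first_commutator} of Lemma \ref{commutator_identities_lem}: since $V \in \pot_1(\kappa) \subseteq \Tpotone$, and since $\|xU'/V^\alpha\|_\infty < \infty$ implies in particular that $U'$ is dominated by a moderately growing function and is locally integrable at the origin (because $V(x)^\alpha/|x|$ behaves like $|x|^{\alpha/\kappa - 1}$ near $0$), the lemma gives
\[
|E_m - E_n| \, |\langle U \psi_n, \psi_m \rangle| \leq \int_\RR |U'| \, |\psi_n| \, |\psi_m'| + \int_\RR |U'| \, |\psi_n'| \, |\psi_m|.
\]
Bounding $|U'(x)| \leq \|xU'/V^\alpha\|_\infty V(x)^\alpha/|x|$ pointwise and applying the Cauchy--Schwarz inequality in the symmetrised form $V^\alpha/|x| = (V^\alpha/|x|)^{1/2} \cdot (V^\alpha/|x|)^{1/2}$ yields
\[
\int_\RR \frac{V^\alpha}{|x|} |\psi_n| \, |\psi_m'| \leq \left(\int_\RR \frac{V^\alpha}{|x|} \psi_n^2\right)^{1/2} \left(\int_\RR \frac{V^\alpha}{|x|} (\psi_m')^2\right)^{1/2}
\]
and a symmetric estimate with the roles of $n$ and $m$ swapped.

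The heart of the argument is now to estimate these weighted integrals via Theorem \ref{thm:pointwise_eigenfcts}, applied with the weight $W(x) = V(x)^\alpha/|x|$, namely $a = -1$ and $b = \alpha$. The pointwise bound in \eqref{eq:xV_int_assumption_bil} is trivial; the integral condition $\int_{-|x|}^{|x|} V^\alpha/|t|\,dt \lesssim_{\kappa,\alpha} V(x)^\alpha$ follows from the reverse doubling $V(t) \leq V(x) (|t|/|x|)^{1/\kappa}$ for $|t| \leq |x|$ (Proposition \ref{doubling_prp}\ref{en:doubling_doub} applied to $V_\oplus$ and $V_\ominus$), which makes the integrand $V(t)^\alpha/|t|$ integrable at $0$ with the required bound. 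Since $\alpha + (-1)\kappa^{-1} \cdot (-1) \cdot$ \ldots\ actually since $a + b/\kappa = -1 + \alpha/\kappa > -1$, parts \ref{en:integral_eigenfcts}--\ref{en:ass_weight_eigenfcts} of Theorem \ref{thm:pointwise_eigenfcts} give
\[
\int_\RR \frac{V^\alpha}{|x|} \psi_n^2 \lesssim_{\kappa,\alpha} \frac{E_n^\alpha}{|\{V \leq E_n\}|}, \qquad \int_\RR \frac{V^\alpha}{|x|} (\psi_m')^2 \lesssim_{\kappa,\alpha} \frac{E_m^{\alpha+1}}{|\{V \leq E_m\}|}.
\]
Multiplying out and using the approximate identity $|\{V \leq E_n\}| \simeq_\kappa n/\sqrt{E_n}$ from Theorem \ref{thm:eigenvalues}, together with the analogous estimate with $n,m$ swapped and the trivial $E_n^{1/2} + E_m^{1/2} \simeq (E_n + E_m)^{1/2}$, consolidates all contributions into
\[
|\langle U\psi_n, \psi_m\rangle| \lesssim_{\kappa,\alpha} \left\|\frac{xU'}{V^\alpha}\right\|_\infty \frac{1}{|E_n-E_m|} \sqrt{\frac{(E_n+E_m) E_n^{1/2+\alpha} E_m^{1/2+\alpha}}{nm}},
\]
which is the first asserted bound.

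For the second statement, in the near-diagonal regime $T^{-1} n \leq m \leq T n$ with $n \neq m$, Proposition \ref{prp:eigenvalue_doub} gives $E_m \simeq_{\kappa,T} E_n$ and the eigenvalue gap estimate of Theorem \ref{thm:eigenvalues} gives $|E_n - E_m| \simeq_{\kappa,T} E_n |n-m|/n$; substituting these into the first bound reduces it to $\|xU'/V^\alpha\|_\infty E_n^\alpha/|n-m|$, which is what is wanted for $n \neq m$. The diagonal case $n = m$ is handled separately and trivially: since $\int_\RR V^\alpha \psi_n^2 \lesssim_{\kappa,\alpha} E_n^\alpha$ by Theorem \ref{thm:pointwise_eigenfcts} (with $W = V^\alpha$, so $a = 0$, $b = \alpha$), one has $|\langle U\psi_n,\psi_n\rangle| \leq \|U/V^\alpha\|_\infty \int_\RR V^\alpha \psi_n^2 \lesssim_{\kappa,\alpha} \|U/V^\alpha\|_\infty E_n^\alpha$. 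Combining the two cases via the maximum over $\ell = 0,1$ produces the stated consolidated bound; the main technical obstacle throughout is ensuring the correct interplay between the integrability condition at the origin and the doubling behaviour of $V$, which here is handled cleanly by the symmetric Cauchy--Schwarz split that matches $\alpha > 0$ exactly with the borderline $-1 + \alpha/\kappa > -1$ of Theorem \ref{thm:pointwise_eigenfcts}\ref{en:ass_weight_eigenfcts}.
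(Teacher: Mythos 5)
Your proof is correct and follows essentially the same argument as the paper: the commutator identity of Lemma \ref{commutator_identities_lem}, the pointwise bound $|U'(x)|\leq M|x|^{-1}V(x)^\alpha$, Cauchy--Schwarz, the weighted integral bounds of Theorem \ref{thm:pointwise_eigenfcts} applied with $W=|x|^{-1}V^\alpha$ (valid precisely because $-1+\alpha/\kappa>-1$), the Bohr--Sommerfeld identity $\sqrt{E_n}\,|\{V\leq E_n\}|\simeq_\kappa n$, and the gap estimate from Theorem \ref{thm:eigenvalues} for the near-diagonal case, with the $n=m$ diagonal term handled separately via $W=V^\alpha$. The only cosmetic blemish is the garbled fragment before the corrected condition ``$a+b/\kappa=-1+\alpha/\kappa>-1$'', which you should simply delete.
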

\begin{proof}
We may assume $M = \| xU' / V^\alpha \|_\infty < \infty$. 
Since $|U'(x)| \leq M |x|^{-1} V(x)^\alpha$ and $V \in \pot_1(\kappa)$ with $\alpha>0$, from Proposition \ref{doubling_prp} we deduce that $U'$ is locally integrable at $0$.
Hence, by Lemma \ref{commutator_identities_lem}, the Cauchy--Schwarz inequality and Theorem \ref{thm:pointwise_eigenfcts},
\[\begin{split}
&|(E_m-E_n) \, \langle U \psi_n,\psi_m \rangle|^2 \\
&\lesssim \left(\int_\RR |U'\psi_n\psi_m'|\right)^2 +\left(\int_\RR |U'\psi_n'\psi_m|\right)^2 \\
&\leq M^2 \left(\int_\RR |x|^{-1} V^{\alpha} \, \psi_n^2 \cdot \int_\RR |x|^{-1} V^{\alpha}\, (\psi_m')^2 
+\int_\RR |x|^{-1} V^{\alpha} \, (\psi_n')^2 \cdot \int_\RR |x|^{-1} V^{\alpha}  \,\psi_m^2 \right) \\
&\lesssim_{\kappa,\alpha}
M^2 \frac{E_n^{1+\alpha} E_m^{\alpha} + E_n^{\alpha} E_m^{1+\alpha}}{|\{V\leq E_n\}| \, |\{V\leq E_m\}|} .
\end{split}\]
Since $\sqrt{E_n} \,|\{V\leq E_n\}|\simeq_{\kappa} n$ by Theorem \ref{thm:eigenvalues}, we finally deduce that
\begin{equation*}
|(E_m-E_n) \, \langle U\psi_n,\psi_m \rangle| \lesssim_{\kappa,\alpha} M \sqrt{\frac{(E_n+E_m) E_n^{1/2+ \alpha} E_m^{1/2+\alpha}}{nm}}.
\end{equation*}
If $T^{-1} n\leq m\leq Tn$, then Theorem \ref{thm:eigenvalues} and Proposition \ref{prp:eigenvalue_doub} yield the estimate
\begin{equation*}
\frac{E_n}{n} | m-n | \, |\langle U\psi_n,\psi_m\rangle| \lesssim_{\kappa,T, \alpha} M \frac{E_n^{1+ \alpha}}{n},
\end{equation*}
which proves the desired near-diagonal bound, at least for $n\neq m$. On the other hand, if $n=m$, we have simply 
\[
|\langle U \psi_n,\psi_n \rangle| 
\leq \| U / V^\alpha \|_\infty \int_\RR V^{\alpha} \psi_n^2  \lesssim_{\kappa,\alpha}  \| U / V^\alpha \|_\infty \, E_n^{\alpha},
\]
by Theorem \ref{thm:pointwise_eigenfcts}.
\end{proof}

A faster decay for matrix coefficients is provided by the following lemma, which exploits the identity of Lemma \ref{commutator_identities2_lem} and consequently requires additional conditions on $U$ and $\alpha$.

\begin{prp}\label{prp:near_diag_lem_old}
Let $V\in \pot_1(\kappa)$. Let $U \in C^0(\RR) \cap C^2(\RR \setminus \{0\})$.
For all $\alpha > \kappa$, and all $n, m \geq 1$ with $n \neq m$,
\begin{equation*}
|\langle U \psi_n,\psi_m \rangle| \lesssim_{\kappa,\alpha} \max_{\ell=1,2} \left\| \frac{x^\ell U^{(\ell)}}{V^\alpha}\right\|_\infty  \frac{1}{(E_m-E_n)^2} \frac{(E_n+E_m)\sqrt{E_n^{1+\alpha} E_m^{1+\alpha}}}{nm} .
\end{equation*}
Thus, for every $T \geq 1$,
\begin{equation*}
|\langle U \psi_n,\psi_m \rangle| \lesssim_{\kappa,T,\alpha} \max_{\ell=0,1,2} \left\| \frac{x^\ell U^{(\ell)}}{V^\alpha}\right\|_\infty \frac{E_n^{\alpha}}{1+|m-n|^2} \quad \forall n,m \tc T^{-1} n\leq m\leq Tn.
\end{equation*}
\end{prp}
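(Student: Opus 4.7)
The plan is to mirror the argument of Proposition \ref{prp:5/4_lem}, but with the first-order commutator identity \eqref{first_commutator} replaced by the second-order one \eqref{second_commutator}. The strengthened decay, essentially the square of what Proposition \ref{prp:5/4_lem} yields, will come from the additional factor $(E_m - E_n)^{-1}$ in front, combined with weighted integral bounds on eigenfunctions and their derivatives.

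First I need to verify that the hypotheses of Lemma \ref{commutator_identities2_lem} are met. Since $V \in \pot_1(\kappa) \subseteq \Tpotone$, and $|V'(x)| \lesssim V(x)/|x| \lesssim |x|^{1/\kappa - 1}$ near the origin (by Proposition \ref{doubling_prp}\ref{en:doubling_doub} applied to $V_\oplus$ and $V_\ominus$), the potential $V'$ is locally integrable at $0$. As for $U$, the assumption $\max_{\ell=1,2}\|x^\ell U^{(\ell)}/V^\alpha\|_\infty < \infty$ (otherwise the statement is vacuous), combined with $V(x) \lesssim |x|^{1/\kappa}$ near $0$, yields $|U^{(\ell)}(x)| \lesssim |x|^{\alpha/\kappa - \ell}$ for $\ell = 1, 2$. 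The hypothesis $\alpha > \kappa$ forces $\alpha/\kappa > 1$, which guarantees that $U'(x) \to 0$ as $x \to 0$ (so $U \in C^1(\RR)$ with $U'(0) = 0$) and that $U''$ is locally integrable at $0$. Moderate growth of $U, U', U''$ follows from the moderate growth of $V$ given by membership in $\Tpot$.

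Next, apply the identity \eqref{second_commutator} and estimate each of the four integrals by Cauchy--Schwarz, using the bounds
\[
|V' U'| \lesssim M_1 |x|^{-2} V^{\alpha+1}, \quad |VU''| \leq M_2 |x|^{-2} V^{\alpha+1}, \quad |U''| \leq M_2 |x|^{-2} V^\alpha,
\]
where $M_\ell = \|x^\ell U^{(\ell)}/V^\alpha\|_\infty$ and $xV'(x) \simeq V(x)$ was used for the first. Theorem \ref{thm:pointwise_eigenfcts}\ref{en:integral_eigenfcts}--\ref{en:ass_weight_eigenfcts} applied with $W = |x|^{-2} V^\beta$ is available precisely because $\beta \in \{\alpha, \alpha+1\}$ satisfies $-2 + \beta/\kappa > -1$ thanks to $\alpha > \kappa$; it gives
\[
\int_\RR |x|^{-2} V^\beta \bigl(\psi_n^2 + (\psi_n')^2/E_n\bigr) \lesssim_{\kappa,\alpha} |\{V \leq E_n\}|^{-2} E_n^\beta.
\]
Summing the four contributions, using $\sqrt{E_n E_m} \leq E_n + E_m$ to combine the $\beta = \alpha$ and $\beta = \alpha+1$ terms uniformly, and invoking the Bohr--Sommerfeld type identity $\sqrt{E_n}\,|\{V \leq E_n\}| \simeq_\kappa n$ from Theorem \ref{thm:eigenvalues}, one obtains the first displayed inequality after dividing by $(E_m - E_n)^2$.

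The near-diagonal form is obtained by specialising: in the range $T^{-1}n \leq m \leq Tn$ with $n \neq m$, Theorem \ref{thm:eigenvalues} and Proposition \ref{prp:eigenvalue_doub}\ref{en:energy_doubling_eigen} give $E_n \simeq_{\kappa,T} E_m$, $n \simeq_T m$, and $|E_m - E_n| \simeq_{\kappa,T} (E_n/n)|m-n|$, and plugging these in collapses the right-hand side to $E_n^\alpha/|m-n|^2$. The diagonal case $n = m$ is handled separately by the trivial bound $|\langle U\psi_n,\psi_n\rangle| \leq \|U/V^\alpha\|_\infty \int_\RR V^\alpha \psi_n^2 \lesssim_{\kappa,\alpha} \|U/V^\alpha\|_\infty E_n^\alpha$, using Theorem \ref{thm:pointwise_eigenfcts}\ref{en:integral_eigenfcts}--\ref{en:ass_weight_eigenfcts}. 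There is no deep obstacle; the main care required is bookkeeping the constants and ensuring that the hypothesis $\alpha > \kappa$ is sharp enough to simultaneously justify the integration-by-parts in \eqref{second_commutator} (regularity of $U$ at $0$) and the applicability of the weighted integral bounds with the heaviest weight $|x|^{-2} V^{\alpha+1}$.
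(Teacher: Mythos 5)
Your proposal follows the paper's proof step for step: verify the local-integrability and moderate-growth hypotheses of Lemma \ref{commutator_identities2_lem} using $\alpha/\kappa>1$, apply the second commutator identity \eqref{second_commutator}, estimate the four resulting integrals by Cauchy--Schwarz and the weighted eigenfunction bounds of Theorem \ref{thm:pointwise_eigenfcts} with weight $|x|^{-2}V^\beta$ for $\beta\in\{\alpha,\alpha+1\}$, combine via $E_nE_m\le(E_n+E_m)^2$ and the Bohr--Sommerfeld identity, then specialise to the near-diagonal range by the gap estimates and handle $n=m$ separately. This is exactly the paper's argument, with no meaningful deviation.
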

\begin{proof}
We may assume $M = \| xU'/V^\alpha \|_\infty + \|x^2 U''/V^\alpha\|_\infty < \infty$. 
Note that, under the assumption $V \in \pot_1(\kappa)$, $V'$ is locally integrable at $0$, because $|x|^{-1} V$ is (see Propositions \ref{trivial_doubling_prp} and \ref{doubling_prp}). 
Moreover, $|U'(x)| \leq M |x|^{-1} V(x)^{\alpha}$, which tends to $0$ as $x \to 0$ because $ \alpha/\kappa > 1$, and consequently $U'$ extends to a function in $C^0(\RR)$. Furthermore $|U''(x)| \leq M |x|^{-2}V(x)^{\alpha}$, which is locally integrable at $0$, because $\alpha/\kappa > 1$.
Hence, by Lemma \ref{commutator_identities2_lem}, the Cauchy--Schwarz inequality and Theorem \ref{thm:pointwise_eigenfcts}, 
\[\begin{split}
&|(E_m-E_n)^2 \, \langle U \psi_n, \psi_m \rangle|^2 \\
&\lesssim \left(\int_{\RR} (|V'| |U'|+ V |U''|) |\psi_n \psi_m|  + (E_n + E_m) \int |U'' \psi_n \psi_m| + \int_{\RR} |U'' \psi_n' \psi_m'|\right)^2 \\
&\lesssim M^2 \Biggl[ \int_{\RR} |x|^{-2} \, V^{\alpha+1} \, \psi_n^2 \cdot \int_{\RR} |x|^{-2} \, V^{\alpha+1} \, \psi_m^2\\
&\quad+ (E_n + E_m)^2 \int_{\RR} |x|^{-2} \, V^{\alpha} \, \psi_n^2 \cdot \int_{\RR} |x|^{-2} \, V^{\alpha} \, \psi_m^2 \\
&\quad+ \int_{\RR} |x|^{-2} \, V^{\alpha} \, (\psi_n')^2 \cdot \int_{\RR} |x|^{-2} \, V^{\alpha} \, (\psi_m')^2 \Biggr] \\
&\lesssim_{\kappa,\alpha} M^2 \frac{E_n^{1+\alpha} E_m^{1+\alpha} + (E_n + E_m)^2 E_n^{\alpha} E_m^{\alpha}}{|\{V \leq E_n\}|^2 \, |\{V \leq E_m\}|^2} \\
&\simeq_{\kappa} M^2 \frac{(E_n + E_m)^2 E_n^{1+\alpha} E_m^{1+\alpha}}{n^2 m^2},
\end{split}\]
since $\sqrt{E_n} \, |\{V\leq E_n\}|\simeq_{\kappa} n$ by Theorem \ref{thm:eigenvalues}; the fact that $\alpha/\kappa > 1$ was used in the application of Theorem \ref{thm:pointwise_eigenfcts}.

If $T^{-1} n \leq m \leq T n$, then, by arguing as in the proof of Proposition \ref{prp:5/4_lem}, we get
\[
\frac{E_n^2}{n^2} |m-n|^2 \, |\langle U \psi_n, \psi_m \rangle|
\lesssim_{\kappa,\alpha,T} 
M
\frac{E_n^{2+\alpha}}{n^2},
\]
and the remaining bound follows for $n\neq m$; the on-diagonal bound for $n=m$ is already proved in Proposition \ref{prp:5/4_lem}.
\end{proof}

\begin{rem}
The assumption $\alpha>\kappa$ in Proposition \ref{prp:near_diag_lem_old} may be relaxed for particular potentials $V$. For example, if $V$ satisfies, for some exponent $d > 0$, the reverse doubling condition
\begin{equation}\label{eq:reverse_doubling_remark}
V(\lambda x) \gtrsim \lambda^{d} V(x) \qquad\forall x \in \RR,\, \forall \lambda \geq 1,
\end{equation}
then we can run the same proof for any $\alpha > 1/d$ (indeed under \eqref{eq:reverse_doubling_remark} the function $W(x) = |x|^{-2} V(x)^\alpha$ satisfies the condition \eqref{eq:xV_int_assumption_bil} of Theorem \ref{thm:pointwise_eigenfcts} whenever $\alpha>1/d$).
The choice $d = 1/\kappa$ in \eqref{eq:reverse_doubling_remark} is always possible for any $V \in \pot_1(\kappa)$ in light of Proposition \ref{doubling_prp}, but a specific $V$ may satisfy \eqref{eq:reverse_doubling_remark} for some larger exponent $d$ as well (e.g., in the case $V(x) \simeq |x|^d$).
However the implicit constants in the resulting matrix bounds would then also depend on $d$ and the implicit constant in \eqref{eq:reverse_doubling_remark}.
\end{rem}

\subsection{Off-diagonal decay: the interpolation argument}
We note that Proposition \ref{prp:5/4_lem} can be applied to $U = V^\alpha$ for any $\alpha>0$; in particular, when $\alpha=1$, we obtain some bounds for the matrix $\matP(V)$, which however will not be enough for our purposes in the near-diagonal region.

Proposition \ref{prp:near_diag_lem_old} in principle would provide better bounds. However it cannot be directly applied to $U=V^\alpha$ without additional smoothness assumptions on $V$; moreover, one cannot take $\alpha = 1$ when $\kappa$ is too large, thus preventing in general its direct application to the matrix $\matP(V)$. Nevertheless, an interpolation argument allows us to weaken the smoothness requirements and remove the restrictions on $\alpha$, while still yielding an improved near-diagonal decay compared to the one given by Proposition \ref{prp:5/4_lem}.

In order to run the interpolation argument, it is convenient to introduce the following quantity, which, roughly speaking, should be thought of as interpolating between the ``weighted $C^m$ norms'' $\max_{\ell=0,\dots,m} \left\| x^\ell U^{(\ell)}/V^\alpha\right\|_\infty$ of $U$ used in Propositions \ref{prp:5/4_lem} and \ref{prp:near_diag_lem_old} with $m=1$ and $m=2$ respectively.

\begin{dfn}
Let $V \in \pot_1(\kappa)$. Let $\eta \in C^\infty_c(\Rpos)$ be any nontrivial cutoff. Define, for $s>0$ and $U : \RR \to \CC$,
\[
\|U\|_{V,s} = \sup_{t \neq 0} V(t)^{-1} \| U(t \cdot) \eta \|_{B^s_{\infty,\infty}},
\]
where $B_{\infty,\infty}^s(\RR)$ denotes the Besov space of Lebesgue indices $\infty,\infty$ and order $s$. 
\end{dfn}

\begin{rem}\label{rem:retract}
The norm $\|\cdot \|_{V,s}$ is defined analogously to the local scale-invariant Sobolev norm appearing in smoothness conditions \eqref{eq:mhcond_intro} of Mihlin--H\"ormander type; the main difference is the presence of the weighting factor $V(t)$, which, while not constant, nevertheless satisfies $V(t) \simeq_{\kappa} V(t')$ whenever $|t| \simeq |t'|$.
In particular, it is easily seen that different choices of the cutoff $\eta$ give rise to equivalent norms $\|\cdot\|_{V,s}$, and moreover one can restrict the supremum to a discrete set of scales, namely
\[
\|U\|_{V,s} \simeq_{\kappa,s,\eta} \sup_{t \in D} V(t)^{-1} \| U(t \cdot) \eta \|_{B^s_{\infty,\infty}},
\]
where $D = \{ \pm 2^n \tc n \in \ZZ \}$, provided the set $\{\eta \neq 0\}$ where $\eta$ does not vanish is sufficiently large that $\bigcup_{n \in \ZZ} 2^n \{ \eta \neq 0\} = \Rpos$ (cf., e.g., \cite[Proposition 2.4.1]{martini_phdthesis}). As a consequence, the Banach space determined by the norm $\| \cdot \|_{V,s}$ is a retract of the sequence space $\ell^\infty( (V(t)^{-1} B^s_{\infty,\infty}(\RR))_{t \in D})$ via the maps
\[
U \mapsto (U(t \cdot) \eta)_{t \in D}, \qquad (U_t)_{t \in D} \mapsto \sum_{t \in D} (\tilde \eta U_t)(t^{-1} \cdot),
\]
where $\eta,\tilde \eta \in C^\infty_c(\Rpos)$ are chosen so that $\sum_{n \in \ZZ} \eta(2^n \cdot) = 1$ on $\Rpos$, and $\tilde\eta \eta = \eta$ (see \cite[Section 6.4]{bergh_lofstrom} and \cite[Section 1.18.1]{triebel} for information on retracts and vector-valued sequence spaces in the context of interpolation theory).
\end{rem}

\begin{rem}\label{rem:embedding}
If $s>0$ is not an integer and $m = \lfloor s \rfloor$, then
\[
 \|U\|_{V,s} \simeq_{\kappa,s}  \max_{\ell=0,\dots,m} \left\| \frac{x^\ell U^{(\ell)}}{V} \right\|_\infty + \sup_{t \neq 0, \, |h| \leq 1} \frac{|U^{(m)}(e^h t) - U^{(m)}(t)|}{|h|^{s-m} |t|^{-m} V(t)} .
\]
This is easily proved as a consequence of the characterisation of the Besov space $B_{\infty,\infty}^s(\RR)$ in terms of finite differences \cite[Theorem 6.2.5]{bergh_lofstrom}.
\end{rem}

We can now state the near-diagonal bound for the matrix associated to $U$, which interpolates between those in Propositions \ref{prp:5/4_lem} and \ref{prp:near_diag_lem_old}.

\begin{prp}\label{prp:near_diag_sharper}
Let $V \in \pot_1(\kappa)$. Then, for all $\alpha>0$ and $s>1$, there exists $\epsilon = \epsilon(\kappa,\alpha,s) >0$ such that, for all $U : \RR \to \CC$ and $T \geq 1$,
\begin{equation*}
|\langle U \psi_n,\psi_m \rangle| \lesssim_{\kappa,\alpha,s,T} \|U\|_{V^\alpha,s} \frac{E_n^{\alpha}}{1+|m-n|^{1+\epsilon}} \qquad\forall n,m \tc T^{-1} n\leq m\leq Tn.
\end{equation*}
\end{prp}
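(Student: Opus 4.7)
The plan is to obtain Proposition \ref{prp:near_diag_sharper} by interpolating between the decay exponents provided by Propositions \ref{prp:5/4_lem} (decay $1/(1+|m-n|)$, controlled by a weighted $C^1$-type norm) and \ref{prp:near_diag_lem_old} (decay $1/(1+|m-n|^2)$, controlled by a weighted $C^2$-type norm), replacing the integer-order norms by the Besov-type norm $\|U\|_{V^\alpha,s}$.

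First, I would exploit the retract structure of Remark \ref{rem:retract} to decompose $U = \sum_{t \in D} U_t$, where $U_t(x) = U(x)\,\eta(x/t)$ is supported in $\{|x| \simeq |t|\}$, so that $V(x) \simeq V(t)$ on the support of $U_t$. A crucial virtue of this localisation is that, since each $U_t$ vanishes near $x=0$, the restriction $\alpha > \kappa$ in the hypotheses of Proposition \ref{prp:near_diag_lem_old} is automatically bypassed (it arose only to ensure local integrability of $|x|^{-2} V^\alpha$ at the origin), and both single-scale bounds apply for arbitrary $\alpha > 0$. Re-examining the proofs of Propositions \ref{prp:5/4_lem} and \ref{prp:near_diag_lem_old} with all weighted eigenfunction integrals restricted to the support of $U_t$ yields
\begin{align*}
|\langle U_t \psi_n,\psi_m\rangle| &\lesssim V(t)^{-\alpha}\,\|U(t\cdot)\eta\|_{C^1}\,\frac{E_n^\alpha\,M_t}{1+|m-n|},\\
|\langle U_t \psi_n,\psi_m\rangle| &\lesssim V(t)^{-\alpha}\,\|U(t\cdot)\eta\|_{C^2}\,\frac{E_n^\alpha\,M_t}{1+|m-n|^2},
\end{align*}
where the ``mass factor'' $M_t \geq 0$ encodes the concentration of $\psi_n \psi_m$ on the annulus $|x| \simeq |t|$ that enters through the Cauchy--Schwarz step.

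Next, for each fixed $t$, I would interpolate the two estimates by a Littlewood--Paley / frequency-splitting argument: decomposing $U(t\cdot)\eta = f_\Lambda + g_\Lambda$ at frequency cutoff $\Lambda$, applying the $C^2$-bound to the low-frequency piece $f_\Lambda$ (with $\|f_\Lambda\|_{C^2} \lesssim \Lambda^{2-s}\|U(t\cdot)\eta\|_{B^s_{\infty,\infty}}$) and the $C^1$-bound to the high-frequency piece $g_\Lambda$ (with $\|g_\Lambda\|_{C^1} \lesssim \Lambda^{1-s}\|U(t\cdot)\eta\|_{B^s_{\infty,\infty}}$), and optimising at $\Lambda \simeq |m-n|$, one arrives at
\[
|\langle U_t \psi_n,\psi_m\rangle| \lesssim V(t)^{-\alpha}\,\|U(t\cdot)\eta\|_{B^s_{\infty,\infty}}\,\frac{E_n^\alpha\,M_t}{1+|m-n|^{s}}
\leq \|U\|_{V^\alpha,s}\,\frac{E_n^\alpha\,M_t}{1+|m-n|^{s}}
\]
for any $s \in (1,2)$. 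Summing over $t \in D$ and using $\sum_{t\in D} M_t \lesssim 1$ then produces the claim with $\epsilon = s-1 > 0$.

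The main obstacle is verifying the uniform summability $\sum_{t \in D} M_t \lesssim_{\kappa,\alpha} 1$, independently of $(n,m)$. A naive bookkeeping based on the global integrals $\int |x|^a V^b \psi_n^2$ from Theorem \ref{thm:pointwise_eigenfcts}\ref{en:integral_eigenfcts} only gives a $\sup_t$ bound on each $M_t$, and therefore a logarithmic divergence after summation over dyadic scales. The remedy is to work instead with the localised integrals $\int_{|x|\simeq |t|}$ and to observe that the $t$-dependent factors produced this way grow geometrically in $V(t)$ and $|t|$ by the doubling property of $V$ (Proposition \ref{doubling_prp}), so that the dyadic sum is dominated by its single largest term, attained near the classical turning point $V(t) \simeq E_n$. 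In the forbidden region $V(t) \gg E_n$ the pointwise exponential decay of eigenfunctions (Theorem \ref{thm:pointwise_eigenfcts}\ref{en:pointwise_eigenfcts}) suppresses $M_t$ super-geometrically, whereas in the deep classical region $V(t) \ll E_n$ the same doubling estimates yield the polynomial decay $M_t \lesssim (V(t)/E_n)^{\alpha}$ needed to close the sum.
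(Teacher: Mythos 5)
Your approach is genuinely different from the paper's. The paper interpolates the two endpoint bounds (Propositions \ref{prp:5/4_lem} and \ref{prp:near_diag_lem_old}) abstractly, by the complex method for the weighted Besov sequence spaces of Remark \ref{rem:retract}, choosing auxiliary exponents $\alpha_0<\alpha<\alpha_1$, $s_0<s<s_1$ with $\alpha_1>\kappa$ and $s_1>2$ and interpolating with a small parameter $\theta$; this yields $\epsilon=\theta$, which can be quite small. You instead propose a concrete real-variable argument: dyadic spatial localisation in $t$, a Littlewood--Paley frequency splitting of each localised piece optimised at $\Lambda\simeq|m-n|$, and a summation over scales controlled by a ``mass factor'' $M_t$. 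If it worked, your route would give the substantially sharper exponent $\epsilon=s-1$ for every $\alpha>0$, which is a genuine improvement.

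However, there are gaps that prevent the argument from closing as written. First, the frequency truncations $f_\Lambda$ and $g_\Lambda$ of $U(t\cdot)\eta$ are not compactly supported, so the spatial localisation that was supposed to bypass the restriction $\alpha>\kappa$ (and to produce the factor $M_t$) is lost precisely at the step where the $C^2$-type bound is applied to $f_\Lambda$. One would have to estimate the tails of $f_\Lambda$ and $g_\Lambda$ outside the annulus $|x|\simeq|t|$ and show that they produce only summable errors; this is not addressed. Second, the ``localised'' versions of Propositions \ref{prp:5/4_lem} and \ref{prp:near_diag_lem_old} are asserted to produce the \emph{same} factor $M_t$, but inspecting their proofs shows that the Cauchy--Schwarz step involves different weighted eigenfunction integrals in the two cases ($|x|^{-1}V^{\alpha}$ versus $|x|^{-2}V^{\alpha}$ and $|x|^{-2}V^{\alpha+1}$), so the two single-scale bounds do not naturally share a common $M_t$; after the $\Lambda$-optimisation, the result for each $t$ is therefore the maximum of two different mass factors, and the summability $\sum_t M_t\lesssim 1$ is no longer a clean Cauchy--Schwarz consequence of $\sum_t\int_{|x|\simeq|t|}(\cdot)=\int(\cdot)$. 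Third, the final summability argument (forbidden region vs.\ deep classical region) is heuristic and would need to be made quantitative, tracking the $n,m$-uniformity. These are fixable in principle, but each requires a nontrivial additional argument; the paper's abstract interpolation avoids all of them at the price of a worse (unspecified) $\epsilon$.
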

\begin{proof}
Let $\alpha> 0$ and $s>1$. Choose any $\alpha_0 \in (0,\alpha)$ and $s_0 \in (1,s)$. Then we can find $\theta \in (0,1)$ sufficiently small that, if $\alpha_1,s_1$ are defined by the equations
\[
\alpha = (1-\theta) \alpha_0 + \theta \alpha_1, \qquad s = (1-\theta) s_0 + \theta s_1,
\]
then $\alpha_1> \kappa$ and $s_1>2$. 

Let $T \geq 1$. Since $s_0>1$ and $\alpha_0>0$, from Proposition \ref{prp:5/4_lem} and Remark \ref{rem:embedding} we deduce the bound
\[
|\langle U \psi_n,\psi_m \rangle| \lesssim_{\kappa,\alpha_0,s_0,T} \|U\|_{V^{\alpha_0},s_0} \frac{E_n^{\alpha_0}}{1+|m-n|} \qquad\forall n,m \tc T^{-1} n\leq m\leq Tn
\]
for all $U : \RR \to \CC$. Similarly, since $s_1>2$ and $\alpha_1>\kappa$, from Proposition \ref{prp:near_diag_lem_old} and Remark \ref{rem:embedding} we deduce the bound 
\[
|\langle U \psi_n,\psi_m \rangle| \lesssim_{\kappa,\alpha_1,s_1,T} \|U\|_{V^{\alpha_1},s_1} \frac{E_n^{\alpha_1}}{1+|m-n|^2} \qquad\forall n,m \tc T^{-1} n \leq m \leq T n
\]
for all $U : \RR \to \CC$. Interpolation of these two bounds (using the upper complex method) yields the bound
\[
|\langle U \psi_n,\psi_m \rangle| \lesssim_{\kappa,\alpha,s,T} \|U\|_{V^\alpha,s} \frac{E_n^{\alpha}}{1+|m-n|^{1+\theta}} \qquad\forall n,m \tc T^{-1} n \leq m \leq T n,
\]
as desired.

To justify the previous interpolation, in light of Remark \ref{rem:retract} and \cite[Theorem 6.4.2]{bergh_lofstrom}, it is enough to prove that
\begin{multline*}
(\ell^\infty( (V(t)^{-\alpha_0} B^{s_0}_{\infty,\infty}(\RR))_{t \in D}),\ell^\infty( (V(t)^{-\alpha_1} B^{s_1}_{\infty,\infty}(\RR))_{t \in D}))^{[\theta]} \\
= \ell^\infty( (V(t)^{-\alpha} B^{s}_{\infty,\infty}(\RR))_{t \in D}).
\end{multline*}
Here we write $(\cdot,\cdot)_{[\theta]}$ and $(\cdot,\cdot)^{[\theta]}$ for the lower and upper complex interpolation functors.
The previous identity follows by duality \cite[Theorems 4.5.1]{bergh_lofstrom} from the fact that
\[\begin{split}
& (\ell^1( (V(t)^{\alpha_0} B^{-s_0}_{1,1}(\RR))_{t \in D}),\ell^1( (V(t)^{\alpha_1} B^{-s_1}_{1,1}(\RR))_{t \in D}))_{[\theta]} \\
&= \ell^1(( (V(t)^{\alpha_0} B^{-s_0}_{1,1}(\RR),V(t)^{\alpha_1} B^{-s_1}_{1,1}(\RR))_{[\theta]})_{t \in D}) \\
& = \ell^1( (V(t)^{\alpha} B^{-s}_{1,1}(\RR))_{t \in D}),
\end{split}\]
which in turn is a consequence of \cite[\S 1.18.1]{triebel} and \cite[Theorem 6.4.5]{bergh_lofstrom}.
\end{proof}

\subsection{Off-diagonal decay: bounds for \texorpdfstring{$\matA$}{A} and \texorpdfstring{$\matP$}{P}}

As a consequence of the theory developed in the previous sections, we can prove the far-diagonal estimate \eqref{far_diag_bound} for $\matA(V)$ stated in Theorem \ref{thm:estimates_A}, and the near-diagonal bound for $\matP(V)$ stated in Theorem \ref{thm:near_diag}.

We first prove that, if $V\in \pot_1(\kappa)$, then, for all $T > 1$,
\begin{equation}\label{eq:far_diag_bound_cor}
|\matA_{nm}(V)| \lesssim_{\kappa,T} \frac{1}{\sqrt{nm}} \left(\frac{E_m}{E_n}\right)^{3/4}, \qquad \forall n,m \tc n \geq T m,
\end{equation} 
and
\begin{equation}\label{eq:near_diag_bound_cor}
|\matP_{nm}(V)| \lesssim_{\kappa,T} \frac{E_n}{1+|m-n|} \qquad\forall n,m \tc T^{-1} n \leq m \leq T n.
\end{equation}
Indeed, by Proposition \ref{prp:5/4_lem}, applied with $U=V$ and $\alpha=1$, we directly obtain the bound \eqref{eq:near_diag_bound_cor}, as well as the bound
\begin{equation}\label{eq:PV_est1}
|\matP_{nm}(V)| \lesssim_{\kappa} \frac{1}{|E_n-E_m|} \sqrt{\frac{(E_n+E_m) E_n^{3/2} E_m^{3/2}}{nm}}
\end{equation}
for all $m \neq n$. By Theorem \ref{thm:eigenvalues}, if $n \geq Tm$, then $E_n-E_m \simeq_{\kappa,T} E_n$.
Hence \eqref{eq:PV_est1} and Proposition \ref{prp:A_formula} give \eqref{eq:far_diag_bound_cor}.

We now prove that, under the stronger assumption $V \in \pot_{1+\theta}(\kappa)$ for some $\theta \in (0,1)$, there exists $\epsilon = \epsilon(\theta,\kappa) >0$ such that, for all $T \geq 1$,
\[
|\matP_{nm}(V)| \lesssim_{\kappa,T} \frac{E_n}{1+|m-n|^{1+\epsilon}} \qquad\forall n,m \tc T^{-1} n \leq m \leq T n.
\]
For this it is enough to note that, if $V \in \pot_{1+\theta}(\kappa)$, then $\| V \|_{V,1+\theta} \lesssim_{\kappa,\theta} 1$ (see Remark \ref{rem:embedding}). So the desired bound follows from Proposition \ref{prp:near_diag_sharper} applied with $U = V$, $s=1+\theta$, and $\alpha=1$.

\subsection{\texorpdfstring{$\ell^2$}{l2}-boundedness of the far-diagonal part of \texorpdfstring{$|\matA|$}{|A|}}
Here we complete the proof of Theorem \ref{thm:estimates_A}, by showing how the $\ell^2$-boundedness of $|\matA(V)| \schur \matF_{T}$ can be deduced from the estimate \eqref{far_diag_bound} on its matrix components.

To this purpose, it is convenient to introduce the following norms for complex-valued sequences.

\begin{dfn}
Let $V \in \pot_1(\kappa)$ and $\alpha \in \RR$. Define
\[
\|\vec v\|_{(V,\alpha)} = \sup_{E>0} E^{-\alpha} \sqrt{\sum_{n \tc E_n(V) \in [E,2E]} |v_n|^2}
\]
for all $\vec v = (v_n)_{n \geq 1} \in \CC^{\Npos}$.
\end{dfn}

Some useful properties of the above norms are collected below.

\begin{lem}\label{lem:Valpha_norm}
Let $V \in \pot_1(\kappa)$.
\begin{enumerate}[label=(\roman*)]
\item\label{en:Valpha_norm_equiv} If $\alpha > 0$, then
\[
\| \vec v \|_{(V,\alpha)} \simeq_{\alpha} \sup_{E>0} E^{-\alpha} \sqrt{\sum_{n \tc E_n(V) \in [0,E]} |v_n|^2}
\]
for all $\vec v \in \CC^{\Npos}$.
\item\label{en:Valpha_Eigen} For all $\alpha \in \RR$,
\[
\| (E_n(V)^\alpha/\sqrt{n})_{n \in \Npos} \|_{(V,\alpha)} \lesssim_{\kappa,\alpha} 1.
\]
\item\label{en:matrix_minmax_bds} Let $\delta,C>0$. Let the matrix $\matM$ satisfy the bounds
\[
|\matM_{nm}| \leq \frac{C}{\sqrt{nm}} \left(\frac{\min\{E_n(V),E_m(V)\}}{\max\{E_n(V),E_m(V)\}}\right)^\delta
\]
for all $n,m \in \Npos$. Then, for all $\alpha \in (-\delta,\delta)$, the bounds
\begin{align}
\label{eq:matrix_comp_bd}
|(\matM . \vec v)_n| &\lesssim_{\kappa,\delta,\alpha} C \|\vec v\|_{(V,\alpha)} \frac{E_n(V)^\alpha}{\sqrt{n}}, \\ 
\label{eq:matrix_sum_bd}
\| \matM . \vec v\|_{(V,\alpha)} &\lesssim_{\kappa,\delta,\alpha} C \|\vec v\|_{(V,\alpha)} , \\
\label{eq:matrix_l2_bd}
\| \matM . \vec v\|_{\ell^2} &\lesssim_{\kappa,\delta} C \|\vec v\|_{\ell^2}
\end{align}
hold for all $\vec v \in \CC^{\Npos}$ and $n \in \NN$.
\end{enumerate}
\end{lem}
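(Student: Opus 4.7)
The plan is to decompose $\Npos$ into dyadic energy shells $S_k \defeq \{ n \in \Npos \tc E_n(V) \in [2^k, 2^{k+1}) \}$ ($k \in \ZZ$). By the two-sided comparison of Proposition \ref{prp:eigenvalue_doub}\ref{en:energy_doubling_eigen}, if $S_k$ is nonempty and $n_k \defeq \min S_k$, then $n \simeq_\kappa n_k$ for every $n \in S_k$ and $\# S_k \simeq_\kappa n_k$, so that the harmonic-sum identity
\begin{equation}\label{eq:pf_harmonic}
\sum_{n \in S_k} \frac{1}{n} \simeq_\kappa 1
\end{equation}
holds for every nonempty $S_k$. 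This will be the workhorse behind all three parts.

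Part \ref{en:Valpha_norm_equiv} follows by writing $\{E_n \leq E\} = \bigsqcup_{k \geq 0} \{E_n \in (2^{-k-1} E, 2^{-k} E]\}$, controlling the $\ell^2$-mass on each piece by $(2^{-k-1} E)^{2\alpha} \|\vec v\|_{(V,\alpha)}^2$, and summing the resulting geometric series, which converges thanks to $\alpha > 0$; the reverse inequality is immediate from $[E, 2E] \subseteq [0, 2E]$. Part \ref{en:Valpha_Eigen} follows from \eqref{eq:pf_harmonic}: for each $E > 0$, the indices with $E_n \in [E, 2E]$ are contained in at most two adjacent shells with $2^k \simeq E$, so $E_n^{2\alpha} \simeq E^{2\alpha}$ on this set and $\sum_{n \tc E_n \in [E,2E]} E_n^{2\alpha}/n \lesssim_\kappa E^{2\alpha}$.

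For part \ref{en:matrix_minmax_bds}, the strategy is to block-decompose $\vec v = \sum_k \vec v_k$ with $\vec v_k \defeq \vec v|_{S_k}$ and to establish the Hilbert--Schmidt estimate
\[
\| \matM_{jk} \|_{\mathrm{HS}}^2 \leq C^2 \, 2^{-2|j-k|\delta} \sum_{n \in S_j} \frac{1}{n} \sum_{m \in S_k} \frac{1}{m} \lesssim_\kappa C^2 \, 2^{-2|j-k|\delta}
\]
for the block $\matM_{jk} : \ell^2(S_k) \to \ell^2(S_j)$, combining the entrywise hypothesis on $\matM$ (which yields $|\matM_{nm}| \lesssim C(nm)^{-1/2} 2^{-|j-k|\delta}$ for $n \in S_j$, $m \in S_k$, since $\min\{E_n,E_m\}/\max\{E_n,E_m\} \simeq 2^{-|j-k|}$) with \eqref{eq:pf_harmonic}. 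From this, estimate \eqref{eq:matrix_comp_bd} follows from Cauchy--Schwarz on each block: for $n \in S_j$, $|(\matM \vec v)_n| \lesssim C n^{-1/2} \sum_k 2^{-|j-k|\delta} \|\vec v_k\|_{\ell^2}$, and inserting the bound $\|\vec v_k\|_{\ell^2} \leq 2^{(k+1)\alpha} \|\vec v\|_{(V,\alpha)}$ leaves a geometric series $\sum_\ell 2^{\ell \alpha - |\ell| \delta}$, convergent precisely when $|\alpha| < \delta$. Estimate \eqref{eq:matrix_sum_bd} is obtained by squaring this pointwise bound and summing over $n \in S_j$, using \eqref{eq:pf_harmonic} once more. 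Finally, \eqref{eq:matrix_l2_bd} follows from $\bigl(\sum_{n \in S_j} |(\matM \vec v)_n|^2\bigr)^{1/2} \leq \sum_k \| \matM_{jk} \|_{\mathrm{HS}} \|\vec v_k\|_{\ell^2}$ combined with Young's convolution inequality on $\ZZ$ with the summable kernel $(2^{-|k|\delta})_{k \in \ZZ}$.

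The main (modest) obstacle is identifying $S_k$ as the natural atomic unit of the decomposition and extracting \eqref{eq:pf_harmonic} from the sharp gap and growth estimates of Theorem \ref{thm:eigenvalues} and Proposition \ref{prp:eigenvalue_doub}; thereafter the proof is a standard Cotlar--Schur argument exploiting the approximately block-diagonal structure of $\matM$ in the energy-shell basis.
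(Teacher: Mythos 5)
Your proposal is correct and follows essentially the same route as the paper's: a dyadic decomposition by energy level, the key estimate $\sum_{n\tc E_n(V)\in[E,2E]} 1/n \lesssim_\kappa 1$ (which is exactly the paper's \eqref{eq:dyadic_1n}), and a Cauchy--Schwarz plus geometric-series argument exploiting the decay in $\log(E_n/E_m)$. The differences are organizational rather than mathematical: you peg the shells to fixed dyadic scales $[2^k,2^{k+1})$ and pass through explicit Hilbert--Schmidt block norms plus Young's convolution inequality for \eqref{eq:matrix_l2_bd}, whereas the paper shifts the shells to $[2^jE_n,2^{j+1}E_n]$ and dispatches \eqref{eq:matrix_l2_bd} by Schur's test with test sequence $1/\sqrt{n}$ -- both are standard packagings of the same Cotlar--Schur idea and buy nothing one over the other here. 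One nitpick: the ``$\simeq_\kappa$'' in your claims $\#S_k\simeq_\kappa n_k$ and $\sum_{n\in S_k} 1/n\simeq_\kappa 1$ is not a consequence of Proposition \ref{prp:eigenvalue_doub}\ref{en:energy_doubling_eigen} alone, which only yields the upper bounds $\#S_k\lesssim_\kappa n_k$ and $\sum_{n\in S_k}1/n\lesssim_\kappa 1$; the matching lower bounds would require the gap estimate of Theorem \ref{thm:eigenvalues} to show the shell contains proportionally many eigenvalues. Since every step of your argument only invokes the upper bound, this overstatement is harmless, but you should weaken ``$\simeq_\kappa$'' to ``$\lesssim_\kappa$'' (or cite the gap estimate) in a final write-up.
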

\begin{proof}
Let us first prove part \ref{en:Valpha_norm_equiv}. The estimate $\lesssim_{\alpha}$ is trivial. As for the opposite estimate, for all $E>0$,
\[
\sum_{n \tc E_n \in [0,E]} |v_n|^2 = \sum_{j \in \NN}\sum_{n \tc E_n \in [2^{-j-1} E, 2^{-j} E]} |v_n|^2 \lesssim_{\alpha} E^{2\alpha} \|\vec v\|_{(V,\alpha)}^2 \sum_{j \in \NN} 2^{-2\alpha j},
\]
and the desired estimate follows because $\alpha>0$.

We now prove part \ref{en:Valpha_Eigen}. Clearly, for all $E >0$,
\[
\sum_{E_n \in [E,2E]} \frac{E_n^{2\alpha}}{n} \simeq_\alpha E^{2\alpha} \sum_{E_n \in [E,2E]} \frac{1}{n},
\]
hence the desired estimate follows once we check that
\begin{equation}\label{eq:dyadic_1n}
\sup_{E>0} \sum_{E_n \in [E,2E]} \frac{1}{n} \lesssim_\kappa 1.
\end{equation}
this however is an immediate consequence of the fact that, by Proposition \ref{prp:eigenvalue_doub}, if $E_n,E_m \in [E,2E]$ for some $E>0$, then $n/m \simeq_\kappa 1$.

Finally, let us prove part \ref{en:matrix_minmax_bds}. Clearly we may assume $C = 1$.
For all $n \in \Npos$,
\[\begin{split}
|(\matM . \vec v)_n| 
&\leq \sum_m \frac{1}{\sqrt{mn}} \left(\frac{\min\{E_n,E_m\}}{\max\{E_n,E_m\}}\right)^\delta |v_m| \\
&\lesssim_\delta \frac{1}{\sqrt{n}} \sum_{j \in \ZZ} 2^{-\delta|j|} \sum_{m \tc E_m \in [2^j E_n,2^{j+1} E_n]} \frac{|v_m|}{\sqrt{m}}.
\end{split}\]
Hence, by applying the Cauchy--Schwarz inequality to the inner sum and \eqref{eq:dyadic_1n},
\[\begin{split}
|(\matM . \vec v)_n| &\lesssim_{\kappa,\delta} \frac{1}{\sqrt{n}} \sum_{j \in \ZZ} 2^{-\delta|j|} \sqrt{\sum_{m \tc E_m \in [2^j E_n,2^{j+1} E_n]} |v_m|^2} \\
&\leq \frac{E_n^\alpha}{\sqrt{n}} \|\vec v\|_{(V,\alpha)} \sum_{j \in \ZZ} 2^{-|j|\delta+j\alpha} \\
&\lesssim_{\delta,\alpha} \frac{E_n^\alpha}{\sqrt{n}} \|\vec v\|_{(V,\alpha)},
\end{split}\]
which proves \eqref{eq:matrix_comp_bd}.

The bound \eqref{eq:matrix_sum_bd} is an immediate consequence of \eqref{eq:matrix_comp_bd}  and part \ref{en:Valpha_Eigen}.

To prove \eqref{eq:matrix_l2_bd}, we apply Schur's Test to $\matM$ with testing sequence $v_n = 1/\sqrt{n}$; since $\| \vec v\|_{(V,0)} \lesssim_{\kappa} 1$ by part \ref{en:Valpha_Eigen},
 the desired bound follows from \eqref{eq:matrix_comp_bd} applied with $\alpha=0$.
\end{proof}

In light of the far-diagonal decay \eqref{far_diag_bound} of $\matA(V)$, we can apply Lemma \ref{lem:Valpha_norm}\ref{en:matrix_minmax_bds} with $\delta=3/4$ and $\matM = |\matA(V)| \schur \matF_T$, and deduce the
bound $\| |\matA(V)| \schur \matF_T \|_{\ell^2 \to \ell^2} \lesssim_{\kappa,T} 1$ for all $T > 1$ and $V \in \pot_1(\kappa)$. This concludes the proof of Theorem \ref{thm:estimates_A}.

\subsection{Spectral projector bounds: eigenfunctions}\label{ss:proj_bd_eigen}
Here we prove the first estimate in Theorem \ref{thm:spectral_proj}, 
namely, the bound \eqref{eq:spprojbd} for the eigenfunctions $\psi_n$ of $\opH[V] = -\partial_x^2 + V$.

The integral kernel of the spectral projector $\chr_{[0,E_0]}(\opH[V])$ is given by
\begin{equation*}
K(x,y) = \sum_{E_n \in [0,E_0]} \psi_n(x) \, \psi_n(y).
\end{equation*}	
Thus the quantity
\begin{equation*}
\sup_{x\in\RR} \int_\RR K(x,y)^2 \,dy
= \sup_{x\in\RR} \sum_{E_n \in [0, E_0]} \psi_n(x)^2
\end{equation*}
represents the square of the $L^2 \to L^\infty$ operator norm of $\chr_{[0,E_0]}(\opH[V])$. Since this operator is self-adjoint, this is the same as its $L^1 \to L^2$ operator norm, which can be estimated as follows: 
\[\begin{split}
\|\chr_{[0,E_0]}(\opH[V]) f\|_{L^2(\RR)}
&\leq e \|e^{-\opH[V]/E_0} f \|_{L^2(\RR)} \\
&\leq e \|e^{\partial_x^2/E_0}f\|_{L^2(\RR)} \\
&\leq e\left(\frac{8\pi}{E_0}\right)^{-1/4} \|f\|_{L^1(\RR)} ,
\end{split}\]
where in the second line we used the Feynman--Kac formula (see, e.g., \cite[Theorem 6.2]{simon_quantum}) and in the third line a Euclidean heat kernel bound \cite[p.\ 60]{davies_heat_1989}. This completes the proof of the uniform bound
\[
\sum_{E_n \in [0,E_0]}|\psi_n(x)|^2\lesssim \sqrt{E_0}.
\]
It remains to prove the exponential decay for large $|x|$.
	
By Theorem \ref{thm:pointwise_eigenfcts}, if $V(x)\geq 4 E_0$, then
\[
\sum_{E_n \in [0,E_0]}| \psi_n(x)|^2 
\lesssim_{\kappa} \left( \sum_{E_n \in [0,E_0]} \frac{1}{|\{V\leq E_n\}|} \right) e^{-c(\kappa)|x|\sqrt{V(x)}}.
\]
Moreover $|\{V\leq E_n\}| \sqrt{E_n} \simeq_\kappa n$ by Theorem \ref{thm:eigenvalues},
 hence
\begin{equation}\label{eq:sqrtEsum}
\sum_{E_n \in [0,E_0]} \frac{1}{|\{V\leq E_n\}|}
\simeq_\kappa \sum_{E_n \in [0,E_0]} \frac{\sqrt{E_n}}{n}
\lesssim_\kappa \sqrt{E_0} ,
\end{equation}
where Lemma \ref{lem:Valpha_norm}\ref{en:Valpha_norm_equiv}-\ref{en:Valpha_Eigen} was used, and the desired estimate follows.

\subsection{Spectral projector bounds: differentiated eigenfunctions}\label{ss:proj_bd_diffeigen}
Here we complete the proof of Theorem \ref{thm:spectral_proj}, by proving the bound for the ``modified differentiated eigenfunctions'' $\rho_n = \rho_n(\cdot;V)$ defined in \eqref{eq:rho_def_elem}. 

We first prove the uniform bound
\[
\sum_{E_n\in [0,E_0]} \left| \rho_n(x) \right|^2 \lesssim_{\kappa,T_0} \sqrt{E_0}.
\]
By \eqref{eq:rho_def_elem} and Lemma \ref{lem:Valpha_norm}\ref{en:Valpha_norm_equiv}, this would follow from the bound
\[
\|(\matF_{T_0} \schur \matA(V)) .\vec\psi(x)\|_{(V,1/4)} \lesssim_{\kappa,T_0} 1,
\]
where $\vec{\psi}(x) = (\psi_n(x))_{n\geq 1}$.
 In light of Theorem \ref{thm:estimates_A} and Lemma \ref{lem:Valpha_norm}\ref{en:matrix_minmax_bds}, we can apply the estimate \eqref{eq:matrix_sum_bd} with $\matM = \matF_{T_0} \schur \matA(V)$, $\delta=3/4$ and $\alpha=1/4$; hence the desired uniform bound is a consequence of the bound 
$\|\vec\psi(x)\|_{(V,1/4)} \lesssim_{\kappa} 1$,
which is the uniform bound for the $\psi_n$ proved in Section \ref{ss:proj_bd_eigen}.

To prove the exponential decay part of the statement, we decompose $\rho_n$ as $\sigma_n + (\rho_n-\sigma_n)$, and consider the two summands separately.

First, the same argument used in Section \ref{ss:proj_bd_eigen} to prove the exponential decay for the $\psi_n$, using Proposition \ref{sigma_pointwise_prp} in place of Theorem \ref{thm:pointwise_eigenfcts}, yields 
\begin{equation*}
\sum_{E_n\in [0,E_0]} |\sigma_n(x)|^2 \lesssim_{\kappa} \sqrt{E_0} \, e^{-c(\kappa)|x|\sqrt{V(x)}}
\end{equation*}
for every $x$ such that $V(x)\geq 4E_0$. Next, by \eqref{eq:rho_def_elem},
\[
\sigma_n(x) - \rho_n(x) = \sum_m (\matN_{T_0} \schur \matA(V))_{nm} \psi_m(x),
\]
so
\[
\begin{split}
&\sum_{E_n \in [0,E_0]} |\rho_n(x) - \sigma_n(x)|^2 \\
&= \sum_{E_n \in [0,E_0]} \left|\sum_{T_0^{-1} n \leq m \leq T_0 n} \matA_{nm}(V) \psi_m(x) \right|^2 \\
&\lesssim_{\kappa} \sum_{E_n \in [0,E_0]} n^2 \sum_{T_0^{-1} n \leq m \leq T_0 n} \psi_m(x)^2 .
\end{split}
\]
by the Cauchy--Schwarz inequality and Theorem \ref{thm:estimates_A}.

By Proposition \ref{prp:eigenvalue_doub}, $m\in [T_0^{-1} n, T_0 n]$ implies $E_m\in [2^{-\ell_1}E_n, 2^{\ell_1} E_n]$ for some $\ell_1=\ell_1(\kappa,T_0) \in \NN$. If $V(x)\geq  2^{\ell_1+2}E_0$, then
\begin{equation}\label{eq:n_estimate}
n \simeq_{\kappa} \sqrt{E_n} \, |\{V \leq E_n\}| \lesssim_{\kappa} |x| \sqrt{V(x)}
\end{equation}
whenever $E_n \leq E_0$, by Theorem \ref{thm:eigenvalues}.
Moreover, if $E_n \in [0,E_0]$ and $V(x)\geq  2^{\ell_1+2}E_0$, then
\[\begin{split}
\sum_{T_0^{-1} n \leq m \leq T_0 n}\psi_m(x)^2 
\leq \sum_{E_m \in [0,2^{\ell_1} E_n]}\psi_m(x)^2
\lesssim_{\kappa,T_0} \sqrt{E_n}  \, e^{-c(\kappa) |x|\sqrt{V(x)}}  
\end{split}\]
by \eqref{eq:spprojbd}.
Hence, if $V(x)\geq  2^{\ell_1+2}E_0$, then
\[\begin{split}
&\sum_{E_n \in [0,E_0]} n^2 \sum_{T_0^{-1} n \leq m \leq T_0 n} \psi_m(x)^2\\
 &\lesssim_{\kappa,T_0} e^{-c(\kappa) |x|\sqrt{V(x)}} \sum_{E_n \in [0,E_0]} n^2 \sqrt{E_n} \\
 &\lesssim_{\kappa}  e^{-c(\kappa) |x|\sqrt{V(x)}} \, \left(|x|\sqrt{V(x)}\right)^3 \sum_{E_n \in [0,E_0]} \frac{\sqrt{E_n}}{n} \\
&\lesssim_{\kappa} \sqrt{E_0} \, e^{-\frac{c(\kappa)}{2} |x| \sqrt{V(x)}},
\end{split}\]
where \eqref{eq:n_estimate} and \eqref{eq:sqrtEsum} were applied in the last two inequalities.

This proves the desired estimate with $T_1 = 2^{\ell_1+2}$.

\section{Proof of the weighted Plancherel estimate}\label{s:proofweightedplancherel}

The aim of this section is to prove the ``weighted Plancherel estimate'' of Theorem \ref{thm:conditional_weightedplancherel} and Remark \ref{rem:conditional_weightedplancherel} in the case where $q=\infty$ and $V$ satisfies the assumptions \eqref{eq:2dassumptions}, that is, $V$ belongs to one of the the classes $\pot_{1+\theta}(\kappa)$ introduced in Section \ref{s:matrixbounds}. Note that any $V \in \pot_{1+\theta}(\kappa)$ (more generally, any $V \in \pot_1(\kappa)$) satisfies the assumption \eqref{eq:V_doubling} with $D=\kappa$ (see Proposition \ref{doubling_prp}); so Theorem \ref{thm:conditional_weightedplancherel} applies to such $V$ and, combined with Theorem \ref{thm:abstract} and the result below, proves Theorem \ref{thm:main}.

\begin{thm}\label{thm:weightedplancherel}
Assume that $V \in \pot_{1+\theta}(\kappa)$ for some $\theta \in (0,1)$ and $\kappa \geq 1$.
Let $\opL$ be the Grushin operator on $\RR^2$ associated to $V$, as in \eqref{eq:grushin_def}.
 Then, for all $\vartheta \in [0,1/2)$ and $r>0$, the estimate
\begin{multline*}
\esssup_{z' \in \RR^2} \, r^{2-2\vartheta} \max\{V(r),V(x')\}^{1/2-\vartheta} \int_{\RR^2} |y-y'|^{2\vartheta} \left|\Kern_{\mm(r^2 \opL)}(z,z') \right|^2 \,dz 
\\\lesssim_{\theta,\kappa,\vartheta} \|\mm\|_{\sobolev{\vartheta}{\infty}}^2
\end{multline*}
holds for all continuous $\mm : \RR \to \CC$ with $\supp \mm \subseteq [-1,1]$. The analogous estimate with $\vartheta=0$ holds more generally for any $V \in \pot_1(\kappa)$.
\end{thm}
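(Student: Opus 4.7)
The approach is to first reduce to $r = 1$ by a scaling argument, noting that our assumptions on $V$ are preserved by dilation, and then use the translation-invariance in $y$ to take the partial Fourier transform in the $y$-variable, giving
\[
\Kern_{\mm(\opL)}(z,z') = \frac{1}{2\pi}\int_\RR K_\mm(x,x';\xi) \, e^{i\xi(y-y')} \,d\xi,
\]
with $K_\mm(x,x';\xi) = \sum_n \mm(E_n(\xi^2 V)) \psi_n(x;\xi^2 V)\psi_n(x';\xi^2 V)$. By Plancherel in $y$, the weight $|y-y'|^{2\vartheta}$ translates into a fractional $\xi$-derivative of order $\vartheta$ acting on $K_\mm$, so the task reduces to an $L^2$ estimate of $D_\xi^\vartheta K_\mm$ in both $x$ and $\xi$.

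For the unweighted case $\vartheta = 0$ (valid for any $V \in \pot_1(\kappa)$), Parseval in $x$ and the orthonormality of the eigenfunctions give $\int_\RR |K_\mm(x,x';\xi)|^2 \,dx = \sum_n |\mm(E_n(\xi^2 V))|^2 \psi_n(x';\xi^2 V)^2$. Integrating in $\xi$, exploiting the support condition $\supp \mm \subseteq [-1,1]$ together with the spectral projector bound of Theorem \ref{thm:spectral_proj} and the eigenvalue estimates of Theorem \ref{thm:eigenvalues}, should yield the claimed bound with the factor $\max\{V(1),V(x')\}^{1/2}$; here an eigenvalue-counting argument is needed to ensure that the $\xi$-integral effectively localizes to a bounded region.

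For $\vartheta \in (0,1/2)$, I would proceed by interpolation between $\vartheta = 0$ and a suitable endpoint (e.g.\ $\vartheta = 1$), which reduces the problem to bounding $\int\int |\partial_\xi K_\mm|^2 \,d\xi\,dx$. Setting $\tau = \xi^2$ and using the identities
\[
\tau\partial_\tau E_n(\tau V) = F_n(\tau V), \qquad \tau\partial_\tau \psi_n(x;\tau V) = \sigma_n(x;\tau V) = \sum_m \matA_{nm}(\tau V) \psi_m(x;\tau V),
\]
one gets the explicit formula
\[
\tfrac{\xi}{2}\partial_\xi K_\mm = \sum_n \mm'(E_n) F_n \psi_n(x)\psi_n(x') + \sum_n \mm(E_n)\bigl(\sigma_n(x)\psi_n(x') + \psi_n(x)\sigma_n(x')\bigr).
\]
To handle the $\sigma_n$ contribution, I would decompose $\sigma_n = \rho_n + \sum_m (\matA(\xi^2 V)\schur\matN_{T_0})_{nm}\psi_m$: the $\rho_n$ piece is absorbed by the modified spectral projector estimate in Theorem \ref{thm:spectral_proj}, while the near-diagonal sum is estimated by combining the gap estimate of Theorem \ref{thm:eigenvalues} with \eqref{eq:near_diag_bound_sharper}, yielding $|\matA_{nm}(\xi^2 V)| \lesssim 1/|n-m|^{1+\epsilon}$ in the regime $T_0^{-1} n \leq m \leq T_0 n$. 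This extra decay $\epsilon > 0$ is precisely where the H\"older hypothesis \eqref{eq:2dassumptions_holder} on $V'$ (encoded by $V \in \pot_{1+\theta}(\kappa)$) becomes essential; the $\mm'(E_n) F_n$ terms are controlled using $F_n \leq E_n$, so that $\mm'(E_n) E_n$ is bounded by a Sobolev norm of $\mm$ of the same order.

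The main obstacle will be the near-diagonal matrix contributions: without the extra $\epsilon > 0$ (available only under the H\"older hypothesis), the sum over indices in the near-diagonal band would only be logarithmically divergent and the argument would fail to reach the sharp threshold $s > 2/2$. A secondary technical difficulty is to reconcile the explicit factor $1/\xi$ in $\partial_\xi K_\mm$ with the behaviour of $E_n(\xi^2 V)$ as $\xi\to 0$, which must be handled by exploiting the support of $\mm$ together with the identity $F_n = \tau\partial_\tau E_n$ and the scaling properties of eigenvalues established in Theorems \ref{thm:eigenvalues} and \ref{thm:pointwise_eigenfcts}.
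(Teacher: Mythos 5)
Your overall strategy — partial Fourier transform in $y$, reading off the $\tau\partial_\tau$-derivative of the kernel via the matrices $\matA$, $\matP$ and the differentiated eigenvalues $F_n$, splitting $\sigma_n$ into $\rho_n$ plus a near-diagonal matrix piece, and invoking the $\epsilon$-improved near-diagonal bound \eqref{eq:near_diag_bound_sharper} (which requires $V\in\pot_{1+\theta}(\kappa)$) — is exactly the machinery the paper sets up in Propositions \ref{kernel_prp}--\ref{plancherel_identity_prp} and Theorems \ref{thm:estimates_A}, \ref{thm:near_diag}, \ref{thm:spectral_proj}. The gap is in how you pass from the endpoints to $\vartheta\in(0,1/2)$.

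You propose to interpolate \emph{globally} between $\vartheta=0$ and $\vartheta=1$, i.e.\ to first establish $\int_{\RR^2}|y-y'|^2|\Kern_{\mm(\opL)}|^2\,dz\lesssim V(x')^{-\,?}\|\mm\|_{W^{1,\infty}}^2$ and then interpolate. But the $\vartheta=1$ estimate does \emph{not} hold: each dyadic frequency block $A\tau\simeq 1$ contributes a factor $A^{1/2}$ to the weighted $L^2$ integral (this is visible in the $\tau^{-1/2}\frac{d\tau}{\tau}$ weight of the second Plancherel identity in Proposition \ref{plancherel_identity_prp}), so the sum over scales $A\to+\infty$ (equivalently $\xi\to 0^+$) diverges. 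There is no usable global estimate at the endpoint you name, so there is nothing to interpolate against. The restriction $\vartheta<1/2$ in the theorem is exactly the summability threshold of this dyadic series and has no counterpart at $\vartheta=1$.

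The fix, which the paper uses, is to introduce a Littlewood--Paley decomposition $\chi(A\tau)$ along the spectrum of $-\partial_y^2$ before interpolating: one proves a $\vartheta=0$ bound with factor $A^{-1/2}$ and a $\vartheta=1$ bound with factor $A^{1/2}$ \emph{for each fixed scale $A$}, interpolates to get the factor $A^{\vartheta-1/2}$ at that scale, and only then sums over $A$ — with the sum restricted to $A\gtrsim V(1)$ by the support condition $\supp\mm\subseteq[-1,1]$ (this is where your remark about eigenvalue counting enters, via Proposition \ref{prp:eigenvalue_doub}\ref{en:energy_doubling_lbd}), and converging for $\vartheta<1/2$. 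Your draft acknowledges the $1/\xi$ factor as a ``secondary technical difficulty'' but does not provide the mechanism that tames it; without the dyadic decomposition in $\xi$ the argument does not close. I would also gently correct the claim that ``the $\xi$-integral effectively localizes to a bounded region'': it is bounded above ($\xi\lesssim V(1)^{-1/2}$) by the support of $\mm$, but it extends all the way to $\xi=0$, and convergence there comes from the factor $A^{\vartheta-1/2}$, not from compact support.
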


We note that a sharper result, where $\sobolev{\vartheta}{\infty}$ is replaced by $\sobolev{\vartheta}{2}$, is proved in \cite{martini_sharp_2014} in the case where $V(x) = x^2$.

\subsection{Preliminaries}

We denote by $\left(\cdot,\cdot\right)$ the $\ell^2(\Npos)$-scalar product and by $\|\cdot\|$ the associated norm. For a vector $\vec{a}=(a_n)_{n\geq 1}$ and a function $F$ of one variable, we write $F(\vec{a}) \defeq (F(a_n))_{n\geq 1}$. We also set 
\begin{equation}\label{eq:vec_inc_def}
\diag \vec{a} = (\delta_{nm}a_n)_{n,m\geq 1} \quad \text{and}\quad  \inc \vec{a} = (a_n-a_m)_{n,m\geq 1}
\end{equation}
(respectively, the \emph{diagonal matrix} and the \emph{increment matrix} associated to $\vec{a}$). Recall that, if $\matB$ and $\matB'$ are matrices, $\matB \schur \matB'$ is their Schur product \eqref{eq:schur_def}, while $|\matB|$ denotes the componentwise modulus of $\matB$.

Let $\opL$ be the Grushin operator defined in \eqref{eq:grushin_def}, associated to a function $V \in \pot$.
The self-adjoint operator $\opL$ commutes with the differential operator $-\partial_y^2$ on $\RR^2$, and the two operators have a joint functional calculus on $L^2(\RR^2)$ in the sense of the spectral theorem, which can be conveniently analysed by taking the partial Fourier transform in the variable $y$.
Indeed, if we write, for any sufficiently regular function $f$ on $\RR^2$ and $x,\xi \in \RR$,
\[
f^{\xi}(x) \defeq \int_{\RR} f(x,y) \,e^{-i\xi y} \,dy,
\]
then
\[
(\opL f)^\xi = \opH[\xi^2 V] f^\xi, \qquad (-\partial_y^2 f)^\xi = \xi^2 f^\xi,
\]
where, for all $\tau \in \Rpos$, $\opH[\tau V]$ is the Schr\"odinger operator defined in \eqref{eq:def_opH}.
Consequently
\begin{equation}\label{eq:jointcalculus_four}
(G(\opL,-\partial_y^2) f)^\xi = G(\opH[\xi^2 V],\xi^2) f^\xi
\end{equation}
for all bounded Borel functions $G : \Rnon \times \Rnon \to \CC$.

As in Section \ref{basic_sec}, we consider the eigenvalues $E_n(\tau V)$ and eigenfunctions $\psi_n(\cdot;\tau V)$ associated to $\opH[\tau V]$.
Moreover, as in Section \ref{ss:proj_bd_diffeigen}, it is convenient to introduce the infinite vector 
\begin{equation*}
\vec{\psi}(x;\tau V) = (\psi_n(x;\tau V))_{n\geq 1}.
\end{equation*}
Whenever $\matM(\tau)=(\matM_{nm}(\tau))_{n,m\geq 1}$ is a $\tau$-dependent infinite matrix, we define, at least formally,
\begin{equation}\label{eq:k_mat}
k_{\matM}(x', x;\tau)=\left( \matM(\tau).\vec{\psi}(x;\tau V),\vec{\psi}(x';\tau V)\right).
\end{equation}
In other words, $k_{\matM}(\cdot,\cdot;\tau)$ is the integral kernel of the operator on $L^2(\RR)$ whose matrix is $\matM(\tau)$ with respect to the basis $(\psi_n(\cdot;\tau V))_{n \geq 1}$.

By using the above matrix notation, we can conveniently express the formula for the integral kernel of an operator $G(\opL,-\partial_y^2)$ in the joint functional calculus of $\opL$ and $-\partial_y^2$, which is an elementary consequence of \eqref{eq:jointcalculus_four}.

\begin{prp}\label{kernel_prp}
Given $G : \Rnon \times \Rnon \to \CC$, the integral kernel $K_G(z',z)$ of $G(\opL,-\partial_y^2)$ is given by 
\begin{equation*}
K_G(z',z) = \frac{1}{2\pi} \int_\RR k_{\matM}(x', x;\xi^2) \, e^{i\xi(y'-y)} \,d\xi \qquad(z=(x,y), \ z'=(x',y')),
\end{equation*}
where
\begin{equation}\label{eq:m_def}
\matM(\tau) \defeq \diag G(\vec{E}(\tau V), \tau).
\end{equation}
\end{prp}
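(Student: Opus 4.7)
The plan is to combine the partial Fourier transform in the $y$-variable with the Sturm--Liouville spectral decomposition of the one-dimensional Schr\"odinger operators $\opH[\tau V]$ discussed in Section \ref{basic_sec}. Starting from the intertwining identity \eqref{eq:jointcalculus_four}, we have, for $\xi \in \RR$,
\[
(G(\opL,-\partial_y^2) f)^\xi = G(\opH[\xi^2 V],\xi^2) f^\xi.
\]
Since the spectrum of $\opH[\xi^2 V]$ consists of the simple eigenvalues $E_n(\xi^2 V)$ with normalised eigenfunctions $\psi_n(\cdot;\xi^2 V)$ forming an orthonormal basis of $L^2(\RR)$ (Section \ref{sec:sturm}), the spectral theorem gives
\[
G(\opH[\xi^2 V],\xi^2) g = \sum_{n \geq 1} G(E_n(\xi^2 V), \xi^2) \, \langle g, \psi_n(\cdot;\xi^2 V) \rangle \, \psi_n(\cdot;\xi^2 V)
\]
for every $g \in L^2(\RR)$. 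In particular, the integral kernel of $G(\opH[\xi^2 V],\xi^2)$ on $L^2(\RR)$ is
\[
\sum_{n \geq 1} G(E_n(\xi^2 V), \xi^2) \, \psi_n(x;\xi^2 V) \, \psi_n(x';\xi^2 V) = k_\matM(x',x;\xi^2),
\]
with $\matM$ as in \eqref{eq:m_def}, using the notations \eqref{eq:k_mat} and \eqref{eq:vec_inc_def}.

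Next, inverting the partial Fourier transform in $y$ yields, for a.e.\ $(x,y) \in \RR^2$,
\[
(G(\opL,-\partial_y^2) f)(x,y) = \frac{1}{2\pi} \int_\RR (G(\opH[\xi^2 V],\xi^2) f^\xi)(x) \, e^{i\xi y} \,d\xi.
\]
Substituting $f^\xi(x') = \int_\RR f(x',y') e^{-i\xi y'} \,dy'$ and the kernel formula for $G(\opH[\xi^2 V],\xi^2)$, and exchanging the order of integration, gives the claimed representation
\[
K_G(z',z) = \frac{1}{2\pi} \int_\RR k_\matM(x',x;\xi^2) \, e^{i\xi(y'-y)} \,d\xi.
\]

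The main technical obstacle is ensuring the absolute convergence required to apply Fubini's theorem and to interchange the spectral sum with the Fourier integral, since for general bounded Borel $G$ the kernel $K_G$ is only defined in a distributional sense. I would therefore first establish the identity for $G$ in a suitable dense class (e.g., $G$ bounded and compactly supported, or more restrictively $G \in C_c(\Rpos \times \Rpos)$) and for $f$ in a dense subspace of $L^2(\RR^2)$ such as Schwartz functions whose partial Fourier transform in $y$ is compactly supported in $\xi \in \Rpos$; for such $G$ and $f$, the pointwise bounds of Theorem \ref{thm:pointwise_eigenfcts} and the eigenvalue lower bounds of Theorem \ref{thm:eigenvalues} ensure rapid convergence of the series defining $k_\matM$ uniformly in $\xi$ on compact subsets of $\Rpos$, justifying all the manipulations. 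The statement of the proposition is then to be understood as a formal identity, valid a.e. whenever both sides make sense; in the applications of Proposition \ref{kernel_prp} in Theorem \ref{thm:weightedplancherel}, $G(\lambda,\mu) = \mm(r^2 \lambda)$ with $\mm$ continuous and compactly supported, which falls into this well-behaved class.
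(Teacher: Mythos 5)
Your proof is correct and follows the same route the paper intends: the paper states Proposition \ref{kernel_prp} as ``an elementary consequence of \eqref{eq:jointcalculus_four}'' without further details, and your argument (partial Fourier transform in $y$, spectral decomposition of $\opH[\xi^2 V]$, Fourier inversion) is the standard justification, together with a reasonable discussion of the density/convergence issues needed to make it rigorous. The only minor slip is that a dense class of $f$ should have partial Fourier transform supported away from $\xi=0$ (i.e.\ in $\RR\setminus\{0\}$) rather than in $\Rpos$, but this does not affect the argument.
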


In light of the above formula, multiplication by $(y'-y)$ of the integral kernel $K_G(z',z)$ corresponds to differentiation in $\xi$ of $k_{\matM}(x', x;\xi^2)$. Thanks to the matrix notation, the result of this differentiation can be expressed in a particularly concise form. Recall that $\matA(\tau V)$ is the matrix associated to $\tau V$ defined in Section \ref{s:matrixbounds}.

\begin{prp}\label{Mderivative_prp}
If $k_\matM$ is given by \eqref{eq:k_mat} for some $\matM = \matM(\tau)$, then
\begin{equation}\label{eq:DM_deriv}
\tau\partial_\tau k_{\matM}(x', x;\tau)=k_{D[\matM]}(x', x;\tau),
\end{equation}
where
\begin{equation}\label{eq:DM_def}
D[\matM] \defeq \tau\partial_\tau \matM + [\matM, \matA(\tau V)]. 
\end{equation}
Moreover, if $\matM(\tau) = \diag \vec{a}(\tau)$, then 
\begin{equation}\label{eq:DM_diag}
D[\matM](\tau) = \diag \tau\partial_\tau\vec{a}(\tau)  + \matA(\tau V) \schur \inc \vec{a}(\tau) .
\end{equation}
\end{prp}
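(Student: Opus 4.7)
The plan is to reduce both identities to the single computation of $\tau \partial_\tau \vec\psi(x;\tau V)$ and then to use the antisymmetry of $\matA(\tau V)$ (Proposition \ref{prp:A_formula}) to move derivatives across the inner product.

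First I would observe that, by a simple rescaling, for every $n \in \Npos$,
\[
\tau \partial_\tau \psi_n(x;\tau V) = \partial_s|_{s=1} \psi_n(x; s \cdot \tau V) = \sigma_n(x;\tau V),
\]
since the eigenfunctions of $\opH[(s\tau) V]$ satisfy $\psi_n(x;(s\tau)V) = \psi_n(x; s \cdot (\tau V))$ and the definition of $\sigma_n$ is precisely the derivative at $s=1$ of this expression. Then, using the fact that $\sigma_n(\cdot;\tau V) \in L^2(\RR)$ (Theorem \ref{thm:estimates_A}) and that $\matA_{nm}(\tau V) = \langle \sigma_n(\cdot;\tau V),\psi_m(\cdot;\tau V)\rangle$ by definition, the expansion $\sigma_n = \sum_m \matA_{nm}(\tau V)\psi_m$ in the orthonormal basis of eigenfunctions gives, in the compact vector notation of \eqref{eq:k_mat},
\[
\tau\partial_\tau \vec\psi(x;\tau V) = \matA(\tau V)\,\vec\psi(x;\tau V).
\]
The exponential decay of both $\psi_n$ and $\sigma_n$ stated in Proposition \ref{prp:one_parameter_tV}\ref{en:one_parameter_tV_psi} and Proposition \ref{sigma_pointwise_prp}, together with the convergence of the relevant series, justifies the pointwise differentiation under the inner product.

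Next, differentiating $k_\matM(x',x;\tau) = \left(\matM(\tau)\,\vec\psi(x;\tau V),\,\vec\psi(x';\tau V)\right)$ termwise and applying the Leibniz rule yields
\[
\tau\partial_\tau k_\matM = \left(\tau\partial_\tau \matM \cdot \vec\psi,\vec\psi'\right) + \left(\matM \cdot \matA\,\vec\psi,\vec\psi'\right) + \left(\matM\,\vec\psi,\matA\,\vec\psi'\right),
\]
where $\vec\psi = \vec\psi(x;\tau V)$, $\vec\psi' = \vec\psi(x';\tau V)$, and $\matA=\matA(\tau V)$. Since $\matA$ is antisymmetric (with real entries) by Proposition \ref{prp:A_formula}, the last term equals $-(\matA\matM\,\vec\psi,\vec\psi')$, and combining the three terms gives $\tau\partial_\tau k_\matM = \left((\tau\partial_\tau \matM + [\matM,\matA])\,\vec\psi,\vec\psi'\right) = k_{D[\matM]}(x',x;\tau)$, proving \eqref{eq:DM_deriv}.

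Finally, if $\matM = \diag \vec a(\tau)$, then clearly $\tau\partial_\tau \matM = \diag \tau\partial_\tau \vec a$, while the commutator has $(n,m)$-entry
\[
(\matM\matA - \matA\matM)_{nm} = a_n \matA_{nm} - \matA_{nm} a_m = (a_n - a_m)\,\matA_{nm},
\]
which is exactly $(\matA \schur \inc \vec a)_{nm}$, proving \eqref{eq:DM_diag}. The main technical point is simply to justify the termwise differentiation and the rearrangement of the double series; this is where the decay estimates established in earlier sections (pointwise and $\ell^2$ bounds for $\sigma_n$) are used, and presents no real obstacle given those estimates.
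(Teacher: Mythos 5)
Your argument is correct and follows essentially the same route as the paper: establish $\tau\partial_\tau\vec\psi(\cdot;\tau V)=\matA(\tau V).\vec\psi(\cdot;\tau V)$ via the definition of $\sigma_n$ and the orthonormal expansion, apply the Leibniz rule in \eqref{eq:k_mat}, move $\matA$ across the inner product using its (real) antisymmetry from Proposition~\ref{prp:A_formula}, and observe that the commutator form \eqref{eq:DM_def} specialises to \eqref{eq:DM_diag} when $\matM$ is diagonal. The only cosmetic difference is that you spell out the rescaling identity $\tau\partial_\tau\psi_n(x;\tau V)=\sigma_n(x;\tau V)$ and comment on the convergence/termwise differentiation justification, whereas the paper states the first fact as immediate from the definition of $\matA(\tau V)$ and leaves the analytic justification implicit.
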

\begin{proof}
In light of \eqref{eq:vec_inc_def}, formula \eqref{eq:DM_diag} is just a rewriting of \eqref{eq:DM_def} in the case $\matM$ is a diagonal matrix. We are left with the proof of \eqref{eq:DM_deriv} for an arbitrary $\matM$.

Note that, by the definition of $\matA(\tau V)$,
\[
\tau\partial_\tau \vec\psi(\cdot,\tau V) = \matA(\tau V) . \vec\psi(\cdot,\tau V).
\]
Hence, by \eqref{eq:k_mat} and the Leibniz rule,
\[\begin{split}
\tau \partial_\tau k_{\matM}(x', x;\tau)
&=\left( (\tau \partial_\tau \matM(\tau)).\vec{\psi}(x;\tau V),\vec{\psi}(x';\tau V)\right) \\
&\quad+\left( \matM(\tau). \matA(\tau V) . \vec{\psi}(x;\tau V)),\vec{\psi}(x';\tau V)\right) \\
&\quad+\left( \matM(\tau).\vec{\psi}(x;\tau V),\matA(\tau V). \vec{\psi}(x';\tau V)\right).
\end{split}\]
Since $\matA(\tau V)$ is skew-adjoint (see Proposition \ref{prp:A_formula}), the desired formula follows.
\end{proof}

By combining the previous formulas, the Plancherel theorem for the Fourier transform in $y$, and the orthonormality of the eigenfunctions $\psi_n(\cdot,\tau V)$, we immediately obtain the following ``weighted Plancherel identities'' for the integral kernel $K_G$ of an operator in the joint functional calculus of $\opL$ and $-\partial_y^2$, which will be the starting point for our analysis.

\begin{prp}\label{plancherel_identity_prp}
If $K_G$ and $\matM$ are as in Proposition \ref{kernel_prp}, then
\begin{equation*}
\int_{\RR^2} |K_G(z',z)|^2 \,dz' = \frac{1}{2\pi}\int_0^{\infty} \| \matM(\tau).\vec{\psi}(x;\tau V)\|^2 \tau^\frac{1}{2} \frac{d\tau}{\tau}
\end{equation*}
and
\begin{equation*}
\int_{\RR^2} (y'-y)^2 \, |K_G(z',z)|^2 \,dz' = \frac{2}{\pi}\int_0^{\infty} \| D[\matM](\tau).\vec{\psi}(x;\tau V)\|^2 \tau^{-\frac{1}{2}} \frac{d\tau}{\tau}.
\end{equation*}
\end{prp}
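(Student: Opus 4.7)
The plan is to combine Plancherel's theorem in the variable $y'$, orthonormality of the eigenfunctions $\psi_n(\cdot;\tau V)$ in $x'$, and the change of variable $\tau=\xi^2$ to rewrite the $L^2$-norms in $z'$ as integrals over $\tau \in \Rpos$ of $\ell^2(\Npos)$-norms. The weighted version will additionally exploit the fact that multiplication by $y'-y$ on the kernel side corresponds to $-i\partial_\xi$ on the Fourier side, at which point Proposition \ref{Mderivative_prp} will convert the resulting $\tau$-derivative into the matrix $D[\matM]$.

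For the first identity I would start from the representation in Proposition \ref{kernel_prp}. Writing $\tilde h(\xi)=k_{\matM}(x',x;\xi^2)$, the kernel $y'\mapsto K_G(z',z)$ is essentially an inverse Fourier transform of $\tilde h(\xi)e^{-i\xi y}$, so Plancherel in $y'$ gives
\[
\int_\RR |K_G(z',z)|^2\,dy' = \frac{1}{2\pi}\int_\RR |k_{\matM}(x',x;\xi^2)|^2\,d\xi.
\]
By construction, $x'\mapsto k_{\matM}(x',x;\tau)$ is the expansion of $\matM(\tau).\vec{\psi}(x;\tau V)$ in the orthonormal basis $(\psi_n(\cdot;\tau V))_n$, so Parseval yields $\int_\RR |k_{\matM}(x',x;\tau)|^2\,dx' = \|\matM(\tau).\vec{\psi}(x;\tau V)\|^2$. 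The substitution $\tau=\xi^2$, using that the integrand is even in $\xi$ (so the factor $2$ from $\RR = (-\infty,0)\cup(0,\infty)$ cancels the $2$ in $d\tau=2\xi\,d\xi$), converts $d\xi$ into $\tau^{-1/2}\,d\tau/2\cdot 2 = \tau^{-1/2}\,d\tau = \tau^{1/2}\,d\tau/\tau$, producing the first identity.

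For the weighted identity I would first write
\[
(y'-y)K_G(z',z) = \frac{i}{2\pi}\int_\RR \partial_\xi \bigl[k_{\matM}(x',x;\xi^2)\bigr] \, e^{i\xi(y'-y)}\,d\xi,
\]
obtained by using $(y'-y)e^{i\xi(y'-y)} = -i\partial_\xi e^{i\xi(y'-y)}$ and integrating by parts. The chain rule gives $\partial_\xi\bigl[k_{\matM}(x',x;\xi^2)\bigr] = 2\xi\,\partial_\tau k_{\matM}(x',x;\xi^2)$, and Proposition \ref{Mderivative_prp} rewrites this as $\frac{2}{\xi}\,k_{D[\matM]}(x',x;\xi^2)$. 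Applying once more Plancherel in $y'$ and Parseval in $x'$, and then performing the substitution $\tau=\xi^2$ as above, gives
\[
\int_{\RR^2} (y'-y)^2|K_G(z',z)|^2\,dz' = \frac{2}{\pi}\int_0^\infty \|D[\matM](\tau).\vec{\psi}(x;\tau V)\|^2\,\tau^{-1/2}\,\frac{d\tau}{\tau},
\]
as required.

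I do not anticipate a conceptual obstacle; the proof is essentially bookkeeping. The only point deserving care is the justification of the integration by parts in $\xi$ and of the differentiation under the integral sign, which requires sufficient regularity and decay of $\tau\mapsto\matM(\tau).\vec{\psi}(x;\tau V)$. This can be handled by first proving the identities under a convenient density assumption on $G$ (e.g.\ $G$ smooth, compactly supported away from $\tau=0$, and with eigenvalue-localised support so that $\matM(\tau)$ has finitely many nonzero entries uniformly in $\tau$), and then extending to the general case by monotone convergence, using the fact that both sides of each identity define positive quadratic forms in $G$.
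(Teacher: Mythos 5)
Your proof is correct and follows exactly the route the paper intends (the paper only sketches it by invoking Plancherel in $y$, orthonormality in $x'$, and Propositions \ref{kernel_prp} and \ref{Mderivative_prp}, saying "we immediately obtain"); the bookkeeping of the change of variable $\tau=\xi^2$, the factor of $2$ from evenness of the integrand, and the conversion of $\partial_\xi$ into $\tfrac{2}{\xi}\,\tau\partial_\tau$ via the chain rule are all handled correctly. Your closing remark about justifying the integration by parts and differentiation under the integral sign by first working with $G$ nicely supported and with $\matM(\tau)$ having finitely many nonzero entries, then passing to the limit by monotone convergence, is a sensible way to make the argument rigorous and does not change the essential content.
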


\subsection{Decompositions}

Let $V\in \pot_1(\kappa)$ and $\mm\in C^\infty_c([-1,1])$. In order to analyse the operator $\mm(\opL)$ and its integral kernel, it is convenient to introduce a dyadic decomposition along the spectrum of $-\partial_y^2$.

Namely, let $\chi\in C^\infty_c([1,4])$ be such that $\sum_{j\in \ZZ}\chi(2^j\tau)=1$. We set
\begin{equation}\label{en:GA_def}
G_A(\lambda, \tau) \defeq \mm(\lambda) \,\chi(A\tau),
\end{equation}
where $A>0$ is a parameter. Here $\lambda$ is the variable in the spectrum of $\opL$ and $\tau=\xi^2$ is the variable in the spectrum of $-\partial_y^2$.

In this case the matrix given by Proposition \ref{kernel_prp} is 
\begin{equation}\label{eq:matrix_G_A}
\matM(\tau)=\diag{G_A(\vec{E}(\tau V), \tau)}=\chi(A\tau) \cdot \matM_1(\tau),
\end{equation}
where
\begin{equation}\label{eq:m1_def}
\matM_1(\tau) \defeq \diag{\mm(\vec{E}(\tau V))},
\end{equation}
and Proposition \ref{plancherel_identity_prp} yields
\begin{equation}\label{plancherel_decomposition}
\int_{\RR^2} |K_{G_A}(z',z)|^2 \,dz' 
\lesssim A^{-\frac{1}{2}} \int_{A^{-1}}^{4 A^{-1}}  \| \matM_1(\tau) .\vec{\psi}(x;\tau V) \|^2 \frac{d\tau}{\tau} .
\end{equation}

We now obtain a similar expression for the analogous integral with the weight $(y'-y)^2$. By the Leibniz and chain rules,
\begin{equation*}
\tau\partial_\tau(G_A(E_n(\tau V), \tau)) = \widetilde{\chi}(A\tau) \cdot \mm(E_n(\tau V)) + \chi(A\tau) \cdot \mm'(E_n(\tau V)) \cdot F_n(\tau V),
\end{equation*} 
where $\widetilde\chi(x)=x\chi'(x)$, and $F_n(\tau V)$ is as in \eqref{virial_F}. Thus, by Proposition \ref{Mderivative_prp},
\[
D[\matM](\tau)
= \widetilde{\chi}(A\tau)\cdot \matM_1(\tau) + \chi(A\tau) \sum_{k=2}^4 \matM_k(\tau),
\]
where $\matM_1(\tau)$ is as in \eqref{eq:m1_def}, while
\begin{align}
 \matM_2(\tau)&\defeq \diag \mm'(\vec{E}(\tau V)) \schur \diag \vec{F}(\tau V), \label{eq:m2_def}\\
	\matM_3(\tau)&\defeq \matN \schur \matA(\tau V) \schur \inc{\mm(\vec{E}(\tau V))} , \label{eq:m3_def}\\
	 \matM_4(\tau)&\defeq \matF \schur \matA(\tau V) \schur \inc{\mm(\vec{E}(\tau V))} . \label{eq:m4_def}
\end{align}
Here $\matN$ and $\matF$ denote the cutoff matrices $\matN_T$ and $\matF_T$ of Section \ref{s:matrixbounds}, with $T = 2$.
Then Proposition \ref{plancherel_identity_prp} yields
\begin{equation}\label{plancherel_decomposition2}
\int_{\RR^2}(y'-y)^2 \, |K_{G_A}(z',z)|^2 \,dz' 
\lesssim A^\frac{1}{2} \sum_{k=1}^4\int_{A^{-1}}^{4 A^{-1}} \|\matM_k(\tau).\vec{\psi}(x;\tau V)\|^2 \frac{d\tau}{\tau} .
\end{equation}

In light of \eqref{plancherel_decomposition} and \eqref{plancherel_decomposition2}, we are reduced to estimating the quantities
\begin{equation}\label{eq:planch_terms}
\sup_{\tau\in [A^{-1}, 4 A^{-1}]} \|\matM_k(\tau).\vec{\psi}(x;\tau V)\|^2 .
\end{equation}
for $k=1,2,3,4$. This is discussed in Sections \ref{ss:diagonal_bound} to \ref{ss:neardiagonal} below, where the estimates proved in the previous Sections \ref{basic_sec} and \ref{s:matrixbounds} will play a fundamental role. In applying those estimates, it is crucial to remember that, if $V \in \pot_{1+\theta}(\kappa)$ for some $\theta \in [0,1)$ and $\kappa > 1$, then the scaled potentials $\tau V$ belong to the same class $\pot_{1+\theta}(\kappa)$ for all $\tau>0$.
A useful tool in the upcoming discussion is an elementary strengthening of Theorem \ref{thm:spectral_proj}, which we state here for convenience.

\begin{lem}\label{spectral_proj_final_lem}
Let $V\in \pot_1(\kappa)$. Then, for every $A,E>0$,
\[
\sup_{\tau\geq A^{-1}}\sum_{E_n(\tau V) \in [0,E]} \psi_n(x;\tau V)^2 
\lesssim_{\kappa} \sqrt{E} \left(\chr_{V\leq 4 A E} + e^{-c(\kappa) A^{-1/2} |x| \sqrt{V(x)}} \chr_{V \geq 4 A E}\right)
\]
and, if $\rho_n$ is defined as in Theorem \ref{thm:spectral_proj} with $T_0=2$, then
\[
\sup_{\tau\geq A^{-1}}\sum_{E_n(\tau V) \in [0,E]} \rho_n(x;\tau V)^2 
\lesssim_{\kappa} \sqrt{E} \left(\chr_{V \leq T_1 A E} + e^{-c(\kappa) A^{-1/2} |x| \sqrt{V(x)}} \chr_{V \geq T_1 A E}\right),
\]
where $T_1 = T_1(\kappa)$. 
\end{lem}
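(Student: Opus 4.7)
The plan is to deduce this lemma from Theorem \ref{thm:spectral_proj} by a direct scaling argument, exploiting the fact that $\pot_1(\kappa)$ is closed under the rescaling $V \mapsto \tau V$ (with the same constant $\kappa$). For any fixed $\tau>0$, applying Theorem \ref{thm:spectral_proj} to the potential $\tau V$ (with $E_0 = E$) gives
\[
\sum_{E_n(\tau V) \in [0,E]} \psi_n(x;\tau V)^2 \lesssim_{\kappa} \sqrt{E}\left(\chr_{\tau V(x) \leq 4E} + e^{-c(\kappa)|x|\sqrt{\tau V(x)}}\chr_{\tau V(x) \geq 4 E}\right),
\]
and the implicit constant depends only on $\kappa$, not on $\tau$.

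The next step is to convert the cutoffs involving $\tau V$ into cutoffs involving $V$ alone, uniformly in $\tau \geq A^{-1}$. First, if $V(x) \leq 4AE$ we make no use of exponential decay: since $\chr_{\tau V \leq 4E} + e^{-c(\kappa)|x|\sqrt{\tau V(x)}}\chr_{\tau V \geq 4E} \leq 1$, the rescaled theorem already delivers $\sqrt{E}$, which matches the first term of the asserted bound. Second, if $V(x) \geq 4AE$ and $\tau \geq A^{-1}$, then $\tau V(x) \geq 4E$, so we are in the exponential regime of the rescaled theorem; moreover $\sqrt{\tau V(x)} \geq A^{-1/2}\sqrt{V(x)}$, which yields
\[
e^{-c(\kappa)|x|\sqrt{\tau V(x)}} \leq e^{-c(\kappa) A^{-1/2}|x|\sqrt{V(x)}},
\]
and this estimate is uniform in $\tau \geq A^{-1}$. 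Combining the two cases and taking the supremum in $\tau \geq A^{-1}$ proves the first bound.

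The argument for $\rho_n$ is entirely analogous: with $T_0 = 2$, apply the second half of Theorem \ref{thm:spectral_proj} to $\tau V \in \pot_1(\kappa)$ to get the same bound but with constant $T_1 = T_1(\kappa,2) = T_1(\kappa)$ in place of $4$; then repeat the two-case analysis, replacing the threshold $4AE$ by $T_1 A E$. The only thing to note is that the auxiliary matrices $\matN_{T_0}, \matF_{T_0}$ defining $\rho_n(\cdot;\tau V)$ in \eqref{eq:rho_def_elem} are the same for every $\tau$, so the functions $\rho_n(\cdot;\tau V)$ that appear here genuinely coincide with those obtained by applying Theorem \ref{thm:spectral_proj} to $\tau V$.

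Since the entire argument is a direct scaling reduction, there is no real obstacle: the only subtlety is the bookkeeping that $\tau \geq A^{-1}$ simultaneously forces $\tau V(x) \geq 4E$ (whenever $V(x) \geq 4AE$) and degrades the exponent only by the factor $A^{-1/2}$, which is exactly the loss recorded in the statement.
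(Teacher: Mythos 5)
Your proof is correct, and it is exactly the "elementary strengthening" the authors have in mind (the paper states the lemma without proof, precisely because it follows from Theorem \ref{thm:spectral_proj} by this scaling reduction). The key points you identify are all the ones that matter: the class $\pot_1(\kappa)$ is scale-invariant with the same constant $\kappa$, so Theorem \ref{thm:spectral_proj} applies to $\tau V$ with a $\tau$-uniform implicit constant; the constraint $\tau \geq A^{-1}$ converts the regime $V(x) \geq 4AE$ (resp.\ $T_1 A E$) into $\tau V(x) \geq 4E$ (resp.\ $T_1 E$); and $\sqrt{\tau V(x)} \geq A^{-1/2}\sqrt{V(x)}$ delivers the $A^{-1/2}$ degradation in the exponent, which is precisely the loss recorded in the statement. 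Your remark that $\matN_{T_0}$, $\matF_{T_0}$ are independent of $\tau$, so that the $\rho_n(\cdot;\tau V)$ appearing in the lemma are the same functions produced by applying Theorem \ref{thm:spectral_proj} to $\tau V$, is a worthwhile sanity check and correct.
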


\subsection{The diagonal bounds}\label{ss:diagonal_bound}
Here we continue to assume $V \in \pot_1(\kappa)$, and consider the terms \eqref{eq:planch_terms} with $k=1,2$.
First of all, since $\supp \mm \subseteq [-1,1]$,
\begin{equation}\label{M1}
\begin{split}
&\sup_{\tau\in [A^{-1}, 4 A^{-1}]} \|\matM_1(\tau).\vec{\psi}(x;\tau V)\|^2 \\
&=\sup_{\tau\in [A^{-1}, 4 A^{-1}]} \sum_{n\geq 1} |\mm(E_n(\tau V))|^2 \psi_n(x;\tau V)^2 \\
&\leq \|\mm\|_\infty^2\sup_{\tau\in [A^{-1}, 4 A^{-1}]}\sum_{E_n(\tau V) \in [0,1]} \psi_n(x;\tau V)^2 \\
&\lesssim_{\kappa} \|\mm\|_\infty^2\left(\chr_{V \leq 4 A} + e^{-c(\kappa) A^{-1/2} |x|\sqrt{V(x)}} \chr_{V \geq 4 A}\right) ,
\end{split}
\end{equation} 
by Lemma \ref{spectral_proj_final_lem}.

We can treat similarly the term involving $\matM_2$. In fact, by Theorem \ref{thm:virial}, $0\leq F_n(\tau V)\leq E_n(\tau V)$ and thus, if we define $\tilde\mm$ by
\[
\widetilde{\mm}(\lambda) \defeq \lambda \mm'(\lambda),
\]
then
\begin{equation}\label{M2}
\begin{split}
&\sup_{\tau\in [A^{-1}, 4 A^{-1}]} \|\matM_2(\tau).\vec{\psi}(x;\tau V) \|^2 \\
&\lesssim_{\kappa} \|\widetilde{\mm}\|_\infty^2\left(\chr_{V \leq 4 A} + e^{-c(\kappa) A^{-1/2} |x| \sqrt{V(x)}} \chr_{V \geq 4 A} \right) \\
&\lesssim_{\kappa} \|\mm'\|_\infty^2 \left(\chr_{V \leq 4 A} + e^{-c(\kappa) A^{-1/2} |x|\sqrt{V(x)}} \chr_{V \geq 4 A} \right) ,
\end{split}
\end{equation}
where we used again that $\supp\mm \subseteq [-1,1]$.

\subsection{Terms far from the diagonal}
Here we consider the term \eqref{eq:planch_terms} with $k=4$, still under the assumption $V \in \pot_1(\kappa)$.
In view of \eqref{eq:m4_def}, the modulus of the $n$th entry of $\matM_4(\tau).\vec{\psi}(x;\tau V)$ is
\[\begin{split}
&\left|\sum_{m\in [1, n/2)\cup (2n, \infty)} \matA_{nm}(\tau V) \left( \mm(E_n(\tau V)) - \mm(E_m(\tau V)) \right) \psi_m(x;\tau V) \right| \\
&\leq  \|\mm\|_\infty \Biggl(\chr_{E_n(\tau V)\in [0,1]} |\rho_n(x;\tau V)| \\
&\qquad\qquad+ \sum_{m\in [1, n/2) \cup (2 n, \infty)} |\matA_{nm}(\tau V) \chr_{E_m(\tau V) \in [0,1]} \psi_m(x;\tau V)|\Biggr),
\end{split}\]
where we used the definition \eqref{eq:rho_def_elem} of $\rho_n$.
Thus, from the (uniform in $\tau$) $\ell^2$-boundedness of $|\matA(\tau V)| \schur \matF$ proved in Theorem \ref{thm:estimates_A} we deduce that
\begin{multline*}
\|\matM_4(\tau).\vec{\psi}(x;\tau V)\|^2\\
\lesssim_{\kappa} \|\mm\|_\infty^2\left(\sum_{E_n(\tau V)\in [0,1]} |\rho_n(x;\tau V)|^2+ \sum_{E_n(\tau V)\in [0,1]} |\psi_n(x;\tau V)|^2\right).
\end{multline*}
Hence, by Lemma \ref{spectral_proj_final_lem}, 
\begin{multline}\label{M4}
\sup_{\tau\in [A^{-1}, 4 A^{-1}]} \|\matM_4(\tau).\vec{\psi}(x;\tau V)\|^2 \\
\lesssim_{\kappa}
\|\mm\|_\infty^2 \left(\chr_{V \leq T_2 A E} + e^{-c(\kappa) A^{-1/2} |x| \sqrt{V(x)}} \chr_{V \geq T_2 A E}\right), 
\end{multline}
where $T_2 = T_2(\kappa)$.

\subsection{The near-diagonal bound}\label{ss:neardiagonal}
Finally we consider the term \eqref{eq:planch_terms} with $k=3$. For this bound we use the stronger assumption $V \in \pot_{1+\theta}(\kappa)$ for some $\theta \in (0,1)$.

In view of \eqref{eq:m3_def}, the modulus of the $(n,m)$-entry of $\matM_3(\tau)$ is
\begin{equation*}
\chr_{n/2\leq m \leq 2n} \, |\matA_{nm}(\tau V)| \, |\mm(E_n(\tau V))-\mm(E_m(\tau V))|.
\end{equation*}
Notice that $\mm(E_n(\tau V)) - \mm(E_m(\tau V))$ vanishes unless $E_m(\tau V)\in [0,1]$ or $E_n(\tau V)\in [0,1]$. In the latter case Proposition \ref{prp:eigenvalue_doub} implies $E_m(\tau V)\in [0,S]$, where $S=S(\kappa) \geq 1$. Hence, by Proposition \ref{prp:A_formula} and Theorem \ref{thm:near_diag},
\[\begin{split}
&|(\matM_3)_{nm}(\tau)|\\
&\leq \|\mm'\|_\infty \, \chr_{n/2 \leq m \leq 2n} \, |\matA_{nm}(\tau V)| \left|E_n(\tau V) - E_m(\tau V) \right| \chr_{E_m(\tau V) \in [0,S]}\\
&= \|\mm'\|_\infty \, \chr_{n/2 \leq m \leq 2n} \, |\matP_{nm}(\tau V)| \, \chr_{E_m(\tau V) \in [0,S]}\\
&\lesssim_{\kappa,\theta}  \|\mm'\|_\infty \frac{1}{1+|m-n|^{1+\epsilon}} \chr_{E_m(\tau V) \in [0,S]},
\end{split}\]
where $\epsilon = \epsilon(\kappa,\theta) > 0$.
Applying Schur's Test yields
\begin{equation*}
\|\matM_3(\tau).\vec{\psi}(x;\tau V)\|^2\lesssim_{\kappa,\theta,T} \|\mm'\|_\infty^2 \sum_{E_m(\tau V)\in [0,S]}|\psi_m(x;\tau V)|^2.
\end{equation*} 
Notice that in applying Theorem \ref{thm:near_diag} we used the fact that the potentials $\tau V$ belong to the same class $\pot_{1+\theta}(\kappa)$ for all $\tau >0$. Another application of Lemma \ref{spectral_proj_final_lem} yields 
\begin{multline}\label{M3}
\sup_{\tau\in [A^{-1}, 4 A^{-1}]} \|\matM_3(\tau).\vec{\psi}(x;\tau V)\|^2 \\
 \lesssim_{\kappa,\theta} \|\mm'\|_\infty^2\left(\chr_{V \leq 4 A S} + e^{-c(\kappa) A^{-1/2} |x| \sqrt{V(x)}} \chr_{V \geq 4 A S}\right),
\end{multline}
where, as noted above, $S=S(\kappa)$.

\subsection{Conclusion}

Assume that $V \in \pot_{1+\theta}(\kappa)$ for some $\theta \in (0,1)$.
By combining \eqref{plancherel_decomposition} and \eqref{M1}, we obtain that
\begin{equation*}
\int_{\RR^2} |K_{G_A}(z',z)|^2 \,dz' \lesssim_{\kappa} A^{-\frac{1}{2}} \|\mm\|_\infty^2 \left(\chr_{V\leq T_3 A}+e^{-c(\kappa) A^{-1/2} |x| \sqrt{V(x)}} \chr_{V \geq T_3 A}\right).
\end{equation*}
Similarly, from \eqref{plancherel_decomposition2}, \eqref{M1}, \eqref{M2}, \eqref{M3}, and \eqref{M4} we deduce that
\begin{multline*}
\int_{\RR^2} (y'-y)^2 |K_{G_A}(z',z)|^2 \,dz' \\
\lesssim_{\kappa,\theta} A^\frac{1}{2} \left(\|\mm\|_\infty^2 + \|\mm'\|_\infty^2\right) \left(\chr_{V\leq T_3 A} + e^{-c(\kappa) A^{-1/2} |x| \sqrt{V(x)}} \chr_{V \geq T_3 A}\right).
\end{multline*}
Here $T_3 \defeq \max\{4S, T_2\}$ depends only on $\kappa$. Interpolation of the above two estimates yields
\begin{multline}\label{eq:interpolated_scale}
\int_{\RR^2} |y'-y|^{2\vartheta} |K_{G_A}(z',z)|^2 \,dz' \\
\lesssim_{\theta,\kappa,\vartheta} \|\mm\|_{\sobolev{\vartheta}{\infty}}^2 A^{\vartheta-\frac{1}{2}} \left(\chr_{V \leq T_3 A} + e^{-c(\kappa) A^{-1/2} |x| \sqrt{V(x)}} \chr_{V \geq T_3 A}\right)
\end{multline}
for every $\vartheta\in[0,1]$. We note that, as discussed in Section \ref{ss:diagonal_bound}, the estimate at the endpoint $\vartheta=0$ is valid more generally for $V \in \pot_1(\kappa)$.

When $\vartheta < 1/2$, we can sum the interpolated estimates \eqref{eq:interpolated_scale} corresponding to different scales $A$ to obtain an estimate for $\Kern_{\mm(\opL)}$. Indeed, by \eqref{en:GA_def},
\begin{equation}\label{eq:dyadic_kern}
\Kern_{\mm(\opL)} = \sum_{j \in \ZZ} K_{G_{2^j}} .
\end{equation}
Moreover, in light of  Proposition \ref{kernel_prp}, formula \eqref{eq:matrix_G_A} and the support conditions $\supp \mm \subseteq [-1,1]$ and $\supp \chi \subseteq [1,4]$, 
 the kernel $K_{G_A}$ vanishes identically 
 unless there is a $\tau \in [A^{-1}, 4 A^{-1}]$ such that $E_1(\tau V) \leq 1$; by Proposition \ref{prp:eigenvalue_doub}\ref{en:energy_doubling_lbd} (applied to the potentials $\tau V$ with $E=1$), this in turn implies that $K_{G_A} = 0$ unless $A \geq a V(1)$, where $a = a(\kappa)>0$, and the sum in \eqref{eq:dyadic_kern} can be restricted accordingly.

Hence, if $\vartheta<\frac{1}{2}$, by \eqref{eq:interpolated_scale} and the triangle inequality,
\[\begin{split}
&\left(\int_{\RR^2} |y'-y|^{2\vartheta} |\Kern_{\mm(\opL)}(z',z)|^2 \,dz' \right)^{1/2} \\
&\leq \sum_{j \tc 2^j \geq a V(1)} \left(\int_{\RR^2} |y'-y|^{2\vartheta} |K_{G_{2^j}}(z',z)|^2 \,dz' \right)^{1/2} \\
&\lesssim_{\theta,\kappa,\vartheta} \|\mm\|_{\sobolev{\vartheta}{\infty}}\sum_{j \tc 2^j \geq a V(1)} 2^{\frac{j}{2}(\vartheta-\frac{1}{2})} \left(\chr_{V \leq T_3 2^j} 
+ e^{-c(\kappa) 2^{-j/2} |x| \sqrt{V(x)}} \chr_{V\geq T_32^j}\right) \\
&\lesssim \|\mm\|_{\sobolev{\vartheta}{\infty}} \left(\sum_{j \tc 2^j \geq \max\{T_3^{-1} V(x), a V(1)\}} 2^{\frac{j}{2}(\vartheta-\frac{1}{2})} \right.\\
&\qquad\qquad\qquad\qquad\qquad + \left. e^{-c(\kappa) T_3^{1/2} |x|} \sum_{j \tc 2^j \geq a V(1)} 2^{\frac{j}{2}(\vartheta-\frac{1}{2})}\right)\\
&\lesssim_{\vartheta,\kappa} \|\mm\|_{\sobolev{\vartheta}{\infty}}\left( \max\{V(1),V(x)\}^{\frac{\vartheta}{2} - \frac{1}{4}} 
+ V(1)^{\frac{\vartheta}{2} - \frac{1}{4}}  e^{-c(\kappa) T_3^{1/2} |x|}\right) \\
&\lesssim_{\vartheta,\kappa} \|\mm\|_{\sobolev{\vartheta}{\infty}}  \max\{V(1),V(x)\}^{\frac{\vartheta}{2} - \frac{1}{4}} ,
\end{split}\]
where the last estimate follows from the fact that
\[
V(x) \simeq_\kappa V(|x|) \lesssim_\kappa V(1) \, (1+|x|)^{\kappa} \lesssim_{\kappa,\beta} V(1) \, e^{\beta |x|}
\]
for all $\beta>0$, by Proposition \ref{doubling_prp}. 

This proves the weighted Plancherel estimate of Theorem \ref{thm:weightedplancherel} in the case $r=1$. In the general case, define
\[
V_r(x) = r^2 V(rx), \qquad \opL_r = -\partial_x^2 - V_r(x) \partial_y^2
\]
and observe that $V_r \in \pot_{1+\theta}(\kappa)$ for any $r > 0$. Moreover, if $T_r$ is the isometry of $L^2(\RR^2)$ defined by
\[
T_r f(x,y) = r^{-1/2} f(x/r,y),
\]
then it is immediately checked that $(r^2 \opL) T_r = T_r \opL_r$,
whence
\[
\Kern_{\mm(r^2 \opL)}((x',y'),(x,y)) = r^{-1} \Kern_{\mm(\opL_r)}((x'/r,y'),(x/r,y)) 
\]
and the desired estimate for $\opL$ and arbitrary $r>0$ easily follows by applying the previous estimate to $\opL_r$.

\def\cprime{$'$}

\end{document}